\numberwithin{equation}{section}
\theoremstyle{plain}
\newtheorem{theorem}{Theorem}[section]
\newtheorem{proposition}[theorem]{Proposition}
\newtheorem{lemma}[theorem]{Lemma}
\newtheorem{corollary}[theorem]{Corollary}
\newtheorem{definition}[theorem]{Definition}
\newtheorem{example}[theorem]{Example}
\newtheorem{remark}[theorem]{Remark}
\newtheorem{problem}[theorem]{Problem}
\newtheorem{conjecture}[theorem]{Conjecture}
\newenvironment{proof}{{\noindent \textbf{Proof}\,\,}}{\hspace*{\fill}$\Box$\medskip}
\def\la{\lambda}
\def\cc{\mathbb C}
\def\oc{\overline\cc}
\def\rr{\mathbb R}
\def\mct{\mathcal T}
\def\zz{\mathbb Z}
\def\La{\Lambda}
\def\cp{\mathbb{CP}}
\def\diag{\operatorname{diag}}
\def\sign{\operatorname{sign}}
\def\mcl{\mathcal L}
\def\triv{\operatorname{Triv}}
\def\re{\operatorname{Re}}
\def\mcx{\mathcal X}
\def\mcg{\mathcal G}
\def\mch{\mathcal H}
\def\nn{\mathbb N}
\def\mca{\mathcal A}
\def\mcb{\mathcal B}
\def\mcc{\mathcal C}
\def\mcp{\mathcal P}
\def\wh#1{\widehat#1}
\def\mcr{\mathcal R}
\def\mca{\mathcal A}
\def\mcb{\mathcal B}
\def\idd{\operatorname{Id}}
\def\wt#1{\widetilde#1}
\def\tr{\operatorname{Tr}}
\def\tt{\mathbb T}
\def\mcy{\mathcal Y}
\def\im{\operatorname{Im}}
\def\gl{\operatorname{GL}}
\def\psl{\operatorname{PSL}}
\def\mce{\mathcal E}
\def\mo{\operatorname{mod}}
\def\mcf{\mathcal F}
\def\mcy{\mathcal Y}
\def\gga{\mathbf\Gamma}
\title{On extended model of  Josephson junction, linear systems with polynomial solutions, determinantal surfaces and Painlev\'e III equations}
\author{Alexey Glutsyuk\thanks{CNRS, UMR 5669 (UMPA, ENS de Lyon), France. E-mail: 
aglutsyu@ens-lyon.fr} \thanks{HSE University, Moscow, Russia} \thanks{Higher School of Modern Mathematics MIPT, 1 Klimentovskiy per., Moscow, Russia}\thanks{Work supported by the Theoretical Physics and Mathematics Advancement Foundation ``BASIS''}}
\begin{document}
\maketitle
\centerline{\it To dear Victor Matveevich Buchstaber on the occasion of his 80-th birthday}
\begin{abstract} We consider a three-parameter family of linear special double confluent Heun equations introduced and studied by V.M.Buchstaber and S.I.Tertychnyi, which is an equivalent presentation of  
a model of  Josephson junction in superconductivity. 
Buchstaber and Tertychnyi have shown that the set of those complex 
parameters for which the 
 Heun equation has a polynomial solution is a union of the so-called spectral curves: 
 explicit algebraic curves in $\cc^2$ indexed by  $\ell\in\nn$. In his paper with I.V.Netay, the author have shown   that each spectral curve is irreducible in  Heun equation parameters (consists of two irreducible components in  parameters of Josephson junction model).  
Netay discovered numerically and conjectured a genus formula for spectral curves. He reduced it to the conjecture stating that each of them is regular in 
$\cc^2$ with a coordinate axis deleted. Here we prove Netay's regularity  and genus  conjectures. For the proof we study a four-parameter family of linear systems on the Riemann sphere extending a family of linear systems equivalent to the Heun equations. They yield an equivalent presentation of the extension of model of Josephson junction introduced by the 
author in his paper with Yu.P.Bibilo. We describe the so-called determinantal surfaces, which consist of  linear systems with polynomial solutions, as explicit affine algebraic hypersurfaces in $\cc^3$ indexed by  $\ell\in\nn$. The spectral curves are their intersections with the hyperplane 
corresponding to the initial model. We prove that  each determinantal surface  is regular outside appropriate hyperplane and  consists of two rational irreducible components. The proofs use  Stokes phenomena  theory, holomorphic vector bundle technique, the fact that  each determinantal surface  is foliated by isomonodromic families of linear systems (this foliation is governed by Painlev\'e III equation) and 
transversality of the latter foliation to the initial model.
\end{abstract}
\tableofcontents
\section{Introduction}
\subsection{Introduction, brief description of main results and of their proofs} 

The three-parameter family of special double confluent Heun equations 
  \begin{equation} z^2E''+((-\ell+1)z+\mu(1-z^2))E'+(\lambda+\mu(\ell-1)z)E=0 \label{heun2''}\end{equation}
  belongs to the well-known class of Heun equations, see \cite{sla}. It was studied 
  by V.\,M.\,Buchstaber and S.\,I.\,Tertychnyi in \cite{tert2, 
bt0, bt1, bt2, bt3,  bt4} and in joint papers \cite{bg, bg2} by Buchstaber and the author. Buchstaber and Tertychnyi  have shown in \cite{bt0, bt1} that its restriction to real parameters satisfying the inequality $\la+\mu^2>0$ is equivalent to 
a model of overdamped Josephson junction in superconductivity; see the corresponding material in Subsection 1.4. 
In \cite{bt0} they have described those complex parameters $(\ell,\la,\mu)$ with $\mu\neq0$ 
for which equation (\ref{heun2''}) has a polynomial solution: this holds exactly, when 
$\ell\in\nn$ and the point $(\la,\mu)$ lies on an algebraic curve 
$\Gamma_\ell\subset\cc^2_{\la,\mu}$ 
called the {\it spectral curve}. Namely, $\Gamma_\ell$ is the zero locus of the 
determinant of a remarkable 3-diagonal $\ell\times\ell$-matrix, see \cite[formula (21)]{bt0} 
 and Theorem \ref{tpol} below. It was shown by the author in 
\cite[theorem 1.3]{gn19} that each spectral curve $\Gamma_\ell$ is irreducible. 

One of the main result (joint with I.V.Netay) of the present paper is the formula for genus of the 
spectral curve, see Subsection 1.2. Netay discovered it numerically, stated it as a conjecture, see 
\cite[conjecture 1.5]{gn19} and reduced it to the 
conjecture saying that the curve 
$$\Gamma_\ell^o:=\Gamma_\ell\setminus\{ \mu=0\}\subset\cc^2$$ 
is  regular, 
see \cite[corollary 3.6]{gn19}. Here we prove Netay's smoothness and hence, genus conjectures. 

Family of Heun equations (\ref{heun2''}) is  equivalent to the family of linear systems 
\begin{equation} Y'=\left(\frac1{z^2}\diag(-\mu, 0)+\frac1z\left(\begin{matrix}\ell & -\frac{a}2\\ \frac{a}2 & 0\end{matrix}\right)+\diag(0, \mu)\right)Y,\label{mch0}\end{equation}
\begin{equation}\la=\frac{a^2}4-\mu^2,\label{laas}\end{equation}
in the following sense: for every solution $Y(z)=(Y_1(z),Y_2(z))$ of system (\ref{mch0}) the function 
$$E(z)=Y_2(z)$$ 
satisfies (\ref{heun2''}) and conversely: for each solution $E(z)$ of (\ref{heun2''}) the 
vector function 
\begin{equation}Y=(Y_1,Y_2), \ \ Y_2(z)=E(z), \  Y_1(z)=\frac{2z}a(E'(z)-\mu E(z))\label{e-w}
\end{equation}
is a solution of (\ref{mch0}). See an implicit equivalent statement in \cite[subsection 3.2, p. 3869]{bg}. 

\begin{remark} \label{rempol}
 A function $E(z)$  is  a polynomial solution of double confluent Heun equation 
(\ref{heun2''}), if and only if the corresponding vector function $Y(z)$ given by (\ref{e-w}) is a vector polynomial 
solution of system (\ref{mch0}). 
\end{remark}

For the proof of the genus and smoothness conjectures  we consider a four-parameter extension of the family of linear systems (\ref{mch0}) depending on an additional parameter $\chi$: the family 
\begin{equation} Y'=\left(\frac1{z^2}\left(\begin{matrix}-\frac s2 & -s\chi \\ 0 & 0\end{matrix}\right)+\frac1z\left(\begin{matrix}\ell-\chi a & -\frac{a}2\\ \frac{a}2 & \chi a\end{matrix}\right)+\left(\begin{matrix} 0 & 0 \\ s\chi  & \frac s2\end{matrix}\right)\right)Y.\label{mchoyn}\end{equation}
The initial family (\ref{mch0}) is the subfamily of (\ref{mchoyn}) with 
$\chi=0$, written in the rescaled parameter $\mu:=\frac s2$. 
The restriction of family (\ref{mchoyn}) to real parameters is equivalent to the 
four-dimensional extension of model of Josephson junction introduced  by the author 
in \cite[subsection 6.2]{bibgl}. See the corresponding material in Subsection 1.5.

Our next series of main results, presented in Subsection 1.3, concern the set of those complex parameter values in (\ref{mchoyn}) with $s\neq0$ for which system (\ref{mchoyn}) 
has a vector polynomial solution. We show that this is a union indexed by $\ell\in\nn$ of 
affine algebraic surfaces $S_\ell\subset\cc^3_{\chi,a,s}$, called {\it determinantal surfaces;} each $S_\ell$ consists of two irreducible components $S_{\ell,\pm}$ permuted by the involution $(\chi,a,s)\mapsto (-\chi,-a,s)$. Each $S_{\ell,\pm}$ will be presented as the zero locus  
of an explicit polynomial, the determinant of an $(\ell+1)\times(\ell+1)$-matrix. 
We show that the surface $S_{\ell,\pm}$ is rational and the  complement 
$$S_{\ell,\pm}^o:=S_{\ell,\pm}\setminus\{ s=0\}$$ 
is a regular surface biholomorphically parametrized by the complement of $\cc^*\times\oc$ to 
an analytic hypersurface (curve, which a priori may be neither irreducible, nor algebraic), and the 
surfaces $S_{\ell,\pm}^o$ are disjoint. The latter regularity and parametrization statements are proved in Subsections 2.8, 2.9 by using Stokes phenomena theory and holomorphic vector bundle technique analogous to 
the one used by A.A.Bolibruch in his famous works on the Riemann--Hilbert Problem and related topics: see \cite{Bol89}--\cite{Bol18} and references therein. 

Using the above results on the determinantal surfaces $S_\ell$ we prove Netay's smoothness and genus conjectures on the spectral curves $\Gamma_\ell$ in Subsection 2.11 as follows. 
The curve $\Gamma_\ell^o$ written in the coordinates $(a,s)$  
instead of $(\la,\mu)$,  $s=2\mu$, $\la=\frac{a^2-s^2}4$, see 
(\ref{laas}), is known to consist of two irreducible components $\Gamma^o_{\ell,\pm}$ that are permuted by the involution $(a,s)\mapsto(-a,s)$ and are bijectively projected  onto $\Gamma_\ell^o$ 
via the projection $(a,s)\mapsto (\la,\mu)$, as was shown in  \cite[section 3]{gn19} using results of 
\cite{gn19} and \cite[section 3]{bt0}. Smoothness of $\Gamma_\ell^o$ is equivalent to 
smoothness of any of the curves $\Gamma_{\ell,\pm}^o$. One has 
$$\Gamma_{\ell,\pm}^o=S_{\ell,\pm}^o\cap\{\chi=0\}.$$
Thus, to prove smoothness of the curve $\Gamma_{\ell,\pm}^o$, and hence, 
the genus conjecture, 
it suffices to prove transversality of the (smooth) surface $S_{\ell,\pm}^o$ and the hyperplane $\{\chi=0\}$. 

To prove transversality, we consider the line field on the parameter space introduced by the author in \cite[subsection 6.2]{bibgl}:
\begin{equation}\begin{cases}  \ell'_s=0\\
\chi'_s=\frac{a-2\chi(2\chi a-\ell)}{2s}\\
  a'_s=-2s\chi+\frac as(2\chi a-\ell)
 \end{cases}.\label{isomnews'}\end{equation} 
 It was shown by the author \cite[theorem 6.6]{bibgl} that each its integral curve 
 (graph of solution of (\ref{isomnews'})) corresponds to an isomonodromic family of linear 
 systems\footnote{In fact, this was proved in \cite{bibgl} for equivalent linear systems, which are obtained from (\ref{mchoyn}) by the space variable change $\wh Y= e^{-\frac s2z}Y$.} (\ref{mchoyn}), and 
  for every $\ell\in\cc$ the function 
  \begin{equation}w(s):=\frac{a(s)}{2s\chi(s)}\label{wsnew}\end{equation}
 satisfies Painlev\'e III equation\footnote{In system  (\ref{isomnews'}) and in Painlev\'e III equation (\ref{p3}) 
 presented in \cite{bibgl} the parameter $\ell$ is changed to $-\ell$.}   
  \begin{equation} w''=\frac{(w')^2}w-\frac{w'}{s}+2\ell\frac{w^2}{s}-(2\ell+2)\frac1s+w^3-\frac1w.\label{p3}\end{equation}
   along solutions of (\ref{isomnews'}). As was shown in loc. cit. the above-mentioned  
isomonodromic families are induced from Jimbo isomonodromic deformations given in \cite{J}.  
  We show that isomonodromic deformation preserves 
   the property of linear system (\ref{mchoyn}) to have a polynomial solution, and hence, 
   each determinantal surface $S_{\ell,\pm}^o$ is tangent to the line field on $\cc^3_{\chi,a,s}$ induced by 
    (\ref{isomnews'}).  The hyperplane $\{\chi=0\}$ is clearly transversal to line field (\ref{isomnews'}) at its points where $s,a\neq0$. 
   We show that the surfaces $S_{\ell,\pm}^o$ do not intersect the $s$-axis $\{\chi=a=0\}$. 
   This together with their tangency to field (\ref{isomnews'}) implies  transversality, and hence, smoothness of the intersections $\Gamma_{\ell,\pm}^o=S_{\ell,\pm}^o\cap\{\chi=0\}$. This will finish the proof of smoothness and genus conjectures. 
   
   The foliation of each surface $S_{\ell,\pm}^o$ by integral curves of the line field  induced by 
   (\ref{isomnews'}) is called its {\it isomonodromic foliation.} The isomonodromic foliations 
   of the surfaces $S_{\ell,\pm}^o$ with $\ell=1,2$ (and also an analogous foliation for $\ell=0$) are discussed 
   in Section 3. For $\ell=0,1$ we show that they are equivalently described by explicit Riccatti equations. 
   
   Plan of proof of main results is presented in Subsection 1.6. 
   
   Some open problems are presented in Subsection 1.7.

\subsection{Genus and smoothness of the spectral curves}

Let us recall the description of the parameters corresponding to equations (\ref{heun2''})
with polynomial solutions.
To do this, consider the three-diagonal $\ell\times\ell$-matrix 
\begin{equation}
    H_\ell=\left(
    \begin{matrix}  0 & \mu & 0 & 0  & 0 & 0 \dots & 0\\
        \mu(\ell-1) & 1-\ell & 2\mu & 0 & 0 & \dots & 0\\
        0 & \mu(\ell-2) & -2(\ell-2) & 3\mu & 0 & \dots & 0\\
        \dots & \dots & \dots & \dots & \dots & \dots & \dots\\
        0 &\dots & 0 & 0 & 2\mu & -2(\ell-2) & (\ell-1)\mu\\
        0 & \dots & 0 & 0 & 0 & \mu & 1-\ell 
    \end{matrix}\right):
    \label{defh}
\end{equation}
\begin{align*}
    H_{\ell;ij}=0 \text{ if } |i-j|\geqslant2; & \quad
    H_{\ell;jj}=(1-j)(\ell-j+1); \\
    H_{\ell;j,j+1}=\mu j; & \quad
    H_{\ell;j,j-1}=\mu(\ell-j+1). \\
\end{align*}
The matrix $H_\ell$ belongs to the class of the so called Jacobi matrices that arise in 
different questions of mathematics and mathematical physics \cite{ilyin}.

\begin{theorem}   \label{tpol}\cite[section 3]{bt0}
    A special double confluent Heun equation (\ref{heun2''})  with 
    $\mu\neq0$  has a polynomial solution, if  and only if $\ell\in\nn$ and the 
    three-diagonal matrix $H_\ell+\la\operatorname{Id}$ has zero determinant. 
    For every $\ell\in\nn$ the determinant $\det(H_\ell+\la\operatorname{Id})$ 
    is a polynomial in $(u,v)=(\la,\mu^2)$ of degree $\ell$ (which also has degree $\ell$ in 
    $(\la,\mu)$): 
    \begin{equation}
        Q_\ell(\la,\mu^2):=\det(H_\ell+\la\operatorname{Id}).
    \label{plmu}\end{equation}
\end{theorem}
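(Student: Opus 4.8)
The plan is to look for a power series solution $E(z)=\sum_{k\geq0}c_kz^k$ of (\ref{heun2''}) and to read off the three-term recurrence it imposes on the coefficients. Computing the coefficient of $z^n$ term by term (from $z^2E''$ one gets $n(n-1)c_n$, from $(1-\ell)zE'$ one gets $(1-\ell)nc_n$, from $\mu E'$ one gets $\mu(n+1)c_{n+1}$, from $-\mu z^2E'$ one gets $-\mu(n-1)c_{n-1}$, from $\la E$ one gets $\la c_n$, and from $\mu(\ell-1)zE$ one gets $\mu(\ell-1)c_{n-1}$, with the convention $c_{-1}=0$), and then simplifying the diagonal coefficient $n(n-1)+(1-\ell)n=n(n-\ell)$ and the backward coefficient $-\mu(n-1)+\mu(\ell-1)=\mu(\ell-n)$, yields
\begin{equation}\mu(\ell-n)c_{n-1}+(n(n-\ell)+\la)c_n+\mu(n+1)c_{n+1}=0,\qquad n\geq0.\label{rec}\end{equation}
Since $\mu\neq0$, the forward coefficient $\mu(n+1)$ never vanishes, so each $c_{n+1}$ is determined by $c_n,c_{n-1}$ and a formal solution is fixed by $c_0$.

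First I would establish necessity of $\ell\in\nn$. If $E$ is a genuine polynomial of degree $d$, so $c_d\neq0$ and $c_{d+1}=c_{d+2}=0$, then (\ref{rec}) at $n=d+1$ collapses to $\mu(\ell-d-1)c_d=0$, forcing $\ell=d+1\in\nn$ and degree exactly $\ell-1$. Conversely, once $\ell\in\nn$ the backward coefficient $\mu(\ell-n)$ vanishes precisely at $n=\ell$, so (\ref{rec}) at $n=\ell$ reads $\mu(\ell+1)c_{\ell+1}=0$ as soon as $c_\ell=0$, and an immediate induction propagates $c_n=0$ for all $n\geq\ell$. Thus a polynomial solution amounts to solving the $\ell$ equations (\ref{rec}) for $n=0,\dots,\ell-1$ in the $\ell$ unknowns $c_0,\dots,c_{\ell-1}$, with $c_{-1}=c_\ell=0$.

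Reindexing rows and columns by $j=n+1\in\{1,\dots,\ell\}$, the $(j,j-1)$, $(j,j)$ and $(j,j+1)$ entries of this tridiagonal coefficient matrix are $\mu(\ell-j+1)$, $(1-j)(\ell-j+1)+\la$ and $\mu j$ respectively. I would match these entry-by-entry with (\ref{defh}) to identify the matrix with $H_\ell+\la\idd$, where the only nontrivial check is the diagonal identity $(j-1)(j-1-\ell)=(1-j)(\ell-j+1)$. A nontrivial solution then exists if and only if $\det(H_\ell+\la\idd)=0$, which is the asserted criterion. For the structural claim on $Q_\ell$ I would use the continuant expansion of a tridiagonal determinant as a sum over tilings of $\{1,\dots,\ell\}$ by singletons and adjacent pairs: a singleton at $j$ contributes the diagonal entry $(1-j)(\ell-j+1)+\la$ (linear in $\la$), while an adjacent pair at $(j,j+1)$ contributes $-H_{\ell;j,j+1}H_{\ell;j+1,j}=-\mu^2 j(\ell-j)$, depending on $\mu$ only through $\mu^2$. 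Hence $\det(H_\ell+\la\idd)$ is a polynomial in $(\la,\mu^2)$. Assigning weight $1$ to each of $\la$ and $\mu^2$, a tiling with $k$ pairs has weight $\ell-k$, maximized at $k=0$ by the all-singletons tiling whose leading term is $\la^\ell$; this gives degree $\ell$ in $(\la,\mu^2)$, and since a monomial carrying $k$ factors $\mu^2$ has $(\la,\mu)$-degree at most $2k+(\ell-2k)=\ell$, the degree in $(\la,\mu)$ is also $\ell$.

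The computation is essentially mechanical, so the only genuinely delicate points are the index bookkeeping that pins down the exact identification of the recurrence matrix with $H_\ell+\la\idd$ (up to transposition, which is immaterial for the determinant) and the verification that truncation at degree $\ell-1$ really propagates to the vanishing of all higher coefficients. Once the conventions $c_{-1}=c_\ell=0$ are fixed and the vanishing loci of the forward and backward coefficients $\mu(n+1)$ and $\mu(\ell-n)$ are correctly located, both steps reduce to routine verification.
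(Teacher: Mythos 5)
Your proof is correct, and it is complete: the recurrence $\mu(\ell-n)c_{n-1}+(n(n-\ell)+\la)c_n+\mu(n+1)c_{n+1}=0$ is derived correctly from (\ref{heun2''}), the truncation argument at $n=d+1$ does force $\ell=d+1\in\nn$ with degree exactly $\ell-1$, the propagation $c_\ell=0\Rightarrow c_n=0$ for all $n\geq\ell$ works because the forward coefficients $\mu(n+1)$ never vanish, and the truncated $\ell\times\ell$ system is literally equal to $H_\ell+\la\idd$ (your diagonal identity $(j-1)(j-1-\ell)=(1-j)(\ell-j+1)$ holds, and in fact no transposition is even needed). Note that the paper itself contains no proof of this statement --- it is quoted from \cite[section 3]{bt0} --- so there is nothing internal to compare against; your route (power-series ansatz, three-term recurrence, identification with the Jacobi matrix, continuant expansion over singleton/domino tilings for the degree count) is precisely the standard argument behind the cited result, and your degree claim is airtight since $\la^\ell$ arises only from the all-singletons tiling with coefficient $1$, so no cancellation can occur.
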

See also \cite[remark 4.13]{bg2} for non-existence of polynomial solutions for 
$\ell\notin\nn$ and $s\neq0$. 
\begin{definition} For every $\ell\in\nn$ the {\it $\ell$-th spectral curve} is 
\begin{equation}\Gamma_\ell:=\{ Q_\ell(\la,\mu^2)=0\}\subset\cc^2_{\la,\mu}.\label{spcurve}
\end{equation}
\end{definition}

The following results were proved by the author in \cite{gn19}.
\begin{theorem} \label{irr} \cite[theorems 1.2 and 1.3]{gn19} For every $\ell\in\nn$ 
     the polynomial $Q_\ell(u,v)$ from (\ref{plmu}) is irreducible, and the spectral curve 
     $\Gamma_\ell$ is irreducible. 
\end{theorem}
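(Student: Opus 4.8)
The plan is to deduce both assertions from the connectivity of the spectral covering attached to $Q_\ell$. First I would record two structural features of $Q_\ell(u,v)=\det(H_\ell+u\idd)$, where $v=\mu^2$. By Theorem \ref{tpol} it has degree $\ell$ in $u=\la$, and it is in fact monic in $u$, the coefficient of $u^\ell$ coming from the product of diagonal entries. Moreover it is squarefree in $u$: for real $\mu\neq0$ the off-diagonal products $H_{\ell;j,j+1}H_{\ell;j+1,j}=\mu^2 j(\ell-j)$ are strictly positive, so $H_\ell$ is conjugate by a diagonal matrix to a real symmetric Jacobi matrix with nonvanishing off-diagonal entries, hence has $\ell$ simple real eigenvalues. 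Thus $\operatorname{disc}_u Q_\ell\not\equiv0$, the affine curve $C=\{Q_\ell(u,v)=0\}\subset\cc^2_{u,v}$ is reduced, and irreducibility of the polynomial $Q_\ell(u,v)$ is equivalent to irreducibility of $C$. Since $Q_\ell$ is monic in $u$, this is in turn equivalent to transitivity of the monodromy of the degree-$\ell$ covering $\pi\colon C\to\cc_v$, $(u,v)\mapsto v$.

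To analyse this monodromy I would compute the branch data at the two ends of the $v$-line. The branch locus of $\pi$ is the zero set of $\operatorname{disc}_u Q_\ell(v)$, which by the Jacobi simplicity above is disjoint from the ray $v>0$. At $v=0$ the matrix $H_\ell$ is diagonal with entries $-m(\ell-m)$, $m=0,\dots,\ell-1$, so the roots $u=m(\ell-m)$ collide exactly along the involutive pairing $m\leftrightarrow\ell-m$; the local monodromy at $v=0$ is the corresponding product of transpositions (a root branches like $\sqrt v$ precisely when the two colliding positions are joined by a nonzero off-diagonal entry, the remaining collisions being resolved by intermediate branch points). At $v=\infty$ the leading term $\mu H_1$ dominates, where $H_1$ is the $0$-diagonal tridiagonal matrix with off-diagonal products $j(\ell-j)$; this is the standard realization of $2J_x$ in the $\ell$-dimensional irreducible representation of $\mathfrak{sl}_2$, with simple spectrum $\{-(\ell-1),-(\ell-3),\dots,\ell-1\}$. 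Hence the sheets of $\pi$ grow like $\pm\sqrt v\,\theta$ with $\theta$ ranging over this equally spaced set, and a loop around $v=\infty$ sends $\sqrt v\mapsto-\sqrt v$, i.e.\ $\theta\mapsto-\theta$; the monodromy at infinity is the reversal of the $\theta$-ordered sheets.

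The main obstacle is transitivity: both local monodromies above are reversal-type and a priori preserve a common symmetry, so the intermediate branch points genuinely enter. To control them I would track the $\ell$ real analytic eigenvalue arcs along $\mu\in(0,\infty)$, which never cross by Jacobi simplicity; this produces a canonical ``connection'' permutation $\tau$ identifying the collision labelling at $v=0^+$ with the $\theta$-labelling at $v=\infty$, computable from the monotonicity and interlacing of Jacobi eigenvalues. Transitivity of the monodromy group then becomes the concrete combinatorial statement that the reversal at infinity, conjugated by $\tau$, together with the transposition pattern at $v=0$ generate a transitive subgroup of $\mathfrak S_\ell$; establishing this for every $\ell$ (say by induction on $\ell$, or by exhibiting a single intermediate branch point whose monodromy is a transposition bridging the two halves fixed by the reversals) is where the real work lies.

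Finally, granting that $C$, and hence $Q_\ell(u,v)$, is irreducible, I would deduce irreducibility of $\Gamma_\ell$ by a descent argument. The spectral curve is the fibre product of $C$ with the double cover $\mu\mapsto\mu^2=v$, so $\cc(\Gamma_\ell)=\cc(C)[\mu]/(\mu^2-v)$; thus $\Gamma_\ell$ is irreducible if and only if $v$ is not a square in the function field $\cc(C)$. But at $v=0$ the factor $u$ of $Q_\ell(u,0)=u\prod_{m=1}^{\ell-1}(u-m(\ell-m))$ is simple, so $\partial_u Q_\ell(0,0)\neq0$; hence $(u,v)=(0,0)$ is a smooth point of $C$ at which $v$ is a local uniformizer and therefore vanishes to odd order one. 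A square in $\cc(C)$ has even order at every place, so $v$ is not a square and $\Gamma_\ell$ is irreducible, as claimed.
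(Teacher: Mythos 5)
The theorem is imported by the paper from \cite[theorems 1.2 and 1.3]{gn19} and is not reproved here, so there is no internal proof to compare against; judged on its own terms, your proposal is a proof \emph{strategy} whose central step is missing. The whole content of the irreducibility of $Q_\ell(u,v)$ is the transitivity of the monodromy group of the covering $\pi\colon C\to\cc_v$, and you explicitly leave it open (``this is where the real work lies''). The two local monodromies you do compute cannot suffice: the reversal at $v=\infty$ and the pairing at $v=0$ both commute with the involution $m\leftrightarrow\ell-m$ (equivalently $\theta\mapsto-\theta$), so by themselves they generate an intransitive group as soon as $\ell\geq3$, and everything hinges on the uncontrolled intermediate branch points and on your ``connection permutation'' $\tau$, about which nothing concrete is established. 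Moreover, your description of the local monodromy at $v=0$ is wrong as stated: a colliding pair $\{m,\ell-m\}$ is split, by degenerate perturbation theory, at order $\mu^{\ell-2m}=v^{(\ell-2m)/2}$ (the effective coupling passes through the chain of $\ell-2m$ off-diagonal entries, all nonzero, with nonvanishing energy denominators), so the pair undergoes square-root branching precisely when $\ell-2m$ is \emph{odd}, not only when the two colliding positions are adjacent. In particular, for $\ell$ odd every pair branches at $v=0$, while for $\ell$ even none does; in the even case the local monodromy at $v=0$ is trivial and transitivity rests entirely on the finite branch points, for which you offer only a hope, not an argument.

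By contrast, the final paragraph of your proposal --- the descent from irreducibility of $Q_\ell$ to irreducibility of $\Gamma_\ell$ --- is correct and clean: $\Gamma_\ell$ is the fibre product of $C$ with the double cover $\mu\mapsto\mu^2=v$, its components correspond to the factorization of $\mu^2-v$ over $\cc(C)$ (there are no vertical components since $Q_\ell$ is monic in $u$), and $v$ cannot be a square in $\cc(C)$ because it has odd valuation (equal to one) at the place of the smooth point $(u,v)=(0,0)$, where $u=0$ is a simple root of $Q_\ell(u,0)=\prod_{m=0}^{\ell-1}\bigl(u-m(\ell-m)\bigr)$. But this step only \emph{transfers} irreducibility from the polynomial $Q_\ell(u,v)$ to the curve $\Gamma_\ell$; it does not produce it. As it stands, the proposal proves the second assertion of the theorem conditionally on the first, and the first is not proved.
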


Recall that the {\it geometric genus} of an irreducible algebraic curve is the 
genus of the Riemann surface parametrizing it bijectively (except for possible 
self-intersections), i.\,e., the genus of its normalization.

One of the main results of the paper is the following theorem, which is a joint result of the author 
with I.V.Netay.

\begin{theorem} \label{congenu}
    {\it The geometric genus of the curve $\Gamma_\ell$ equals}
    \[
        \begin{cases}
            \left(\dfrac{\ell-2}{2}\right)^2, & \ell\text{ even;} \\
            \\
            \dfrac{\ell-1}{2}\dfrac{\ell-3}2, & \ell\text{ odd.} \\
        \end{cases}
    \]
\end{theorem}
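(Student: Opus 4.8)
The plan is to deduce the genus formula from the regularity of the punctured spectral curve $\Gamma_\ell^o=\Gamma_\ell\setminus\{\mu=0\}$, invoking Netay's reduction \cite[corollary 3.6]{gn19}, according to which the stated genus values follow once $\Gamma_\ell^o$ is known to be a smooth curve. Thus the whole burden of the proof falls on showing that $\Gamma_\ell^o$ is regular. First I would pass to the coordinates $(a,s)$ via $s=2\mu$, $\la=\frac{a^2-s^2}4$ (see (\ref{laas})), in which $\Gamma_\ell^o$ splits into the two components $\Gamma^o_{\ell,\pm}$ exchanged by the involution $(a,s)\mapsto(-a,s)$, each projected bijectively onto $\Gamma_\ell^o$ under $(a,s)\mapsto(\la,\mu)$. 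Since this projection is a biholomorphism onto its image, regularity of $\Gamma_\ell^o$ is equivalent to regularity of a single component $\Gamma^o_{\ell,\pm}$, and the problem is reduced to the latter.

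The key geometric idea is to view $\Gamma^o_{\ell,\pm}$ as a planar slice of a surface one dimension higher, namely $\Gamma^o_{\ell,\pm}=S^o_{\ell,\pm}\cap\{\chi=0\}$, where $S^o_{\ell,\pm}\subset\cc^3_{\chi,a,s}$ is the punctured determinantal surface of the extended family (\ref{mchoyn}). I would take as established the structural results on the determinantal surfaces: that $S^o_{\ell,\pm}$ is a smooth rational surface, biholomorphically parametrized, and that the two components are disjoint (proved via Stokes data and the holomorphic vector bundle technique). Granting this, smoothness of the slice $\Gamma^o_{\ell,\pm}$ becomes equivalent to the transversality of the smooth surface $S^o_{\ell,\pm}$ with the hyperplane $\{\chi=0\}$, and it is this transversality statement that I would make the technical heart of the argument.

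To prove transversality I would exploit the isomonodromic line field (\ref{isomnews'}) on $\cc^3_{\chi,a,s}$, whose integral curves are the isomonodromic families of systems (\ref{mchoyn}) governed by the Painlev\'e III equation (\ref{p3}). The crucial lemma to establish is that isomonodromic deformation preserves the existence of a vector polynomial solution of (\ref{mchoyn}); granting it, each surface $S^o_{\ell,\pm}$ is a union of integral curves of the line field, and hence is everywhere tangent to it. On the other hand, a direct inspection of (\ref{isomnews'}) shows the field to be transversal to $\{\chi=0\}$ wherever $s,a\neq0$: its $\chi$-component $\frac{a-2\chi(2\chi a-\ell)}{2s}$ equals $\frac a{2s}\neq0$ on $\{\chi=0,\ s\neq0,\ a\neq0\}$. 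Since a surface tangent to a line field that is transversal to a hyperplane must itself meet that hyperplane transversally, transversality of $S^o_{\ell,\pm}$ and $\{\chi=0\}$ follows at every intersection point lying off the $s$-axis $\{\chi=a=0\}$.

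The remaining gap, and the step I expect to be the main obstacle, is the behaviour on the $s$-axis, where the transversality argument degenerates; here I would show that $S^o_{\ell,\pm}$ does not meet the $s$-axis at all. One clean way to see this uses the involution $(\chi,a,s)\mapsto(-\chi,-a,s)$, whose fixed locus is precisely the $s$-axis and which exchanges the disjoint components $S^o_{\ell,+}$ and $S^o_{\ell,-}$: a point of $S^o_{\ell,+}$ on the fixed locus would be carried by the involution into $S^o_{\ell,-}$ while remaining fixed, forcing $S^o_{\ell,+}\cap S^o_{\ell,-}\neq\varnothing$, a contradiction; the explicit parametrization of $S^o_{\ell,\pm}$ should give the same conclusion directly. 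With the $s$-axis excluded, transversality holds along all of $\Gamma^o_{\ell,\pm}$, yielding its smoothness, and by the reduction above the regularity of $\Gamma_\ell^o$ and hence the genus formula. I anticipate that verifying the isomonodromy-invariance of the polynomial-solution property — rather than the comparatively formal transversality bookkeeping — will demand the most care, since it rests on controlling how the polynomial solution is transported along the Painlev\'e III flow.
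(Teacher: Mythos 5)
Your overall route is the paper's route: Netay's reduction of the genus formula to regularity of $\Gamma_\ell^o$, passage to the components $\Gamma_{\ell,\pm}^o=S_{\ell,\pm}^o\cap\{\chi=0\}$, tangency of $S_{\ell,\pm}^o$ to the isomonodromic line field (\ref{isomnews'}) because the defining property (existence of a polynomial solution, equivalently parabolic non-degenerate monodromy--Stokes data) is isomonodromy-invariant, transversality of that field to $\{\chi=0\}$ where $a,s\neq0$, and exclusion of the $s$-axis. One remark on your anticipated ``main obstacle'': in the paper the isomonodromy-invariance is not proved by transporting the polynomial solution along the Painlev\'e III flow; it is immediate from Theorem \ref{thpar}, which characterizes the systems (\ref{mchoyn}) with polynomial solutions purely in terms of data (formal normal forms and parabolicity/non-degeneracy of the monodromy--Stokes data) that isomonodromic deformations preserve by definition. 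That is the one-line proof of Theorem \ref{tisom}, so this step is the cheapest, not the costliest, part of the argument.

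The one place where you genuinely deviate is the disjointness of $S_{\ell,\pm}^o$ from the $s$-axis, and your argument there is circular relative to how the black-boxed ``structural results'' are actually proved. You deduce the $s$-axis exclusion from the disjointness of the two components $S_{\ell,+}^o$ and $S_{\ell,-}^o$ via the involution $\mathcal I:(\chi,a,s)\mapsto(-\chi,-a,s)$. The implication itself is valid, but in the paper the disjointness of the two components (Lemma \ref{lreg}) is proved \emph{using} the $s$-axis exclusion: the proof of Lemma \ref{lreg} assumes a common point, shows by the cross-ratio/parametrization argument that it must satisfy $\chi_0=a_0=0$, and obtains a contradiction precisely with Proposition \ref{disjax}. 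So you cannot take ``the two components are disjoint'' as given and then derive the $s$-axis statement from it; you would need an independent proof of one of the two. Fortunately the $s$-axis exclusion has a trivial direct proof, which is what the paper does (Proposition \ref{disjax}): at $\chi=a=0$ the system (\ref{mchoyn}) is diagonal, a direct sum of one-dimensional equations with solutions $z^\ell e^{\frac s{2z}}$ and $e^{\frac s2 z}$, hence has no polynomial solution for $s\neq0$; equivalently, $G_{2,\ell}=0$ there and $G_{1,\ell}$ is upper-triangular with nonzero diagonal entries $\frac s2,\dots,\frac s2,\frac12$, so $\mcp_{\ell,\pm}\neq0$. Replacing your involution argument by this computation closes the gap and makes your proof coincide with the paper's.
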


I.V.Netay has discovered the above genus formula numerically and stated it as a conjecture, see 
\cite[conjecture 1.5]{gn19}. He reduced it to the conjecture stating that the curve 
$$\Gamma_\ell^o:=\Gamma_\ell\setminus\{ \mu=0\}\subset\cc^2$$ 
is  regular, 
see \cite[corollary 3.6]{gn19}. Here we prove this regularity conjecture, which together with 
\cite[corollary 3.6]{gn19} implies the genus formula:
\begin{theorem} \label{threg} The curve $\Gamma_\ell^o$ is regular for every $\ell\in\nn$.
\end{theorem}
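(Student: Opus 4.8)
The plan is to deduce regularity of $\Gamma_\ell^o$ from smoothness of the determinantal surfaces $S_{\ell,\pm}^o$ together with a transversality argument coming from the isomonodromic foliation. First I would pass to the coordinates $(a,s)$ and to the two components: by \cite[section 3]{gn19}, writing $s=2\mu$, $\la=\frac{a^2-s^2}4$, the curve $\Gamma_\ell^o$ is the bijective image under $(a,s)\mapsto(\la,\mu)$ of each of the two curves $\Gamma_{\ell,\pm}^o=S_{\ell,\pm}^o\cap\{\chi=0\}$, the latter being interchanged by $(a,s)\mapsto(-a,s)$. Thus it suffices to show that each $\Gamma_{\ell,\pm}^o$ is smooth and that the projection onto $\Gamma_\ell^o$ is a local biholomorphism. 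Since the Jacobian of $(a,s)\mapsto(\frac{a^2-s^2}4,\frac s2)$ has determinant $\frac a4$, this projection is a local biholomorphism exactly where $a\neq0$, so I also need $a\neq0$ along $\Gamma_{\ell,\pm}^o$.

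I would then invoke the two facts established earlier: $S_{\ell,\pm}^o\subset\cc^3_{\chi,a,s}$ is a smooth surface, and it is tangent to the line field induced by (\ref{isomnews'}), the latter because isomonodromic deformation of (\ref{mchoyn}) preserves the existence of a vector polynomial solution. Granting these, the smoothness of the slice $\Gamma_{\ell,\pm}^o=S_{\ell,\pm}^o\cap\{\chi=0\}$ reduces to transversality of the surface with the hyperplane $\{\chi=0\}$, since a transversal intersection of a smooth surface and a hyperplane in $\cc^3$ is a smooth curve. A transversal direction inside $T_pS_{\ell,\pm}^o$ is supplied by the line field itself: evaluating (\ref{isomnews'}) at $\chi=0$ gives the direction $(\chi'_s,a'_s,1)$ with $\chi$-component $\chi'_s=\frac a{2s}$, which is nonzero whenever $a\neq0$ and $s\neq0$. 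As this vector lies in $T_pS_{\ell,\pm}^o$ and is not contained in $\{\chi=0\}$, the intersection is transversal at $p$.

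Everything thus reduces to showing $a\neq0$ at every point of $\Gamma_{\ell,\pm}^o$, equivalently that $S_{\ell,\pm}^o$ avoids the $s$-axis $\{\chi=a=0\}$; this is the step I expect to be the crux of the reduction. I would verify it directly from the linear system. At $\chi=a=0$ (with $s\neq0$) system (\ref{mchoyn}) becomes diagonal, namely $Y_1'=\left(-\frac s{2z^2}+\frac\ell z\right)Y_1$ and $Y_2'=\frac s2Y_2$, with solutions $Y_1=C_1z^\ell e^{s/(2z)}$ and $Y_2=C_2e^{sz/2}$. Neither is a polynomial for $s\neq0$, so there is no nonzero vector polynomial solution and these points do not lie on the determinantal surface. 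Hence $S_{\ell,\pm}^o\cap\{\chi=a=0\}=\varnothing$, and since $\chi=0$ along $\Gamma_{\ell,\pm}^o$, this forces $a\neq0$ there.

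Assembling the pieces: the statement $a\neq0$ makes the line field transversal to $\{\chi=0\}$ along $\Gamma_{\ell,\pm}^o$, so the smooth surface $S_{\ell,\pm}^o$ meets $\{\chi=0\}$ transversally and $\Gamma_{\ell,\pm}^o$ is smooth; the same statement makes the projection $(a,s)\mapsto(\la,\mu)$ a local biholomorphism on $\Gamma_{\ell,\pm}^o$, transporting smoothness to $\Gamma_\ell^o$. The genuinely hard analytic ingredient, smoothness of $S_{\ell,\pm}^o$, is not reproved here but taken from the Stokes phenomena and holomorphic vector bundle analysis of Subsections 2.8--2.9; within the present reduction the substantive points are the preservation of polynomiality under isomonodromy, which yields tangency to (\ref{isomnews'}), and the $s$-axis avoidance, the latter being the clean computation above.
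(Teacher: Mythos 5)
Your proposal is correct and follows essentially the same route as the paper's own proof in Subsection 2.11: reduce to $\Gamma_{\ell,\pm}^o=S_{\ell,\pm}^o\cap\{\chi=0\}$, invoke smoothness of $S_{\ell,\pm}^o$ and its tangency to the isomonodromic line field (\ref{isomnews'}), and obtain transversality to $\{\chi=0\}$ from the fact that $S_{\ell,\pm}^o$ avoids the $s$-axis --- your diagonal-system computation is precisely the paper's Proposition \ref{disjax} (``Way 1''). Your only addition is the explicit Jacobian argument showing that the $2:1$ projection $(a,s)\mapsto(\la,\mu)$ is a local biholomorphism where $a\neq0$, a point the paper leaves implicit when asserting that smoothness of $\Gamma_\ell^o$ is equivalent to smoothness of the components $\Gamma_{\ell,\pm}^o$.
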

\begin{remark} Equation (\ref{plmu})  
defining the curve $\Gamma_\ell$ belongs to a remarkable 
 class of determinantal representations of plane curves: equations 
 $$\det(x_1L_1+x_2L_2+x_3L_3)=0,$$ 
 where $(x_1:x_2:x_3)\in\cp^2$ and $L_1$, $L_2$, $L_3$ are $\ell\times\ell$-matrices. Determinantal representations of curves  
 arise in algebraic geometry and integrable systems,  see 
 \cite{vin0, vin} and references therein. Complete description of 
 determinantal representations of smooth complex irreducible projective 
  curves was obtained in \cite{vin0}. Self-adjoint determinantal 
  representations of real smooth plane curves were described in 
  \cite{vin}.
\end{remark}

\subsection{The determinantal surfaces:  formula,  smoothness and parametrization}

\begin{definition} The {\it degree} of a polynomial vector function is the maximal degree of its component.
\end{definition}
\begin{proposition} \label{poldeg} Let $s\neq0$. If a vector polynomial solution of system (\ref{mchoyn}) exists, 
then $\ell\in\zz_{\geq0}$,  the polynomial solution has  degree $\ell$ and is 
unique up to constant factor.
\end{proposition}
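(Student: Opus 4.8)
The plan is to analyze the local behavior of solutions of system (\ref{mchoyn}) at the irregular singular point $z=0$ and at $z=\infty$, and to show that the existence of a polynomial solution forces both $\ell\in\zz_{\geq0}$ and the precise degree. First I would study the singularity at $z=0$, which is irregular of Poincar\'e rank $1$: the leading coefficient matrix $\frac1{z^2}\left(\begin{matrix}-\frac s2 & -s\chi\\ 0 & 0\end{matrix}\right)$ has distinct eigenvalues $-\frac s2$ and $0$ when $s\neq0$, so there is a formal/analytic solution with an essential singularity $e^{\frac s{2z}}$ along the eigendirection for $-\frac s2$, and a second solution that is meromorphic (has at worst a pole) at $z=0$ along the eigendirection for the eigenvalue $0$. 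A vector polynomial solution $Y(z)$ is holomorphic at $z=0$, hence cannot contain the exponentially growing (essentially singular) component; this pins $Y$ to lie in the one-dimensional space of solutions that are meromorphic at $0$, giving uniqueness up to a constant factor once we exclude the exponential branch.

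Next I would examine $z=\infty$ (equivalently $z=0$ in the coordinate $w=1/z$). A polynomial $Y(z)=\sum_{k=0}^{N} c_k z^k$ of degree $N$ must be compatible with the asymptotic structure of (\ref{mchoyn}) at infinity, where the constant term $\left(\begin{matrix}0&0\\ s\chi&\frac s2\end{matrix}\right)$ governs the leading growth. The concrete route is to substitute the polynomial ansatz into (\ref{mchoyn}), clear the denominators by multiplying through by $z^2$, and read off the recurrence for the vector coefficients $c_k$. Matching the top-degree terms forces a relation determining $N$ in terms of $\ell$, and inspecting the highest-order equation should yield exactly $N=\ell$ together with the requirement that $\ell$ be a nonnegative integer for the recursion to terminate consistently with a nontrivial leading coefficient. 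The diagonal entries $\ell-\chi a$ and $\chi a$ of the residue matrix at $z=0$, together with $\pm\frac s2$ from the other singularity, determine the local exponents, and the integrality of the exponent difference is what selects $\ell\in\zz_{\geq0}$.

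For uniqueness I would argue that any two polynomial solutions differ by a solution that is simultaneously holomorphic at $z=0$ and polynomially bounded at $\infty$; since the local solution space at $z=0$ avoiding the essential singularity is one-dimensional, the two polynomials are proportional. Alternatively, and perhaps more cleanly, I can invoke the equivalence with the Heun equation (\ref{heun2''}) in the $\chi=0$ case via Remark \ref{rempol} as a sanity check, but for general $\chi$ the direct recurrence analysis on (\ref{mchoyn}) is the honest approach: the leading recurrence coefficient is generically invertible, so the coefficients $c_1,\dots,c_N$ are determined from $c_0$ by back-substitution, leaving a one-parameter family.

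The main obstacle I anticipate is the degree computation at $z=\infty$: one must carefully track how the $\frac1{z^2}$ and $\frac1z$ terms interact with the constant term when $s\neq0$, since the constant matrix $\left(\begin{matrix}0&0\\ s\chi&\frac s2\end{matrix}\right)$ is nilpotent-plus-rank-one rather than semisimple with both eigenvalues nonzero, so the naive exponential-type asymptotics at infinity need care. Concretely, the eigenvalue $0$ of this constant term could a priori allow polynomial growth of unbounded degree, and ruling this out — establishing that the degree is exactly $\ell$ and not merely bounded by some larger quantity — requires showing the top-order recurrence relation is nondegenerate precisely at $N=\ell$. I expect this to come down to verifying that a specific scalar (an entry built from $\ell$, the eigenvalue $\frac s2$, and the recurrence index) vanishes exactly when $N=\ell$ and is nonzero otherwise, which forces termination of the polynomial series at degree $\ell$.
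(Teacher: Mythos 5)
Your proposal is correct in substance, but it reaches the conclusion by a genuinely different route than the paper. The paper (Subsection 2.2, Proposition \ref{polcan}) argues for an \emph{arbitrary} system (\ref{eqlin0}) with irregular non-resonant Poincar\'e rank 1 points at $0$ and $\infty$: a polynomial solution is dominated, in a suitable subsector, by the exponentially growing canonical solution, and by the standard domination fact from Stokes theory (the one responsible for unipotence and triangularity of Stokes matrices) a dominated solution is itself canonical; hence the polynomial solution is canonical at both singular points, corresponds to the zero eigenvalues of the main term matrices, and its degree equals the residue eigenvalue of the formal normal form at infinity, which for (\ref{mchoyn}) is $\ell$ by (\ref{fnfii}). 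You instead propose a coefficient-matching argument specific to (\ref{mchoyn}), and it does close cleanly: writing the system as $z^2Y'=(A_0+A_1z+A_2z^2)Y$ and $Y=\sum_{k=0}^{N}c_kz^k$ with $c_N\neq0$, the coefficient of $z^{N+2}$ gives $A_2c_N=0$, so $c_N$ spans $\ker A_2$, which is one-dimensional (since $s\neq 0$) and spanned by $(1,-2\chi)$; the coefficient of $z^{N+1}$ gives $(N\idd-A_1)c_N=A_2c_{N-1}\in\operatorname{Im}A_2=\operatorname{span}\{(0,1)\}$, and since $A_1(1,-2\chi)^{T}=(\ell,\tfrac a2(1-4\chi^2))^{T}$, the first component of this relation reads $N-\ell=0$, forcing $N=\ell\in\zz_{\geq0}$. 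Your approach buys elementarity and explicitness; the paper's buys generality (Proposition \ref{polcan} is stated for the whole class (\ref{eqlin0}) and reused later, e.g.\ in Proposition \ref{pinv1} and Theorem \ref{thpar}). One caveat you should repair: your uniqueness step asserts that there is ``a second solution that is meromorphic at $z=0$'' and that the solution space splits into an exponential and a meromorphic component. At an irregular singular point this is false in general -- the splitting exists only sectorially, and generically \emph{no} solution is meromorphic at $0$ (this is exactly the Stokes phenomenon). What you actually need, and what is true, is that the space of solutions meromorphic at $0$ is \emph{at most} one-dimensional: otherwise every solution would be meromorphic at $0$, contradicting the existence of a canonical sectorial solution asymptotic to $e^{\frac s{2z}}z^{\ell}$, which blows up faster than any power of $|z|^{-1}$ along rays where $\re(s/z)>0$; alternatively, note that the Wronskian of any two solutions equals $Cz^{\ell}e^{\frac s{2z}+\frac s2z}$, which is never a nonzero polynomial when $s\neq0$, so two linearly independent polynomial solutions cannot exist. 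Either repair makes your plan rigorous, and the second one keeps it entirely elementary.
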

Proposition \ref{poldeg} will be proved in Subsection 2.2. 
\begin{proposition} \label{pel0} Let $\ell=0$ and $(a,s)\neq(0,0)$. A system (\ref{mchoyn}) has a polynomial solution, if 
and only if  $\chi=\pm\frac12$: then the solution is a constant vector $(1,\mp1)$ up to constant 
factor.
\end{proposition}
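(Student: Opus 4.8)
The plan is to reduce the existence of a vector polynomial solution to a finite linear-algebra problem and then solve it explicitly. Consider first the main case $s\neq0$. By Proposition \ref{poldeg}, any vector polynomial solution of (\ref{mchoyn}) with $\ell=0$ has degree $0$, hence is a nonzero constant vector $v$ with components $c_1,c_2$. Writing the coefficient matrix of (\ref{mchoyn}) as $\frac1{z^2}M_{-2}+\frac1zM_{-1}+M_0$, where $M_{-2}=\left(\begin{smallmatrix}-\frac s2 & -s\chi\\ 0 & 0\end{smallmatrix}\right)$, $M_{-1}=\left(\begin{smallmatrix}-\chi a & -\frac a2\\ \frac a2 & \chi a\end{smallmatrix}\right)$ (with $\ell=0$) and $M_0=\left(\begin{smallmatrix}0 & 0\\ s\chi & \frac s2\end{smallmatrix}\right)$, and using $v'=0$, the identity $\big(\frac1{z^2}M_{-2}+\frac1zM_{-1}+M_0\big)v\equiv0$ in $z$ is equivalent to the three separate conditions $M_{-2}v=M_{-1}v=M_0v=0$.

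Next I would extract $\chi$ from the two rank-one ``tip'' matrices. Since $s\neq0$, the condition $M_{-2}v=0$ reads $c_1+2\chi c_2=0$ and the condition $M_0v=0$ reads $c_2+2\chi c_1=0$. Combining these two relations gives $(1-4\chi^2)c_1=0$ and $(1-4\chi^2)c_2=0$; since $v\neq0$ this forces $\chi^2=\frac14$, i.e. $\chi=\pm\frac12$, and the two relations then pin down $v$ up to a scalar: $v=(1,-1)$ for $\chi=\frac12$ and $v=(1,1)$ for $\chi=-\frac12$, which is precisely $(1,\mp1)$. It remains to verify the middle condition $M_{-1}v=0$ together with the converse; a one-line substitution of each pair $(\chi,v)=(\pm\frac12,(1,\mp1))$ shows that $M_{-1}v=0$ holds identically in $a$. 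Hence for these and only these values $v$ is annihilated by all three matrices, so $Y\equiv v$ is a genuine solution, and by Proposition \ref{poldeg} it is the only one up to a constant factor. This settles both directions when $s\neq0$.

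The remaining point, which I expect to be the main obstacle, is the degenerate locus $s=0$, $a\neq0$ admitted by the hypothesis $(a,s)\neq(0,0)$: here Proposition \ref{poldeg} no longer applies and (\ref{mchoyn}) collapses to the Euler system $zY'=M_{-1}Y$, whose polynomial solutions need not a priori be constant. I would treat this case directly: a monomial term $z^kv_k$ solves the Euler system exactly when $v_k$ is an eigenvector of $M_{-1}$ for the eigenvalue $k$, and from $\tr M_{-1}=0$ and $\det M_{-1}=a^2(\frac14-\chi^2)$ the eigenvalues of $M_{-1}$ are $\pm a\sqrt{\chi^2-\frac14}$. In particular a constant ($k=0$) solution exists if and only if $\det M_{-1}=0$, i.e. $\chi=\pm\frac12$, and then $v\in\ker M_{-1}$ is again $(1,\mp1)$, recovering the stated vector. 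I would isolate this Euler-system analysis as a short separate step, noting that it is exactly on the locus $\{s\neq0\}$ of Proposition \ref{poldeg} and of the determinantal surface $S_0$ that the clean argument of the first two paragraphs applies, and keeping that argument as the main line of proof.
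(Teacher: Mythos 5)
Your main case $s\neq0$ is correct and is essentially the paper's own argument: the paper likewise reduces to a constant vector via Proposition \ref{poldeg} and then observes that the intersection of the kernels of the three coefficient matrices is the solution set of $\frac12v_1+\chi v_2=0$, $\chi v_1+\frac12v_2=0$, which is nonzero exactly when the determinant $\frac14-\chi^2$ vanishes, giving $\chi=\pm\frac12$ and $v=(1,\mp1)$. Your derivation of the same two equations from the tip matrices $M_{-2}$, $M_0$, followed by the verification that $M_{-1}v=0$, is the same computation in a slightly different order.

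The genuine gap is in your last paragraph, on the locus $\{s=0,\ a\neq0\}$. There you only settle the existence of \emph{constant} solutions ($k=0$), but the proposition is an equivalence about \emph{polynomial} solutions, so for the ``only if'' direction you must also exclude monomial solutions $z^kv_k$ with $k\geq1$ when $\chi\neq\pm\frac12$ --- and this cannot be done: by your own eigenvalue formula, whenever $a\sqrt{\chi^2-\frac14}$ is a positive integer the Euler system has a polynomial solution of positive degree. Concretely, for $s=0$, $a=2$, $\chi=\frac1{\sqrt2}$ the residue matrix has eigenvalues $\pm1$, so $Y(z)=zv$ with $M_{-1}v=v$ is a degree-one polynomial solution although $\chi\neq\pm\frac12$. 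Hence the stated equivalence is actually false on this locus, and no completion of your Euler-system step can rescue it; the proposition has to be understood as a statement about $s\neq0$. This is how the paper itself treats it: its own proof invokes Proposition \ref{poldeg} (hence tacitly assumes $s\neq0$, a soft spot you correctly sensed), the Remark immediately following Proposition \ref{pel0} gives the different criterion for the Fuchsian case $s=0$ (existence of a non-negative integer eigenvalue of the residue matrix --- precisely the phenomenon above), and after the proof of Theorem \ref{tmatr} the paper notes that that argument yields Proposition \ref{pel0} ``for $s\neq0$''. So keep your first two paragraphs as the proof for $s\neq0$, and replace the third by the observation that for $s=0$, $a\neq0$ the equivalence genuinely fails, rather than suggesting that the Euler-system analysis recovers the statement there.
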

\begin{proof}  If a polynomial solution exists, 
then it is constant (Proposition \ref{poldeg}). For $(a,s)\neq(0,0)$, $\ell=0$, the intersection of kernels of the matrices in (\ref{mchoyn}) is the subspace of 
those vectors $(v_1,v_2)$ that satisfy the system of linear equations $\frac12v_1+\chi v_2=0$, 
$\chi v_1+\frac12v_2=0$. The latter system has a nonzero solution, if and only if its determinant 
$\frac14-\chi^2$ vanishes, i.e., $\chi=\pm\frac12$, and then the solution is 
$(1,\mp1)$ up to constant factor.
\end{proof}
\begin{remark} Let $s=0$. Then system (\ref{mchoyn}) is Fuchsian and has two singular points: zero and 
infinity. It has a polynomial solution, if and only if its residue matrix has at least one integer non-negative eigenvalue. In the case, when the residue matrix is diagonalizable and both its eigenvalues are integer non-negative,  the system has two linearly independent polynomial solutions. 
\end{remark}

For every $\ell\in\nn$ let us introduce the following $(\ell+1)\times(\ell+1)$-matrix functions:
\begin{equation} G_{1,\ell}=\left(\begin{matrix}\frac s2 & \chi a-1 & 0  & \dots & 0 & 0\\
0 & \frac s2 & \chi a-2 & \dots & 0 & 0\\
\dots & \dots & \dots & \dots &\dots & \dots\\
0 & 0 & 0 & \dots & \frac s2 & \chi a-\ell\\
0 & 0 & 0 & \dots & 0 & \frac 12\end{matrix}\right),\label{mg1}\end{equation} 
\begin{equation}G_{2,\ell}=\left(\begin{matrix} 0 & 0 &\dots 
& 0 & \frac a2 & \chi s\\
0 & 0 &\dots & \frac a2 & \chi s & 0\\
\dots & \dots & \dots &\dots &\dots &\dots\\
\frac a2 & \chi s & \dots & 0 & 0 & 0\\
\chi  & 0 & \dots & 0 & 0 & 0\end{matrix}\right),\label{mg2}\end{equation}
The next three  theorems and corollary are  main results of the paper.
\begin{theorem} \label{tmatr}  System (\ref{mchoyn}) with $\ell\in\nn$ and $s\neq0$ has a vector polynomial solution, if and only if 
$$\mcp_{\ell,\pm}(\chi,a,s):=\det(G_{1,\ell}\pm G_{2,\ell})=0 \text{ for some choice of sign } \pm.$$
\end{theorem}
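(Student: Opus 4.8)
The plan is to reduce the existence of a polynomial solution of system (\ref{mchoyn}) to a finite-dimensional linear-algebra condition by substituting an explicit polynomial ansatz and reading off the resulting recurrence on its coefficients. By Proposition \ref{poldeg}, whenever $s\neq0$ a polynomial solution has degree exactly $\ell$ and is unique up to scale, so I would write $Y(z)=\sum_{k=0}^\ell c_k z^k$ with $c_k=(c_k^{(1)},c_k^{(2)})\in\cc^2$, giving $2(\ell+1)$ scalar unknowns. Plugging this into $z^2Y'=\bigl(z^2 A_0/z^2+z A_1/z\cdot z+\dots\bigr)$ — more precisely multiplying (\ref{mchoyn}) through by $z^2$ to clear the pole — yields a polynomial identity in $z$; equating coefficients of each power $z^k$ produces a linear recurrence relating $c_{k-1}$, $c_k$, $c_{k+1}$ with coefficient matrices built from the three residue matrices in (\ref{mchoyn}).

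The key structural observation I would exploit is that the leading term $\frac1{z^2}\bigl(\begin{smallmatrix}-s/2 & -s\chi\\ 0 & 0\end{smallmatrix}\bigr)$ is nilpotent-like (its second row vanishes) and the term $\bigl(\begin{smallmatrix}0&0\\ s\chi & s/2\end{smallmatrix}\bigr)$ is the ``mirror'' of it, while the middle $1/z$ term carries the $\chi a$ and $\ell$ data. The cleanest route is to decouple the two scalar component-sequences: the recurrence for the first-component coefficients $c_k^{(1)}$ and second-component coefficients $c_k^{(2)}$ will each involve only a bidiagonal shift plus coupling, and I expect that after eliminating one family of coefficients one arrives at a single $(\ell+1)\times(\ell+1)$ linear system whose coefficient matrix is $G_{1,\ell}\pm G_{2,\ell}$. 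The bidiagonal matrix $G_{1,\ell}$ with diagonal entries $\frac s2$ (and final entry $\frac12$, a boundary/normalization effect from the top-degree term) and super-diagonal $\chi a-j$ should emerge from the ``diagonal plus shift'' part of the recurrence, whereas the anti-diagonal $G_{2,\ell}$ with entries $\frac a2$, $\chi s$, and corner $\chi$ should come from the off-diagonal coupling between the two components. The two sign choices $\pm$ correspond to the two ways of orienting this coupling, consistent with the involution $(\chi,a,s)\mapsto(-\chi,-a,s)$ already flagged in the introduction as permuting the two components $S_{\ell,\pm}$.

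Concretely, I would organize the computation so that a nonzero polynomial solution exists if and only if the homogeneous linear system on the coefficient vector has nontrivial kernel, i.e.\ its $(\ell+1)\times(\ell+1)$ determinant vanishes; the factorization of this determinant (or of the relevant $2(\ell+1)$-dimensional condition) into $\det(G_{1,\ell}+G_{2,\ell})$ times $\det(G_{1,\ell}-G_{2,\ell})$ is what realizes the two-component structure and gives the ``for some choice of sign'' clause in the statement. The main obstacle I anticipate is bookkeeping: getting the indexing of the recurrence exactly right so that the super-diagonal entries come out as $\chi a-j$ for $j=1,\dots,\ell$ and the boundary rows/columns (the $\frac12$ entry and the lone $\chi$ corner) are placed correctly, rather than off by one or misaligned between the two component-sequences. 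In particular, verifying that the top-degree coefficient equation contributes the $\frac12$ (rather than $\frac s2$) diagonal entry and that the constant-term equation contributes the $\chi$ corner requires care, since these are precisely where the $1/z^2$ and constant residue terms interact with the boundary of the finite coefficient array. Once the matrix entries are pinned down and matched against (\ref{mg1})--(\ref{mg2}), the equivalence in Theorem \ref{tmatr} follows immediately from the vanishing-determinant criterion for a nontrivial kernel.
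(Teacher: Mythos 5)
Your proposal contains a genuine gap: the elimination step you hope for has no mechanism behind it, and the missing mechanism is precisely the key idea of the paper's proof. If you substitute the ansatz $Y(z)=\sum_{k=0}^\ell c_kz^k$, $c_k=(A_k,B_k)$, directly into (\ref{mchoyn}) (after multiplying by $z^2$), you get a block three-term recurrence $Kc_m+(R-(m-1)\idd)c_{m-1}+Nc_{m-2}=0$ for $m=0,\dots,\ell+2$, i.e.\ roughly $2(\ell+3)$ scalar equations in $2(\ell+1)$ unknowns. This is an overdetermined rectangular system, so ``nontrivial kernel iff determinant vanishes'' does not apply to it, and there is no a priori reason why ``eliminating one family of coefficients'' should land on a square $(\ell+1)\times(\ell+1)$ system, let alone why the solvability condition should factor as $\det(G_{1,\ell}+G_{2,\ell})\cdot\det(G_{1,\ell}-G_{2,\ell})$. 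The paper gets all of this from a symmetry argument that your proposal never invokes: system (\ref{mchoyn}) is of class S, i.e.\ it admits the involution $\mathbb I:(Y_1(z),Y_2(z))\mapsto z^\ell(Y_2(z^{-1}),Y_1(z^{-1}))$ (Proposition \ref{pros}); since the polynomial solution is unique up to scale (Proposition \ref{poldeg}) and $\mathbb I$ preserves the space of vector polynomials of degree at most $\ell$, the solution must be an eigenvector of $\mathbb I$ with eigenvalue $\pm1$ (Proposition \ref{pinv1}), i.e.\ $Y_1(z)=\pm z^\ell Y_2(z^{-1})$, equivalently $A_k=\pm B_{\ell-k}$. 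It is this relation that halves the unknowns to the $\ell+1$ coefficients of $Y_2$, turns the coupling between the two components into an index-reversed self-coupling of $Y_2$ --- which is exactly where the anti-diagonal matrix $G_{2,\ell}$ comes from --- and produces the two signs $\pm$; the resulting square system is $(G_{1,\ell}\pm G_{2,\ell})c=0$ (relations (\ref{relj})).

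Two further points. First, you attribute the sign dichotomy to the parameter involution $(\chi,a,s)\mapsto(-\chi,-a,s)$; that involution permutes the surfaces $S_{\ell,\pm}$ but is not what generates the ``for some choice of sign'' clause --- the signs come from whether the solution is invariant or anti-invariant under the solution-space involution $\mathbb I$, a different object. Second, even granting the reduction, the converse direction (vanishing determinant implies a genuine solution of the full $2$-dimensional system) requires showing that the symmetry relation (\ref{wsymm}) together with the single scalar equation (\ref{diffred}) on $Y_2$ implies the first line of (\ref{mchoyn}) as well; the paper does this by differentiating the symmetry relation (Proposition \ref{pinv2}), and your elimination scheme leaves this lifting step unaddressed. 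So the overall shape of your computation (ansatz, recurrence, determinant) is in the right family, but without the class-S symmetry the argument cannot be completed as described.
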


\begin{theorem} \label{tirred} 1) The above determinants $\mcp_{\ell,\pm}$ are irreducible polynomials in $(\chi, a, s)$ that are obtained one from the other by sign change of two variables: 
$(\chi,a)\mapsto(-\chi,-a)$. 

2) Their zero loci 
\begin{equation} S_{\ell,\pm}:=\{\mcp_{\ell,\pm}(\chi,a,s)=0\}\subset\cc^3_{\chi,a,s},\label{slpm}\end{equation}
which are called the {\bf determinantal surfaces,} are irreducible affine surfaces permuted by 
the involution 
$$\mathcal I:(\chi,a,s)\mapsto(-\chi, -a, s).$$
\end{theorem}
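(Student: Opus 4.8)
The plan is to first establish the algebraic structure of the determinants $\mcp_{\ell,\pm}$ by exploiting the symmetry built into the matrices $G_{1,\ell}$ and $G_{2,\ell}$, and only afterward prove irreducibility. First I would verify part (1) of the claim concerning the involution. Inspecting (\ref{mg1}) and (\ref{mg2}), the matrix $G_{1,\ell}$ depends on $(\chi,a,s)$ only through $\chi a$ and $s$, so it is invariant under $(\chi,a)\mapsto(-\chi,-a)$, while $G_{2,\ell}$ carries odd entries (the off-antidiagonal terms $\chi s$ and the corner entry $\chi$, together with the $\frac a2$ on the antidiagonal) that are designed so that $G_{2,\ell}$ simply changes overall sign under $(\chi,a)\mapsto(-\chi,-a)$. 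This would give $G_{1,\ell}\mapsto G_{1,\ell}$ and $G_{2,\ell}\mapsto -G_{2,\ell}$, hence $\mcp_{\ell,+}(\chi,a,s)=\mcp_{\ell,-}(-\chi,-a,s)$ and vice versa, which is exactly the asserted interchange. (I would double-check the antidiagonal entry $\frac a2$ carefully, since it is odd in $a$ but appears symmetrically; the precise parity bookkeeping of each entry under the sign change is the routine but error-prone part here.) This immediately reduces the irreducibility of $\mcp_{\ell,-}$ to that of $\mcp_{\ell,+}$, since irreducibility is preserved under the invertible affine change of variables $(\chi,a)\mapsto(-\chi,-a)$.

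The heart of the theorem is proving that $\mcp_{\ell,+}$ is irreducible as a polynomial in $\cc[\chi,a,s]$. My approach would be to connect $\mcp_{\ell,\pm}$ to the already-established irreducibility of the spectral-curve polynomials. By Theorem \ref{tmatr}, the vanishing of $\mcp_{\ell,+}$ detects exactly the parameter values where (\ref{mchoyn}) has a polynomial solution with the $+$ sign, and by the discussion in the introduction the restriction to $\{\chi=0\}$ recovers (up to the coordinate change $s=2\mu$, $\la=\frac{a^2-s^2}4$) one of the irreducible components $\Gamma^o_{\ell,\pm}$ of the spectral curve. Thus the restriction $\mcp_{\ell,+}(0,a,s)$ should, up to a nonzero constant and possibly a power of $s$, equal the irreducible polynomial $Q_\ell(\la,\mu^2)$ of Theorem \ref{irr}. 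This strongly constrains any factorization: if $\mcp_{\ell,+}=PR$ were a nontrivial factorization, then restricting to $\chi=0$ would factor $Q_\ell$, contradicting its irreducibility — unless one factor restricts to a constant on $\{\chi=0\}$, i.e. is of the form $\chi^k\cdot(\text{unit})$ plus higher order, meaning $\chi$ divides one factor. So the only possible nontrivial factor is a power of $\chi$, and I would rule this out by checking that $\mcp_{\ell,+}$ has a term free of $\chi$ (equivalently that $\mcp_{\ell,+}|_{\chi=0}\not\equiv0$, which follows since $Q_\ell\not\equiv0$).

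To turn this sketch into a proof I would compute the degree structure and the $\chi$-dependence of $\det(G_{1,\ell}+G_{2,\ell})$ explicitly enough to confirm three facts: that $\mcp_{\ell,+}$ genuinely depends on all three variables (so it is not merely a polynomial in two of them, in which case a different argument is needed), that $\mcp_{\ell,+}|_{\chi=0}$ is a nonzero scalar multiple of the irreducible $Q_\ell$ (possibly times a power of $s$ coming from the lower-right $\frac12$ block of $G_{1,\ell}$), and that $\chi\nmid\mcp_{\ell,+}$. Part (2) of the theorem then follows formally: the zero locus of an irreducible polynomial over $\cc$ is an irreducible affine hypersurface, and the involution $\mathcal I:(\chi,a,s)\mapsto(-\chi,-a,s)$ carries $S_{\ell,+}$ to $S_{\ell,-}$ precisely because $\mcp_{\ell,+}\circ\mathcal I=\mcp_{\ell,-}$ by part (1). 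The main obstacle I anticipate is the bookkeeping needed to pin down the exact relationship between $\mcp_{\ell,+}|_{\chi=0}$ and $Q_\ell$, including the possible spurious factor $s^k$; this requires tracking the cofactor expansion of $G_{1,\ell}+G_{2,\ell}$ along its last row and column, where the asymmetric entries $\frac12$ and $\chi$ sit, and comparing it with the three-diagonal determinant defining $Q_\ell$ in Theorem \ref{tpol}. I would expect that a careful row/column reduction identifies the $\{\chi=0\}$ restriction with the Buchstaber–Tertychnyi determinant up to an explicit monomial, after which irreducibility propagates by the factorization argument above.
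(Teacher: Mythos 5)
Your handling of the involution is fine and matches the paper: $G_{1,\ell}$ depends on $(\chi,a)$ only through $\chi a$, while every entry of $G_{2,\ell}$ is odd under $(\chi,a)\mapsto(-\chi,-a)$, so $\mcp_{\ell,+}\circ\mathcal I=\mcp_{\ell,-}$, and part 2) follows formally once part 1) is known. The gap is in the irreducibility argument, and it is fatal as written. From a factorization $\mcp_{\ell,+}=PR$ you argue that one factor ``restricts to a constant on $\{\chi=0\}$, i.e.\ is of the form $\chi^k\cdot(\text{unit})$ plus higher order, meaning $\chi$ divides one factor.'' That implication is false: a polynomial whose restriction to $\{\chi=0\}$ is a \emph{nonzero} constant need not be divisible by $\chi$ --- take $R=1+\chi a$. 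Because of this loophole, all three facts you propose to verify (dependence on all variables, irreducibility of the restriction, $\chi\nmid\mcp_{\ell,+}$) can hold simultaneously for a reducible polynomial: $F=(1+\chi a)\,Q_{\ell,+}$ passes every test on your list. So irreducibility of a restriction to a hyperplane simply does not propagate upward, and no amount of ``bookkeeping'' of cofactor expansions will close this; you would have to rule out factors of the form $c+\chi(\cdots)$, $c\neq0$, which is precisely the hard part. There is also a secondary inaccuracy: by Proposition \ref{det=} the restriction $\mcp_{\ell,\pm}|_{\chi=0}$ is a nonzero constant times $Q_{\ell,\pm}=\det(\mcg_\ell\pm r\idd)$, $r=\frac a2$ (no spurious power of $s$), \emph{not} the polynomial $Q_\ell$ of Theorem \ref{irr}; irreducibility of the components $\Gamma_{\ell,\pm}$ is a separate statement quoted from \cite{gn19}, and even it only gives irreducibility of the curve, not immediately that $Q_{\ell,\pm}$ is not a proper power.

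For comparison, the paper does not prove Theorem \ref{tirred} algebraically at all: it is harvested from the analytic machinery. Irreducibility of the surface $S_{\ell,\pm}$ is equivalent to connectivity of its regular part; the part $S^o_{\ell,\pm}=S_{\ell,\pm}\setminus\{s=0\}$ is smooth and connected because it is biholomorphically parametrized by the connected set $\triv^o_\ell\subset\cc^*_s\times\oc$ (Lemma \ref{lreg}, resting on the Stokes-data and vector-bundle construction of Subsections 2.7--2.8); Proposition \ref{nonvan} (the top term of $\mcp_{\ell,\pm}(0,a,0)$ is $\pm a^\ell/2^{\ell+1}$) shows that no component of the zero locus is contained in $\{s=0\}$, whence $S_{\ell,\pm}$ is irreducible. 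Polynomial irreducibility then follows by excluding proper powers: if $\mcp_{\ell,\pm}=\Psi^m$ up to constant, the unique highest monomial $\chi^{\ell+1}a^\ell$ in $(\chi,a)$ for fixed $s\neq0$ forces $m\alpha=\ell+1$, $m\beta=\ell$, hence $m(\alpha-\beta)=1$ and $m=1$ (proof of Proposition \ref{prat}). So the connectivity supplied by the isomonodromic/bundle parametrization is exactly the ingredient your purely algebraic scheme is missing.
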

Set 
$$S_{\ell,\pm}^o:= S_{\ell,\pm}\setminus\{ s=0\}.$$
\begin{theorem} \label{tsmooth} 
1) The surfaces $S_{\ell,\pm}^o$ are smooth and disjoint, and they are disjoint from the $s$-axis, 
i.e., the line $\{\chi=a=0\}$.

2) Each surface $S_{\ell,\pm}^o$ is fibered over the $s$-axis (punctured at the origin) by smooth rational curves of degree $2\ell+1$ conformally parametrized by  Riemann spheres punctured in at most $2\ell+1$ points;  their projective closures in $\cp^3\supset\cc^3_{\chi,a,s}$ are  parametrized by $\oc$.

3) Each surface $S_{\ell,\pm}^o$ is biholomorphically parametrized by the complement of the product 
$\cc^*_s\times\oc$ to an analytic hypersurface (curve) so that the parametrization preserves the coordinate 
$s$: sends each $s$-fiber $\{ s\}\times\oc$  to the $s$-fiber of the surface $S_{\ell,\pm}^o$.
\end{theorem}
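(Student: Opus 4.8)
The plan is to establish part 3 — the biholomorphic parametrization — first, and then to read off the smoothness assertion of part 1 and the whole of part 2 as consequences, treating the disjointness statements by separate and largely elementary arguments. The starting point is to reinterpret the existence of a vector polynomial solution of (\ref{mchoyn}) as a connection condition between the two rank-one irregular singular points $z=0$ and $z=\infty$. For $s\neq0$ the leading coefficient matrix at $z=0$ has eigenvalues $-\frac s2,0$ and the leading matrix at $z=\infty$ has eigenvalues $0,\frac s2$; hence at each singular point one formal solution carries an exponential factor ($e^{s/(2z)}$ at $0$, $e^{sz/2}$ at $\infty$) and the other is subdominant (holomorphic at $0$, resp. of at most polynomial growth at $\infty$). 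A vector polynomial solution is precisely a solution simultaneously subdominant at both ends, so its existence means that the subdominant solution at $0$ extends, up to a scalar, to the subdominant solution at $\infty$. I would encode this through the Stokes data: the Stokes multipliers and formal monodromy at each irregular point, together with the connection matrix linking the two.

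First I would set up the inverse monodromy (Riemann--Hilbert) correspondence for the systems (\ref{mchoyn}) with $s\neq0$. The irregular type (the leading exponential parts) at each singular point depends only on $s$, so for fixed $s$ the systems vary only in their residual, Stokes and connection data, which form the moduli; along a leaf of the isomonodromic foliation governed by (\ref{isomnews'})--(\ref{p3}) this data is fixed while $s$ plays the role of deformation time. Following Bolibruch's holomorphic vector bundle technique, for each admissible tuple of monodromy data I would construct a rank-two holomorphic bundle on $\cp^1$ carrying a meromorphic connection with the prescribed formal and Stokes structure at $0$ and $\infty$; by Birkhoff--Grothendieck the bundle generically splits as $\mathcal O\oplus\mathcal O$, i.e. is trivial, and in a trivialization the connection takes exactly the global form (\ref{mchoyn}). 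The polynomial-solution condition selects a one-parameter family of monodromy data, which I would compactify to the Riemann sphere $\oc$, the resulting coordinate labelling the isomonodromic leaves. Reconstruction then produces a holomorphic, $s$-preserving map $\cc^*_s\times\oc\to\cc^3_{\chi,a,s}$ with image in $S_{\ell,\pm}^o$. The locus where the constructed bundle fails to be trivial (splitting type $\mathcal O(k)\oplus\mathcal O(-k)$, $k\geq1$) is an analytic hypersurface; removing it makes the map a biholomorphism onto $S_{\ell,\pm}^o$, since the monodromy together with $s$ determines the system uniquely and depends holomorphically on the data. This proves part 3, and the smoothness in part 1 follows immediately, $S_{\ell,\pm}^o$ being the biholomorphic image of a smooth surface.

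With the parametrization in hand, part 2 is bookkeeping: for fixed $s\in\cc^*$ the fiber $S_{\ell,\pm}^o\cap\{s=\mathrm{const}\}$ is the image of $\{s\}\times\oc$ with finitely many points removed, hence a rational curve conformally parametrized by a punctured Riemann sphere whose projective closure is parametrized by the full $\oc$. The degree $2\ell+1$ I would obtain by computing the top-degree part of $\mcp_{\ell,\pm}(\chi,a,s)$ in $(\chi,a)$ for fixed $s$ directly from (\ref{mg1})--(\ref{mg2}); expansion of the determinant shows the leading term has total degree $2\ell+1$ (for $\ell=1$ it equals $-\chi^2a$), and the number of branches of the projective closure at infinity is at most $2\ell+1$, bounding the number of punctures.

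The remaining assertions of part 1 I would argue by hand. Disjointness of $S_{\ell,+}^o$ and $S_{\ell,-}^o$ follows from uniqueness of the polynomial solution (Proposition \ref{poldeg}): a common point would furnish polynomial solutions of both sign types, hence two linearly independent polynomial solutions, contradicting uniqueness for $s\neq0$. Disjointness from the $s$-axis is elementary, since on $\{\chi=a=0\}$ the system (\ref{mchoyn}) becomes diagonal, with scalar components solved by $z^{\ell}e^{s/(2z)}$ and $e^{sz/2}$, neither of which is polynomial for $s\neq0$, so no nonzero vector polynomial solution exists. The main obstacle is the construction of the second paragraph: rigorously carrying out the Stokes-data bookkeeping and the Bolibruch-type bundle reconstruction so as to prove holomorphic dependence on the parameters, injectivity of the reconstruction, and the precise identification of the non-triviality locus of the bundle with the analytic curve to be removed from $\cc^*_s\times\oc$.
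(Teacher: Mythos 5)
Your strategy coincides in its essentials with the paper's own proof: recast membership in $S_{\ell,\pm}^o$ as a degeneracy of the monodromy--Stokes data (the paper's ``parabolic non-degenerate'' condition, coordinatized by a cross-ratio $\mcr\in\oc$), realize all such data by a Bolibruch-type family of rank-two bundles with connections, and parametrize the surface by the set of parameters where the bundle is holomorphically trivial, the complement of an analytic set by Malgrange/R\"ohrl. One point where you genuinely improve on the paper: your disjointness argument for $S_{\ell,+}^o$ and $S_{\ell,-}^o$ (a common point would carry both an $\mathbb I$-invariant and an $\mathbb I$-anti-invariant polynomial solution, which are automatically linearly independent, contradicting uniqueness in Proposition \ref{poldeg}) is more elementary than the paper's, which derives disjointness from the parametrization itself (Lemma \ref{lreg}).

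However, two steps of your reconstruction paragraph fail as stated. First, your map cannot be defined directly on $\cc^*_s\times\oc$: the Stokes sectors at $0$ and $\infty$ and the paths along which canonical solutions are continued to the base point rotate with $\arg s$, and when $s$ traverses a loop around the origin the cross-ratio coordinate on the monodromy data undergoes the shift $\mcr\mapsto\mcr+2$ (the paper's Proposition \ref{crossm}). Hence the ``monodromy-data coordinate'' is well defined only on the universal cover $\cc_t$, $s=e^t$, and the advertised $s$-preserving parametrization by $\cc^*_s\times\oc$ exists only after constructing the family over $\cc_t\times\oc_\mcr$ and quotienting by $(t,\mcr)\mapsto(t+2\pi i,\mcr+2)$, passing to the invariant coordinate $\xi=\pi i\mcr-t$ (Subsections 2.7--2.8). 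Omitting this, you parametrize by $\cc\times\oc$, not by $\cc^*\times\oc$, and the fibration statement in part 3 does not follow. Second, triviality of the bundle does not put the connection ``exactly in the global form (\ref{mchoyn})'': a trivial bundle with the right formal data and parabolic symmetric Stokes data yields a system of class S, and it is gauge equivalent to one of form (\ref{mchoyn}) only under the additional line non-coincidence condition (*) of Lemma \ref{smch} (the eigenlines for the nonzero eigenvalues at $0$ and $\infty$ must be distinct). So a second locus $\Sigma'_\ell$ must be removed besides the non-triviality locus $\Sigma_\ell$; and since $\Sigma'_\ell$ is a priori analytic only in the complement of $\Sigma_\ell$, proving that the total removed set is an analytic hypersurface requires an extra argument --- the paper shows the points of $\overline{\Sigma'_\ell}\cap\Sigma_\ell$ are isolated and invokes Shiffman's removability theorem. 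Finally, a smaller but real point: a holomorphic bijection from a manifold onto an analytic set does not by itself make the image smooth; you must also show the inverse is holomorphic, i.e.\ that $(s,\mcr)$ are holomorphic functions of $(\chi,a,s)$ along the surface (this is how the paper proves $\mct_\pm$ is an immersion in Lemma \ref{lreg}). These items are not routine bookkeeping; they are where most of the work of Subsections 2.7--2.9 is concentrated.
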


\begin{corollary} The projective closures in $\cp^3$ of the 
determinantal surfaces $S_{\ell,\pm}\subset\cc^3\subset\cp^3$ are rational surfaces.
\end{corollary}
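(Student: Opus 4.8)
The plan is to deduce rationality from the fibration structure already established in Theorem~\ref{tsmooth}, not from the analytic parametrization directly. Write $X_{\ell,\pm}\subset\cp^3$ for the projective closure of $S_{\ell,\pm}$. Since rationality of a projective surface is a birational invariant, I would first pass to a resolution of singularities $\wt X_{\ell,\pm}\to X_{\ell,\pm}$ (which exists in the surface case), so that it suffices to prove that the function field $\cc(X_{\ell,\pm})=\cc(S_{\ell,\pm})$ is purely transcendental of transcendence degree two over $\cc$. Because the involution $\mathcal I$ of Theorem~\ref{tirred} carries $S_{\ell,+}$ onto $S_{\ell,-}$, it is enough to treat one choice of sign.

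The $s$-coordinate, viewed projectively, defines a dominant rational map $\pi\colon X_{\ell,\pm}\dashrightarrow\oc$, $(\chi,a,s)\mapsto s$, whose fibres over closed points $s\in\cc^*$ coincide with the $s$-fibres of $S_{\ell,\pm}^o$ (together with their points at infinity). By part 2 of Theorem~\ref{tsmooth} these fibres are smooth, irreducible rational curves, so a general fibre is a smooth geometrically integral curve of geometric genus $0$. By generic smoothness in characteristic zero, the generic fibre $X_\eta$ of $\pi$ over $\eta=\operatorname{Spec}\cc(s)$ is then a smooth, geometrically rational curve over the field $\cc(s)$, i.e.\ a smooth conic.

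The key point is to upgrade this conic to $\cp^1_{\cc(s)}$ by producing a $\cc(s)$-rational point on $X_\eta$. Here I would invoke Tsen's theorem: the function field $\cc(s)$ of a curve over an algebraically closed field is a $C_1$-field, hence every homogeneous form of degree two in three variables over it has a nontrivial zero; in particular the smooth conic $X_\eta$ has a $\cc(s)$-point. Therefore $X_\eta\cong\cp^1_{\cc(s)}$, and the function field of $X_{\ell,\pm}$ is $\cc(s)(t)=\cc(s,t)$, purely transcendental of degree two. Consequently $X_{\ell,\pm}$, and with it $\wt X_{\ell,\pm}$, is a rational surface. Equivalently, on a smooth projective model the same conclusion follows from the Noether--Enriques theorem applied to $\pi$: a surface fibred over the rational curve $\oc$ with generic fibre $\cp^1$ is birational to $\oc\times\cp^1$.

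The main obstacle, and the reason I avoid using the analytic parametrization of part 3 of Theorem~\ref{tsmooth}, is that that parametrization is only holomorphic and its indeterminacy locus is an a priori non-algebraic analytic curve, so it does not by itself furnish a birational map of algebraic surfaces. The fibration argument sidesteps this difficulty entirely: it uses only the algebraic fact that the fibres of $\pi$ are rational curves, and transfers the question to the generic fibre, where the existence of a rational point—and hence rationality over $\cc(s)$—is automatic from the $C_1$-property of $\cc(s)$. All remaining verifications (that $\pi$ is dominant with geometrically integral generic fibre, and that singularities of $X_{\ell,\pm}$ are irrelevant because we work at the level of function fields) are routine.
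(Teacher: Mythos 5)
Your proof is correct and takes essentially the same route as the paper: the paper likewise deduces rationality from the fibration of $S_{\ell,\pm}$ by rational $s$-fibers (Statement 2 of Theorem \ref{tsmooth}) over the rational base $\oc_s$, concluding by the Noether--Enriques theorem \cite[theorem III.4]{beauville}, which is exactly the criterion you reprove on the generic fiber via Tsen's theorem. The only difference is that you unpack the cited black box; the key input and overall structure are identical.
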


\begin{proof} The projection $\pi_s:S_{\ell,\pm}\to\cc_s$ is rational, and the preimage of 
every $s\neq0$ is a  rational curve, by Statement 2) of Theorem \ref{tsmooth}. 
This together with \cite[theorem III.4]{beauville} implies that $S_{\ell,\pm}$ is a rational surface.
\end{proof}

 Let us now check that the intersection of each surface $S_{\ell,\pm}\subset\cc^3_{\chi,a,s}$ with the hyperplane 
$\{\chi=0\}$ is a component $\Gamma_{\ell,\pm}$ of the pullback of the spectral curve $\Gamma_\ell$ under the $2:1$ projection $(a,s)\mapsto(\la=\frac{a^2-s^2}4, \mu=\frac s2)$.  On one hand, 
this follows "a posteriori" from Theorem \ref{tmatr} and Remark \ref{rempol}. On the other hand, 
we will present its next direct and self-contained proof. 
To do this, let us rescale the coordinates $(a,s)$ to the new coordinates $(\mu,r)$:
$$\mu=\frac s2, \ r=\frac a2; \  \ \la=r^2-\mu^2.$$
Consider the following $\ell\times\ell$-matrices:
$$\mathcal{G}_\ell =
	\begin{pmatrix}
		0 & \ldots & 0 & \mu \\
		\vdots && \rotatebox{75}{$\ddots$} & -(\ell-1) \\
		0 &\rotatebox{75}{$\ddots$} & \rotatebox{75}{$\ddots$} & \\
		\mu & -1& & 0\\
	\end{pmatrix}.$$
The following relation (found in~\cite[section 3,~eq.~(30)]{bt0}) holds:
$$ H_\ell + (r^2 - \mu^2)\operatorname{Id} =
    -(\mathcal{G}_\ell + r\operatorname{Id})
     (\mathcal{G}_\ell - r\operatorname{Id}),$$
     and it yields that 
\begin{equation}Q_\ell(\la,\mu^2)=(-1)^\ell Q_{\ell,+}(\mu,r)Q_{\ell,-}(\mu,r),\ \ 
Q_{\ell,\pm}(\mu,r):=\det(\mathcal{G}_\ell \pm r\operatorname{Id}).\label{qpm}\end{equation}
This implies that the preimage of the curve $\Gamma_\ell\subset\cc^2_{\la,\mu}$ under the above 
projection consists of two curves  
$$\Gamma_{\ell,\pm}=\{ Q_{\ell,\pm}(\mu,r)=0\}.$$
They are irreducible, as is $\Gamma_\ell$, since the projection is $2:1$, see \cite[section 3]{gn19}. 

\begin{proposition} \label{det=} The restriction to the hyperplane $\{\chi=0\}$ of the polynomial $\mcp_{\ell,\pm}$ 
coincides with the polynomial $Q_{\ell,\pm}$ up to non-zero constant factor. In particular, one has $S_{\ell,\pm}\cap\{\chi=0\}=\Gamma_{\ell,\pm}$. 
\end{proposition}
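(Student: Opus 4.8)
The plan is to prove the statement by a direct computation of $\mcp_{\ell,\pm}(0,a,s)=\det(G_{1,\ell}\pm G_{2,\ell})|_{\chi=0}$ and to match it, up to an explicit nonzero scalar, with $Q_{\ell,\pm}(\mu,r)=\det(\mathcal{G}_\ell\pm r\operatorname{Id})$, where $\mu=\frac s2$ and $r=\frac a2$. First I would record the effect of setting $\chi=0$ on the two matrix functions. In $G_{1,\ell}$ each superdiagonal entry $\chi a-j$ becomes $-j$, the diagonal becomes $(\mu,\dots,\mu,\frac12)$ with $\ell$ copies of $\mu$, and nothing else changes. In $G_{2,\ell}$ every entry carrying a factor $\chi$ (the $\chi s$ on the anti-diagonal and the single $\chi$ in the lower-left corner) vanishes, so that only the entries $r=\frac a2$ survive, at the positions $(i,\ell+1-i)$ for $i=1,\dots,\ell$. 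The decisive consequence is that the last row of $G_{2,\ell}|_{\chi=0}$ is identically zero, hence the last row of $(G_{1,\ell}\pm G_{2,\ell})|_{\chi=0}$ is $(0,\dots,0,\frac12)$, coming solely from $G_{1,\ell}$.

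This reduces the size by one. Expanding the $(\ell+1)\times(\ell+1)$ determinant along its last row, only the corner entry $\frac12$ contributes, giving $\mcp_{\ell,\pm}|_{\chi=0}=\frac12\det B_\pm$, where $B_\pm$ is the top-left $\ell\times\ell$ block. Writing $J$ for the anti-diagonal permutation matrix with $J_{i,\ell+1-i}=1$, one reads off $B_\pm=B_1\pm rJ$, where $B_1$ has diagonal $\mu$ and superdiagonal $-1,-2,\dots,-(\ell-1)$ (deleting the last column removed the entry $-\ell$). I would then multiply on the right by $J$ and use $J^2=\operatorname{Id}$ to get $B_\pm J=B_1 J\pm r\operatorname{Id}$, and verify by a direct index check that conjugation by $J$ turns $B_1 J$ into $\mathcal{G}_\ell$: the anti-diagonal $\mu$'s of $B_1 J$ stay on the anti-diagonal, while its sub-anti-diagonal entries $-i$ (at $(i,\ell-i)$, $i=1,\dots,\ell-1$) are sent to the super-anti-diagonal entries $-(\ell+1-i)$ (at $(i,\ell+2-i)$, $i=2,\dots,\ell$), which is exactly the shape of $\mathcal{G}_\ell$. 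Since $J\operatorname{Id}J=\operatorname{Id}$, this gives $J(B_1 J\pm r\operatorname{Id})J=\mathcal{G}_\ell\pm r\operatorname{Id}$, whence $\det(B_1 J\pm r\operatorname{Id})=Q_{\ell,\pm}(\mu,r)$.

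Collecting the factors, $\det(B_\pm)\det J=\det(B_\pm J)=\det(B_1 J\pm r\operatorname{Id})=Q_{\ell,\pm}$, and since $\det J=(-1)^{\ell(\ell-1)/2}$ squares to $1$, I obtain $\mcp_{\ell,\pm}|_{\chi=0}=\frac12(-1)^{\ell(\ell-1)/2}Q_{\ell,\pm}(\mu,r)$, a nonzero constant multiple of $Q_{\ell,\pm}$. The ``in particular'' clause on the zero loci is then immediate, since the two curves $\Gamma_{\ell,\pm}=\{Q_{\ell,\pm}=0\}$ were already identified above, and a nonzero scalar does not change a zero locus.

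The only genuinely delicate point, and the one I would treat most carefully, is the bookkeeping of the anti-diagonal reflection: the two determinants live on matrices of different sizes, and one must check both that the index shifts match the super-anti-diagonal labelling of $\mathcal{G}_\ell$ and that the reversal contributes precisely the sign $(-1)^{\ell(\ell-1)/2}$. I would also note in passing that, once Theorem \ref{tmatr} and Remark \ref{rempol} are available, the coincidence of the zero loci follows ``a posteriori'', but the determinant identity above is the self-contained route the paper commits to. Before trusting the general formula I would sanity-check it on $\ell=1,2$, where the scalar factors come out as $\frac12$ and $-\frac12$ respectively.
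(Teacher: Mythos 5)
Your proof is correct and follows essentially the same route as the paper's: substitute $\chi=0$, expand the determinant along the last row to extract the factor $\frac12$, and identify the remaining $\ell\times\ell$ block with $\mcg_\ell\pm r\idd$ up to the anti-diagonal reversal permutation $J$, which only contributes a sign. Your identity $J(B_1J\pm r\idd)J=\mcg_\ell\pm r\idd$ is just a repackaging of the paper's row-reversal statement $B_\pm=J(\mcg_\ell\pm r\idd)$; the only (welcome) addition is that you make the constant explicit as $\frac12(-1)^{\ell(\ell-1)/2}$, where the paper merely says ``a power of $-1$''.
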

\begin{proof} Substituting $\chi=0$ to the matrices $G_{1,\ell}$ and $G_{2,\ell}$ crosses out the 
$\chi a$ in $G_{1,\ell}$ and transforms the matrix $G_{2,\ell}$ to the matrix whose lower line 
is zero and 
whose only non-zero elements are those equal to $\frac a2$. Therefore, each determinant polynomial 
$\mcp_{\ell,\pm}$ becomes $\frac12$ times the determinant  of the $\ell\times\ell$-submatrix in 
$G_{1,\ell}\pm G_{2,\ell}$ formed by the first $\ell$ lines and columns. The latter submatrix is 
obtained from the matrix $\mcg_\ell\pm r\idd$ by reflection transposition of lines, permuting the first 
and the last lines, the 2nd and the $\ell-1$-th line etc. This permutation multiplies the determinant by 
a power of $-1$. Finally, the restriction to the hyperplane $\{\chi=0\}$ of 
the determinants  $\mcp_{\ell,\pm}=\det(G_{1,\ell}\pm G_{2,\pm})$ and $Q_{\ell,\pm}=
\det(\mathcal{G}_\ell \pm r\operatorname{Id})$ differ by a non-zero constant factor. 
This proves the proposition.
\end{proof}

\begin{example} For $\ell=1$, up to constant factor, 
 \begin{equation}\pm4\mcp_{1,\pm}(\chi,a,s):=(a\pm s)(1-4\chi^2)+4\chi;\label{mcp1}
 \end{equation}
$$S_{1,\pm}=\{\mcp_{1,\pm}=0\}=\{(a\pm s)(1-4\chi^2)+4\chi=0\}.$$
The surfaces $S_{1,\pm}$ are rational,  parametrized by the coordinates $(\chi,s)$ and permuted by the involution $\mathcal I$. This parametrization also implies rationality of the $s$-fibers of the 
surfaces $S_{1,\pm}$ given by Statement 2) of Theorem \ref{tsmooth}. 
\end{example}

\begin{example} \label{irr2} For $\ell=2$ one has 
 \begin{equation}\pm8\mcp_{2,\pm}(\chi,a,s)=(2\chi\pm1)^2(2\chi\mp1)(a^2-s^2) -2a(2\chi\pm1)(6\chi\mp1)+16\chi.\label{mcp2}\end{equation}
Each surface $S_{2,\pm}=\{\mcp_{2,\pm}=0\}$ 
is irreducible and rational. Indeed, for every fixed $\chi\in\cc$ the equation 
 $\mcp_{2,\pm}=0$ defines a conic 
 $\mcc_{\chi,\pm}\subset\cc^2_{a,s}$, whose projective closure 
 $\overline\mcc_{\chi,\pm}$ in $\cp^2_{a:s:t}\supset\cc^2_{a,s}=\{ t=1\}$ 
 passes through the point $D$ at infinity with homogeneous coordinates $[1:-1:0]$. 
The projection of the projective conic $\overline\mcc_{\chi,\pm}$ from the point $D$ to the $a$-axis yields its rational parametrization by $\oc=\cc_a\cup\{\infty\}$ that depends rationally on  $\chi$. This yields a birational parametrization of  $S_{2,\pm}$ by two parameters: 
 the parameter $\chi$ and the above projection.
   \end{example}
   \begin{remark}
  The above argument  does not prove rationality of the $s$-fibers of the surface $S_{2,\pm}$. Each its $s$-fiber $$\gamma_{s,\pm}=\{(\chi,a) \ | \ \mcp_{2,\pm}(\chi,a,s)=0\}\subset\cc^2$$ 
  is a quintic that is the zero locus of the polynomial 
 (\ref{mcp2}) with the fixed $s$. Though we prove rationality of the $s$-fibers in full generality for all 
 $\ell$, below we present yet another proof in the special case $\ell=2$, together with the description of their branching points over the $\chi$-axis. 
 \end{remark}
 
 In what follows for every $s\neq0$ by 
 $\wt\gamma_{s,\pm}$ we denote the normalization: 
 the compact Riemann surface holomorphically parametrizing the 
 projective closure $\overline\gamma_{s,\pm}$ of the $s$-fiber $\gamma_{s,\pm}$ (bijectively up to self-intersections). 
\begin{proposition} \label{pcrit} Let $\ell=2$. The projection to the coordinate $\chi$ induces 
a double branched covering $\pi_\chi:\wt\gamma_{s,\pm}\to\oc_\chi$ with two branching points (i.e., 
critical values) 
\begin{equation}\chi_{\pm,+}=\pm\frac12(1+is^{-1}), \ \ \chi_{\pm,-}= \pm\frac12(1-is^{-1}).\label{critval}
\end{equation}
In particular, the  normalization $\wt\gamma_{s,\pm}$ is the Riemann sphere, and $\gamma_{s,\pm}$ is rational. 
\end{proposition}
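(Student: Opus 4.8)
The plan is to realize $\pi_\chi$ as a degree-$2$ branched covering and to locate its branch points through a single discriminant computation, the real care being needed at the values of $\chi$ where the leading coefficient of the relevant quadratic degenerates. I will carry out the argument for the ``$+$'' sign; the ``$-$'' case then follows at once by applying the involution $\mathcal I$ from Theorem \ref{tirred}, which carries $\mcp_{2,+}$ to $\mcp_{2,-}$ and acts on the base as $\chi\mapsto-\chi$, so that the branch values get negated, matching the two rows of (\ref{critval}).

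First I would fix $s\neq0$ and read the quintic (\ref{mcp2}) as a quadratic equation $A(\chi)a^2+B(\chi)a+C(\chi)=0$ in the single variable $a$, with
$$A=(2\chi+1)^2(2\chi-1),\qquad B=-2(2\chi+1)(6\chi-1),\qquad C=-As^2+16\chi.$$
A generic $\chi$-fiber consists of the two roots in $a$, so $\pi_\chi$ has degree $2$; since it will be shown to carry positive ramification, it is automatically connected, which also re-proves irreducibility of $\gamma_{s,+}$. The ramification over the finite part of $\oc_\chi$ sits exactly at the zeros of the discriminant $\Delta=B^2-4AC$, and the decisive (elementary) computation is the factorization
$$\Delta=4(2\chi+1)^4\bigl[1+(2\chi-1)^2s^2\bigr],$$
which uses the cancellation $(6\chi-1)^2-16\chi(2\chi-1)=(2\chi+1)^2$. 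Its zeros are $\chi=-\tfrac12$ (of multiplicity $4$) together with the two points $\chi=\tfrac12(1\pm is^{-1})$ arising from $1+(2\chi-1)^2s^2=0$. At the latter two values $A\neq0$, so two genuine finite roots collide; these are the honest branch points, namely the values in (\ref{critval}).

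The main obstacle is to show that the extra zero $\chi=-\tfrac12$ of $\Delta$, and the remaining loci where $A$ vanishes, do not produce branch points of the normalized covering. At $\chi=-\tfrac12$ one has $A=B=0$ while $C=-8\neq0$, so both roots escape to $a=\infty$; I would therefore pass to the chart $u=1/a$ around the point at infinity $[0:1:0]\in\cp^2_{\chi:a:t}$. Writing $\chi=-\tfrac12+\epsilon$, the local equation in $(\epsilon,u)$ has leading form $-8(\epsilon-u)^2$, so $[0:1:0]$ is a tacnodal singular point; a Newton-polygon analysis then yields two smooth branches $\epsilon=u+c_\pm u^2+\cdots$ with $c_\pm=\tfrac12(-1\pm\sqrt{1+4s^2})$. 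On each branch $\chi$ is a local coordinate, since $d\chi/du\neq0$, so the normalization separates the tacnode into two points over each of which $\pi_\chi$ is unramified; thus $\chi=-\tfrac12$ is \emph{not} a branch value. The two remaining degenerate values are dispatched the same way: at $\chi=\tfrac12$ (where $A=0$, $B\neq0$) one root stays finite and one runs off to $[0:1:0]$, and a direct local computation shows the curve is smooth there with $\chi$ a local coordinate; and over $\chi=\infty$ the two roots tend to the distinct finite values $a=\pm s$ while $\Delta\to\infty$, so $\pi_\chi$ is unramified there as well.

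Having established that $\pi_\chi\colon\wt\gamma_{s,+}\to\oc_\chi$ is a degree-$2$ covering with exactly the two simple branch points (\ref{critval}), I would conclude by Riemann--Hurwitz: $2-2g(\wt\gamma_{s,+})=2\cdot2-2=2$, whence $g(\wt\gamma_{s,+})=0$ and $\wt\gamma_{s,+}\cong\oc$. Therefore $\gamma_{s,+}$, and by the involution $\mathcal I$ also $\gamma_{s,-}$, is rational, which completes the proof.
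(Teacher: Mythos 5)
Your proof is correct and shares the paper's overall skeleton: view (\ref{mcp2}) as a quadratic in $a$, compute the discriminant and factor it as $\Delta_s(\chi)=4(2\chi\pm1)^4\left(1+s^2(2\chi\mp1)^2\right)$ (your computation agrees with the paper's), then apply Riemann--Hurwitz. Where you genuinely diverge is in ruling out the multiplicity-four zero $\chi=\mp\frac12$ as a branch value. The paper does this in one line by a parity criterion: the curve is birational over $\oc_\chi$ to $w^2=\Delta_s(\chi)$ (complete the square in $a$), so a zero of $\Delta_s$ is a branch point of the normalized double cover if and only if its multiplicity is odd, and $\infty$ is not one because $\deg\Delta_s$ is even. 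You instead work at the point $[0:1:0]$ of the projective closure, identify a tacnode there, and resolve it by a Newton-polygon computation into two smooth branches on which $\chi$ is a local coordinate, with separate checks at $\chi=\frac12$ and $\chi=\infty$. Your route is longer but yields more: the explicit singular-point structure of $\overline\gamma_{s,\pm}$, and an explicit connectedness argument (positive ramification forces connectedness), which Riemann--Hurwitz tacitly requires and which the paper leaves implicit.

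One caveat affects both proofs, but your formulation more visibly: at the two exceptional values $s=\pm\frac i2$ one of the points (\ref{critval}) coincides with $\mp\frac12$. There $1+4s^2=0$, so your $c_+=c_-$ and the claimed pair of distinct branches at the tacnode degenerates; likewise your assertion that $A\neq0$ at both points of (\ref{critval}) fails. In that case $\Delta_s$ equals $(2\chi\pm1)^5(3\mp2\chi)$ up to a nonzero constant, so the paper's parity criterion still gives exactly the two branch values of (\ref{critval}) (one of them now at $\mp\frac12$, where the point at infinity has become a genuine ramification point), whereas your local analysis would have to be redone there. Since the paper also states the multiplicities only in the generic case, this is minor; but to cover all $s\neq0$ you should either treat $s=\pm\frac i2$ separately or replace the tacnode computation by the parity argument, which handles all cases uniformly.
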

\begin{proof} The fact that  $\pi_\chi$ is a double covering follows from 
quadraticity in $a$ of the polynomial (\ref{mcp2}). Its branching points lie in the zero locus of its discriminant $\Delta=\Delta_s(\chi)$ of the polynomial (\ref{mcp2}) as a quadratic 
polynomial in $a$. One has 
$$\Delta_s(\chi)=4(2\chi\pm1)^2((6\chi\mp1)^2-16\chi(2\chi\mp1)+s^2(4\chi^2-1)^2)$$
$$=4(2\chi\pm1)^2(36\chi^2\mp12\chi+1-32\chi^2\pm16\chi+s^2(4\chi^2-1)^2)$$
$$=4(2\chi\pm 1)^4(1+s^2(2\chi\mp1)^2).$$
Therefore, the zeros of the discriminant are $\mp\frac12$ (of multiplicity 4) and the points (\ref{critval}) (of multiplicity one). Its zero is a branching point, if and only if its multiplicity is odd. The infinity is 
not a branching point, since the polynomial $\Delta_s$ has even degree. 
Thus, the only branching points are the points (\ref{critval}). Hence, 
$\wt\gamma_{s,\pm}$ is the Riemann sphere, by Riemann-Hurwitz Theorem. 
 Proposition \ref{pcrit} is proved.
\end{proof}

\subsection{Relation to model of  Josephson junction. Historical remarks}

In 1962 B.Josephson \cite{josephson} predicted tunnelling effect for the  {\it Josephson junction:} a system of two superconductors separated by a narrow dielectric (Nobel Prize, 1973). It was confirmed experimentally by 
P.W.Anderson and J.M.Rowell in 1963 \cite{ar}.  The   {\it overdamped Josephson junction},  
see \cite{stewart, mcc,  lev,  schmidt}, \cite[p. 306]{bar}, \cite[pp. 337--340]{lich}, 
\cite[p.193]{lich-rus}, \cite[p. 88]{likh-ulr} is described by the family of nonlinear differential equations
 \begin{equation}\frac{d\phi}{dt}=-\sin \phi + B + A \cos\omega t, \ \omega>0, \ B\geq0.\label{jos}\end{equation}
 
  Here $\phi$ is the  difference of phases (arguments) of the complex-valued 
 wave functions describing the quantum mechanic 
 states of the two superconductors. Its derivative is 
 equal to the voltage up to known constant factor. 

The variable 
and parameter changes 
\begin{equation}\tau:=\omega t, \ \theta:=\phi+\frac{\pi}2, \ \ell:=\frac B\omega, \ a=\frac1\omega, \ s:=\frac A{\omega},\label{elmu}\end{equation}
 transform (\ref{jos}) to a 
non-autonomous ordinary differential equation on the two-torus $\mathbb T^2=S^1\times S^1$ with coordinates 
$(\theta,\tau)\in\rr^2\slash2\pi\zz^2$: 
\begin{equation} \frac{d\theta}{d\tau}=a\cos\theta + \ell + s \cos \tau.\label{jostor}\end{equation}
The rotation number of dynamical system (\ref{jostor}), see \cite[p. 104]{arn}, is a function of the parameters 
$a$, $\ell$, $s$. 
For every fixed $a$, the rotation number function is considered as a function of the 
parameters $(B,A)$.  The two-dimensional phase-lock areas of family (\ref{jostor}) are those level subsets of the rotation number function in $\rr^2_{B,A}$ that have non-empty interiors. V.M.Buchstaber, O.V.Karpov and S.I.Tertychnyi have shown that they exists only for integer values 
of the rotation number \cite{buch2}. The phase-lock areas of family (\ref{jostor}) 
were studied by V.M.Buchstaber, O.V.Karpov, S.I.Tertychnyi, Yu.S.Ilyashenko, D.A.Filimonov, D.A.Ryzhov, A.V.Klimenko, O.L.Romaskevich, V.A.Kleptsyn, I.V.Schurov, Yu.P.Bibilo and the author, see \cite{bg}--\cite{bt1}, \cite{bibgl}, 
\cite{IRF, krs, RK}, \cite{g18}--\cite{gn19}, \cite{tert, tert2} and references therein. 

Equations of type (\ref{jos}), 
(\ref{jostor}) arise in different domains of mathematics, e.g., in planimeters, see \cite{Foote, foott}. A subfamily of family (\ref{jostor}) was studied by Yu.S.Ilyashenko and J.Guckenheimer \cite{ilguk} from the slow-fast system point of view. They obtained results on its limit cycles, as $\omega\to0$. 

Model (\ref{jostor}) has the following equivalent description by a family of two-dimensional linear systems of differential equations on the Riemann sphere, 
see \cite{bkt1, buch2, bt1, Foote, foott,  IRF}, \cite[subsection 3.2]{bg}. 
The variable change 
$$z=e^{i\tau}=e^{i\omega t}, \ \ \Phi=e^{i\theta}=ie^{i\phi}$$
transforms  equation (\ref{jostor}) on the function $\theta(\tau)$  
to the Riccati equation 
\begin{equation}\frac{d\Phi}{dz}=z^{-2}((\ell z+\frac s2(z^2+1))\Phi+\frac a2z(\Phi^2+1)).\label{ric}\end{equation}
Equation (\ref{ric}) is the projectivization of the two-dimensional linear system 
\begin{equation} Y'=\left(\frac{\diag(-\frac s2,0)}{z^2}+\frac{\mcb}z+\diag(0, \frac s2)\right)Y, \ \ \ 
\mcb=\left(\begin{matrix} -\ell &-\frac a2\\ \frac a2 & 0\end{matrix}\right), 
\label{tty}\end{equation}
in the following sense: a function $\Phi(z)$ is a solution of (\ref{ric}), if and only if 
$\Phi(z)=\frac{Y_2}{Y_1}(z)$, where the vector function $Y(z)=(Y_1(z),Y_2(z))$ is a solution of 
system (\ref{tty}). For $s\neq0$ system (\ref{tty}) has two irregular 
nonresonant singular points at $0$ and at $\infty$. In the new parametes 
\begin{equation}\mu:=\frac s2=\frac A{2\omega}, \ \ \la:=\frac{a^2-s^2}4=\frac1{4\omega^2}-\mu^2
\label{parchange}\end{equation}
family of systems (\ref{tty}) is equivalent to the following family of {\it special double confluent Heun equations} on the function $E=Y_2(z)$, 
see \cite{tert2}, \cite{bt0}--\cite{bt4}: 
  \begin{equation} z^2E''+((\ell+1)z+\mu(1-z^2))E'+(\lambda-\mu(\ell+1)z)E=0.\label{heun}\end{equation}
We  will deal with the so-called {\it conjugate} special double confluent Heun equation obtained from 
(\ref{heun}) by  sign change at $\ell$: 
  \begin{equation} z^2E''+((-\ell+1)z+\mu(1-z^2))E'+(\lambda+\mu(\ell-1)z)E=0.\label{heun2}\end{equation}
  Let $\rho=\rho(B,A)=\rho(B,A;\omega)$ denote the rotation number function of family (\ref{jostor}). 
 For every $r\in\zz$ let $L_r$ denote the phase-lock area of family with the rotation number $r$:
 $$L_r=\{(B,A) \ | \ \rho(B,A)=r\}\subset\rr^2_{B,A}.$$
  
  It is known that each phase-lock area  $L_r$ is an infinite garland of domains going 
  to infinity "asymptotically vertically" where every two neighbor domains are separated by one 
  point. In more detail, the boundary $\partial L_r$ consists of two graphs 
  $\{ B=g_{r,\pm}(A)\}$ of functions $g_{r,\pm}$ analytic on $\rr_A$ and having Bessel type asymptotics, 
  as $A\to\infty$ (observed and proved on physics level in ~\cite{shap}, see also \cite[p. 338]{lich},
 \cite[section 11.1]{bar}, ~\cite{buch2006}; proved mathematically in \cite{RK}). The  graphs have infinitely many intersection points (which separate adjacent domains). One of them lies in the $B$-axis, is called the {\it growth point} (it exists only for $r\in\zz\setminus\{0\}$), and its abscissa is equal to 
  $\sign r\sqrt{r^2\omega^2+1}$, see \cite[corollary 3]{buch1}. The other intersection (separation) 
  points are called {\it constrictions}. It was shown in \cite[theorem 1.4]{bibgl} that all the constrictions in  $L_r$  lie in the same vertical line 
  $$\Lambda_r:=\{ B=r\omega\},$$
   which is called the {\it axis} of the phase-lock area $L_r$. See Fig. 1.        
   \begin{figure}[ht]
  \begin{center}
   \epsfig{file=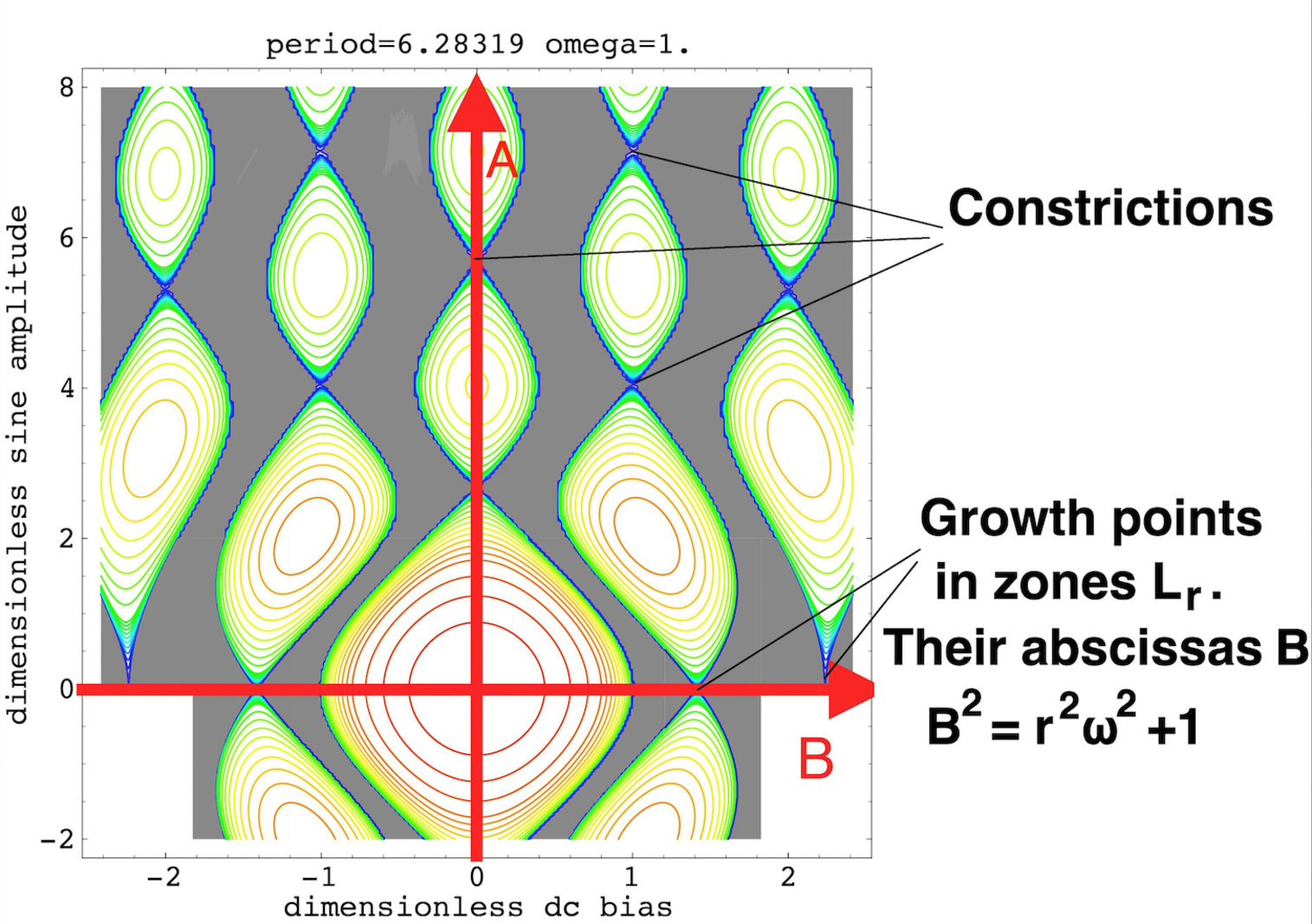, width=20em}
    \caption{Phase-lock areas, their constrictions and growth points for $\omega=1$. The abscissa is $B$, the ordinate is $A$. Figure 
    taken from paper \cite[fig. 1b)]{bg2} with authors' permission, with coordinate axes added.}
  \end{center}
\end{figure} 
 The constrictions correspond exactly to 
   those parameters $\ell=r$, $(\la,\mu)\in\rr^2$, $\la+\mu^2>0$, see (\ref{parchange}), for which the special double confluent 
   Heun equation (\ref{heun})  has an entire solution, i.e., holomorphic on $\cc_z$, as was 
   shown in \cite[theorem 3.3]{bg}.  The set of those parameters $(\la,\mu)$ for which 
   an entire solution exists is given by an explicit analytic equation in terms of an infinite matrix product  introduced in \cite{tert-prod} and in \cite[formula (30)]{bt1}. This was stated as a conjecture in loc. cit. 
    and proved in \cite[theorem 3.5]{bg}. 

    For a given $\omega>0$ a point $(B,A)\in L_r$ is called a 
    {\it generalized simple intersection}, if it is an intersection point 
    of the boundary  $\partial L_r$ with the axis $\Lambda_\ell=\{ B=\ell\omega\}$,  $\ell\in\zz$, of some phase-lock area $L_\ell$ with $\ell\equiv r(\mo2)$, and it is not a constriction. See Fig. 2. It is known that the generalized simple intersections correspond exactly to those parameters $\ell$, $\la$, $\mu$ with $\mu\neq0$, $\la+\mu^2>0$, for which the conjugate Heun equation (\ref{heun2}) has a polynomial solution, as was shown in \cite[theorem 1.15]{bg2}. The proof of this result is based 
    on the previous joint result by Buchstaber and the author \cite[theorem 3.10]{bg} stating that if  the conjugate Heun equation (\ref{heun2}) has a polynomial solution, then Heun equation (\ref{heun})  has 
    no entire solution. This was a solution of a series of conjectures stated and partially studied by V.M.Buchstaber and S.I.Tertychnyi in \cite{bt1, bt2}. 

   \begin{figure}[ht]
  \begin{center}
   \epsfig{file=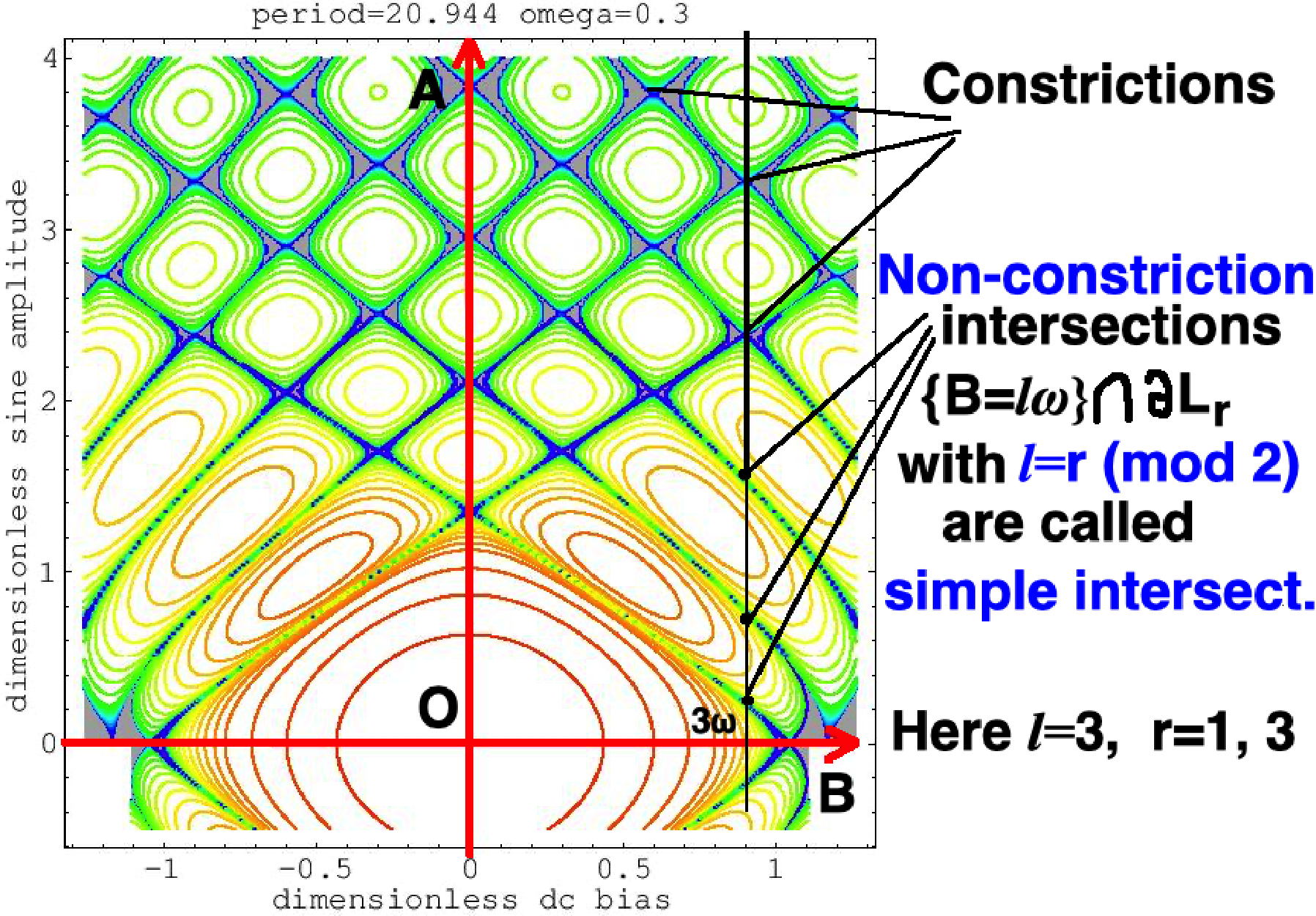, width=20em}
    \label{fig:5.1}
    \caption{Phase-lock areas for $\omega=0.3$: constrictions (correspond to Heun equations (\ref{heun}) with entire solutions) and generalized  simple intersections (correspond to conjugate Heun equations (\ref{heun2}) with polynomial solutions).}
  \end{center}
\end{figure}

\subsection{Four-parameter extension of the Josephson family of dynamical systems on 2-torus and isomonodromic foliation}
The four-parameter extension  of the three-parameter "Josephson" family of dynamical systems 
(\ref{jostor}) given in \cite[subsection 6.2]{bibgl} is 
\begin{equation}  \frac{d\theta}{d\tau}=\nu+a\cos\theta+s\cos\tau+\psi\cos(\theta-\tau); \ \ \ 
\nu,a,\psi\in\rr, \ \ s>0, \ (a,\psi)\neq(0,0).\label{gen3}\end{equation}
The variable changes 
$\Phi=e^{i\theta}$, $z=e^{i\tau}$ 
transform (\ref{gen3}) to the Riccati equation
\begin{equation}\frac{d\Phi}{dz}=\frac1{z^2}\left(\frac s2\Phi+\frac\psi2\Phi^2\right)+\frac1z\left(\nu\Phi+\frac a2(\Phi^2+1)\right) 
+\left(\frac s2\Phi+\frac\psi2\right).\label{genric}\end{equation}
A function $\Phi(z)$ is a solution of the latter Riccati equation, if and only if 
$\Phi(z)=\frac{Y_2(z)}{Y_1(z)}$, where $Y=(Y_1,Y_2)(z)$ is a solution of the linear system 
\begin{equation} Y'=\left(-s\frac{\mathbf K}{z^2}+\frac{\mathbf R}z+s\mathbf N\right)Y,\label{mchoy}\end{equation}
$$\mathbf K=\left(\begin{matrix}\frac12 & \chi \\ 0 & 0\end{matrix}\right), \ 
  \mathbf R=\left(\begin{matrix}-(\ell +\chi a) & -\frac{a}2\\ \frac{a}2 & \chi a\end{matrix}\right),  \ 
  \mathbf N=\left(\begin{matrix}0 & 0 \\ \chi  & \frac12\end{matrix}\right);
  $$
  $$\chi=\frac{\psi}{2s}, \ \ell= \nu-\frac{\psi a}{s}=\nu-2\chi a.$$
  The residue matrix of the formal normal forms of system (\ref{mchoy}) at $0$ and at $\infty$ is the same and equal to 
  \begin{equation}\diag(-\ell,0), \ \ \ell=\nu-2\chi a.\label{ellll}\end{equation}
  See the background material on formal normal forms, Stokes matrices and isomonodromic 
  deformations in Subsection 2.1. 
  \begin{theorem} \label{tisom2} \cite[subsection 6.2]{bibgl} The four-parameter family of linear systems (\ref{mchoy}) is analytically foliated by 
  one-dimensional isomonodromic families defined by the following non-autonomous system of differential equations:
  \begin{equation}\begin{cases} \chi'_s=\frac{a-2\chi(\ell+2\chi a)}{2s}\\
  a'_s=-2s\chi+\frac as(\ell+2\chi a)\\
  \ell'_s=0\end{cases}.\label{isomnews}\end{equation} 
  For every $\ell\in\rr$ the function (\ref{wsnew}), i.e., 
  \begin{equation}w(s):=\frac{a(s)}{2s\chi(s)}=\frac{a(s)}{\psi(s)}\label{wsnews}\end{equation}
  satisfies Painlev\'e III equation (\ref{p3}) with $\ell$ replaced by $-\ell$ along solutions of (\ref{isomnews}): 
  \begin{equation} w''=\frac{(w')^2}w-\frac{w'}{s}-2\ell\frac{w^2}{s}+(2\ell-2)\frac1s+w^3-\frac1w.\label{p3'}\end{equation}
  \end{theorem}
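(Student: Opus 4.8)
The plan is to prove isomonodromicity by exhibiting a \emph{Lax pair}: an auxiliary matrix $B(z,s)$ such that the overdetermined system $\partial_z Y=A(z,s)Y$, $\partial_s Y=B(z,s)Y$, where $A$ denotes the coefficient matrix of (\ref{mchoy}), is compatible, i.e. satisfies the zero-curvature relation
\begin{equation}
\partial_s A-\partial_z B=[B,A].\label{zerocurv}
\end{equation}
Compatibility of such a pair is exactly the statement that a fundamental solution can be continued in $s$ while preserving its monodromy data, so (\ref{zerocurv}) is the analytic backbone of the assertion. Following the footnote and \cite{J}, I would first normalize the system by the scalar gauge $\widehat Y=e^{-\frac s2 z}Y$, which replaces $A$ by $\widehat A=A-\frac s2\idd$; then the leading coefficient $-s\mathbf K$ at $0$ and the constant term $s\mathbf N-\frac s2\idd$ at $\infty$ both acquire spectrum $\{-\frac s2,0\}$, putting $\widehat A$ into the standard form of Jimbo's $2\times2$ Painlev\'e III Lax matrix with deformation parameter $s$.

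For the deformation matrix, the Jimbo--Miwa--Ueno recipe for two irregular singular points of Poincar\'e rank one dictates that $B$ be a Laurent polynomial $B=B_1z+B_0+B_{-1}/z$, with a simple pole at $0$ and a linear term at $\infty$ read off from the principal parts of $A$ at $0$ and at $\infty$ respectively. I would substitute this ansatz into (\ref{zerocurv}). The coefficients of $z^{-3}$ and $z^{1}$ in $[B,A]$ must vanish, forcing $[B_{-1},\mathbf K]=0$ and $[B_1,\mathbf N]=0$, which fixes $B_{-1}$ and $B_1$ up to scalars in terms of $\chi,a,s$. Matching the remaining coefficients of $z^{-2},z^{-1},z^{0}$ then produces first-order differential equations for the entries of $\mathbf K,\mathbf R,\mathbf N$; expressing these in the explicit parametrization by $(\chi,a,\ell)$ collapses them to the equations for $\chi'_s$ and $a'_s$ in (\ref{isomnews}). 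The third, $\ell'_s=0$, is automatic, since $-\ell$ is the eigenvalue of the formal-normal-form residue matrix (\ref{ellll}) at both singular points, hence a formal and therefore monodromy invariant, constant along any isomonodromic deformation.

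The reduction to Painlev\'e III is then a direct computation along solutions of (\ref{isomnews}). Writing $m:=\ell+2\chi a$ and $w=\frac{a}{2s\chi}$ as in (\ref{wsnews}), logarithmic differentiation of $w$ using (\ref{isomnews}) gives the first-order relation
\begin{equation}
w'=-1+\frac{2mw}{s}-\frac{w}{s}-w^2,\label{wfirst}
\end{equation}
which I invert to obtain $\frac{2m}{s}=\frac1w\bigl(w'+1+w^2+\frac ws\bigr)$. Differentiating (\ref{wfirst}) once more and inserting $m'=\frac{a^2}{s}-4s\chi^2=\frac{(m-\ell)(w^2-1)}{w}$, the latter obtained from $4s\chi^2w=m-\ell$ and $a^2/s=4s\chi^2w^2$, all occurrences of $\chi$ and $a$ cancel: the $m$-dependent part assembles into $\frac1w\bigl[(w'+w^2)^2-(1+\frac ws)^2\bigr]$, the $2ww'$ and $\pm w/s^2$ contributions cancel, and the residual $1/s$-terms combine to $(2\ell-2)/s$. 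Collecting terms yields exactly (\ref{p3'}) with the stated coefficients, with no hidden constraint on $\ell$; I have checked this cancellation.

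The genuinely delicate point is the passage, in the second paragraph, from the \emph{formal} solvability of (\ref{zerocurv}) to \emph{analytic} isomonodromicity: because both singular points are irregular, preservation of the formal monodromy does not by itself guarantee that the Stokes matrices at $0$ and $\infty$ and the connection matrix are $s$-independent. Rigorously this requires the Stokes theory for the sectorial canonical solutions and a check that $B$ is single-valued and compatible with the sectorial normalizations. The most economical way to discharge this subtlety, and the route taken in \cite{bibgl}, is to identify the normalized system $\widehat A$ explicitly with a member of the Painlev\'e III family of \cite{J}, whose analytic isomonodromicity is already established there; isomonodromicity of (\ref{mchoy}) and the deformation equations (\ref{isomnews}) are then inherited, and only the bookkeeping translating Jimbo's parameters into $(\chi,a,\ell,s)$ remains. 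I would carry out that identification and use the direct zero-curvature computation above as an independent verification of (\ref{isomnews}).
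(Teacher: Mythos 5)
Your proposal is correct and follows essentially the route the paper itself relies on: Theorem \ref{tisom2} is not proved in this paper but imported from \cite[subsection 6.2]{bibgl}, and --- as the footnote and the remark following the theorem indicate --- that proof proceeds exactly via your main step, the gauge normalization $\wh Y=e^{-\frac s2 z}Y$ followed by identification with the Jimbo isomonodromic deformations of \cite{J}. Your supplementary direct computation is also correct: the first-order relation for $w'$, the identity $m'=(m-\ell)\bigl(w-\frac1w\bigr)$ for $m=\ell+2\chi a$, and the final cancellation yielding (\ref{p3'}) all check out.
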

  \begin{remark} In fact, isomonodromicity was proved in loc. cit. for systems obtained 
 from (\ref{mchoy}) by adding the scalar matrix $-\frac12 Id$ to the matrix $N$ in (\ref{mchoy}). 
 But addition of the latter term changes neither projectivization (Riccati equation), nor  isomonodromicity. 
 The differential equation (\ref{isomnews}) is Hamiltonian with the 
  time $s$-depending Hamiltonian function  
  \begin{equation} H(\chi,a,s):=-\frac{\chi^2a^2}{s}+\frac{a^2}{4s}+s\chi^2-\frac{\ell\chi a}s,
  \label{hamilt}\end{equation}
  see  \cite[proposition 1.14]{g23}. 
Representations of Painlev\'e  equations as  Hamiltonian systems were found by 
J.Malmquist \cite{mlm} (for all, except Painlev\'e III) and by K.Okamoto \cite[p. 265]{ok} 
(for all, including Painlev\'e III). The above Hamiltonian representation of Painlev\'e III equation 
(\ref{p3'}) with Hamiltonian (\ref{hamilt}) can be obtained from Okamoto Hamiltonian representation 
by a variable change: see \cite[remark 1.15]{g23}. 
  \end{remark}
We will be dealing with the linear systems obtained from (\ref{mchoy}) by change of sign at $\ell$: 
  \begin{equation} Y'=\left(-s\frac{\mathbf K}{z^2}+\frac{\mathbf{\wh R}}z+s\mathbf N\right)Y,\label{mchoy*}\end{equation}
$$\mathbf K=\left(\begin{matrix}\frac12 & \chi \\ 0 & 0\end{matrix}\right), \ 
 \mathbf{\wh R}=\left(\begin{matrix}\ell -\chi a & -\frac{a}2\\ \frac{a}2 & \chi a\end{matrix}\right),  \ 
 \mathbf N=\left(\begin{matrix} 0 & 0 \\ \chi  & \frac12\end{matrix}\right).$$
 Note that for $\chi=0$ system (\ref{mchoy*}) coincides with system (\ref{tty}) where $\ell$ 
 is replaced by $-\ell$. Therefore, it is equivalent to the conjugate Heun equation (\ref{heun2}) on $E=Y_2(z)$, analogously to the similar statement on (\ref{tty}) and Heun equation (\ref{heun}), 
 see the end of Subsection 1.4. 
 
 Analogously to the above discussion, the parameter space of 
 family (\ref{mchoy*}) is foliated by isomonodromic families given by solutions of 
 system of differential equations (\ref{isomnews'}), along which the function $w(s)$ satisfies 
 Painlev\'e III equation (\ref{p3}). 

\subsection{Plan of proof of main results}

The proof of main results is based on  analytic theory of linear systems with irregular non-resonant singularities: formal normal forms, Stokes phenomena (canonical solutions in Stokes sectors and monodromy-Stokes data) 
and isomonodromic deformations. The corresponding background material is recalled in Subsection 2.1. 

In Subsection 2.2 we prove Proposition \ref{poldeg}. We will deduce it from a more general proposition 
dealing with general linear systems of type 
\begin{equation}Y'=\left(\frac K{z^2}+\frac Rz+ N\right)Y, \ \  \ K, \ R, \ N \text{ are } 2\times2-\text{matrices,}
\label{eqlin0}\end{equation}
having irregular non-resonant Poincar\'e rank 1 singular points at $0$ and at $\infty$. It states that a 
system has a vector polynomial solution, if and only if two appropriate canonical solutions coming from 
$0$ and from $\infty$ coincide and  form a solution holomorphic in $\cc$, and then it is a unique 
polynomial solution. Its degree is equal to the corresponding residue eigenvalue of the formal normal form 
at $\infty$. 

In Subsection 2.4 we prove Theorem \ref{tmatr} describing the set of parameters of those systems (\ref{mchoyn}) that have polynomial solutions as the union of the zero loci 
$S_{\ell,\pm}=\{\mcp_{\ell,\pm}=0\}$. To do this, 
in Subsection 2.3 we introduce and study a more general class of systems (\ref{eqlin0}), the so-called class S, which contains systems (\ref{mchoyn}), and show that class S coincides with the class of those systems 
(\ref{eqlin0}) that  have a symmetry of the type 
 $(Y_1(z),Y_2(z))\mapsto z^\ell(Y_2(z^{-1}),Y_1(z^{-1}))$; for systems (\ref{mchoyn}), 
 the latter $\ell$ coincides with the $\ell$ in (\ref{mchoyn}). We show that if a polynomial solution 
 of a system (\ref{mchoyn}) exists, then it is either invariant, or anti-invariant under the above symmetry, 
 and then deduce that the parameters $(\chi,a,s)$ of the system lie in $S_{\ell,+}$, respectively $S_{\ell,-}$. 
 And then we prove the converse. 
 
 For the proof of Theorem \ref{tsmooth}   we consider  analytic extensions of 
 appropriate canonical sectorial solutions  of a system (\ref{mchoyn}) at $0$ and at $\infty$ 
 to the base point $z_0=1$ along paths $\alpha_0^{-1}$ and $\alpha_\infty^{-1}$ in $\cc^*_z$ going from 
 appropriate Stokes sectors to $z_0$; the sectors and the paths  are permuted by the involution $z\mapsto z^{-1}$ (such sector and path collections will be called symmetric). Taking projectivizations of the vector values at $z_0$ of thus extended solutions yields four points 
 $q_{10}, q_{20}, q_{1\infty},q_{2\infty}\in\cp^1=\oc$. The collection of the  four points $q_{jp}$ taken together with the monodromy operator (up to natural $\operatorname{PSL_2(\cc)}$-action) is 
 called the {\it monodromy-Stokes data,} see Subsection 2.1. We consider yet another point $q_{10}'$ 
 that is the image of the point $q_{10}$ under the projectivized monodromy of system (\ref{mchoyn}) 
 acting on the space of projectivized initial conditions (local solutions) at $z_0$. In other terms, take the analytic extension of the solution $\Phi(z)$ of the Riccatti equation (the priojectivization of our system (\ref{mchoyn})) with 
 the initial condition $\Phi(z_0)=q_{10}$ along a counterclokwise circuit around the origin. Set $q_{10}'$ to be 
 the value of thus  extended solution at $z_0$.

 Our goal is to construct a parametrization 
 of each surface $S^o_{\ell,\pm}$ by  the complement of $\cc^*_s\times\oc$ to an analytic 
 hypersurface (curve).  In Subsection 2.5 we show that  (\ref{mchoyn}) has a polynomial solution, if and only if $\ell\in\nn$ and the monodromy-Stokes data is parabolic and non-degenerate. Parabolicity means that 
 some of the above points $q_{jp}$ coincide, namely, 
 $q_{20}=q_{1\infty}$, the monodromy is unipotent and its projectivization fixes $q_{20}$. Non-degeneracy means that the collection $q_{10'}$, $q_{10}$, $q_{20}$, $q_{2\infty}$ consists of at least three 
 distinct points. We describe in similar terms all the systems (\ref{eqlin0}) that are gauge equivalent to systems (\ref{mchoyn}) with polynomial solutions. 
 
 In Subsection 2.6 we show that a parabolic non-degenerate monodromy-Stokes data is uniquely determined 
 by the cross-ratio
  $$\mcr:=\frac{(q_{20}-q_{10}')(q_{10}-q_{2\infty})}{(q_{20}-q_{2\infty})(q_{10}-q_{10}')}.$$
  Then we show that two systems of type (\ref{mchoyn}) with the same $\ell$ 
   having a polynomial solution coincide up to parameter sign change $(\chi,a)\mapsto(-\chi,-a)$, if 
   and only if they have the same parameter $s$ and the same cross-ratio $\mcr$ (constructed for given 
   paths $\alpha_{0,\infty}$, the same for both systems). This already yields a local parametrization 
   of surfaces $S_{\ell,\pm}^o$ by  coordinates $(s,\mcr)\in\cc^*\times\oc$ 
   (though  we will not prove that at this place). 
   But $\mcr$ is not uniquely defined: it depends on homotopic classes of paths $\alpha_{0,\infty}$. 
   
   In order to obtain a global parametrization of surfaces $S_{\ell,\pm}^o$, in Subsection 2.7 we first construct a holomorphic family of vector bundles $\mce_{t,\mcr}=\mce_{t,\mcr,\ell}$ 
   with meromorphic connections on $\oc$ depending on two parameters 
   $(t,\mcr)\in\cc\times\oc$, set $s:=e^t$, and a continuous family of pairs of paths 
   $\alpha_{0,t}$ and $\alpha_{\infty,t}$ going from $z_0=1$ to appropriate Stokes sectors at $0$ and at 
   $\infty$ respectively and permuted by the involution $z\mapsto z^{-1}$ that satisfy the following statements:
   
  1) the connections have irregular non-resonant Poincar\'e rank 1 singular points at $0$ and at $\infty$ 
   with formal normal forms 
   \begin{equation} Y'=\left(\frac{\diag(-\frac s2, 0)}{z^2}+\frac{\diag(\ell, 0)}z\right)Y \ \ \text{ at } 0,
  \label{fnf00}\end{equation}
\begin{equation} 
 Y'=\left(\diag(0, \frac s2)+\frac{\diag(\ell, 0)}z\right)Y \ \ \text{ at } \infty;\label{fnfii}\end{equation}
 
 2) the above-mentioned Stokes sectors are those for the above normal forms; 
 
 3) the monodromy-Stokes data defined by the above paths $\alpha_{0,t}$, $\alpha_{\infty,t}$ are parabolic non-degenerate, and the corresponding cross-ratio is equal to $\mcr$;
 
 4) two bundles $\mce_{t,\mcr}$ and $\mce_{t',\mcr'}$ with connections are analytically gauge equivalent, if and only if 
 $$(t',\mcr')=(t,\mcr)+(2\pi i n,2n), \ \ n\in\zz.$$
 Parabolicity implies that each bundle with connection has a meromorphic flat section with only one pole, at infinity, which is a vector polynomial of degree $\ell$ in a trivializing chart on a neighborhood of infinity. 
 
 In Subsection 2.8 we take the quotient of the parameter space by the translation by the vector $(2\pi i,2)$, 
 which leaves invariant the coordinate $\xi:=\pi i\mcr-t$. We get a holomorphic 
 family of bundles with connections parametrized by  $(s,\xi)\in\cc^*\times\oc$, where all the bundles are distinct: pairwise gauge non-equivalent. It follows from a well-known theorem, see \cite[proposition 4.1]{malgr},  
  \cite[appendix 3]{Bol18}, \cite[theorem 2.2, p.449]{rohrl}, that the subset $\triv_\ell\subset\cc^*\times\oc$ of those parameters that correspond to holomorphically 
 trivial bundles is the complement to an analytic hypersurface (curve). They correspond to linear systems of type 
 (\ref{eqlin0}) on the Riemann sphere. Parabolicity of the monodromy-Stokes data implies that the systems in question are gauge equivalent to systems of type S having a vector polynomial solution and the above formal normal forms. 
 As is shown in Subsection 2.3, a system of type S with the above formal normal forms is gauge equivalent to a system of type (\ref{mchoyn}), 
 if and only if it satisfies the following {\it line non-coincidence condition:} 
 the eigenlines of the main term matrices at $0$ and at $\infty$ corresponding to non-zero 
 eigenvalues are distinct. Erasing yet another curve from the subset $\triv_\ell$  consisting of those systems 
 that do not satisfy the line non-coincidence condition, we get a new open and dense subset 
 $\triv^o_\ell\subset\cc^*\times\oc$ of those points that correspond to systems gauge equivalent to systems (\ref{mchoyn}) with polynomial solutions; for every point $(s,\xi)\in\triv^o_\ell$ the corresponding system (\ref{mchoyn}) is uniquely 
 defined up to sign change $(\chi,a)\mapsto(-\chi,a)$. This yields  biholomorphic parametrizations 
 of the surfaces $S_{\ell,\pm}^o$ by $\triv^o_\ell$. Afterwards we show that the complement 
 $\cc^*\times\oc\setminus\triv^o_\ell$ is an analytic curve, by proving that potential 
 non-extendability points of the closure of the second erased curve are isolated and hence, removable by Shiffman Theorem, 
 see \cite{shiffman} and \cite[subsection 4.4]{chirka}.  
 
 In Subsection 2.9 we deduce regularity of the surfaces $S_{\ell,\pm}^o$ and rationality 
 of their $s$-fibers from their parametrization and finish the proof of Theorem \ref{tsmooth}. 
 Theorem \ref{tirred} (irreducibility of surfaces $S_{\ell,\pm}$ and of the  polynomials 
 $\mcp_{\ell,\pm}$) is proved in Subsection 2.10. Smoothness of the spectral curves $\Gamma_{\ell,\pm}^o$ 
 and the genus formula are proved in Subsection 2.11.

 \subsection{Open problems}
 
 A list of open problems on model of overdamped Josephson junction, on its four-dimensional 
 extension (\ref{gen3}), its isomonodromic foliation given by (\ref{isomnews}) and related questions 
 is presented in \cite[section 6]{bibgl}. 
 
For every $r\in\zz_{\neq0}$ set 
$$L_r^+=L_r\cap\{ A>0\}, \ A_r:=\text{the ordinate of the highest simple intersection in } L_r,$$
$$\La_r:=\{ B=r\}, \ \ Sr=\La_r\cap\{ A\geq A_r\}.$$

The {\bf connectivity conjecture}, see \cite[conjecture 1.14]{g18}, states that {\it the intersection 
$L_r^+\cap\La_r$ coincides with the ray $Sr$, and thus, is connected.} By \cite[corollary 6.1 and 
remark 6.2]{bibgl}, it is reduced to a pure real-algebro-geometric conjecture on the real spectral curves 
$\Gamma_\ell$, see \cite[conjecture 6.3]{bibgl}, which we reformulate here in the following slightly stronger form.

\begin{conjecture} 1) For every $\ell\in\nn$ the intersection  of the real spectral curve $\Gamma_\ell$ defined in (\ref{spcurve}) with the domain $\{\la+\mu^2>0\}$ contains a unique connected component 
$\Gamma_\ell^*$ whose points correspond to differential equations (\ref{jostor}) on $\tt^2$ with the rotation number $\ell$. Its other components correspond to positive rotation numbers that are less than $\ell$. 

2) The projection $(\la,\mu)\mapsto\la+\mu^2$ maps the component $\Gamma_\ell^*$ 
diffeomorphically onto $\rr_+$. 
\end{conjecture}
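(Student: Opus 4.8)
The plan is to recast the real locus of $\Gamma_\ell$ in terms of the spectrum of the Jacobi matrix $H_\ell$ and then feed in the dynamical meaning of the rotation number. Specializing the quoted identity $H_\ell+(r^2-\mu^2)\idd=-(\mcg_\ell+r\idd)(\mcg_\ell-r\idd)$ at $r=0$ gives $H_\ell=\mu^2\idd-\mcg_\ell^2$. For real $\mu\neq0$ the off-diagonal products $H_{\ell;j,j+1}H_{\ell;j+1,j}=\mu^2 j(\ell-j)$ are strictly positive, so a diagonal conjugation turns $H_\ell$ into a real symmetric tridiagonal matrix with $\ell$ distinct real eigenvalues $e_1(\mu)>\dots>e_\ell(\mu)$, analytic in $\mu$. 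Hence $\Gamma_\ell\cap\rr^2\cap\{\mu\neq0\}$ is the disjoint union of the analytic graphs $\{\la=-e_j(\mu)\}$, and on the $j$-th graph one has $\la+\mu^2=\mu^2-e_j(\mu)=g_j(\mu)^2\ge0$, where $g_j$ is the matching eigenvalue of $\mcg_\ell$. I would first record the arithmetic fact $\det\mcg_\ell(\mu)=c_\ell\mu^\ell$ (provable from the three-term recursion for the tridiagonal determinant $\det(H_\ell-\mu^2\idd)=(-1)^\ell(\det\mcg_\ell)^2$, checked directly for small $\ell$), so that $\mcg_\ell(\mu)$ is nonsingular for every real $\mu\neq0$; consequently $\la+\mu^2>0$ on each graph away from $\mu=0$, i.e. each graph lies in the open domain. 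Since $Q_\ell$ depends on $\mu^2$, the locus is symmetric under $\mu\mapsto-\mu$, matching the reflection $A\mapsto-A$ of the Josephson picture, so it suffices to work with $\mu>0$.

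Next I would pin the rotation number on each component. A point of $\Gamma_\ell\cap\{\la+\mu^2>0\}$ is exactly a generalized simple intersection by \cite[theorem 1.15]{bg2}, and the spectral curve avoids all constrictions by \cite[theorem 3.10]{bg}; therefore the rotation number $\rho$ restricts there to a locally constant, integer-valued function, hence is constant along each connected graph and is computed from its limit as $\mu\to0^+$. In that limit equation (\ref{jostor}) degenerates to the autonomous field $\dot\theta=2r\cos\theta+\ell$ with $r^2=\mu^2-e_j(0)=(j-1)(\ell-j+1)=m(\ell-m)$, $m:=j-1$, whose rotation number equals $\sqrt{\ell^2-4r^2}=|\ell-2m|$. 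Thus the $j$-th graph carries rotation number $|\ell-2(j-1)|$. The value $\ell$ arises only for $m=0$ (its partner $m=\ell$ forces $r=0$ and is excluded), singling out a unique component $\Gamma_\ell^*$, namely the top graph $\{\la=-e_1(\mu)\}$; every remaining graph carries a rotation number $|\ell-2m|$ with $0<m<\ell$. This yields Part 1, modulo the treatment of the central graph discussed below.

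For Part 2 I would prove that on $\Gamma_\ell^*$, parametrized by $\mu$, the function $\Psi(\mu):=\la+\mu^2=\mu^2-e_1(\mu)$ is a diffeomorphism of $(0,\infty)$ onto $\rr_+$. Properness is immediate: $\Psi(\mu)\to0$ as $\mu\to0^+$ because $e_1(0)=0$, while $\Psi(\mu)=\mu^2-e_1(\mu)\to+\infty$ as $\mu\to+\infty$ from the leading behaviour $e_1(\mu)=(\ell-1)\mu+O(1)$ (the normalized matrix $\mu^{-1}H_\ell$ tends to a fixed Jacobi matrix with top eigenvalue $\ell-1$). The remaining point is the absence of critical points, $\Psi'(\mu)=2\mu-e_1'(\mu)>0$; here I would use the Hellmann--Feynman formula $e_1'(\mu)=\langle v_1,(\partial_\mu H_\ell)v_1\rangle$ with $v_1$ the Perron-type eigenvector of the symmetrized matrix (whose entries may be chosen of one sign) and estimate this against $2\mu$. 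A more robust route, better adapted to the global statement, is to deduce strict monotonicity of $\Psi=\tfrac1{4\omega^2}$ along $\Gamma_\ell^*$ from the strict monotonicity of the rotation number $\rho(B,A;\omega)$ in the frequency $\omega$ along the axis $\Lambda_\ell$, combined with the transversality of the isomonodromic foliation of Subsection 1.5 to the model hyperplane $\{\chi=0\}$.

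The hard part is twofold and is genuinely dynamical rather than algebraic. First, when $\ell$ is even the central graph ($m=\ell/2$) limits at $\mu\to0^+$ to the degenerate field $\dot\theta=\ell(1+\cos\theta)$, which has a parabolic fixed point and rotation number $0$; the naive limit argument would then assign $\rho=0$ to this component, in tension with the positivity claim. Deciding the true rotation number on this component for $\mu>0$ requires a perturbation analysis of the saddle--node under small periodic forcing, i.e. input beyond the spectral reduction, and is exactly the delicate point left open by the conjecture. Second, ruling out critical points of $\Psi$ on all of $\Gamma_\ell^*$ (the injectivity in Part 2) is unlikely to follow from the eigenvalue estimates alone for large $\ell$; I expect the decisive ingredient to be the global monotonicity of $\rho$ in $\omega$ together with the transversality established in this paper, which is what would upgrade the local analytic picture to a genuine global diffeomorphism.
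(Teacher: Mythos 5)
The first thing to note is that this statement is not a theorem of the paper: it sits in Subsection 1.7 (``Open problems'') and is explicitly labeled a conjecture --- a strengthened reformulation of \cite[conjecture 6.3]{bibgl}, which in turn is what would yield the connectivity conjecture of \cite{g18}. The paper contains no proof of it, so there is nothing to compare your argument against; the only question is whether your proposal settles the problem, and it does not --- as you yourself concede in your final paragraph.

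Three concrete gaps. First, your claim that all the graphs $\{\la=-e_j(\mu)\}$ lie in $\{\la+\mu^2>0\}$ rests on writing $\la+\mu^2=g_j(\mu)^2\ge 0$ with $g_j$ ``the matching eigenvalue of $\mcg_\ell$''; this presupposes that the spectrum of $\mcg_\ell$ is real for real $\mu\neq0$, which you never establish. Symmetrization of $H_\ell$ only gives that the eigenvalues of $\mcg_\ell^2=\mu^2\idd-H_\ell$ are real; a priori they could be negative, and $\det\mcg_\ell\neq0$ excludes zero eigenvalues of $\mcg_\ell^2$, not negative ones. The issue is not academic: on the very component you designate as $\Gamma_\ell^*$ (the graph $j=1$), second-order perturbation theory gives $e_1(\mu)=\mu^2+O(\mu^4)$, so the sign of $\mu^2-e_1(\mu)$ is exactly borderline. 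Second, your rotation-number mechanism (continuity of $\rho$ plus integrality on the curve via \cite[theorem 1.15]{bg2}) is sound where it applies, but applied to the central graph $m=\ell/2$ for even $\ell$ it does not merely leave a ``delicate point'': it forces rotation number $0$ on that component, contradicting the positivity asserted in Part 1, so either the dynamics must be reexamined or the statement corrected --- and this is precisely the content your proposal defers. Relatedly, as literally stated the conjecture concerns components of $\Gamma_\ell\cap\{\la+\mu^2>0\}$ for both signs of $\mu$; by your own analysis the graph carrying rotation number $\ell$ passes through $(\la,\mu)=(0,0)$, a point excluded from the open domain, so it contributes two components (one for $\mu>0$, one for $\mu<0$) with rotation number $\ell$, and uniqueness only survives after the restriction to $\mu>0$ which you invoke informally via the symmetry $\mu\mapsto-\mu$. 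Third, for Part 2 you offer two routes --- a Hellmann--Feynman estimate you do not carry out, and monotonicity of $\rho(B,A;\omega)$ in $\omega$ along $\Lambda_\ell$ combined with the transversality proved in this paper --- but neither is executed, and that monotonicity is itself an unproven assertion not contained in the paper. In short: the spectral decomposition, the autonomous limit at $\mu\to0^+$, and the continuity--integrality mechanism form a plausible research program, but what you have written is a plan with acknowledged holes at exactly the points that make this a conjecture rather than a theorem.
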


\begin{problem} Study  real (complex) determinantal surfaces $S_{\ell,\pm}$ and their real (complex) isomonodromic foliations given by real (complex) solutions of system of differential equations (\ref{isomnews'}). 
For which real initial conditions $(a_0,s_0)$ in the hyperplane $\{\chi=0\}$ 
(corresponding to model (\ref{jostor}) of 
Josephson junction) the corresponding real solutions $(\chi(s), a(s))$ of (\ref{isomnews'}) return back 
to the hyperplane $\{\chi=0\}$ for some $s>s_0$ and for infinitely many $s>s_0$? In other words, study the definition domain of the Poincar\'e first return map (and its iterates) to the above hyperplane for the real isomonodromic foliation (\ref{isomnews'}).
\end{problem}
It is known that returns to the hyperplane $\{\chi=0\}$ corresponds to simple poles with residue 1 of the solution of Painlev\'e equation (\ref{p3}) governing the solution of (\ref{isomnews'}) in question, see \cite[remark 6.14]{bibgl}. Few solutions of Painl\'ev\'e 3 equation, 
e.g., the {\it tronqu\'ee solutions}, see \cite{lidati}, are bounded on a semi-interval $[C,+\infty)$, and 
hence, have no poles there.

\begin{problem}
 Describe those solutions of (\ref{isomnews'}) whose graphs  lie in $S_{\ell,\pm}$ and that 
 correspond to tronqu\'ee solutions 
  of the corresponding Painlev\'e III equation (\ref{p3}). 
 \end{problem}

\section{Proof of main results}

\subsection{Background material: Stokes phenomena, monodromy--Stokes data and 
 isomonodromic deformations}
 
 The following material on irregular singularities of linear systems and Stokes phenomena is contained in \cite{2, BUL, bjl, 12, jlp, sib}. 

Consider a two-dimensional linear system 
 \begin{equation}Y'=
\left(\frac K{z^2}+\frac Rz+N\right)Y, \ \ \ 
Y=(Y_1,Y_2)\in\cc^2,\label{eqlin}\end{equation}
over a neighborhood of zero in $\cc_z$. Here  $K$, $R$, $N$ are complex $2\times2$-matrices. 
We consider that $K$  has distinct eigenvalues $\la_{10}\neq\la_{20}$, and $N$ has distinct eigenvalues 
$\la_{1\infty}\neq\la_{2\infty}$. By definition, the two latter  conditions on eigenvalues are equivalent to 
the condition that the singular point 0 (respectively, $\infty$) of system (\ref{eqlin}) is 
{\it irregular non-resonant of Poincar\'e rank 1.}  The matrix $K$ is conjugate to  
$\wt K=\diag(\la_{10},\la_{20})$, $\wt K=\mathbf{H_0}^{-1}K\mathbf{H_0}$, $\mathbf{H_0}\in GL_2(\cc)$,  and one can achieve that 
$K=\wt K$ by applying the constant linear change (gauge transformation)
$Y=\mathbf{H_0}\wh Y$. 

Recall that two systems of type (\ref{eqlin}) are {\it analytically equivalent} near the origin, 
if one can be transformed to the other by linear space coordinate  change 
$Y=H_0(z)\wt Y$), where $H_0(z)$ is a holomorphic $GL_2(\cc)$-valued function on a neighborhood 
of the origin. Two systems (\ref{eqlin}) are {\it formally equivalent} at the origin, 
if the above $H_0(z)$ exists in 
the class of invertible formal power series with matrix coefficients. Analytic (formal) equivalence 
at infinity is defined analogously, with "power series" being 
Laurent series containing only non-positive degrees of $z$. 

System (\ref{eqlin}) is 
formally equivalent at the origin to a unique {\it formal normal form} 
\begin{equation}\wt Y'=\left(\frac{\wt K}{z^2}+\frac{\wt R_0}z\right)\wt Y, \ \wt K=\diag(\la_{10},\la_{20}), 
\ \wt R_0=\diag(b_{10},b_{20}),\label{nform}
\end{equation}
\begin{equation} \wt R_0 \ \  \text{ is the diagonal part of the matrix } \ \  \mathbf{H_0}^{-1}R
\mathbf{H_0}.
\label{resfo}\end{equation} 
The matrix coefficient $K$ in  system (\ref{eqlin}) and the corresponding  matrix $\wt K$ 
in (\ref{nform}) are called the 
{\it main term matrices}, and $R$, $\wt R_0$  the {\it residue matrices} at the origin. However the normalizing series $H_0(z)$ bringing (\ref{eqlin}) to (\ref{nform}) generically diverges. At the same time, there exists a covering of a punctured 
neighborhood of zero by two sectors $S_0^0$ and $S_1^0$ with vertex at 0 in which 
there exist holomorphic $GL_2(\cc)$-valued matrix functions $H_{j;0}(z)$, $j=0,1$, that are $C^{\infty}$ 
smooth on  $\overline S_j^0\cap D_r$ for some $r>0$, and such that the variable changes $Y=H_{j;0}(z)\wt Y$ 
transform (\ref{eqlin}) to (\ref{nform}). This Sectorial Normalization Theorem 
 holds for the so-called {\it Stokes sectors}. Namely, consider the rays issued from 0 and forming the set 
\begin{equation}\{\re\frac{\la_{10}-\la_{20}}z=0\}.\label{strays0}\end{equation}
 They are called {\it imaginary dividing rays} or {\it Stokes rays}.  
A sector $S_{j}^0$ is called a {\it Stokes sector,} if it contains  one imaginary dividing ray 
 and its closure does not contain the other one. 
 
 Let $W(z)=\diag(e^{-\frac{\lambda_{10}}z}z^{b_{10}},e^{-\frac{\lambda_{20}}z}z^{b_{20}})$ denote the canonical diagonal fundamental 
matrix solution of the formal normal form (\ref{nform}). The matrices 
$X^j_0(z):=H_{j;0}(z)W(z)$ are fundamental matrix solutions of the initial system (\ref{eqlin}) 
defining solution bases in $S_j^0$ called the {\it canonical sectorial solution bases.} 
Here we choose the branches $W_0(z)=W^j_0(z)$ of the  matrix function $W_0(z)$ in $S_j^0$ so that $W^1_0(z)$ is obtained from $W^0_0(z)$ by counterclockwise analytic extension from $S_0^0$ to $S_1^0$. 
And  we define the branch $W^2_0(z)$ of $W_0(z)$ in $S_2^0:=S_0^0$  obtained from $W^1_0(z)$ by counterclockwise 
analytic extension from $S_1^0$ to $S_0^0$. This yields 
another canonical matrix solution $X^2_0(z):=H_{0;0}(z)W^2_0(z)$ of system (\ref{eqlin}) in $S_0^0$, which is obtained 
from $X^0_0(z)$ by multiplication from the right by the monodromy matrix $\exp(2\pi i\wt R_0)$ 
of the formal normal form (\ref{nform}).  Let $S_{j,j+1}^0$ denote the connected component 
of the intersection $S_{j+1}^0\cap S_{j}^0$, $j=0,1$, that is crossed when one moves 
from $S_j^0$ to $S_{j+1}^0$ counterclockwise, see Fig. 3. The transition matrices $C_{00}$, $C_{10}$ 
between thus defined canonical solution bases $X^j_0$, 
\begin{equation} X^1_0(z)=X^0_0(z)C_{00} \text{ on }  S_{0,1}^0, \ \ X^2_0(z)=X^1_0(z)C_{10} \text{ on } S_{1,2}^0,\label{stokes}\end{equation}
are called the {\it Stokes matrices} at the origin. 

The above formal normal form  and sectorial normalization theorems also hold at infinity. 
Namely, the formal normal form at infinity is 
\begin{equation}\wt Y'=\left(\wt N+\frac{\wt R_\infty}z\right)\wt Y, \ \wt N=\diag(\la_{1\infty},\la_{2\infty}), 
\ \wt R_\infty=\diag(b_{1\infty},b_{2\infty}).\label{nforminf}
\end{equation}
Here $\la_{1\infty}$ and $\la_{2\infty}$ are the eigenvalues of the matrix $N$.

In more detail, let $\mathbf{H_\infty}$ be a matrix such that $\mathbf{H_{\infty}}^{-1}N\mathbf{H_{\infty}}=\diag(\la_{1\infty},\la_{2\infty})$. Then 
$$\wt R_{\infty} \  \text{ is the diagonal part of the matrix } \ \mathbf{H_\infty}^{-1}R
\mathbf{H_\infty}.$$
The Stokes rays at infinity are defined by the equation
\begin{equation}\{\re(\la_{1\infty}-\la_{2\infty})z=0\}.\label{straysinf}\end{equation}

\begin{example} Let  $K=\diag(\la_{10},\la_{20})$, and let  $\la_{20}-\la_{10}>0$. Then 
the Stokes rays are the positive and negative imaginary semiaxes. 
The Stokes sectors $S_0^0$ and $S_1^0$ covering $\cc^*$ satisfy the following 
conditions:

- the sector $S_0^0$ contains the positive imaginary semiaxis, and its closure does 
not contain the negative one;

- the sector $S_1^0$ satisfies the opposite condition. See Fig. 3.

\noindent The Stokes matrices $C_{00}$ and $C_{10}$ are unipotent upper and lower triangular 
respectively. 
Case of singular point at infinity with $\la_{2\infty}-\la_{1\infty}>0$ is treated analogously. The Stokes rays are the same, and the Stokes sectors also can be choosen the same, as above. But at infinity,  the Stokes matrices $C_{0\infty}$, $C_{1\infty}$ are oppositely-triangular: unipotent lower (respectively, upper) triangular. 
\end{example}
\begin{figure}[ht]
  \begin{center}
   \epsfig{file=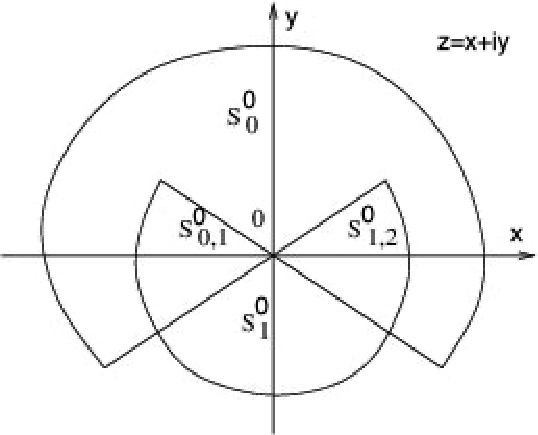}
    \caption{Stokes sectors at the origin in the case, when $\la_{10}-\la_{20}\in\rr$.}
  \end{center}
\end{figure}

Recall that the {\it monodromy operator} of system (\ref{eqlin}) acts on the space of local solutions (initial conditions) at a base point $z_0\in\cc^*$ (or on the space of solutions in a given simply connected domain 
(sector) in $\cc^*$) by analytic extension along a counterclockwise circuit around the origin. 
It is well-known that in the canonical basis in the sector $S_{0}^p$ at $p\in0,\infty$ 
the monodromy matrix is given by 
the following formula, see  \cite[p.290]{12}: 
\begin{equation} M=M_{norm,p}C_{1p}^{-1}C_{0p}^{-1},\label{monprod}\end{equation}
 $$M_{norm,p}=\diag(e^{2\pi i b_{1p}},e^{2\pi i b_{2p}})$$ 
 is the monodromy matrix of the formal normal form at $p$. Formula (\ref{monprod}) together with 
 unipotence of the Stokes matrices implies that 
 \begin{equation}\det M=\det M_{norm,p}=\exp(2\pi i\tr R).\label{dettr}\end{equation}

\begin{remark} \label{rk23} (see \cite[remark 2.3]{bibgl}). The canonical solutions $Y(z)$ 
of system (\ref{eqlin})  
on a Stokes sector $S_{j}^p$ (which are the columns of the fundamental matrix solution $X_p^j$) 
are numerated by the eigenvalues $\la_{sp}$  of its main term matrix 
at $p$ (or equivalently, by the eigenvalues $b_{sp}$ of the residue matrix of the corresponding 
formal normal form). The projectivized  canonical sectorial solutions $\Phi(z)=\frac{Y_2(z)}{Y_1(z)}$ 
(which take values in $\cp^1=\oc_\Phi$) are the unique  solutions of the  Riccati equation corresponding to (\ref{eqlin}) that are holomorphic on $S_{j}^p$ and 
extend continuously to $p$. The value of an extended projectivized canonical solution 
at $p$ is the tautological projection to $\cp^1=\oc$ of the eigenline $L_{sp}$ 
of the main term matrix corresponding to the eigenvalue $\la_{sp}$: the eigenvalue numerating the solution of linear system. 
\end{remark}
\begin{definition} \label{defms} \cite[definition 2.7]{bibgl}. 
Consider a linear system $\mcl$ of type (\ref{eqlin}). 
Fix a point $z_0\in\cc^*$ and two pairs of Stokes sectors 
($S_{0}^0$, $S_{1}^0$), ($S_{0}^{\infty}$, $S_1^{\infty}$) for the main term matrices $K$ at $0$ 
and $N$ at $\infty$ respectively. Fix two paths 
$\alpha_p$  in $\cc^*$ numerated by $p=0,\infty$, going from the point 
$z_0$ to a point  in $S_{0}^p$. Let $f_{1p}$, $f_{2p}$ be a canonical sectorial solution basis  
for the system $\mcl$ at $p$ in $S_0^p$. Consider the analytic extensions of the basic functions $f_{kp}$ 
to the point $z_0$ along the paths $\alpha_p^{-1}$. Let $\pi:\cc^2\setminus\{0\}\to\cp^1$ 
denote the tautological projection. Set $\Phi:=\frac{Y_2}{Y_1}$, 
\begin{equation} q_{kp}:=\pi(f_{kp}(z_0))\in\cp^1=\oc_{\Phi}.\label{qjp}\end{equation}
Let $M$ denote the monodromy operator of the system $\mcl$ acting on the local solution 
space at $z_0$ (identified with the space $\cc^2$ of initial conditions 
at $z_0$) by analytic extension along counterclockwise circuit around zero. The tuple 
\begin{equation}(q,M):=(q_{10},q_{20},q_{1\infty}, q_{2\infty}; M)
\label{msd}\end{equation}
taken up to the next equivalence is called the {\it monodromy--Stokes data} of the system 
$\mcl$. Namely, two tuples $(q,M),(q',M')\in(\cp^1)^4\times\gl_2(\cc)$ are called 
{\it equivalent\footnote{Here is an equivalent group-action definition. 
The group $\psl_2(\cc)$ acts on $\oc^4\times\gl_2(\cc)$ by action 
$h:q_{kp}\mapsto hq_{kp}$ on points in $\oc=\cp^1$ and conjugation $M\mapsto hMh^{-1}$ 
on matrices. The {\it monodromy-Stokes data} is the $\psl_2(\cc)$-orbit of a collection 
$(q,M)$ under this action.},} if there exists a linear operator $H\in\gl_2(\cc)$ whose projectivization 
sends $q_{kp}$ to $q'_{kp}$ and such that $H^{-1}\circ M'\circ H=M$. 
\end{definition}
\begin{remark} \cite[remark 2.8]{bibgl} 
The  monodromy--Stokes data of a system (\ref{eqlin}) depends only 
on the homotopy class of the pair of paths $(\alpha_0,\alpha_{\infty})$ 
in the space of pairs of paths in $\cc^*$ with a common (variable) starting point $z_0$ and with endpoints 
lying in given sectors $S_0^0$ and $S_0^\infty$ respectively.
 \end{remark}
Here and in what follows two linear systems on $\oc$ are said to be  {\it gauge equivalent,} if they are  ``constant gauge equivalent'':  obtained 
 one from the other by constant linear variable change $Y\mapsto HY$, $H\in\operatorname{GL}_2(\cc)$. 
 Gauge equivalence of holomorphic vector bundles with meromorphic connections is defined analogously, 
 with $H$ being a holomorphic  bundle isomorphism.

\begin{theorem} \label{gaugeq} \cite[theorem 2.11]{bibgl}. Two linear systems of type (\ref{eqlin}) are {\bf gauge 
equivalent,} if and only if they have the same formal normal 
forms at each singular point and the same monodromy-Stokes data (defined by a given collection of sectors 
and paths). In this case each linear automorphism of the fiber $\cc^2\times\{ z_0\}$ sending the monodromy-Stokes data of one system to that of the other system extends to a gauge equivalence of systems. Here both monodromy-Stokes   data correspond to the same sectors and path collections.
\end{theorem}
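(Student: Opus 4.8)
The plan is to prove the easy implication by direct transformation and to reduce the hard implication to the classical sectorial (Birkhoff) gluing for systems with two irregular non-resonant Poincar\'e rank $1$ singular points on $\cp^1$.

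\emph{Forward direction and reduction.} Suppose $\mcl'$ is obtained from $\mcl$ by the constant gauge $X_p^j\mapsto HX_p^j$ on fundamental solutions, so that the coefficient matrix transforms by $A\mapsto HAH^{-1}$. Conjugation preserves the eigenvalues of the main term matrices $K,N$, and the diagonal part of $\mathbf{H}_p^{-1}R\mathbf{H}_p$ is unchanged when the diagonalizing matrix $\mathbf{H}_p$ is replaced by $H\mathbf{H}_p$; hence the formal normal forms (\ref{nform}), (\ref{nforminf}) at both points are preserved. Each canonical sectorial solution transforms by $f_{kp}\mapsto Hf_{kp}$ (it keeps the correct asymptotics since the diagonalizing matrix becomes $H\mathbf{H}_p$), so $q_{kp}\mapsto\pi(H)q_{kp}$ and the monodromy transforms by $M\mapsto HMH^{-1}$. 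This is precisely the equivalence of Definition \ref{defms}, proving the ``only if'' part. Conversely, assume the formal normal forms coincide and a fiber automorphism $H$ realizes $\pi(H)q_{kp}=q'_{kp}$ and $H^{-1}M'H=M$. Applying the gauge $X_p^j\mapsto HX_p^j$ to $\mcl$ yields a system $\mcl^{(H)}$ whose formal normal forms still agree with those of $\mcl'$ and whose monodromy--Stokes data equals $(\pi(H)q_{kp};HMH^{-1})=(q'_{kp};M')$, i.e. \emph{exactly} the data of $\mcl'$. It therefore suffices to prove the core uniqueness statement: two systems of type (\ref{eqlin}) with identical formal normal forms and identical monodromy--Stokes data are gauge equivalent by a scalar, hence coincide; then the given $H$ is itself the desired gauge equivalence.

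\emph{Gluing.} Write $A,A'$ for the coefficient matrices of $\mcl^{(H)}$ and $\mcl'$. Since the four points $q_{kp}$ coincide, I first use the scaling freedom in the eigenvector columns of $\mathbf{H}_p$ to rescale the canonical solutions of $\mcl'$ so that the $0$-frame $(f_{10},f_{20})$ and the $\infty$-frame $(f_{1\infty},f_{2\infty})$ agree with those of $\mcl^{(H)}$ as \emph{honest} vectors at $z_0$, not merely projectively; the four available scalars match the four eigenvector normalizations. With the frames matched and the genuine matrix $M$ the same, formula $M=M_{norm,p}C_{1p}^{-1}C_{0p}^{-1}$ from (\ref{monprod}) forces the products $C_{1p}^{-1}C_{0p}^{-1}$ to agree at each $p$; as the Stokes matrices are unipotent of the two opposite triangular types (the Example in Subsection 2.1), the factorization of such a product into opposite-triangular unipotent factors is unique, so $C_{0p}$ and $C_{1p}$ coincide individually. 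The connection (linking) matrix between the $0$- and $\infty$-frames also coincides, because both frames agree at $z_0$. Consequently the sectorial gauge transformations $G(z):=X_p^j(z)\,(X_p'^j(z))^{-1}$, which satisfy $G'=AG-GA'$, agree on the overlaps of adjacent Stokes sectors (equal Stokes matrices), around each puncture (equal monodromy), and in the equatorial region where the $0$- and $\infty$-constructions meet (equal connection matrix). Hence they glue to a single-valued holomorphic $\gl_2(\cc)$-valued function $G$ on $\cc^*=\oc\setminus\{0,\infty\}$.

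\emph{Conclusion and the main obstacle.} The asymptotics $X_p^j\sim\mathbf{H}_pW$ show that $G$ tends to finite invertible limits as $z\to0$ and $z\to\infty$, so by Riemann's removable singularity theorem $G$ extends to a holomorphic $\mathrm{Mat}_2(\cc)$-valued function on the compact $\cp^1$ and is therefore a constant matrix $G_*$. Then $G_*$ is a gauge equivalence of the two systems, and since it carries their common data to itself it fixes all four points $q_{kp}$ projectively and commutes with $M$; as the relevant data are non-degenerate (at least three distinct $q_{kp}$), $\pi(G_*)$ fixes three points of $\cp^1$, hence $\pi(G_*)=\idd$, so $G_*$ is scalar and $\mcl^{(H)}=\mcl'$, making $H$ the sought gauge equivalence. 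The hard part is the gluing step: one must pass from the partly projective data (the points $q_{kp}$) together with the single matrix $M$ to honest equalities of \emph{all} Stokes and connection matrices, which is exactly where the scalar normalization of canonical solutions and the uniqueness of the opposite-triangular factorization have to be orchestrated with care; once single-valuedness on $\cc^*$ is secured, the removable-singularity and Liouville steps are routine.
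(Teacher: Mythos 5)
The paper itself contains no proof of Theorem \ref{gaugeq}: it is quoted from \cite[theorem 2.11]{bibgl}, so there is no in-paper argument to compare yours against. Your route --- reduce to the case of literally identical data, match the canonical frames at $z_0$ by scalar rescalings, recover equality of the Stokes matrices from (\ref{monprod}) and uniqueness of the factorization into unipotent opposite-triangular factors, recover the connection matrix from the matched frames, glue the sectorial ratios $G=X(X')^{-1}$ into a single-valued holomorphic map on $\cc^*$, and finish by removable singularities plus Liouville --- is the standard Birkhoff/Bolibruch-type rigidity argument, and the reduction and gluing steps are carried out correctly. In particular the frame-matching is legitimate: rescaling canonical solutions conjugates the Stokes matrices by a diagonal matrix, which preserves their unipotent triangular types and commutes with the diagonal matrix $M_{norm,p}$, so formula (\ref{monprod}) still applies in the rescaled frames.

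Your final step, however, is wrong as written. You deduce that the constant matrix $G_*$ is scalar by invoking ``non-degeneracy (at least three distinct $q_{kp}$)''. No such hypothesis appears in the theorem, and it genuinely fails for systems of type (\ref{eqlin}): for a diagonal system (e.g.\ the formal normal form itself) one has $\{q_{10},q_{20}\}=\{q_{1\infty},q_{2\infty}\}$, only two distinct points, and a M\"obius transformation fixing two points need not be the identity, so this argument does not force $\pi(G_*)=\idd$. Fortunately the detour is unnecessary and the fix is already contained in your own construction: since you normalized the frames so that $X(z_0)=X'(z_0)$, the constant value is $G_*=G(z_0)=X(z_0)\,X'(z_0)^{-1}=\idd$, hence $\mcl^{(H)}=\mcl'$ exactly, and $H$ is the sought gauge equivalence --- with no appeal whatsoever to the configuration of the points $q_{kp}$. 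Replacing the non-degeneracy sentence by this one line makes the proof complete.
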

\begin{definition} A family of  linear systems (\ref{eqlin})  with irregular non-resonant singular points 
at zero and at infinity is {\it isomonodromic,} if the eigenvalues of 
the residue matrices of  formal normal forms at their singular 
points and the monodromy--Stokes data remain constant: independent on the 
parameter of the family. 
\end{definition}
\begin{remark} In the case under consideration (two singularities of Poincar\'e rank 1) 
the above definition given in \cite[definition 3.1]{bibgl} is known to be equivalent to the classical 
definition saying that a family of systems is isomonodromic, if the monodromy matrix (in appropriate canonical sectorial basis), Stokes matrices, the transition matrix between two canonical bases at zero and at infinity, 
and the above residue eigenvalues in normal forms  
are constant: see \cite[proposition 2.9]{bibgl}. 
It is well-known that if a family of systems (\ref{eqlin}) depends continuously on a parameter from a connected manifold, then constance of the monodromy-Stokes data automatically implies constance of the residue eigenvalues of the formal normal forms.
\end{remark}

\subsection{Degree of polynomial solution. Proof of Proposition \ref{poldeg}}

\begin{proposition} \label{polcan}
 A system (\ref{eqlin}) with Poincar\'e rank 1 irregular non-resonant singular points 
at $0$ and at $\infty$ has a vector polynomial solution, if and only if each its main term matrix at $0$, $\infty$ 
has a zero eigenvalue, the corresponding residue eigenvalue of the formal normal forms is  a nonnegative integer number  at $0$ 
and a  number $\ell\in\zz_{\geq0}$ at $\infty$, and the corresponding canonical sectorial solutions at zero and at infinity coincide 
(up to constant factor) and holomorphic in $\cc^*$. In this case  the polynomial solution is unique,  
coincides with the above canonical solution and has degree $\ell$. 
\end{proposition}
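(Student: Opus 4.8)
The plan is to characterize polynomial solutions via the behavior of solutions at the two irregular singular points. A polynomial solution $Y(z)$ is, by definition, holomorphic on all of $\cc$ including $z=0$, grows polynomially at $\infty$, and is single-valued (hence invariant under monodromy). I would organize the argument around these three demands: holomorphy at $0$, polynomial growth at $\infty$, and single-valuedness, translating each into a statement about canonical sectorial solutions and the corresponding eigendata of the formal normal forms.

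First I would treat the behavior at $0$. A canonical sectorial solution of system (\ref{eqlin}) in a Stokes sector $S_j^0$ has the asymptotic form $H_{j;0}(z)\cdot e^{-\la_{s0}/z}z^{b_{s0}}e_s$, coming from the column of $W(z)$ indexed by the eigenvalue $\la_{s0}$. For a solution to extend holomorphically across $0$, the essential-singularity factor $e^{-\la_{s0}/z}$ must be absent, forcing $\la_{s0}=0$: thus the relevant main term eigenvalue at $0$ must vanish. Once $\la_{s0}=0$, the solution behaves like $z^{b_{s0}}$ times a holomorphic factor near $0$, so holomorphy (indeed, vanishing-or-finite behavior) requires $b_{s0}\in\zz_{\geq0}$. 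This identifies the polynomial solution, near $0$, with the canonical sectorial solution attached to the zero eigenvalue of $K$, and pins down the residue eigenvalue $b_{s0}$ as a nonnegative integer. An entirely symmetric analysis at $\infty$, using the normal form (\ref{nforminf}) with exponential factor $e^{\la_{s\infty}z}$ and power factor $z^{-b_{s\infty}}$ as $z\to\infty$, forces the corresponding main term eigenvalue $\la_{s\infty}=0$ and the residue eigenvalue to be a nonnegative integer $\ell$; polynomial (rather than merely meromorphic) growth at $\infty$ then gives degree exactly $\ell$.

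Next I would glue the two local pictures. A polynomial solution is holomorphic and single-valued on $\cc^*$, so it must coincide (up to scalar) with \emph{both} the distinguished canonical solution at $0$ and the distinguished canonical solution at $\infty$; since both are determined up to constant factor and both equal the same holomorphic function on $\cc^*$, they coincide up to constant factor and define a common solution holomorphic on $\cc^*$. Conversely, if the zero-eigenvalue canonical solutions at $0$ and at $\infty$ coincide up to scalar and are holomorphic on $\cc^*$, then this common solution extends holomorphically across $0$ (by $\la_{s0}=0$ and $b_{s0}\in\zz_{\geq0}$) and has polynomial growth of degree $\ell$ at $\infty$ (by $\la_{s\infty}=0$ and $b_{s\infty}=\ell\in\zz_{\geq0}$); being holomorphic on $\cc$ with polynomial growth at $\infty$, it is a vector polynomial of degree $\ell$ by a Liouville/removable-singularity argument. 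Uniqueness follows because each main term matrix has distinct eigenvalues, so the canonical solution attached to the zero eigenvalue is unique up to scalar in its Stokes sector, hence the polynomial solution is unique up to constant factor.

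The main obstacle I anticipate is the precise bookkeeping of single-valuedness and branch choices. The power factors $z^{b_{sp}}$ are genuinely multivalued unless the exponents are integers, and the monodromy of the formal normal form is $\diag(e^{2\pi i b_{1p}},e^{2\pi i b_{2p}})$; I must argue carefully that holomorphy across $0$ already forces $b_{s0}\in\zz_{\geq0}$ (so the would-be branching disappears) rather than merely $b_{s0}\in\zz$, and similarly at $\infty$, ruling out negative integer exponents by the requirement that the solution be genuinely holomorphic at $0$ and polynomially bounded at $\infty$. One must also verify that the two canonical solutions, a priori defined only on sectors, actually extend to single-valued functions on $\cc^*$ precisely when they coincide; here the hypothesis that they are holomorphic on $\cc^*$ does the work, but the matching of normalizations (the constant factor) and the consistency across the Stokes lines is the delicate point that I would spell out using the sectorial normalization theorem and the explicit form of $W(z)$ recalled in Subsection 2.1.
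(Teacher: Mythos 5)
Your proposal has a genuine gap at the crucial step. You treat the polynomial solution as if it were automatically one of the canonical sectorial solutions: you write down the asymptotics of a canonical solution and then assert that ``for a solution to extend holomorphically across $0$, the essential-singularity factor $e^{-\la_{s0}/z}$ must be absent,'' thereby identifying the polynomial solution near $0$ with the canonical solution attached to the zero eigenvalue of $K$. But a priori a polynomial solution, restricted to a Stokes sector, is a linear combination $c_1f_1+c_2f_2$ of the two canonical basis solutions, and the whole point is to prove that the coefficient along the ``exponential'' canonical solution (the one attached to the nonzero eigenvalue, which exists by non-resonance) vanishes. This is where the paper's proof does its real work: it picks a subsector $S'$ of the Stokes sector, on the appropriate side of the Stokes ray, in which that exponential canonical solution tends to infinity exponentially as $z\to p$; there the polynomial solution, having only moderate growth, is asymptotically dominated by it, and by the standard subdominance fact of Stokes theory (the same fact responsible for unipotence and triangularity of the Stokes matrices) a solution dominated in a sector is canonical, in every Stokes sector containing $S'$. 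Without this domination argument your inference does not go through: holomorphy at $0$ alone does not force the component along $f_1$ to vanish, because $e^{-\la_{10}/z}$ is flat along half of the directions, so the blow-up that yields a contradiction occurs only in a suitably chosen subsector, which must be exhibited. The same gap occurs in your treatment of $\infty$, and your uniqueness claim rests on the same missing step (the paper derives uniqueness from precisely this domination argument).

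Two smaller points. Your asymptotics at infinity carry a wrong sign: solutions of the normal form (\ref{nforminf}) behave like $e^{\la_{s\infty}z}z^{b_{s\infty}}$, not $e^{\la_{s\infty}z}z^{-b_{s\infty}}$; with your convention the polynomial solution would have degree $-\ell$, contradicting the statement being proved. On the other hand, your converse direction and the gluing of the two local pictures (zero main-term eigenvalue and nonnegative integer residue exponent give holomorphy at $0$, polynomial growth at $\infty$ plus a Liouville-type argument gives a vector polynomial of degree $\ell$) are correct and agree with the paper.
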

\begin{proof} At each singular point $p=0,\infty$ the corresponding main term matrix has at least one 
non-zero eigenvalue $\la$ (non-resonance). In each Stokes sector $S$ the corresponding canonical solution has type of 
an exponential function times a monomial (of some not necessary integer power) 
times a function holomorphic in $S$ that extends continuously to $p$. 
The Stokes sector $S$ contains a subsector $S'$ (lying on appropriate side from the Stokes ray contained in $S$) where the exponential function tends to infinity exponentially, as $z\to p$, $z\in S'$. Therefore, if 
 a polynomial solution exists, then its restriction to $S'$ has smaller growth. In other words it is asymptotically dominated by the above ``exponential'' canonical solution. But it is a well-known fact from the Stokes phenomena theory that if in some sector $S'$ one solution of a two-dimensional linear system (\ref{eqlin}) 
 dominates another solution, then the smaller, dominated solution is a canonical sectorial solution, and it is canonical in each Stokes sector containing $S'$. Namely this fact explains unipotence and triangularity of the 
 Stokes matrices.  Therefore, the polynomial solution  is canonical at both singular points $p=0,\infty$. 
 It corresponds to zero eigenvalue of the main term matrices, being meromorphic, and the corresponding 
 residue eigenvalues of the formal normal forms are clearly a 
 nonnegative integer number  at $0$ and a number 
 $\ell\in\zz_{\geq0}$ at infinity (equal to the degree of solution). Conversely, 
 if there exists a solution holomorphic on $\cc^*$ 
 that is canonical at each singular point and corresponds to zero eigenvalue of both  
 main term matrices and the corresponding residue eigenvalues of the formal normal form 
 are equal to nonnegative integer numbers $\nu$ at zero and $\ell$ at infinity, then the solution is polynomial of degree $\ell$. Uniqueness of polynomial solution follows from the above exponential growth and domination argument. Proposition \ref{polcan} is proved.
 \end{proof} 
 
Proposition \ref{poldeg}  follows immediately from Proposition \ref{polcan}.

 \subsection{Symmetric systems of class S and their monodromy--Stokes data}
 For a $2\times2$-matrix $K$ let $K^{tt}$ denote the matrix obtained from $K$ by 
 conjugation by the transposition matrix $\left(\begin{matrix} 0 & 1\\ 1 & 0\end{matrix}\right)$, i.e., obtained from $K$ by transposition of diagonal terms and transposition 
 of off-diagonal terms.
  \begin{definition} 
 A linear system is {\it of class $S$}, if it has the type 
 \begin{equation}Y'=\left(\frac K{z^2}+\frac R{z}-K^{tt}\right)Y, \ \ R_{12}=-R_{21}.\label{types}
 \end{equation}
 \end{definition}
 \begin{example} \label{exmchoy} Every system (\ref{mchoy*}) is of type $S$.\end{example}

\begin{proposition} \label{pros}
 A linear system (\ref{eqlin}) has type $S$, if and only if it admits a symmetry 
of the type 
\begin{equation}
\mathbb I:(Y_1(z), Y_2(z))\mapsto z^\ell(Y_2(z^{-1}), Y_1(z^{-1})).
\label{mathbi}\end{equation}
In this case one has 
\begin{equation}\ell=\tr R.\label{tracekr}\end{equation}
\end{proposition}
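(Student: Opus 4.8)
The plan is to turn the symmetry condition into a single matrix identity and then match coefficients of powers of $z$. First I would encode $\mathbb I$ through the transposition matrix $P=\left(\begin{smallmatrix}0&1\\1&0\end{smallmatrix}\right)$, for which $K^{tt}=PKP$ and $P^2=\idd$, so that $\mathbb I$ acts on a vector function by $Y(z)\mapsto z^\ell P\,Y(z^{-1})$. Writing system (\ref{eqlin}) as $Y'=A(z)Y$ with $A(z)=\frac K{z^2}+\frac Rz+N$ and taking a fundamental matrix solution $\mathcal Y(z)$, the map $\mathbb I$ sends it to $\mathcal Z(z):=z^\ell P\,\mathcal Y(z^{-1})$. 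Since $\mathcal Y$ is invertible, $\mathbb I$ permutes solutions (is a symmetry) if and only if $\mathcal Z$ is again a fundamental solution of the \emph{same} system, i.e. if and only if the logarithmic derivative $B(z):=\mathcal Z'(z)\mathcal Z(z)^{-1}$ equals $A(z)$. This is the key reduction: the quantifier ``for all solutions'' disappears and we are left with an identity of matrix-valued rational functions.

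Next I would compute $B(z)$ directly. Differentiating $\mathcal Z$ and using $\mathcal Y'(z^{-1})=A(z^{-1})\mathcal Y(z^{-1})$ together with the chain-rule factor $-z^{-2}$, the factors $\mathcal Y(z^{-1})\mathcal Y(z^{-1})^{-1}$ cancel and $P^2=\idd$ gives
$$B(z)=\frac{\ell}{z}\,\idd-\frac1{z^2}\,P\,A(z^{-1})\,P.$$
Substituting $A(z^{-1})=Kz^2+Rz+N$ and using $PKP=K^{tt}$, $PRP=R^{tt}$, $PNP=N^{tt}$ then yields
$$B(z)=-\frac{N^{tt}}{z^2}+\frac{\ell\,\idd-R^{tt}}{z}-K^{tt}.$$
The whole point is that conjugation by $P$ reproduces exactly the $tt$-operation, which is what makes class $S$ the natural condition.

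Finally I would match $B(z)=A(z)$ power by power. The $z^{-2}$ and $z^0$ coefficients both give $-N^{tt}=K$, equivalently $N=-K^{tt}$, which is the first defining condition of class $S$ in (\ref{types}). The $z^{-1}$ coefficient gives $R+R^{tt}=\ell\,\idd$; writing $R+R^{tt}$ entrywise, its diagonal entries equal $\tr R$ and its off-diagonal entries equal $R_{12}+R_{21}$, so this is equivalent to the pair of conditions $\tr R=\ell$ and $R_{12}=-R_{21}$. The latter is the second defining condition of class $S$. Collecting everything, $\mathbb I$ with exponent $\ell$ is a symmetry of (\ref{eqlin}) precisely when $N=-K^{tt}$ and $R_{12}=-R_{21}$ (i.e. the system is of class $S$) together with $\ell=\tr R$. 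This establishes both implications of the equivalence and formula (\ref{tracekr}) in one stroke. The computation is routine; the only step demanding care is the bookkeeping of the $tt$-operation under conjugation by $P$ and the entrywise reading of $R+R^{tt}=\ell\,\idd$, which cleanly separates into the antisymmetry $R_{12}=-R_{21}$ and the trace normalization $\ell=\tr R$.
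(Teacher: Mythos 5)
Your proof is correct and follows essentially the same route as the paper's: both compute how the coefficient matrix of (\ref{eqlin}) transforms under $(Y,z)\mapsto(z^\ell PY,z^{-1})$ and equate it with the original, arriving at the identities $N=-K^{tt}$ and $R+R^{tt}=\ell\,\idd$, the latter splitting entrywise into $R_{12}=-R_{21}$ and $\ell=\tr R$. Your fundamental-matrix/logarithmic-derivative formulation is merely a more explicit packaging of the paper's statement that the transformation sends solutions of the system with matrices $(K,R,N)$ to solutions of the system with matrices $(-PNP^{-1},-PRP^{-1}+\ell\,\idd,-PKP^{-1})$.
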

\begin{proof} The transformation $(Y_1,Y_2,z)\mapsto (Y_2,Y_1,z^{-1})$ sends graphs of solutions of a system (\ref{eqlin}) with matrices $K$, $N$, $R$ 
 to graphs of solutions of the new system of similar kind with 
the new matrices 
\begin{equation} \wh K=-PNP^{-1}, \  \wh N=-PKP^{-1}, \ \wh R=-PRP^{-1}; \ \ \ P:=\left(\begin{matrix} 0 & 1\\
1 & 0\end{matrix}\right).\label{conjperm}\end{equation} 
Multiplication by $z^\ell$ adds $\ell Id$ to $R$. 
The statement saying that $\mathbb I$ is a symmetry of the initial equation (\ref{eqlin}) 
is equivalent to the equality that the new 
matrices thus obtained are equal respectively to $K$, $N$ and $R$: 
\begin{equation} K=-PNP^{-1}=-N^{tt}, \ R=-PRP^{-1}+\ell Id=-R^{tt}+\ell Id.
\label{eqkn}\end{equation}
The last equation is equivalent to the fact that the off-diagonal elements of the matrix $R$ 
are opposite and $\ell=\tr R$.  This proves the proposition.
\end{proof}

 Recall that the canonical basic solutions of a system (\ref{eqlin}) at a point $p\in\{0,\infty\}$ 
 in a Stokes sector $S_j^p$ are  numerated by the eigenvalues $\la_{1p}$, $\la_{2p}$ of the corresponding main term matrix $K$ (respectively, $N$). Or equivalently, by the 
 eigenvalues $b_{1p}$, $b_{2p}$ of the residue matrix\footnote{The main term matrix eigenvalues  are always distinct  (the singularity in question in non-resonant), and hence, numerate the canonical basic solutions. But the residue matrix eigenvalues may coincide. One can say that they numerate the canonical basic solutions, only if they are distinct.}  of the corresponding formal normal form 
 (\ref{nform}), see Remark \ref{rk23}.  
  \begin{definition} \label{dsyms} Let a linear system (\ref{eqlin}) have irregular non-resonant singular points  of Poincar\'e rank 1 at the origin and at infinity.  Let $(q;M)=(q_{10}, q_{20}, q_{1\infty},q_{2\infty};M)$ 
  be its monodromy-Stokes data defined by the base point $z_0=1$, sectors $S_0^0$ and $S_0^{\infty}$ that are permuted by the involution $z\mapsto z^{-1}$, and  paths $\alpha_0$, $\alpha_\infty$ going from $z_0$ to $S_0^0$ and $S_0^\infty$ respectively and permuted by the same involution. (The latter pairs of sectors and paths  will be called {\it symmetric.}) We say that the 
  monodromy-Stokes data $(q,M)$ is {\it (1,1)-symmetric}, if there exists a linear involution 
  $\sigma:\cc^2\to\cc^2$ with eigenvalues $1$, $-1$ (i.e., conjugated to the above 
  permutation matrix $P$) 
   that conjugates the monodromy transformation with a scalar multiple of its inverse, and such that 
  the  projectivization of the involution $\sigma$ permutes $q_{10}$ and $q_{2\infty}$ and  permutes $q_{20}$ and 
  $q_{1\infty}$. In this case
  $$\sigma M\sigma^{-1}=\pm\exp(2\pi i \tr R) M^{-1},$$
  where $R$ is the residue matrix of the system (\ref{eqlin}), which follows from (\ref{dettr}). 
  \end{definition}
  \begin{remark} \label{11sym} Let in a system (\ref{eqlin}) the differences $\la_{20}-\la_{10}$, 
  $\la_{2\infty}-\la_{1\infty}$ of the eigenvalues in 
  the main term matrices $K$ and $N$ be real-proportional. (This is true, e.g., for systems of 
  type S, where the eigenvalue collections differ by sign.) Then the Stokes 
  ray collections at the origin and at infinity are permuted by the involution $z\mapsto z^{-1}$. This 
  implies that the Stokes sector collections at $0$ and at $\infty$ can be chosen so that 
  $S_j^0$ and $S_j^{\infty}$ be also permuted by the same involution for every $j=0,1$. 
  Therefore, one can define the monodromy-Stokes data by paths $\alpha_0$ and $\alpha_\infty$ 
  permuted by the same involution, and the notion of (1,1)-symmetric monodromy-Stokes data is well-defined. 
  \end{remark}

    \begin{theorem} \label{thgauge} A linear system (\ref{eqlin}) is gauge equivalent to a linear system of type $S$, if and only if 
  \begin{equation} (\la_{1\infty},\la_{2\infty}; b_{1\infty}, b_{2\infty}) = (-\la_{20}, -\la_{10}; \ 
    b_{10}, b_{20}),\label{labeq}\end{equation}
     and its monodromy-Stokes data $(q_{10},q_{20}, q_{1\infty},q_{2\infty}; M)$ 
  (numerated by the eigenvalues $\la_{10}$, $\la_{20}$, $\la_{1\infty}$, $\la_{2\infty}$ and constructed from some given symmetric pairs of sectors and paths) is (1,1)-symmetric. (In this case it is 
  (1,1)-symmetric for every symmetric pairs of sectors and paths.) By definition, equality (\ref{labeq}) 
  means that its formal normal forms at the origin and at the infinity are respectively
  $$Y'=\left(\frac{\diag(\la_{10},\la_{20})}{z^2}+\frac{\diag(b_{10},b_{20})}{z}\right)Y \ \ \text{ at } \ 0,$$
  $$Y'=\left(-\diag(\la_{20},\la_{10})+\frac{\diag(b_{10},b_{20})}{z}\right)Y \ \ \text{ at } \ \
  \infty.$$
  \end{theorem}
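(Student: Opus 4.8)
The plan is to reduce everything to Proposition \ref{pros}, which characterizes class $S$ by the existence of the symmetry $\mathbb I$ of (\ref{mathbi}), together with the gauge-classification Theorem \ref{gaugeq}. Throughout I use that both the formal normal forms and the monodromy--Stokes data are gauge invariants, so that the two asserted conditions depend only on the gauge class of the system. For the forward implication, suppose the system is already of type $S$; then it carries the symmetry $\mathbb I:(Y_1(z),Y_2(z))\mapsto z^\ell(Y_2(z^{-1}),Y_1(z^{-1}))$ with $\ell=\tr R$, whose fibre part is the permutation matrix $P$ and whose base part is $z\mapsto z^{-1}$. First I would push the two formal normal forms through $\mathbb I$: the involution $z\mapsto z^{-1}$ interchanges $0$ and $\infty$, it turns a factor $e^{-\la_{k0}/z}z^{b_{k0}}$ of a canonical solution at $0$ into $e^{-\la_{k0}z}z^{-b_{k0}}$, and the extra factor $z^\ell$ multiplies the monomial part by $z^{\ell}$. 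Matching the resulting exponential and monomial types against the canonical solutions at $\infty$, which carry factors $e^{\la_{j\infty}z}z^{b_{j\infty}}$, forces $\la_{j\infty}=-\la_{k0}$ and $b_{j\infty}=\ell-b_{k0}$; since $\ell=b_{10}+b_{20}=\tr R$, this is exactly (\ref{labeq}) with the stated indexing $\la_{1\infty}=-\la_{20}$, $\la_{2\infty}=-\la_{10}$, $b_{j\infty}=b_{j0}$. Read on projectivized canonical solutions via Remark \ref{rk23}, the same $\mathbb I$ shows that $P$ interchanges $q_{10}\leftrightarrow q_{2\infty}$ and $q_{20}\leftrightarrow q_{1\infty}$, while (\ref{dettr}) gives $PMP^{-1}=\pm e^{2\pi i\tr R}M^{-1}$; thus the data is $(1,1)$-symmetric with $\sigma=P$, and by construction this holds for every symmetric choice of sectors and paths.

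For the converse, assume (\ref{labeq}) and that the monodromy--Stokes data is $(1,1)$-symmetric, realized by an involution $\sigma_0$ with eigenvalues $1,-1$. I claim the map $T:Y(z)\mapsto z^\ell\sigma_0 Y(z^{-1})$, $\ell=\tr R$, is a genuine symmetry of the system; granting this, diagonalizing $\sigma_0=hPh^{-1}$ and conjugating by $h^{-1}$ turns the system into one invariant under $\mathbb I$, hence of class $S$ by Proposition \ref{pros}. To verify that $T$ preserves the system I would argue on canonical solutions. By Remark \ref{rk23} the projectivized canonical solutions at $0,\infty$ have boundary values $q_{kp}\in\cp^1$; the $q$-interchange built into $(1,1)$-symmetry says precisely that the projectivization of $T$ carries the projectivized canonical solution numbered by $\la_{k0}$ at $0$ to the one numbered by $\la_{j\infty}=-\la_{k0}$ at $\infty$, and vice versa. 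Hence the projectivized $T$ permutes the finitely many projectivized canonical sectorial solutions, so it preserves the associated Riccati equation, i.e.\ preserves the system up to a gauge automorphism fixing every canonical solution projectively.

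It then remains to lift this projective symmetry to a linear one, and this is the main obstacle. The equality $b_{j\infty}=b_{j0}$ from (\ref{labeq}) guarantees that the scalar ambiguities of $T$ on the two canonical solutions can be matched along the Stokes sectors, while $\sigma_0M\sigma_0^{-1}=\pm e^{2\pi i\tr R}M^{-1}$ makes the lift single-valued around the origin: the counterclockwise monodromy of the source becomes clockwise after $z\mapsto z^{-1}$, whence $M^{-1}$, and the normal-form factor $e^{2\pi i\tr R}$ from (\ref{dettr}) absorbs the $z^\ell$-twist. Equivalently, one may apply Theorem \ref{gaugeq} to the pushforward system $\mathbb I_*\mcl$: by (\ref{labeq}) it has the same formal normal forms as $\mcl$, and by $(1,1)$-symmetry the map $\sigma_0$ sends the monodromy--Stokes data of $\mcl$ to that of $\mathbb I_*\mcl$, so $\sigma_0$ extends to a gauge equivalence $H:\mcl\to\mathbb I_*\mcl$; composing $H$ with the tautological transformation $Y\mapsto z^\ell P\,Y(z^{-1})$ intertwining $\mcl$ and $\mathbb I_*\mcl$ yields a self-symmetry $S:Y\mapsto z^\ell\sigma Y(z^{-1})$ of $\mcl$. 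The delicate point is to ensure that this $\sigma$ has eigenvalue ratio $-1$: computing $S^2=\sigma^2\cdot\idd$ exhibits $\sigma^2$ as a gauge automorphism of $\mcl$, and since the formal monodromy eigenvalues are distinct this automorphism group is a torus, so one must still show $\sigma^2$ is scalar to conclude that $\sigma$ is conjugate to a scalar multiple of $P$. In the degenerate case $b_{10}=b_{20}$, where this torus is two-dimensional, I would instead appeal directly to the involutivity of $\sigma_0$ supplied by the $(1,1)$-symmetry hypothesis to pin down $\sigma$, thereby completing the reduction to class $S$.
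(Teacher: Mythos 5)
Your forward implication is sound and is essentially the paper's own argument: you push canonical solutions through the symmetry $\mathbb I$, obtain (\ref{labeq}) by matching exponential/monomial types (the paper does the same bookkeeping by conjugating $K$, $N$, $R$ by $H$ and $P$, with $P^{-1}RP=-R+(\tr R)\idd$), and read off the $(1,1)$-symmetry from the action of $\mathbb I$ on canonical solutions via Remark \ref{rk23}.

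The converse, however, has genuine gaps. First, the claim that the projectivized $T$ ``permutes the finitely many projectivized canonical sectorial solutions, so it preserves the associated Riccati equation'' is a non sequitur: knowing where the four points $q_{jp}$ go does not constrain the equation away from $z=z_0$; the only workable route is your ``equivalently'' one, through Theorem \ref{gaugeq} applied to the pushforward. Second, in that route the operator sending the monodromy--Stokes data of $\mcl$ to that of $\mathbb I_*\mcl$ is not $\sigma_0$ but $P\sigma_0$ (indeed $q_{10}(\mathbb I_*\mcl)$ is the projectivization of $P$ applied to $q_{2\infty}(\mcl)$, and the monodromy of $\mathbb I_*\mcl$ is $e^{2\pi i\ell}PM^{-1}P^{-1}$), and even then only up to the sign in $\sigma_0M\sigma_0^{-1}=\pm e^{2\pi i\tr R}M^{-1}$ allowed by Definition \ref{dsyms}. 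With the minus sign the two data tuples are in general inequivalent, so Theorem \ref{gaugeq} cannot be invoked; ruling out that sign is a necessary step, which the paper performs using (\ref{monprod}), equality of the formal monodromies, and unipotence and opposite triangularity of the Stokes matrices, and which your proposal never addresses. Third, your ``delicate point'' is acknowledged but not resolved: you end with a fibre part $\sigma$ whose square you cannot show is scalar, and the dichotomy you propose ($b_{10}=b_{20}$) is not the relevant degeneracy. What matters is whether the points $q_{jp}$ together with their images under the projectivized monodromy comprise at least three distinct points: the paper normalizes $\sigma_0=P$ by a gauge change, obtains some gauge equivalence $H$ between $\mcl$ and its pushforward, shows $H$ is scalar in the non-degenerate case (projectivized $H$ then fixes at least three points), and in the degenerate case proves directly that the system is diagonal---trivial Stokes operators, hence meromorphic projectivized canonical solutions, whose graphs can be disjoint in $\oc\times\oc$ only if constant by an intersection-index argument---and therefore of type S. Note that had you used the correct automorphism $P\sigma_0$, your composed self-symmetry would have fibre part $\sigma_0$ itself, already an involution with eigenvalues $1,-1$, and this last difficulty would largely dissolve; as written, the argument does not close.
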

  \begin{proof}  Let us prove the statements of the theorem for a system of type S.  First let us prove (\ref{labeq}). 
  Equality $\{\la_{1\infty},\la_{2\infty}\}=\{-\la_{20},-\la_{10}\}$ follows from definition. Let us numerate the 
  latter eigenvalues so that $\la_{1\infty}=-\la_{20}$, $\la_{2\infty}=-\la_{10}$.  Let $H$ be the matrix diagonalizing $K$: $H^{-1}KH=\diag(\la_{10},\la_{20})$, and let 
  $P$ be the permutation matrix, see (\ref{conjperm}). One has $P^2=Id$. 
  The matrix $PHP$ diagonalizes the matrix $N=-K^{tt}=-PKP$: 
  $$P^{-1}H^{-1}P^{-1}NPHP=-P^{-1}H^{-1}KHP=-\diag(\la_{20},\la_{10})=\diag(\la_{1\infty},\la_{2\infty}).$$
One has 
  $$\diag(b_{10}, b_{20})= \text{ the diagonal part of } H^{-1}RH,$$
  $$\diag(b_{1\infty},b_{2\infty})=\text{ the diagonal part of } 
  P^{-1}H^{-1}P^{-1}RPHP.$$
  One has $P^{-1}RP=-R+(\tr R)Id$, by definition and since $R_{12}=-R_{21}$. Therefore, 
  $$\diag(b_{1\infty},b_{2\infty})=-(\text{the diagonal part of } \  P^{-1}H^{-1}RHP)+(\tr R) Id
  $$
  $$=-\diag(b_{20},b_{10})+(\tr R)Id=\diag(b_{10},b_{20}).$$

  Let us prove symmetry of the monodromy-Stokes data. First let us  show that 
  \begin{equation}PMP^{-1}=e^{2\pi i\ell}M^{-1}, \ \ \ell=\tr R.\label{conjmon}\end{equation}
  Indeed,  the monodromy operator $M$ 
  acts on the space of initial conditions in $\cc^2=\cc^2\times\{1\}$ at $z_0=1$ so that 
  an initial condition $Y^0$ is sent to the analytic extension of the corresponding germ of solution 
  to $1$ along a counterclockwise circuit around the origin. Let us write its matrix in the standard 
  basis $(1,0)$, $(0,1)$.  The transformation (\ref{mathbi}) is a symmetry of system 
  (\ref{eqlin}) (Proposition \ref{pros}). 
  It is the composition of the three following operations:
  
  (i) the space variable change $Y\mapsto PY$ (constant gauge transformation), 
  it conjugates the monodromy by the involution $P$; 
  
  (ii) the time variable change $z\mapsto z^{-1}$, it changes the monodromy to its inverse; 
  
  (iii) multiplication by the function $z^\ell$, it multiplies the monodromy by the 
  scalar factor $e^{2\pi i\ell}$.
  
  Therefore, the transformation $\mathbb I$ given by (\ref{mathbi}) on the space of vector functions sends solutions 
  of the initial system (with monodromy matrix $M$) to solutions of the new system whose monodromy matrix is equal to $e^{2\pi i\ell}PM^{-1}P^{-1}$. The latter matrix coincides with 
  $M$, since both systems coincide ($\mathbb I$ is a symmetry). This proves (\ref{conjmon}). 
  
  Let us now prove that the transformation $\mathbb I$ sends canonical solutions to canonical solutions.
  For every $p=0,\infty$ the transformation (i) sends canonical solutions at $p$ of the initial system 
  to canonical solutions of its image at the  same point $p$, and the numeration by  eigenvalues 
  $\la_{jp}$ is preserved. The transformation (ii) sends a canonical solution at $0$ corresponding 
  to the eigenvalue $\la_{j0}$ in a Stokes sector $S^0$ to the canonical solution of the transformed system at $\infty$ in the Stokes sector $S^\infty$ that is the image of the sector $S^0$ under the involution 
  $z\mapsto z^{-1}$, 
  and the corresponding eigenvalue of the main term matrix at infinity is equal to $-\la_{j0}=\la_{2-j,\infty}$. 
  This follows from Remark \ref{rk23}. 
This implies that the projectivization of 
  the operator $P$ is an involution $\oc_\Phi\to\oc_\Phi$ permuting $q_{10}$ and $q_{2\infty}$ 
  and permuting $q_{20}$ and $q_{1\infty}$. This together with (\ref{conjmon}) implies symmetry 
  of the monodromy-Stokes data. 
  
  Let us now prove the converse. Let for a given system (\ref{eqlin}) 
  the eigenvalue equalities (\ref{labeq}) hold, 
  and let the monodromy-Stokes data defined by some symmetric pairs of sectors and paths 
  be (1,1)-symmetric. Let us prove that (\ref{eqlin}) has type 
  S. Let $\sigma:\cc^2\to\cc^2$ be the 
  corresponding linear involution making their symmetry. Applying a gauge transformation we can and will consider that $\sigma=P$. Set $\ell=\tr R$. Then transformation (\ref{mathbi}) 
  sends the initial system (\ref{eqlin}) to a system of similar kind, let us denote it by 
  (\ref{eqlin})',  with the same monodromy-Stokes data up to multiplication of the monodromy by $\pm1$. 
  This follows from the above discussion on action of 
  the operations (i)--(iii) and from the formula in Definition \ref{dsyms}. 
  At each singular point the eigenvalues of the main term 
  matrices in both systems 
  are the same, and the eigenvalues of the residue matrices of the formal normal form 
are the same. In both systems they numerate the monodromy-Stokes data in the same way. 
This follows  by construction and (\ref{labeq}).  The monodromies of the systems 
coincide, by their coincidence up to sign, coincidence of the 
eigenvalues of the residue matrices (and hence, the monodromies) of their formal 
normal forms, formula (\ref{monprod}) and unipotence and opposite triangularity of the Stokes matrices. 
Finally, systems (\ref{eqlin}) and 
  (\ref{eqlin})' have the same formal normal forms and monodromy-Stokes data. Hence, they are 
  gauge equivalent: obtained one from the other by constant linear change 
  $Y\mapsto HY$, $H\in GL_2(\cc)$ (Theorem \ref{gaugeq}). 
  
   We choose coordinates $(Y_1,Y_2)$ in $\cc^2$, let $\Phi=\frac{Y_2}{Y_1}$ denote the corresponding 
   coordinate on $\cp^1=\oc$, so that the representatives $(q_{10}, q_{20}, q_{1\infty}, 
  q_{2\infty}; M)$ of the monodromy-Stokes data of systems (\ref{eqlin}) and (\ref{eqlin})' coincide. 
  Let us show that $H$ is scalar: a multiple of the identity.
  The action of the operator $H$ on the space of initial conditions 
  $\cc^2=\cc^2\times\{ 1\}$ preserves the monodromy-Stokes data. This implies that its 
  projectivization preserves the points $q_{jp}$ and their images under the projectivized 
  monodromy: the monodromy of the Riccati equation corresponding to (\ref{eqlin}). 
  Let the total 
  number of distinct points among the points $q_{jp}$ and their latter images  be greater than two. Then the projectivized operator $H$ preserves at least three distinct points in $\oc$, and hence, is identity. Thus $H$ is scalar, and the above variable change multiplies $Y$ by a constant. This 
  scalar change does not change the underlying linear system. Hence, systems (\ref{eqlin}) 
  and (\ref{eqlin})' coincide, and thus, the transformation $\mathbb I$, see (\ref{mathbi}), 
  is a symmetry of system (\ref{eqlin}). Therefore, (\ref{eqlin}) is a system of type S (Proposition  
  \ref{pros}). 
  
  Let us consider the opposite case: the above number of distinct points 
  is  at most two. For every given $p=0,\infty$ the points $q_{1p}$, $q_{2p}$ are distinct, being 
  projectivized initial conditions of two linearly independent basic solutions of (\ref{eqlin}). 
  This implies that $\{ q_{10},q_{20}\}=\{ q_{1\infty},q_{2\infty}\}$, and the projectivized 
  monodromy either preserves each $q_{jp}$, or permutes $q_{1p}$ and $q_{2p}$. The second case is impossible. Indeed, otherwise, if $q_{10}$ and $q_{20}$ were permuted by the projectivized monodromy, then in the canonical basis at $0$ the monodromy matrix of system (\ref{eqlin}) 
  would have zero diagonal terms. But at least one of them should be nonzero and equal to 
  an eigenvalue $e^{2\pi i b_{j0}}$ of the monodromy of the formal normal form (follows 
  from (\ref{monprod}), opposite triangularity and unipotence of the Stokes matrices), -- 
  a contradiction. In the 
  first case, when the projectivized monodromy preserves each $q_{jp}$, 
   the monodromy of linear system is diagonal in the canonical sectorial basis at $p$. Hence, 
  the corresponding Stokes operators are trivial, by their opposite triangularity and 
  (\ref{monprod}) (as in \cite[proof of lemma 3.3]{4}). And this holds at both points 
  $p=0,\infty$. This together with Remark \ref{rk23} implies that the projectivized canonical solutions $g_{jp}(z)=\pi\circ f_{jp}(z)$, i.e., the solutions of 
  the Riccati equation with initial conditions $q_{j0}$ at $z_0=1$, are meromorphic on the 
  whole Riemann sphere, and their values at each point $p=0,\infty$ are the two distinct 
  eigenlines of the corresponding main term matrix ($K$ or $N$). This implies that the graphs 
  $\{\Phi=g_{j0}(z)\}$, $j=1,2$, are disjoint rational curves in $\oc_\Phi\times\oc_z$ parametrized by $\oc_z$ via projection. But they may be disjoint only if $g_{j0}\equiv const$. Indeed, otherwise, 
  if some $g_{j0}$ is non-constant,  then the intersection index of graphs would be positive, by 
   K\"unneth Formula for product homology and bi-degree argument, see the proof of the same statement 
   at the end of \cite[proof of proposition 4.8]{bibgl}, -- a contradiction. Hence, 
   $g_{10}(z)$ and $g_{20}(z)$ are  constant. Applying a gauge transformation, we can and will 
   consider that $g_{10}\equiv 0$, $g_{2,0}\equiv\infty$. Then the matrices in linear system 
   (\ref{eqlin})  are diagonal, as in loc. cit., and the system is obviously of type S. Theorem 
   \ref{thgauge} is proved.
   \end{proof}
   
   \begin{lemma} \label{smch} A system of type S where the main term matrix $K$ has distinct eigenvalues 
   (non-resonance condition) is gauge equivalent to a system of type (\ref{mchoy*}), if and only if 
   in (\ref{labeq}) one has $\la_{20}=b_{20}=0$ and the following {\bf line non-coincidence condition} holds:

(*) the eigenlines $L_{10}$, $L_{2\infty}$ of the main term matrices at $0$  and  
   at  $\infty$ respectively corresponding to non-zero eigenvalues  $\la_{10}$ and $\la_{2\infty}=-\la_{10}$  
   are distinct.\end{lemma}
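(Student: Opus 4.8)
The plan is to prove both implications by direct computation on the explicit matrices, the real content being a single symmetry-preserving gauge normalization for the "if" direction. Throughout I write $P$ for the permutation matrix of (\ref{conjperm}), and recall that for a system of type $S$ one has $N=-K^{tt}=-PKP$ and $\tr R=\ell$ (Proposition \ref{pros}, (\ref{tracekr})). Recall also that $\Phi=\frac{Y_2}{Y_1}$ and that by Remark \ref{rk23} the eigenlines $L_{10}$, $L_{2\infty}$ are the tautological projections to $\oc_\Phi$ of the eigenvectors of $K$ (for $\la_{10}$) and of $N$ (for $\la_{2\infty}$).

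For the "only if" direction I would simply read off the three invariants from the normal form (\ref{mchoy*}). There $K=-s\mathbf K$ has eigenvalues $-\frac s2$ and $0$, so $\la_{20}=0$; diagonalizing $K$ by $\mathbf{H_0}=\left(\begin{matrix}1 & -2\chi\\ 0 & 1\end{matrix}\right)$ and taking the diagonal part of $\mathbf{H_0}^{-1}\mathbf{\wh R}\,\mathbf{H_0}$ gives the residue pair $(b_{10},b_{20})=(\ell,0)$, so $b_{20}=0$; and the eigenlines $L_{10}=\{\Phi=0\}$, $L_{2\infty}=\{\Phi=\infty\}$ are distinct, so $(*)$ holds. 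Since the eigenvalues of $K$, the residue eigenvalues of the formal normal form, and the coincidence or non-coincidence of the eigenlines are all invariant under gauge equivalence, every system gauge equivalent to (\ref{mchoy*}) inherits $\la_{20}=b_{20}=0$ and $(*)$, which gives necessity.

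For the "if" direction, let the type $S$ system be (\ref{types}) with eigenvalues $\la_{10}=\la\neq0$ (non-resonance) and $\la_{20}=0$, so $K$ has rank $1$ and $L_{10}=\operatorname{range}(K)$. From $N=-PKP$ one gets $L_{2\infty}=\operatorname{range}(N)=P\cdot L_{10}$; writing $L_{10}=[v_1:v_2]$ this reads $L_{2\infty}=[v_2:v_1]$, so the line non-coincidence condition $(*)$ becomes exactly $v_1^2\neq v_2^2$. The key observation is that this is precisely the invertibility condition for $H=\left(\begin{matrix}v_1 & v_2\\ v_2 & v_1\end{matrix}\right)$, which commutes with $P$; hence the gauge $Y=H\wt Y$ conjugates $P$ to itself, so the transformed system again admits the symmetry (\ref{mathbi}) and stays of type $S$ (Proposition \ref{pros}). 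A short computation shows $H^{-1}$ carries $L_{10}$ to the $e_1$-axis, so after this gauge $K$ has first column $(\la,0)$ and, its eigenvalues being $\{\la,0\}$, the shape $K=\left(\begin{matrix}-\frac s2 & -s\chi\\ 0 & 0\end{matrix}\right)$; this defines $s:=-2\la\neq0$ and $\chi$, and forces $N=-K^{tt}=s\mathbf N$.

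It then remains to see that $R$ automatically lands in the form $\mathbf{\wh R}$. Writing $R=\left(\begin{matrix}r_1 & r_2\\ -r_2 & r_3\end{matrix}\right)$ and diagonalizing the new $K$ again by $\mathbf{H_0}=\left(\begin{matrix}1 & -2\chi\\ 0 & 1\end{matrix}\right)$, one computes $b_{20}=(\mathbf{H_0}^{-1}R\,\mathbf{H_0})_{22}=r_3+2\chi r_2$; thus the hypothesis $b_{20}=0$ is exactly the relation $r_3=-2\chi r_2$. Setting $a:=-2r_2$ yields $r_2=-\frac a2$, $r_3=\chi a$ and $r_1=\ell-r_3=\ell-\chi a$ (using $\tr R=\ell$), i.e. $R=\mathbf{\wh R}$, so the normalized system is literally (\ref{mchoy*}) with parameters $(\chi,a,s)$. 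I expect the main obstacle to be conceptual rather than computational: recognizing that the symmetry-preserving gauge group is the centralizer $\{\alpha\idd+\beta P\}$ of $P$, that $(*)$ is precisely the non-vanishing $\det H=v_1^2-v_2^2\neq0$ needed to normalize $K$, and that $b_{20}=0$ is precisely the single scalar constraint placing $R$ into the two-parameter family $\mathbf{\wh R}$. Care is also needed with the eigenvalue labelling fixed by (\ref{labeq}) and with verifying that the chosen gauge keeps the system of type $S$, both handled by the commutation $HP=PH$.
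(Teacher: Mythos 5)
Your proof is correct and follows essentially the same route as the paper: both verify necessity by reading off the gauge invariants $\la_{20}$, $b_{20}$ and the eigenlines from (\ref{mchoy*}), and both prove sufficiency by applying a gauge transformation commuting with $P$ (your explicit $H=\left(\begin{smallmatrix} v_1 & v_2\\ v_2 & v_1\end{smallmatrix}\right)$, whose invertibility is exactly condition (*)) to normalize $L_{10}$ to the first coordinate axis, after which the type-S relations together with $b_{20}=0$ force $R=\mathbf{\wh R}$. Your version merely makes explicit the centralizer element and the computation $b_{20}=r_3+2\chi r_2$ that the paper phrases as ``type (\ref{mchoy*}) up to a scalar shift $\nu\idd$ of $R$, with $\nu=0$ since $b_{20}=0$.''
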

   \begin{proof} Each system (\ref{mchoy*}) obviously satisfies equality $\la_{20}=b_{20}=0$ and 
   condition (*), see formula (\ref{ellll}) for analogous system with $\ell$ replaced by $-\ell$. Let us prove the converse. Let $\la_{20}=b_{20}=0$ and condition (*) hold. 
   The permutation operator $P$ permutes the eigenlines 
   $L_{10}$ and $L_{2\infty}$, see (\ref{eqkn}). 
 Applying a gauge transformation $Y\mapsto HY$ with $H$ commuting with $P$ 
 preserves the class S of linear systems, since it does not change  relations (\ref{eqkn}).  
 We can apply $H$ commuting with $P$ 
 that sends the eigenline $L_{10}$ to the first coordinate line, and then $L_{2\infty}$ is sent to 
 the second coordinate line, since they are distinct, by (*). This yields another system of class 
 S with upper-triangular matrix $K$  and lower-triangular matrix $N=-K^{tt}$. 
 The new system has type (\ref{mchoy*}) up to addition of a scalar matrix 
 $\nu Id$ to the residue matrix $R$, by (\ref{eqkn}).   In our case $\nu=0$, 
 since $b_{20}=0$ for both systems. Thus, the system under consideration is of type (1.32).  This proves the lemma.
 \end{proof}

   \begin{corollary} \label{lmchoy} A system (\ref{eqlin})  is gauge equivalent to a system of type 
   (\ref{mchoy*}), if and only if the following conditions hold:
   
 a) equalities (\ref{labeq}) hold, and $\la_{20}=\la_{1\infty}=0$, $b_{20}=b_{2\infty}=0$;
 
 b) the monodromy-Stokes data constructed from some given symmetric pairs of sectors and paths 
  is (1,1)-symmetric;
 
 c)  the above line non-coincidence condition (*) holds.
 
 In this case  monodromy-Stokes data defined by arbitrary 
 symmetric pairs of sectors and paths are (1,1)-symmetric. 
 \end{corollary}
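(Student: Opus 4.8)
The plan is to obtain this corollary as a direct composition of Theorem \ref{thgauge}, which characterizes gauge equivalence to class $S$, and Lemma \ref{smch}, which singles out among class $S$ systems those gauge equivalent to systems of type (\ref{mchoy*}). No new construction or estimate is needed; the only real work is bookkeeping, namely transporting the listed conditions along gauge equivalences and checking that the normalizations in a) are consistent with (\ref{labeq}).

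First I would treat the ``only if'' direction. Suppose (\ref{eqlin}) is gauge equivalent to a system of type (\ref{mchoy*}). Since every such system is of class $S$ (Example \ref{exmchoy}), the given system is gauge equivalent to a class $S$ system, so Theorem \ref{thgauge} yields both equality (\ref{labeq}) and $(1,1)$-symmetry of the monodromy-Stokes data; this gives the (\ref{labeq}) part of a) and condition b), and the last clause of Theorem \ref{thgauge} gives $(1,1)$-symmetry for every symmetric pair of sectors and paths, which is the final assertion of the corollary. To read off the remaining normalizations, I would use the normal forms (\ref{fnf00}), (\ref{fnfii}) of (\ref{mchoy*}): the main term matrix at the origin has eigenvalues $-\frac s2$ and $0$, so $\la_{20}=0$ and $b_{20}=0$, whence (\ref{labeq}) forces $\la_{1\infty}=-\la_{20}=0$ and $b_{2\infty}=b_{20}=0$. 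All these quantities, together with the line non-coincidence condition (*), are gauge invariant (the main term matrices and their eigenlines are conjugated, and the residue eigenvalues of the formal normal forms are invariants), and (*) holds for (\ref{mchoy*}) itself by Lemma \ref{smch}; hence a) and c) hold for (\ref{eqlin}).

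Next I would treat the ``if'' direction. Assume a), b), c). The (\ref{labeq}) part of a) together with b) is exactly the hypothesis of Theorem \ref{thgauge}, so (\ref{eqlin}) is gauge equivalent to some class $S$ system $\mcl_S$. By gauge invariance, $\mcl_S$ inherits $\la_{20}=b_{20}=0$ and condition (*), and its main term matrix at the origin has distinct eigenvalues $\la_{10}\neq\la_{20}=0$ (in particular $\la_{10}\neq0$) by the non-resonance built into (\ref{eqlin}). Thus $\mcl_S$ meets all hypotheses of Lemma \ref{smch}, so $\mcl_S$ is gauge equivalent to a system of type (\ref{mchoy*}); by transitivity of gauge equivalence, so is (\ref{eqlin}).

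The main point to watch — a subtlety rather than a genuine obstacle — is that Lemma \ref{smch} is stated for systems that literally are of class $S$, whereas we only have gauge equivalence. I would therefore stress that the three conditions being combined ($\la_{20}=b_{20}=0$, non-resonance, and (*)) are all gauge invariant, so they transfer to the class $S$ representative produced by Theorem \ref{thgauge}, and I would note that the equalities $\la_{1\infty}=0$, $b_{2\infty}=0$ in a) are automatic consequences of (\ref{labeq}) once $\la_{20}=b_{20}=0$, being listed only to record the full normal-form data.
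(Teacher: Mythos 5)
Your proof is correct and takes essentially the same route as the paper's: the paper also proves the corollary by combining Theorem \ref{thgauge} (conditions (\ref{labeq}) plus (1,1)-symmetry give gauge equivalence to a class $S$ system) with Lemma \ref{smch} (a class $S$ system with $\la_{20}=b_{20}=0$ and condition (*) is gauge equivalent to a system of type (\ref{mchoy*})), and checks the forward direction directly on systems (\ref{mchoy*}). Your explicit bookkeeping that the normalizations in a), the non-resonance condition, and condition (*) are all gauge invariant is exactly the point the paper leaves implicit.
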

 \begin{proof} Each  system (\ref{mchoy*}) obviously satisfies statements a) and c), and 
 statement b) follows by Theorem \ref{thgauge}. Conversely, let a system (\ref{eqlin}) satisfy 
 conditions a)--c). Then it is gauge equivalent to a system of class S, by symmetry of 
 monodromy-Stokes data and Theorem \ref{thgauge}. The latter system is gauge 
 equivalent to a system of type (\ref{mchoy*}), by Lemma \ref{smch}. The corollary is proved.
  \end{proof}

\subsection{Polynomial solutions and determinantal surfaces. Proof of Theorem \ref{tmatr}.} 
Everywhere below we consider that $s\neq0$.
\begin{proposition} \label{pinv1} Let $\ell\in\zz_{\geq0}$. The transformation (\ref{mathbi}) 
is an involution sending each vector polynomial solution $Y(z)$  of system (\ref{mchoy*}) (if any) to $\pm Y(z)$.
\end{proposition}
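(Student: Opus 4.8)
The plan is to combine the symmetry property of systems of type S established in Proposition \ref{pros} with the uniqueness of the polynomial solution given by Proposition \ref{poldeg}. First I would verify directly that $\mathbb I$ is an involution on the space of vector functions holomorphic on $\cc^*$. Writing $U=\mathbb I(Y)$, so that $U_1(z)=z^\ell Y_2(z^{-1})$ and $U_2(z)=z^\ell Y_1(z^{-1})$, a second application gives $\mathbb I(U)(z)=z^\ell(U_2(z^{-1}),U_1(z^{-1}))$, where $U_2(z^{-1})=(z^{-1})^\ell Y_1(z)=z^{-\ell}Y_1(z)$ and similarly $U_1(z^{-1})=z^{-\ell}Y_2(z)$. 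Thus the factor $z^\ell$ produced by the second application cancels the factor $z^{-\ell}$ coming from the substitution $z\mapsto z^{-1}$ inside the first one, the two coordinate swaps undo each other, and we obtain $\mathbb I^2=\idd$. This uses $\ell\in\zz$ only to make $z^\ell$ single-valued on $\cc^*$.

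Next I would check that $\mathbb I$ maps the space of vector polynomial solutions of (\ref{mchoy*}) into itself. Since system (\ref{mchoy*}) is of type S (Example \ref{exmchoy}), Proposition \ref{pros} shows that $\mathbb I$ sends solutions to solutions, with the relevant $\ell$ equal to $\tr R$. It remains to see that $\mathbb I$ preserves polynomiality: if each component of $Y$ is a polynomial of degree at most $\ell$, say $Y_j(z)=\sum_{k=0}^{\ell}c_{jk}z^k$, then $z^\ell Y_j(z^{-1})=\sum_{k=0}^{\ell}c_{jk}z^{\ell-k}$ is again a polynomial of degree at most $\ell$. By Proposition \ref{poldeg} a vector polynomial solution (for $s\neq0$) has degree exactly $\ell$, so this is precisely the degree bound in play, and $\mathbb I(Y)$ is again a vector polynomial solution. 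The hypothesis $\ell\in\zz_{\geq0}$ is exactly what guarantees that the homogenizing factor $z^\ell$ clears the negative powers introduced by $z\mapsto z^{-1}$.

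Finally I would invoke uniqueness. By Proposition \ref{poldeg} the polynomial solution is unique up to a constant factor, so $\mathbb I(Y)=c\,Y$ for some $c\in\cc$. Applying $\mathbb I$ once more and using $\mathbb I^2=\idd$ yields $Y=\mathbb I^2(Y)=\mathbb I(cY)=c\,\mathbb I(Y)=c^2Y$; since $Y\neq0$ this forces $c^2=1$, i.e.\ $c=\pm1$, so $\mathbb I(Y)=\pm Y$. None of these steps presents a genuine obstacle; the only point requiring care is the bookkeeping of the $z^\ell$ factors and the coordinate swap in verifying $\mathbb I^2=\idd$, together with confirming that the substitution $z\mapsto z^{-1}$ does not destroy polynomiality, which is exactly where the degree-$\ell$ property of Proposition \ref{poldeg} and the assumption $\ell\in\zz_{\geq0}$ are used.
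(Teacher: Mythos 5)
Your proof is correct and takes essentially the same route as the paper's: establish that $\mathbb I$ is an involution, use Proposition \ref{pros} together with Example \ref{exmchoy} to see that it sends solutions to solutions, observe that it preserves the space of vector polynomials of degree at most $\ell$, and conclude from uniqueness of the polynomial solution that it must be an eigenvector of this linear involution, hence with eigenvalue $\pm1$. The only cosmetic differences are that you write out explicitly the computation $\mathbb I^2=\idd$ and the step $c^2=1$, which the paper treats as obvious.
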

\begin{proof} The fact that (\ref{mathbi}) is an involution is obvious. It is a symmetry of system (\ref{mchoy*}), 
by Proposition \ref{pros} and since (\ref{mchoy*}) is a system of type S, see Example \ref{exmchoy}. 
If a  polynomial solution exists, then it is unique (up to constant factor) and has degree $\ell$ 
(Proposition \ref{polcan}). The transformation (\ref{mathbi}) preserves the space of vector polynomials of degree 
at most $\ell$ and acts there as a linear involution sending a solution of (\ref{mchoy*}) to a solution. 
Therefore, the unique polynomial solution is its eigenvector with eigenvalue $\pm1$.
\end{proof}

\begin{proposition} \label{pinv2} A vector polynomial $Y(z)=(Y_1(z),Y_2(z))$ is a solution of linear system 
(\ref{mchoyn}), if and only if it satisfies the system of two following equations with at least one 
choice of sign $\pm$:
\begin{equation}  (Y_1(z), Y_2(z))=\pm z^{\ell}(Y_2(z^{-1}),Y_1(z^{-1}))\label{wsymm}
\end{equation}
\begin{equation} Y_2'(z)=\pm(\frac a{2z}+s\chi)z^\ell Y_2(z^{-1})+(\frac{\chi a}z+\frac s2)Y_2(z)
\label{diffred}\end{equation}
\end{proposition}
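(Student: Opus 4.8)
The plan is to compare the two scalar component equations of the vector system (\ref{mchoyn}) with the two conditions (\ref{wsymm})--(\ref{diffred}), exploiting the fact that the symmetry $\mathbb I$ of (\ref{mathbi}) interchanges those two scalar equations. First I would write out the bottom and top rows of (\ref{mchoyn}) explicitly (the bottom row includes the contribution $s\chi Y_1$ coming from the constant matrix):
\[ E_2:\quad Y_2'(z)=\left(\tfrac{a}{2z}+s\chi\right)Y_1(z)+\left(\tfrac{\chi a}{z}+\tfrac s2\right)Y_2(z), \]
\[ E_1:\quad Y_1'(z)=\left(-\tfrac{s}{2z^2}+\tfrac{\ell-\chi a}{z}\right)Y_1(z)+\left(-\tfrac{s\chi}{z^2}-\tfrac{a}{2z}\right)Y_2(z). \]
The observation driving the argument is that equation (\ref{diffred}) is nothing but $E_2$ after the first component of the symmetry relation (\ref{wsymm}), namely $Y_1(z)=\pm z^\ell Y_2(z^{-1})$, is substituted for $Y_1(z)$.

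For the forward implication I would suppose $Y$ is a vector polynomial solution of (\ref{mchoyn}). It is unique up to a constant factor and of degree $\ell$ by Proposition \ref{poldeg}, so by Proposition \ref{pinv1} (which applies since (\ref{mchoyn}) coincides with (\ref{mchoy*})) the involution $\mathbb I$ sends it to $\pm Y$; this is exactly (\ref{wsymm}) for the corresponding sign. Since $Y$ solves the system, in particular $E_2$ holds, and substituting $Y_1(z)=\pm z^\ell Y_2(z^{-1})$ into $E_2$ turns it into (\ref{diffred}). Thus both (\ref{wsymm}) and (\ref{diffred}) hold.

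For the converse I would assume the polynomial $Y$ satisfies (\ref{wsymm}) and (\ref{diffred}) for a fixed sign. Reversing the previous substitution, (\ref{diffred}) together with $Y_1(z)=\pm z^\ell Y_2(z^{-1})$ gives back $E_2$ at once, so it only remains to recover $E_1$. Here I would use the second component of (\ref{wsymm}), namely $Y_2(z)=\pm z^\ell Y_1(z^{-1})$ (equivalent to the first), and differentiate it:
\[ Y_2'(z)=\pm\ell z^{\ell-1}Y_1(z^{-1})\mp z^{\ell-2}Y_1'(z^{-1}). \]
Equating this with the value of $Y_2'(z)$ given by $E_2$ after its right-hand side is rewritten through (\ref{wsymm}) as $\pm z^\ell\big[(\tfrac{a}{2z}+s\chi)Y_2(z^{-1})+(\tfrac{\chi a}{z}+\tfrac s2)Y_1(z^{-1})\big]$, then cancelling $\pm z^\ell$ and substituting $w=z^{-1}$, yields precisely the equation $E_1$ at the point $w$. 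Hence $Y$ satisfies both $E_1$ and $E_2$, i.e.\ it solves (\ref{mchoyn}).

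Conceptually the only thing being checked is that, for an $\mathbb I$-(anti)invariant vector function, the two scalar equations $E_1$ and $E_2$ are equivalent; this is the concrete manifestation of Proposition \ref{pros}, namely that $\mathbb I$ is a symmetry of every system of class $S$ (and in particular of (\ref{mchoyn})), and that the transformation (\ref{mathbi}) interchanges the $Y_1'$- and $Y_2'$-equations. The main, and essentially only, obstacle is therefore sign bookkeeping: keeping the $\pm$ consistent across the two components of (\ref{wsymm}) and through the change of variable $w=z^{-1}$ in the differentiation step. Once this is organized the identities are forced and no genuine difficulty remains.
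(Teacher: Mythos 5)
Your proof is correct and follows essentially the same route as the paper: the forward direction via Proposition \ref{pinv1} (with Proposition \ref{poldeg} justifying its applicability) followed by substitution of $Y_1(z)=\pm z^\ell Y_2(z^{-1})$ into the second scalar equation, and the converse by differentiating the symmetry relation (\ref{wsymm}) and eliminating. The only cosmetic difference is that the paper differentiates the first component of (\ref{wsymm}) and uses (\ref{diffred}) evaluated at $z^{-1}$, whereas you differentiate the second component and apply your equation $E_2$ at $z$ before substituting $w=z^{-1}$ --- a mirror image of the same computation.
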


\begin{proof} Let a vector polynomial $Y(z)$ be a solution of system (\ref{mchoyn}). Then 
equation (\ref{wsymm}) holds for some choice of sign $\pm$, by Proposition \ref{pinv1}. 
Substituting thus found $Y_1(z)=\pm z^{\ell}Y_2(z^{-1})$ to the second line of (\ref{mchoyn}) 
yields (\ref{diffred}). Conversely, let a vector polynomial $Y(z)$ be a solution of 
system of equations (\ref{wsymm}) and (\ref{diffred}). Then the second line of system 
(\ref{mchoyn}) holds, as in the above argument. It remains to show that its first line holds as well. 
Differentiating $Y_1(z)=\pm z^\ell Y_2(z^{-1})$ and substituting (\ref{diffred}) yields 
\begin{equation}Y_1'(z)=\frac{\ell}z Y_1(z)\mp z^{\ell-2}Y_2'(z^{-1}),\label{w1diff}\end{equation}
$$\mp z^{\ell-2}Y_2'(z^{-1})=-z^{\ell-2}(\frac{az}{2}+s\chi)z^{-\ell} Y_2(z)
\mp z^{\ell-2}(\chi a z+\frac s2)Y_2(z^{-1}).$$
Substituting the equality $Y_2(z^{-1})=\pm z^{-\ell} Y_1(z)$ to the latter right-hand side and substituting everything to  (\ref{w1diff}) yields the first line in (\ref{mchoyn}): 
$$Y_1'(z)=(\frac{\ell-\chi a}z-\frac s{2z^2})Y_1(z)-(\frac a{2z}+\frac{s\chi}{z^2})Y_2(z).$$
\end{proof}

\begin{proof} {\bf of Theorem \ref{tmatr}.} It suffices to show that existence of polynomial solution 
of system of equations (\ref{wsymm}) and (\ref{diffred}) for a given choice of sign $\pm$ is 
equivalent to the inclusion $(\chi,a,s)\in S_{\ell,\pm}$. The space of polynomial solutions of 
equation (\ref{wsymm}) is isomorphic to the space of polynomials $Y_2(z)$ of degree 
at most $\ell$ via the correspondence $Y_1(z)=\pm z^{\ell}Y_2(z^{-1})$. A given polynomial 
$Y_2(z)=\sum_{j=0}^\ell c_jz^j$ of degree at most $\ell$ is a solution of (\ref{diffred}), if and only if its coefficients satisfy the following relations, corresponding to equality of degree $j$ terms in  (\ref{diffred}) for $j=0,\dots,\ell$:
\begin{equation}\pm(\frac a2 c_{\ell-(j+1)}+ s\chi c_{\ell-j})+(\chi a-(j+1)) c_{j+1}+\frac s2 c_{j}=0.
\label{relj}\end{equation}
Here we set $c_k=0$ for $k\notin\{0,\dots,\ell\}$. System of relations (\ref{relj}) is equivalent to 
the statement that the vector of coefficients $(c_0,\dots, c_\ell)$ lies in the kernel of the matrix 
$G_{1,\ell}\pm G_{2,\ell}$. Therefore, existence of polynomial solution of 
system of equations (\ref{wsymm}), (\ref{diffred}) 
is equivalent to the statement that the latter matrix has zero determinant, i.e., 
$(\chi,a,s)\in S_{\ell,\pm}$. This together with Proposition \ref{pinv2} implies the statement of 
Theorem \ref{tmatr}. 
 \end{proof}
 \begin{remark} The above proof of Theorem \ref{tmatr} remains valid for $\ell=0$ and yields 
 a proof of Proposition \ref{pel0} for $s\neq0$. Indeed,  for $\ell=0$ equation (\ref{relj}) becomes 
 $\pm s\chi c_0+\frac s2c_0=0$ and has a non-zero solution, if and only if $\chi=\mp\frac12$. 
 \end{remark}

 \subsection{Systems with polynomial solutions. Parabolicity of the monodromy-Stokes data}
 \begin{definition} \label{qdiff} We deal with an {\it abstract monodromy-Stokes data:} 
 a collection $(q_{10}, q_{20}, q_{1\infty}, q_{2\infty}; M)$, where $q_{jp}\in\oc_\Phi=\cp^1_{[Y_1:Y_2]}$, $\Phi=\frac{Y_2}{Y_1}$, 
 $q_{10}\neq q_{20}$, $q_{1\infty}\neq q_{2\infty}$, and $M\in GL_2(\cc)$ is a non-degenerate 
  linear operator acting on the space $\cc^2_{Y_1,Y_2}$, called the {\it monodromy operator}. The above collection is taken up to equivalence from the definition of the monodromy-Stokes data. 
 We say that an abstract  monodromy-Stokes data  is {\it parabolic,} if  $q_{20}=q_{1\infty}$, and  the  projectivized monodromy operator is either a parabolic transformation $\oc\to\oc$ that fixes only the point $q_{20}$, or the identity. Set $q_{10}'$ to 
 be the image of the point $q_{10}$ under the projectivized monodromy transformation. 
 A parabolic monodromy-Stokes data is called {\it non-degenerate,} if among the four points 
 $q_{10}'$, $q_{10}$, $q_{20}$, $q_{2\infty}$ at least three points are distinct. 
  \end{definition}
  \begin{remark} \label{aut11} Consider a monodromy-Stokes data of linear system 
  (\ref{eqlin})   defined by symmetric pairs of sectors and paths. 
  If the monodromy-Stokes data is parabolic, then it is 
  automatically (1,1)-symmetric. This follows from the fact that every conformal involution $\sigma:\oc\to\oc$ 
  fixing $q_{20}=q_{1\infty}$ (e.g., the one  permuting $q_{10}$ and $q_{2\infty}$) conjugates each parabolic 
  transformation fixing $q_{20}$ (e.g., the 
  projectivized monodromy) with its inverse. Namely, if we choose the coordinate $w$ on $\oc$ so that 
  $q_{20}=\infty$ and the origin is the second fixed point of the  involution $\sigma$, then 
  $\sigma(w)=-w$, the projectivized monodromy is a translation $w\mapsto w+u$, and hence,  
  $\sigma$ conjugates it with its inverse $w\mapsto w-u$. 
  \end{remark}

  Here we prove the following theorem.
  
   \begin{theorem} \label{thpar} A system (\ref{eqlin}) with irregular non-resonant singular points of Poincar\'e rank 1 at $0$ and 
 at $\infty$ is gauge equivalent to a system of type (\ref{mchoyn}) having a non-constant polynomial solution, if and only if it satisfies the following conditions:
 
 a) $(\la_{1\infty},\la_{2\infty}; b_{1\infty}, b_{2\infty}) = (-\la_{20}, -\la_{10}; \ 
    b_{10}, b_{20})$,  $\la_{20}=\la_{1\infty}=0$, 
  $b_{20}=b_{2\infty}=0$.
  
  b) $\ell:=b_{10}=b_{1\infty}\in\nn$.
 
 c) The monodromy-Stokes data defined by  sectors and paths permuted by  
  the involution $z\mapsto\frac1z$ is parabolic non-degenerate.
 
 d) The above line non-coincidence condition (*) holds.
 
 In this case Statement c) holds for monodromy--Stokes data defined by arbitrary symmetric pairs 
 of sectors and paths. 
 \end{theorem}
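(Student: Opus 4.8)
The plan is to assemble the theorem from the two structural results already in place: Corollary \ref{lmchoy}, which characterizes gauge equivalence to the family (\ref{mchoyn}) (recall that (\ref{mchoy*}) and (\ref{mchoyn}) are literally the same family) in terms of the eigenvalue normalization a), the (1,1)-symmetry of the monodromy-Stokes data, and the line non-coincidence condition d); and Proposition \ref{polcan}, which characterizes existence of a vector polynomial solution via coincidence and single-valuedness of the two canonical sectorial solutions attached to the \emph{zero} eigenvalues $\la_{20}=0$ at the origin and $\la_{1\infty}=0$ at infinity. The genuine content of the theorem is then the translation of the analytic "coincidence plus single-valuedness" statement of Proposition \ref{polcan} into the projective "parabolic non-degenerate" statement of Definition \ref{qdiff}.

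For the forward direction I would start from a system gauge equivalent to (\ref{mchoyn}) carrying a non-constant polynomial solution. Gauge equivalence yields a) and d) by Corollary \ref{lmchoy}. Proposition \ref{polcan} identifies the polynomial solution with the canonical solution for the zero eigenvalue at each singular point; its degree equals the residue eigenvalue $b_{1\infty}=b_{10}$, so this is a nonnegative integer, and non-constancy forces it to be $\geq 1$, giving b) with $\ell=b_{10}\in\nn$. Projectively, coincidence of these two canonical solutions means exactly $q_{20}=q_{1\infty}$, and single-valuedness of the underlying vector solution means the monodromy fixes this point with eigenvalue $1$. Here the computation enters: since $b_{10}=\ell\in\zz$ and $b_{20}=0$, the formal monodromy at the origin is trivial, $M_{norm,0}=\idd$, so in the canonical basis at $0$ one has $M=C_{10}^{-1}C_{00}^{-1}$, a product of two unipotent oppositely-triangular Stokes matrices with off-diagonal multipliers $c_0$ (upper) and $c_1$ (lower). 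I would check that invariance of $f_{20}=(0,1)^{T}$ under $M$ forces $c_0=0$, whence $M$ is unipotent with fixed line exactly $q_{20}$ — precisely parabolicity. Conversely, parabolicity with fixed point $q_{20}$ forces $c_0=0$ by the same matrix identity, which simultaneously rules out the spurious eigenvalue $-1$ case (where the unique projective fixed point would not be $q_{20}$), so the monodromy eigenvalue on $q_{20}$ is exactly $1$ and the vector solution is single-valued. This is the equivalence "polynomial solution $\iff$ parabolic" under the standing normalization.

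For the reverse direction, assuming a)--d), I would first invoke Remark \ref{aut11}: parabolicity automatically implies (1,1)-symmetry, so a), d) and parabolicity feed into Corollary \ref{lmchoy} and give gauge equivalence to a system of type (\ref{mchoyn}). The eigenvalue data a) and integrality b) then place us in the hypotheses of Proposition \ref{polcan}, and the translation above supplies exactly the coincidence $q_{20}=q_{1\infty}$ and the eigenvalue-$1$ single-valuedness needed to produce the unique polynomial solution, of degree $\ell\geq1$ and hence non-constant. Non-degeneracy enters to guarantee at least three distinct points among $q_{10}',q_{10},q_{20},q_{2\infty}$, which is the regime in which the reconstruction in the proof of Theorem \ref{thgauge} yields a scalar gauge and excludes the degenerate diagonalizable configuration (whose distinguished solutions are exponential rather than polynomial); for a system satisfying a) one verifies that parabolicity with the prescribed zero-eigenvalue labeling already forces this, so listing non-degeneracy is consistent and makes the criterion robust for the abstract reconstruction of Subsections 2.7--2.8. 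Finally, the closing assertion that c) holds for every symmetric pair of sectors and paths follows because existence of a polynomial solution is intrinsic to the system (independent of the auxiliary sector and path choices), while Corollary \ref{lmchoy} already secures (1,1)-symmetry for all symmetric choices; hence parabolicity and non-degeneracy, being equivalent to this intrinsic property under any one symmetric choice, transfer to all of them.

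The technical heart, and the step I expect to be most delicate, is the exact matching of eigenvalues in the single-valuedness analysis: distinguishing genuine vector-level monodromy invariance (eigenvalue $1$) from mere projective fixing, and ensuring that the projective fixed point of the parabolic monodromy is $q_{20}$ rather than the spurious fixed point occurring when the lifted monodromy has eigenvalue $-1$. This rests on careful bookkeeping of the oppositely-triangular Stokes structure together with the normalizations $M_{norm,0}=\idd$ and $\det M=e^{2\pi i\ell}=1$, and on a clean argument that the degenerate ($\leq 2$-point) configuration is incompatible with the fixed zero-eigenvalue labeling, so that non-degeneracy is indeed the correct projective counterpart of the situation at hand.
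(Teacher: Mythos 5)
Your proposal is correct and takes essentially the same route as the paper's own proof: both directions rest on Corollary \ref{lmchoy} together with Remark \ref{aut11}, on Propositions \ref{poldeg}/\ref{polcan}, and on formula (\ref{monprod}) with trivial formal monodromy to convert monodromy-invariance of the polynomial solution into parabolicity (your explicit Stokes-multiplier computation forcing $c_0=0$ is just a coordinate version of the paper's determinant-plus-unit-eigenvalue argument, and your converse routed through Proposition \ref{polcan} matches the paper's inline reconstruction of the polynomial solution). The one place you are sketchier than the paper is the non-degeneracy step, where the paper spells out the contradiction (degenerate parabolic data forces, via the end of the proof of Theorem \ref{thgauge}, gauge equivalence to a diagonal system, hence $\chi=a=0$, and the diagonal system has only exponential, non-polynomial solutions); the mechanism you name is exactly this one, so no genuine gap remains.
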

 
 \begin{proof} 
 First let us prove that each system (\ref{mchoyn}) with a non-constant polynomial solution 
 satisfies statements a)--d). Statements a) and d) hold for each system (\ref{mchoyn}). Statement b) follows 
 from Proposition \ref{poldeg}. 
 Let us now prove parabolicity of the monodromy-Stokes data. 
 The monodromy operator has unit determinant, since it is the product of unipotent matrices and 
 the monodromy of the formal normal form, i.e., the formal monodromy, see (\ref{monprod}), and the formal monodromy is equal to $\diag(e^{2\pi i\ell}, 1)=Id$, since $\ell\in\nn$. Hence, the monodromy 
 is unipotent: it has unit determinant and at least one unit eigenvalue, which corresponds  
 to the polynomial solution (fixed by the monodromy).  The value at $z=1$ of the 
 projectivized polynomial solution is equal to $q_{20}=q_{1\infty}$, and it is a fixed point of the projectivized monodromy, by the above similar statement on the monodromy.  This together with 
  unipotence of the monodromy 
 implies  parabolicity of the monodromy-Stokes data. Let us prove its non-degeneracy.

 Suppose the contrary: among the points $q_{jp}$, $q_{10}'$ at most two are distinct. 
 Then $q_{10}=q_{2\infty}=q_{10}'$, since $q_{10}\neq q_{20}=q_{1\infty}$, $q_{1\infty}\neq q_{2\infty}$,  $q_{20}$ is a fixed  point of the projectivized monodromy transformation, and thus, the image $q_{10}'$ of the point $q_{10}$ under the projectivized monodromy is distinct from $q_{20}$.  Therefore,  
 $q_{10}=q_{2\infty}$, $q_{20}=q_{1\infty}$, and they are fixed by the projectivized monodromy. 
 Thus, the corresponding linear system is  gauge equivalent to a diagonal system, as at the end of the proof of Theorem \ref{thgauge}.  Hence, $\chi=a=0$, and thus, 
system (\ref{mchoyn}) is diagonal. But then it has no non-constant polynomial solution, since 
it is the direct sum of one-dimensional equations $Y_1'=(-\frac s{2z^2}+\frac{\ell-\chi a}z)Y_1$, 
 $Y_2'=(\frac{\chi a}z+\frac s2)Y_2$, and none of them has a non-constant polynomial  solutuon, --  a contradiction. 
  
 Let us prove the converse: assyming conditions a)-d), let us show 
 that the system (\ref{eqlin}) in question is gauge 
 equivalent to a system (\ref{mchoyn}) having a non-constant polynomial solution. We already know 
 that the system is gauge equivalent to a system (\ref{mchoyn}), by Corollary \ref{lmchoy}: the monodromy-Stokes data is (1,1)-symmetric, see Remark \ref{aut11}. Thus, we 
 consider that our system is of type (\ref{mchoyn}). Its 
projectivized canonical solutions corresponding to the eigenvalues $\la_{20}=\la_{1\infty}=0$ at 
the origin and at infinity
paste together to the projectivization of just one solution $Y(z)$ of linear system, 
since $q_{20}=q_{1\infty}$ (parabolicity). The projectivized solution $Y(z)$ is fixed by the projectivized monodromy, since the latter fixes $q_{20}$  (parabolicity). Hence,  $Y(z)$ is a canonical solution of linear system, and the monodromy multiplies $Y(z)$ by a constant factor. The formal residue eigenvalues at $0$ and at $\infty$ corresponding to $Y(z)$ are respectively $b_{20}=0$ and $b_{1\infty}=\ell$, by definition, and $\ell\in\nn$ by assumption. 
This together with (\ref{monprod}) implies that in the canonical sectorial solution basis at the origin  
the monodromy is triangular and $Y(z)$ is its eigenvector with unit eigenvalue. Thus, 
$Y(z)$ is holomorphic on all of $\cc^*$. Recall that the canonical solution $Y(z)$ corresponds 
to zero main term  eigenvalues at $0$ and $\infty$. This together with the above formal 
residue eigenvalue statement imply that $Y(z)$  extends holomorphically to the origin and has growth of order $z^\ell$ (times a non-zero constant vector), 
as $z\to\infty$. Hence, it  is a nonconstant vector polynomial solution. Theorem \ref{thpar} is proved.
\end{proof}

In what follows we use the following general property of bundles with connections having 
parabolic monodromy-Stokes data.

\begin{proposition} \label{pbng} Let a holomorphic vector bundle $\mce$ on the Riemann sphere $\oc_z$ 
be equipped with a meromorphic 
connection with irregular non-resonant singular points at $0$ and at $\infty$ with formal 
normal forms 
$$\dot Y=\left(\frac{\diag(-\frac s2, 0)}{z^2}+\frac{\diag(\ell,0)}z\right)Y \ \text{ at } 0,$$
$$\dot Y=\left(\diag(0,\frac s2)+\frac{\diag(\ell,0)}z\right)Y \ \text{ at } \infty.$$
Let $S_0^0$ and $S_0^{\infty}$ be Stokes sectors at $0$ and at $\infty$ respectively that are 
permuted by the involution $z\mapsto z^{-1}$. 
Let $\alpha_0$ and $\alpha_\infty$ be paths from 1 to $S_0^0$ and $S_0^\infty$ respectively 
that are also permuted by the same involution. 

1) The connection has a meromorphic solution (i.e., a flat section), if and only if $\ell\in\zz$ and the monodromy-Stokes data\footnote{The notion of monodromy-Stokes data is well-defined for 
connections in not necessarily trivial bundles over the Riemann sphere that have irregular non-resonant singular points of Poincar\'e rank 1 at $0$ and at $\infty$. In our case, when the eigenvalue 
collections of the main term matrices differ by sign, the notion of parabolic monodromy-Stokes data is also well-defined.} $(q_{10}, q_{20}, q_{1\infty},q_{2\infty})$ defined by the paths $\alpha_{p}$, $p=0,\infty$ and indexed by the main term eigenvalues $-\frac s2$, $0$, $0$, $\frac s2$, is parabolic. In this case the 
meromorphic solution is unique up to constant factor and is holomorphic on $\cc$. 

2) Let a meromorphic solution exist. Then  the bundle with connection is diagonal, i.e., a direct sum of one-dimensional 
holomorphic vector bundles with connections, if and only if  
 the monodromy-Stokes data is degenerate.
\end{proposition}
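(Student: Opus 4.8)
The plan is to mirror the proofs of Proposition \ref{polcan}, Theorem \ref{thpar} and the final part of Theorem \ref{thgauge}, observing that every ingredient used there is local at the singular points and depends only on the formal normal forms and the Stokes data; consequently all the arguments remain valid for a meromorphic connection on a possibly nontrivial bundle $\mce$. Throughout, the flat section we are looking for will correspond to the zero main-term eigenvalue at both singular points: at $0$ its residue eigenvalue is $b_{20}=0$ and its projectivized value at the base point $z_0=1$ is $q_{20}$, while at $\infty$ its residue eigenvalue is $b_{1\infty}=\ell$ and its projectivized value is $q_{1\infty}$. In suitable trivializing charts this solution behaves as a nonzero constant (holomorphically) as $z\to0$ and as $z^{\ell}$ times a nonzero vector as $z\to\infty$.

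For Statement 1, first suppose a meromorphic flat section $Y$ exists. By the domination principle (the growth argument in the proof of Proposition \ref{polcan}), near each singular point $Y$ cannot be dominated by an exponentially growing canonical solution, so it must itself be the canonical sectorial solution attached to the zero main-term eigenvalue; hence its projectivizations computed from $0$ and from $\infty$ agree, giving $q_{20}=q_{1\infty}$. Single-valuedness of $Y$ forces it to be fixed, up to a scalar, by the monodromy, so the projectivized monodromy fixes $q_{20}$. The local form $Y\sim z^{\ell}$ at infinity can be single-valued only if $\ell\in\zz$; then $\det M=e^{2\pi i\ell}=1$ by (\ref{dettr}) (the residue eigenvalues of the normal form being $\ell$ and $0$), and since $M$ already has the eigenvalue $1$, it is unipotent, so its projectivization is parabolic or the identity. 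Together these statements say precisely that the monodromy-Stokes data is parabolic. Conversely, assume $\ell\in\zz$ and parabolicity. Since $q_{20}=q_{1\infty}$, the two canonical solutions attached to the zero eigenvalue at $0$ and at $\infty$ are proportional and define a single solution holomorphic on $\cc^{*}$; parabolicity makes it an eigenvector of the (unipotent or trivial) monodromy with eigenvalue $1$, hence single-valued, and $\ell\in\zz$ together with $b_{20}=0$ make it holomorphic at $0$ and meromorphic (a pole of order $\ell$) at $\infty$. Uniqueness up to a constant again follows from the domination argument.

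For Statement 2 assume a meromorphic solution exists, so by Statement 1 the data is parabolic. If the bundle with connection is diagonal, its two summand directions are the eigenlines at both singular points; the monodromy is then diagonal and fixes every $q_{jp}$, so $q_{10}'=q_{10}$, and matching the summands with parabolicity gives $q_{20}=q_{1\infty}$ for one summand and $q_{10}=q_{2\infty}$ for the other. Thus $q_{10}'=q_{10}=q_{2\infty}$, leaving only two distinct points among $q_{10}',q_{10},q_{20},q_{2\infty}$, i.e.\ the data is degenerate. Conversely, suppose the data is degenerate. Since $q_{20}$ is distinct from both $q_{10}$ and $q_{2\infty}$, degeneracy forces $q_{10}=q_{2\infty}$ together with $q_{10}'\in\{q_{10},q_{20}\}$; a nontrivial parabolic fixing only $q_{20}$ can neither fix $q_{10}\neq q_{20}$ nor send it to $q_{20}$, so the projectivized monodromy must be the identity and $M$ is scalar. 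Then $M$ is diagonal in the canonical sectorial bases at $0$ and at $\infty$, which by (\ref{monprod}) and the opposite triangularity and unipotence of the Stokes matrices forces all Stokes matrices to be trivial. Hence the two projectivized canonical solutions extend to meromorphic functions on all of $\oc$ whose graphs in $\oc_{\Phi}\times\oc_{z}$ are disjoint (their values at $0$ and at $\infty$ are the distinct eigenlines); by the K\"unneth and bidegree intersection argument used at the end of the proof of Theorem \ref{thgauge}, disjoint graphs are possible only if both solutions are constant, and constancy makes the connection matrices diagonal, so the bundle with connection splits.

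The main obstacle is not any single computation but the bookkeeping that lets the trivial-bundle arguments of Theorems \ref{thgauge} and \ref{thpar} be reused for a connection on a nontrivial bundle: one has to check that the domination/canonicity dichotomy and the meromorphy conclusions at $\infty$ are purely local and hence intrinsic to the connection, as is the very notion of parabolic monodromy-Stokes data in this setting (this is exactly the point flagged in the footnote to the proposition). The second delicate point is the intersection-theoretic step in the degenerate case, which I import from the proof of Theorem \ref{thgauge} rather than reprove.
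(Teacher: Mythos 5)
Your proof of Statement 1 and of the ``diagonal $\Rightarrow$ degenerate'' half of Statement 2 is correct and essentially the paper's own argument; your only variation (deriving unipotence of $M$ from $\det M=e^{2\pi i\ell}=1$ via (\ref{dettr}) together with the eigenvalue $1$ forced by single-valuedness, instead of the paper's route through (\ref{monprod}) and triviality of one Stokes matrix) is harmless.

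The gap is in the converse of Statement 2. You import the K\"unneth/bidegree argument from the end of the proof of Theorem \ref{thgauge}: you treat the two projectivized canonical solutions as meromorphic functions on $\oc$ with graphs in $\oc_\Phi\times\oc_z$, conclude from disjointness that both are constant, and then say that constancy makes ``the connection matrices'' diagonal. All of this presupposes a global trivialization of $\mce$. But Proposition \ref{pbng} is stated for a possibly nontrivial bundle, and that generality is exactly why the proposition exists: it is applied (in Proposition \ref{crossm}) to the glued bundles $\mce_{t,u,\zeta}$ of Subsection 2.7, which carry no preferred trivialization. For a nontrivial $\mce\cong\mathcal O(a)\oplus\mathcal O(b)$, $a\neq b$, the projectivized solutions are sections of the Hirzebruch surface $\mathbb P(\mce)$, where ``constant'' is meaningless and where pairs of disjoint holomorphic sections (of different self-intersection) do exist; so the imported step is undefined, and its conclusion false, in the required generality.

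The repair is actually simpler than the argument you import. Once you know (as you correctly establish) that $M$ is scalar and all four Stokes matrices are trivial, the projectivizations of $f_{10}$ and $f_{20}$ are globally defined holomorphic sections of $\mathbb P(\mce)$ over $\oc_z$: they are flat over $\cc^*$, they extend holomorphically to $0$ and to $\infty$ with values the two distinct eigendirections at each of these points (here degeneracy $q_{10}=q_{2\infty}$ and parabolicity $q_{20}=q_{1\infty}$ are used to match the solution coming from $0$ with the one coming from $\infty$), and they are disjoint over $\cc^*$ by uniqueness of the flat section of the projectivized connection through a given point. Two everywhere-disjoint holomorphic sections of $\mathbb P(\mce)$ define holomorphic line subbundles $L_1,L_2\subset\mce$ with $L_{1,z}\oplus L_{2,z}=\mce_z$ for every $z$, i.e.\ $\mce=L_1\oplus L_2$, and flatness of the sections makes each $L_j$ invariant under the connection. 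This yields the diagonal decomposition with no intersection theory at all; the K\"unneth/bidegree step is needed in Theorem \ref{thgauge} only because there one must prove the stronger statement that the system's matrices are diagonal in the given coordinates, not merely that the bundle with connection splits.
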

\begin{proof} A meromorphic solution $f$ should coincide with a canonical solution 
at the origin and at infinity that corresponds to zero eigenvalue of the main term matrix: thus, these 
solutions are pasted together, and hence, $q_{20}=q_{1\infty}$. This together with the equality $b_{20}=0$ 
implies holomorphicity on $\cc$ and uniqueness. The corresponding eigenvalue of the residue matrix of the formal normal form should be integer, by meromorphicity. Hence, $\ell\in\zz$. Therefore, at each singular 
point $0$, $\infty$ the formal monodromy is trivial, and at least one of the Stokes matrices is also trivial, since the canonical solution $f$ is meromorphic. 
Hence, the monodromy of the connection is unipotent, by   (\ref{monprod}). This together the 
equality $q_{20}=q_{1\infty}$ implies parabolicity of the monodromy-Stokes data. The converse 
is proved analogously to the above proof of Theorem \ref{thpar}. Statement 1) is proved.

Let in the above assumptions the bundle with connection be diagonal. 
Then the solutions of the connections in one-dimensional 
subbundles are canonical solutions at both singular points: the origin and the infinity. 
This implies that the connection is analytically equivalent to its formal normal form, and its  
monodromy is trivial, as is that of the normal form. The 
projectivizations of the above solutions 
 are holomorphic sections of the projectivized ambient bundle on $\cc^*$. Their  
values at $z=1$ are the points $q_{jp}$  from the monodromy-Stokes data. This 
together with the previous statement implies that $q_{10}=q_{2\infty}$, $q_{20}=q_{1\infty}$. 
Hence, the monodromy-Stokes data is degenerate. The converse is proved analogously to the 
proof of Theorem \ref{thpar}.
\end{proof} 

\subsection{Local parametrization  by formal invariant $s$ and  cross-ratio $\mcr$}
\begin{definition} Let $(q_{10}, q_{20}=q_{1\infty},  q_{2\infty}; M)$ be an abstract 
non-degenerate 
parabolic monodromy-Stokes data. Its {\it characterizing cross-ratio} is the cross-ratio of the four points $q_{20}$, $q_{10}$, $q_{10}'$, $q_{2\infty}$, see Definition \ref{qdiff}: 
\begin{equation}\mcr:=\frac{(q_{20}-q_{10}')(q_{10}-q_{2\infty})}{(q_{20}-q_{2\infty})(q_{10}-q_{10}')}.\label{crat}\end{equation}
\end{definition}

\begin{proposition} \label{abmon} Each abstract non-degenerate parabolic monodromy-Stokes data is uniquely 
determined by its characterizing cross-ratio, up to multiplication of the monodromy operator 
by scalar factor.
\end{proposition}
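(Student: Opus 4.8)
The plan is to use the equivalence of the data, i.e. the $\psl_2(\cc)$-action on the points $q_{jp}\in\oc_\Phi$ together with conjugation of the monodromy $M$, to bring any non-degenerate parabolic monodromy--Stokes data into a canonical form depending only on $\mcr$. First I would normalize the structural points. Parabolicity gives $q_{20}=q_{1\infty}$ as a fixed point of the projectivized monodromy $\bar M$, so applying a Möbius transformation I place $q_{20}=q_{1\infty}=\infty$ and $q_{10}=0$. In the affine coordinate $w=\Phi=\frac{Y_2}{Y_1}$ the map $\bar M$ (parabolic fixing $\infty$, or the identity) becomes a translation $w\mapsto w+u$, and by Definition \ref{qdiff} its translation length is $u=\bar M(q_{10})=q_{10}'$. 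The residual gauge fixing $0$ and $\infty$ is exactly the scaling $w\mapsto\alpha w$.

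Next I would evaluate the cross-ratio (\ref{crat}) in these coordinates. Writing $q_{10}'=u$ and $q_{2\infty}=v$ (with $v\neq\infty$, since $q_{2\infty}\neq q_{1\infty}=q_{20}$), the two factors containing $q_{20}=\infty$ have ratio tending to $1$, and (\ref{crat}) reduces to $\mcr=v/u$, which is visibly invariant under the residual scaling, as it must be. I then split into two cases. If $\bar M\neq\operatorname{Id}$, then $u=q_{10}'\neq0$, and I use the scaling to set $u=1$; this renders the configuration rigid (the only scaling fixing $q_{10}'=1$ is $\alpha=1$) and forces $q_{2\infty}=v=\mcr$, so the four points become $0,1,\infty,\mcr$, determined by $\mcr$ alone (and non-degeneracy is automatic, as $0,1,\infty$ are already three distinct points). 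If $\bar M=\operatorname{Id}$, then $q_{10}'=q_{10}=0$ forces $\mcr=\infty$, non-degeneracy forces $v\neq0$, and I set $q_{2\infty}=1$ by scaling; the configuration is again rigid.

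It then remains to recover $M$ itself up to scalar. Writing the action of $M=\left(\begin{smallmatrix}a&b\\c&d\end{smallmatrix}\right)$ on $w=\frac{Y_2}{Y_1}$ as $w\mapsto\frac{dw+c}{bw+a}$, the equation $\bar M(w)=w+1$ forces $b=0$ and $a=d=c$, so $M=a\left(\begin{smallmatrix}1&0\\1&1\end{smallmatrix}\right)$ is the standard unipotent up to the scalar $a$; in the case $\bar M=\operatorname{Id}$ the operator $M$ is scalar, hence the identity up to scalar. Since no residual gauge survives the normalization, both the four points and $M$ (up to a scalar) are completely pinned down by $\mcr$. Consequently two non-degenerate parabolic monodromy--Stokes data with the same $\mcr$ are carried to the same canonical form, so they are equivalent up to multiplication of $M$ by a scalar, which is the assertion.

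The computations here are all elementary; the two points I expect to require care are the bookkeeping of the residual gauge, so as to be certain the normalized form is genuinely rigid (so that $M$ is determined up to scalar and not merely up to conjugation by a leftover stabilizer), and the clean separation of the generic parabolic case $\mcr\in\cc$ from the trivial-monodromy case $\mcr=\infty$. The scalar ambiguity in $M$ is expected and unavoidable, since conjugation preserves $\det M$ whereas the projectivized data does not see the scalar.
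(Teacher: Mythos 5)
Your proof is correct and follows essentially the same route as the paper's: normalize three of the relevant points to $0,1,\infty$ by a M\"obius transformation, identify the projectivized monodromy as a translation in the chart where $q_{20}=\infty$, read the remaining data off from $\mcr$, and conclude rigidity. The only (harmless) differences are that you normalize $q_{10}'$ rather than $q_{2\infty}$, so your exceptional case is $\mcr=\infty$ (identity projectivized monodromy) where the paper's is $\mcr=0$ (the coincidence $q_{10}=q_{2\infty}$), and that you make explicit the matrix computation pinning down $M$ up to scalar, which the paper only asserts.
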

\begin{proof} The cross-ratio $\mcr$ defines the four points $q_{10}$, $q_{10}'$, 
$q_{20}$, $q_{2\infty}$ on the Riemann sphere $\oc$ up to M\"obius transformation. Except may be for some of the cases, when some of them coincide. Let us check that this statement remains 
valid in case of coincidences as well. Indeed, $q_{10}, q_{2\infty}\neq q_{20}=q_{1\infty}$. One has $q_{10}'\neq q_{20}$: the
 point $q_{10}'$ is the image of the point $q_{10}\neq q_{20} $ under the projectivized monodromy, which  fixes the point $q_{20}$. Therefore, the only a priori possible coincidences are either 
 $q_{10}=q_{2\infty}$, or $q_{10}'=q_{2\infty}$, or $q_{10}'=q_{10}$: only one coincidence may take place, since the total number of distinct points among $q_{10}$, $q_{10}'$, $q_{20}$, 
 $q_{2\infty}$ is at least three (non-degeneracy). In the case, when $q_{10}=q_{2\infty}$, one has $\mcr=0$. 
 In the case, when $q_{10}'=q_{2\infty}$, one has $\mcr=1$. In the case, when 
 $q_{10}'=q_{10}$, one has $\mcr=\infty$. This yields three distinct values for $\mcr$ for the three 
 possible coincidences. 
 
 Let us first assume for simplicity that  $q_{10}\neq q_{2\infty}$. Then 
 applying a M\"obius transformation we can and will consider that $q_{20}=\infty$, 
$q_{10}=0$, $q_{2\infty}=1$. Then in the coordinate on $\cc=\oc\setminus\{ q_{20}\}$, which will be denoted by $w$,  the projectivized monodromy operator is a translation $w\mapsto w+u$, $u\in\cc$. One should have 
$q_{10}'=q_{10}+u=u$, since $q_{10}'$ is the image of the point $q_{10}=0$. Therefore, 
$$\mcr=\frac{q_{2\infty}-q_{10}}{q_{10}'-q_{10}}=\frac{1}u.$$
Thus, the correspondence $\mcr\mapsto u$ is bijective, and the projectivized monodromy 
can be restored from $\mcr$, as does $u$. The case, when $q_{10}=q_{2\infty}$, i.e., 
$\mcr=0$, is treated analogously. Namely, in this case $q_{10}'\neq q_{10}, q_{20}$ (non-degeneracy). Applying a M\"obius transformation, we consider that $q_{20}=q_{1\infty}=\infty$,  
$q_{10}=q_{2\infty}=0$ and $q_{10}'=1$ in the standard affine coordinate $w$ on $\cc=\oc\setminus\{ q_{20}\}$. Then the projectivized monodromy is the translation $w\mapsto w+1$. Hence, the value 
$\mcr=0$ determines the monodromy-Stokes data uniquely up to multiplication of the monodromy 
by scalar factor.  Proposition \ref{abmon} is proved.
\end{proof}

\begin{theorem} \label{threq} Two systems of type (\ref{mchoyn}) with $s\neq0$ 
having  polynomial solutions of degree $\ell\geq1$ coincide up to 
the parameter change $(\chi,a)\mapsto(-\chi,-a)$, if and only if the corresponding values of the paramer $s$ and the cross-ratio $\mcr$ (constructed for given symmetric pair of homotopic classes of paths 
$\alpha_{jp}$) are the same for both systems. 
\end{theorem}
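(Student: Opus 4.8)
The plan is to reduce both implications to the gauge-equivalence criterion of Theorem \ref{gaugeq}, exploiting that a system (\ref{mchoyn}) carrying a degree-$\ell$ polynomial solution has formal normal forms determined solely by the pair $(s,\ell)$ (its leading eigenvalues are $-\tfrac s2,0$ at $0$ and $0,\tfrac s2$ at $\infty$, and its residue eigenvalues in normal form are $\ell,0$ at both points), together with parabolic non-degenerate monodromy-Stokes data by Theorem \ref{thpar}. Since $\ell\geq1$ forces the polynomial solution to be non-constant, each such system has $(\chi,a)\neq(0,0)$; this will be used below.

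For the forward implication I would first check that the parameter change $(\chi,a)\mapsto(-\chi,-a)$ is realized by the constant gauge transformation $Y\mapsto\diag(1,-1)Y$: conjugating each of the three coefficient matrices of (\ref{mchoyn}) by $\diag(1,-1)$ negates exactly the off-diagonal entries and fixes the diagonal ones, which is precisely the effect of this sign change on (\ref{mchoyn}). A constant gauge leaves $z$ untouched, hence preserves $s$ as well as the chosen symmetric sectors and paths, and acts on the monodromy-Stokes data by the induced M\"obius map $\Phi\mapsto-\Phi$. As the characterizing cross-ratio $\mcr$ of (\ref{crat}) is M\"obius-invariant, both $s$ and $\mcr$ are unchanged.

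For the converse, suppose the two systems share the same $s$ and the same $\mcr$. Equal $(s,\ell)$ gives identical formal normal forms. By Theorem \ref{thpar} both monodromy-Stokes data are parabolic non-degenerate, so by Proposition \ref{abmon} they agree up to a scalar multiple on the monodromy $M$; this scalar is forced to be $1$, since $\det M=\exp(2\pi i\,\tr R)=e^{2\pi i\ell}=1$ (as $\tr R=\ell\in\nn$) by (\ref{dettr}) and the monodromy is unipotent, whereas a non-identity unipotent operator scaled by $\lambda\neq1$ would have eigenvalues $\lambda,\lambda\neq1$. Thus the two systems have the same formal normal forms and the same monodromy-Stokes data, and Theorem \ref{gaugeq} yields a constant gauge equivalence $Y\mapsto HY$, $H\in\gl_2(\cc)$, between them. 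To identify $H$, I would invoke the rigidity coming from the parameter-independence of the distinguished eigenlines: in every system (\ref{mchoyn}) the eigenline of the leading matrix at $0$ for the eigenvalue $-\tfrac s2$ is the first coordinate axis and the eigenline of the leading matrix at $\infty$ for $\tfrac s2$ is the second coordinate axis, independently of $(\chi,a)$. Since $H$ intertwines the leading matrices of the two systems, it must carry each of these axes to itself, hence $H=\diag(h_1,h_2)$ is diagonal. Writing $\kappa=h_2/h_1$ and matching the $1/z^2$, $1/z$ and constant terms forces $\chi'=\kappa\chi=\kappa^{-1}\chi$ and $a'=\kappa a=\kappa^{-1}a$; because $(\chi,a)\neq(0,0)$ this gives $\kappa^2=1$. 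The value $\kappa=1$ yields coinciding systems and $\kappa=-1$ yields exactly $(\chi,a)\mapsto(-\chi,-a)$.

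The main obstacle is the rigidity step of the converse: showing that the a priori arbitrary gauge produced by Theorem \ref{gaugeq} must be diagonal with $\kappa=\pm1$. This rests entirely on the parameter-independence of the distinguished eigenlines of the leading matrices of (\ref{mchoyn}) and on excluding $(\chi,a)=(0,0)$ via $\ell\geq1$; the resolution of the scalar ambiguity in Proposition \ref{abmon} through unipotence of the monodromy is the other point requiring care, but both reduce to short direct checks rather than substantial new arguments.
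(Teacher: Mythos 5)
Your proof is correct and takes essentially the same route as the paper's: the forward implication is realized by the constant gauge $\diag(1,-1)$, and the converse combines Proposition \ref{abmon} with unipotence of the monodromy (to kill the scalar ambiguity), Theorem \ref{gaugeq}, and the rigidity argument showing any gauge between two systems (\ref{mchoyn}) must be diagonal with ratio $\kappa=\pm1$, using $(\chi,a)\neq(0,0)$. One small caveat: $\det M=1$ alone does not give unipotence — you also need that the single-valued polynomial solution is fixed by $M$, supplying one unit eigenvalue (this is how the paper's proof of Theorem \ref{thpar} gets it; in its proof of Theorem \ref{threq} the paper instead argues via triviality of one Stokes matrix and formula (\ref{monprod})) — but this is a one-line repair, not a substantive gap.
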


\begin{proof} If two systems of type (\ref{mchoyn}) are gauge equivalent, then the gauge equivalence is given by a matrix of the type $\diag(1,\pm1)$. Indeed, the gauge equivalence matrix 
should be simultaneously upper- and lower-triangular, since the conjugation by it should preserve 
the (opposite) triangular types of the main term matrices $K$ and $N$ at zero and at infinity, and each one of the latter matrices 
has distinct eigenvalues.  Therefore, the gauge transformation  is given by a diagonal matrix 
$\La=(\La_1,\La_2)$. 
It acts on the residue matrix by multiplying its off-diagonal terms $\pm a$ by ratios of eigenvalues: 
 by the numbers $\left(\frac{\La_1}{\La_2}\right)^{\pm1}$. Thus transformed residue matrix 
 has opposite off-diagonal elements, as does the initial matrix, if and only if the latter ratios are  
 equal to $\pm1$. That is, if and only if $\La=\diag(1,\pm1)$ up to scalar factor. The  
 gauge transformation in question does not change $s$, and it multiplies  both $\chi$, $a$ by the 
 same number $\pm1$. It does not change the monodromy-Stokes data, and hence, the cross-ratio $\mcr$. Conversely, let two systems (\ref{mchoyn}) with 
 polynomial solutions of degree $\ell\geq1$ have the same parameter value $s$, and let the corresponding 
 monodromy-Stokes data have the same cross-ratio. Then the corresponding monodromy 
 operators are the same. Indeed, one of the Stokes matrices at the origin is trivial: 
 this holds for every irregular non-resonant Poincar\'e rank 1 singularity, whenever one of the canonical solutions is holomorphic on $\cc^*$; in our case the 
 polynomial solution is such a solution. Therefore, in a canonical 
 sectorial solution basis at $0$ the monodromy is a triangular matrix with the same eigenvalues, as the formal monodromy, by (\ref{monprod}). The eigenvalues of the formal monodromy are  $e^{2\pi i\ell}=1$ and 
 $e^0=1$. Thus, the monodromy of system (\ref{mchoyn}) in question is unipotent. 
 Then it is uniquely determined by the projectivized monodromy, which in its turn 
 is uniquely determined by the cross-ratio $\mcr$, by Proposition \ref{abmon} and non-degeneracy of the monodromy--Stokes data, see Theorem  \ref{thpar}, Statement c). Finally, the 
 complete monodromy-Stokes data is uniquely determined by the cross-ratio $\mcr$. Thus, 
 the systems in question have the same formal normal forms at each singular point $p=0,\infty$ 
 and the same monodromy-Stokes data. Hence, they are gauge equivalent, by Theorem \ref{gaugeq}. 
 This together with the above characterization of gauge equivalent systems  (\ref{mchoyn}) 
 implies that the systems  in question are obtained one from the other by the 
 simultaneous sign change $(\chi,a)\mapsto(-\chi,-a)$. This proves Theorem \ref{threq}.
 \end{proof}

   \subsection{Realization of parabolic monodromy-Stokes data by vector bundles with connections} 
   
 Our goal is to prove Theorem  \ref{tsmooth}, which states that each determinantal surface $S_{\ell,\pm}$ is biholomorphically 
 parametrized (outside the locus $\{ s=0\}$) by the complement of $\cc^*\times\oc$ to an 
 analytic hypersurface (curve), and its fibers $\{ s=const\neq0\}$ are rational curves parametrized by 
 finitely punctured fibers $\{ s\}\times\oc$. The results of the previous subsection together with 
 \cite[appendix 3, lemma 1]{Bol18} imply that the parameters $(s,\mcr)$ yield a local biholomorphic 
 parametrization of each surface $S_{\ell,\pm}^o$. But the value of the parameter $\mcr$ depends on the 
 homotopy class of its defining pair of paths $\alpha_0$, $\alpha_\infty$ and the corresponding Stokes sectors 
 $S^0_0$, $S^\infty_0$; the paths and the sectors should be permuted by the involution $z\mapsto z^{-1}$. 
 As $s$ makes one turn around the origin, the Stokes sectors turn, the endpoints of the paths (which lie 
  in the Stokes sectors) also turn, and the homotopy class of each path changes. Therefore, as $s$ makes a turn while $\mcr$ remains constant, the monodromy-Stokes data defined by the initial paths 
  (and hence, the underlying system) changes in general. 
  
  To construct a global biholomorphic parametrization of the surfaces $S^o_{\ell,\pm}$, we pass to the 
  universal covering $\cc_t$ over the punctured line $\cc^*_s$, $s=e^t$, and construct  continuous families 
  of Stokes sectors $S_{0,t}^0$, $S_{0,t}^\infty$ and paths $\alpha_{0,t}$, $\alpha_{\infty,t}$. Then we construct 
  a holomorphic family $\mce_{t,\mcr}$ of holomorphic vector bundles with connections over $\oc_z$, parametrized by 
  $(t,\mcr)$, with formal normal forms  (\ref{fnf00}) and (\ref{fnfii}) at $0$ and at $\infty$ respectively, whose  monodromy-Stokes data 
  defined by the above paths and sectors is parabolic and has cross-ratio $\mcr$. We show that two bundles 
  $\mce_{t,\mcr}$ and $\mce_{t',\mcr'}$ with connections are gauge equivalent, if and only if 
  \begin{equation}(t',\mcr')=(t,\mcr)+(2\pi in, 2n), \  \ n\in\zz.\label{bundleq}
 \end{equation} 
 Afterwards, in the next subsection we pass to the quotient  of the space $\cc_t\times\oc_\mcr$ by the cyclic group 
 generated by the translation by the vector $(2\pi i,2)$ and get a family of bundles with connections 
 parametrized by $\cc^*_s\times\oc$. We show that all the parameters in $\cc^*\times\oc$ except for an 
 analytic hypersurface (curve) correspond to trivial bundles equipped with linear systems of type (\ref{mchoyn}) 
 with polynomial solutions. This will yield a biholomorphic parametrization of each surface $S_{\ell,\pm}^o$ 
 by the above parameters in $\cc^*_s\times\oc$. 

The above-mentioned families of sectors and paths are 
  $$S_{0,t}^0=\{\im t-\arg z\in(-\frac\pi2+\delta, \frac{3\pi}2-\delta)\}, \ \ 
 S_{0,t}^\infty=\{\im t+\arg z\in(-\frac\pi2+\delta, \frac{3\pi}2-\delta)\},$$
\begin{equation}\alpha_{0,t}(\tau)= e^{i\tau(Im t-\pi)}, \ \alpha_{\infty,t}(\tau)=\overline{\alpha_{0,t}(\tau)}, 
 \ \tau\in[0,1].\label{alft}\end{equation}
 They are permuted by the involution $z\mapsto z^{-1}$, and the sectors are Stokes for the formal normal 
 forms (\ref{fnf00}) and (\ref{fnfii}) respectively. This follows from definition.
 
  \begin{theorem} \label{tbundle} For every $\ell\in\nn$ there exists a two-dimensional 
  holomorphic vector bundle 
  $\mce=\mce_\ell$ over $\oc_z\times\cc_t\times\oc_\mcr$ such that for every 
  $t\in\cc$, set $s:=e^t$,  and every $\mcr\in\oc$ the restriction $\mce_{t,\chi}=\mce_{t,\chi,\ell}$ 
  of the bundle $\mce$ to  $\oc_z=\oc_z\times\{(t,\chi)\}$ is equipped with a meromorphic connection 
  depending holomorphically on $(t,\chi)$ that has  formal normal forms 
$$ Y'=\left(\frac{\diag(-\frac s2, 0)}{z^2}+\frac{\diag(\ell, 0)}z\right)Y \ \ \text{ at } 0,$$
 \begin{equation} 
 Y'=\left(\diag(0, \frac s2)+\frac{\diag(\ell, 0)}z\right)Y \ \ \text{ at } \infty\label{2fnf}\end{equation} 
and satisfies the following statements: 
 
a) The monodromy-Stokes data of the connection defined by the above Stokes sectors and paths is 
parabolic and has the given cross-ratio $\mcr$. 
 
 b) Two bundles $\mce_{t,\mcr}$, $\mce_{t',\mcr'}$ with connections corresponding to two distinct $(t,\mcr)$ and $(t',\mcr')$ are analytically equivalent, if and only if relation (\ref{bundleq}) holds.
 \end{theorem}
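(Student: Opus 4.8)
The plan is to realize the prescribed data by a Birkhoff--Malgrange--Sibuya type gluing construction, carried out in a holomorphic family over $(t,\mcr)$. First I would fix the local models at the two singular points: the formal normal forms (\ref{2fnf}) are diagonal and explicit, with canonical fundamental solutions $W_0(z)=\diag(e^{s/(2z)}z^\ell,1)$ near $0$ and $W_\infty(z)=\diag(z^\ell,e^{sz/2})$ near $\infty$, where $s=e^t$; since $\ell\in\nn$ the formal monodromy $\diag(e^{2\pi i\ell},1)=\idd$ is trivial at both points. The bundle $\mce_{t,\mcr}$ is then assembled by covering $\oc_z$ by sector-neighbourhoods of $0$ and of $\infty$ together with an intermediate annulus, placing on each sector the appropriate normal form with its canonical solution, and gluing across sector boundaries by Stokes matrices and across the annulus by a single connecting matrix. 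By the realization theorem for meromorphic connections with prescribed formal normal forms, Stokes matrices and connecting matrix (see \cite{sib,malgr,Bol18}), every compatible such tuple of gluing data is realized by a connection on $\oc$, unique up to gauge equivalence.

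Second, I would exhibit the gluing data as a holomorphic family over $(t,\mcr)\in\cc\times\oc$. The Stokes sectors $S_{0,t}^0$, $S_{0,t}^\infty$ and the paths $\alpha_{0,t}$, $\alpha_{\infty,t}$ of (\ref{alft}) rotate continuously with $\im t$ and stay symmetric under $z\mapsto z^{-1}$; passing to the $t$-line (the universal cover of $\cc^*_s$) is exactly what makes this rotation single-valued as $\im t$ ranges over $\rr$. By Proposition \ref{abmon} a non-degenerate parabolic monodromy--Stokes data is determined by its cross-ratio $\mcr$ up to a scalar factor of the monodromy, and that factor is pinned to be trivial by unipotence of the monodromy (forced by $\ell\in\zz$ through (\ref{monprod})). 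I would therefore choose explicit representatives of the points $q_{jp}$, of the unipotent $M$, and of the Stokes and connecting matrices, depending holomorphically (in fact rationally) on $\mcr$ over all of $\oc_\mcr$ — including the degenerate values and $\mcr=\infty$ — and holomorphically on $s=e^t$. Feeding this parametric data into the realization theorem yields the holomorphic bundle $\mce=\mce_\ell$ over $\oc_z\times\cc_t\times\oc_\mcr$ with the stated connection and formal normal forms, and statement a) holds by construction.

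For b), gauge equivalence preserves formal normal forms (Theorem \ref{gaugeq}), and (\ref{2fnf}) determines $s=e^t$; hence $\mce_{t,\mcr}\cong\mce_{t',\mcr'}$ forces $e^{t'}=e^t$, i.e. $t'=t+2\pi i n$, $n\in\zz$. Granting this, by Theorem \ref{gaugeq} it remains to compare the two data with respect to one common symmetric pair of paths, so the content is to compute how $\mcr$ changes when $\alpha_{p,t}$ is replaced by $\alpha_{p,t+2\pi i n}$, i.e. when each defining path winds $n$ extra times about the origin. In the coordinate $w$ on $\oc\setminus\{q_{20}\}$ normalized so that $q_{20}=q_{1\infty}=\infty$, the projectivized monodromy is $w\mapsto w+u$ and $\mcr=1/u$ (as in the proof of Proposition \ref{abmon}); the fixed point $q_{20}$ does not move. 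Since $\alpha_{0,t}$ and $\alpha_{\infty,t}=\overline{\alpha_{0,t}}$ are swapped by $z\mapsto z^{-1}$ and wind in opposite senses, one extra loop shifts the $0$-points by $M^{-1}$ and the $\infty$-points by $M^{+1}$; with $q_{10}=0$, $q_{2\infty}=1$, $q_{10}'=u$ this gives $q_{10}\mapsto -u$, $q_{2\infty}\mapsto 1+u$, and $q_{10}'\mapsto 0$, whence the new cross-ratio is $(-u-(1+u))/(-u-0)=\mcr+2$. (The overall sign of the shift reflects an orientation convention and is immaterial, as $n$ ranges over all of $\zz$.) Iterating, the data of $\mce_{t,\mcr}$ computed with $\alpha_{p,t'}$ is parabolic with cross-ratio $\mcr+2n$, so by Proposition \ref{abmon} and triviality of the scalar factor it agrees with that of $\mce_{t',\mcr'}$ iff $\mcr'=\mcr+2n$, which is precisely relation (\ref{bundleq}).

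I expect the main obstacle to be the parametric realization step together with the bookkeeping in b). Constructing the family so that it is a genuine holomorphic bundle over the whole product — with the gluing data extending holomorphically across $\mcr=\infty$ and the Stokes sectors rotating consistently as $\im t$ varies — is where the cited realization theorems must be applied with care. In b) the delicate point is tracking the direction conventions, namely which power of $M$ acts on which of the points $q_{jp}$ under one extra loop: it is the opposite winding of the two symmetric paths $\alpha_{0,t}$, $\alpha_{\infty,t}$ that makes the two contributions reinforce and produces the factor $2$ rather than $1$.
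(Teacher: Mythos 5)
Your part b) is essentially the paper's own argument: coincidence of formal normal forms forces $e^{t'}=e^{t}$, and the effect of replacing the defining paths $\alpha_{p,t}$ by $\alpha_{p,t+2\pi i}$ is computed exactly as in the paper's Proposition \ref{crossm}, with the same normalization $q_{10}=0$, $q_{20}=\infty$, $q_{10}'=u$, $q_{2\infty}=1$ and the same answer $\mcr\mapsto\mcr+2$. One caveat: your remark that the overall sign of the shift is ``immaterial'' is not quite right, since relation (\ref{bundleq}) couples the \emph{same} integer $n$ in both coordinates, so the lattices generated by $(2\pi i,2)$ and by $(2\pi i,-2)$ are different subgroups of $\cc\times\cc$; what saves you is that your stated convention (the $0$-data is continued by $M^{-1}$, the $\infty$-data by $M$) is the correct one for the paths (\ref{alft}) and counterclockwise monodromy, and it does produce $+2$, which is also what the coordinate $\xi=\pi i\mcr-t$ used later in the paper requires.

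The genuine gap is in the construction itself, i.e. in existence and statement a). You outsource it to a ``realization theorem for meromorphic connections with prescribed formal normal forms, Stokes matrices and connecting matrix'' applied in a holomorphic family, citing \cite{sib,malgr,Bol18}. Those results give, for \emph{fixed} data, a realization that is unique only up to gauge equivalence; they do not produce a bundle-with-connection family holomorphic in $(t,\mcr)$ over all of $\cc_t\times\oc_\mcr$, and they say nothing about how to choose representatives of the $\psl_2(\cc)$-equivalence class of monodromy--Stokes data consistently across the two affine charts of $\oc_\mcr$ (the natural normalizations near $\mcr=0$ and near $\mcr=\infty$ differ). You yourself flag this as ``the main obstacle'', and it is exactly where the paper does the real work: it writes down explicit model systems (\ref{sys0}) over $D_2$ and (\ref{sysinf}) over $\oc\setminus\overline{D_{\frac12}}$, whose canonical sectorial solutions are given by the explicit integral formulas (\ref{ff0}), (\ref{ffinf}) and whose monodromies are the explicit unipotent matrices (\ref{mndr}) with off-diagonal entry $u$ (Proposition \ref{cansol}); after conjugating the monodromy at infinity by $\Psi_\zeta$ so that the two monodromy matrices literally coincide, the ratio of fundamental matrices $X_{t,u,\zeta}=\mcy^0\left(\wt\mcy^\infty\right)^{-1}$ is single-valued on the annulus and glues the two trivial bundles into $\mce_{t,u,\zeta}$, manifestly holomorphic in $(t,u,\zeta)$, with parabolic data of cross-ratio $\zeta/u$ (Proposition \ref{pglbund}). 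The extension over $\mcr=\infty$ is then achieved not by postulating ``rational dependence'' of abstract data but by pulling the family back along the two sections $\sigma_0,\sigma_\infty$ of the tautological bundle over $\cp^1_{[u:\zeta]}$ and pasting by the unique normalized isomorphisms, whose uniqueness (hence holomorphic dependence on parameters) rests on non-degeneracy of the data (Proposition \ref{bundlee}). Some such explicit construction, or a genuinely parametric realization theorem that you would have to state and prove, is needed before your outline becomes a proof.
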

 \begin{proof} Fix a $\ell\in\nn$. Set  
 $$d_\ell=d_\ell(s):=\frac{\ell!}{2\pi i}\left(\frac2s\right)^\ell e^{\frac s2}.$$
 We consider the following model linear systems depending on parameters $s\in\cc^*$, $u\in\cc$: 
 \begin{equation}Y'=\left(\frac{\diag(-\frac s2,0)}{z^2}+\frac{\left(\begin{matrix}\ell & 0\\ ud_\ell & 0\end{matrix}\right)}z\right)Y \ \ \text{ over the disk } D_2:=\{|z|<2\};\label{sys0}\end{equation}
  \begin{equation}Y'=\left(\diag(0, \frac s2)+\frac{\left(\begin{matrix}\ell & ud_\ell \\ 0 & 0\end{matrix}\right)}z\right)Y \ \ \text{ over  } \oc\setminus\overline{D_{\frac12}}.\label{sysinf}\end{equation}
  These systems  have the prescribed formal normal forms (\ref{2fnf}) at the origin and at the infinity respectively. It is easy to check that their monodromy operators in appropriate local solution bases 
  are given by lower (upper) triangular unipotent matrices 
  with triangular elements being equal to $\pm u$. 
  We paste them together over the annulus 
  $$\mca:=\{\frac12<|z|<2\}=D_2\cap(\oc\setminus\overline{D_{\frac12}})$$ 
  in appropriate way depending on $(t,\mcr)$, preserving the 
  differential equations as connections on trivial bundles, in order to obtain 
  a holomorphic vector bundle $\mce_{t,\mcr}$ with connection having the desired cross-ratio $\mcr$ of the 
  monodromy-Stokes data. Recall that $\mcr$ is the cross-ratio between appropriate analytic 
  extensions of canonical sectorial solutions. To define the gluing, we first find the canonical solutions and their analytic extensions to  $z_0=1$ along the paths $\alpha_{0,t}^{-1}$, $\alpha_{\infty,t}^{-1}$ and find 
  monodromy matrices of systems (\ref{sys0}), (\ref{sysinf}). To this end, let us 
  introduce the paths 
  $$\beta_{0,t}(\tau)=(1-\tau)\alpha_{0,t}(1)=(1-\tau)e^{i(\im t-\pi)},$$
  $$\beta_{\infty,t}(\tau)=\frac1{\beta_{0,t}(\tau)}=\frac1{1-\tau}e^{-i(\im t-\pi)},$$
  $$\gamma_{p,t}:=\beta_{p,t}^{-1}\circ\alpha_{p,t}^{-1}$$
  Each $\gamma_{p,t}$ is a path  from the point $p$ to 1. 
  
  We will deal with functions defined on a disk $U\subset\cc^*$ 
  centered at $z_0=1$. For every $z\in U$ by $\gamma_{p,t,z}$ we will denote the composition of the path  $\gamma_{p,t}$ from  $p$ to 1 with the straight path from 1 to $z$. 
  
  The canonical sectorial basic solutions of system (\ref{sys0}) at $0$ in $S_{0,t}^0$  numerated by 
  the main term matrix eigenvalues $-\frac s2$ and $0$ will be denoted by $f_{10}$, $f_{20}$. 
  Similarly the canonical basic solutions of system (\ref{sysinf}) at $\infty$ in $S_{0,t}^{\infty}$ 
  numerated by 
  the main term matrix eigenvalues $0$ and $\frac s2$ will be denoted by $f_{1\infty}$, $f_{2\infty}$.
  
  \begin{proposition} \label{cansol} 1) The analytic extensions to the disk $U$ 
  of sectorial solutions $f_{10}$, $f_{20}$ 
  of system (\ref{sys0}) and $f_{1\infty}$, $f_{2\infty}$ for system (\ref{sysinf}) along paths 
  $\alpha_{0,t}$ and $\alpha_{\infty,t}$ respectively (after their appropriate normalizations by 
  constant factors)  are given  by the following formulas:
  \begin{equation} f_{10}(z)=(e^{\frac s2(\frac1z-1)}z^\ell, \ d_\ell u \int_{\gamma_{0,t,z}}\zeta^{\ell-1}e^{\frac s{2}(\frac1\zeta-1)}d\zeta), \ \ \ f_{20}(z)=(0,1),\label{ff0}\end{equation}
  \begin{equation} f_{1\infty}(z)=(z^\ell,0), \ \ \ f_{2\infty}(z)=(d_\ell u z^{\ell}\int_{\gamma_{\infty,t,z}}
  \zeta^{-(\ell+1)}e^{\frac s2(\zeta-1)}d\zeta, \ e^{\frac s2(z-1)}).\label{ffinf}\end{equation}
  
  2) In the above canonical bases the monodromy matrices $M_0$, $M_\infty$ of systems (\ref{sys0}) and (\ref{sysinf}) respectively are equal to 
  \begin{equation} M_0=\left(\begin{matrix} 1 & 0 \\ u & 1\end{matrix}\right), 
   \ M_\infty= \left(\begin{matrix} 1 & u \\ 0 & 1\end{matrix}\right).\label{mndr}\end{equation}
  \end{proposition}
  \begin{proof} The vector functions $f_{20}$ and $f_{1\infty}$ are clearly solutions 
  of systems (\ref{sys0}) and (\ref{sysinf}) respectively. They are obviously canonical, since they are respectively 
   holomorphic at zero and meromorphic at infinity. Let us show that the other vector functions 
   $f_{10}$, $f_{2\infty}$ are canonical. First let us prove their well-definedness: convergence of the 
   corresponding integrals. The integral in (\ref{ff0}) has subintegral expression 
   $\zeta^{\ell-1}e^{\frac s2(\frac1\zeta-1)}$, which has singular point 
   at the origin. It is taken along the path $\gamma_{0,t,z}$, which goes 
   out from the origin in the straightline direction with azimuth $\im t-\pi$ with respect to the 
   real axis (as does the path $\beta_{0,t}^{-1}$). Therefore,  along the latter straightline part of 
   the integration path one has
   $$\arg\zeta=\im t-\pi, \ \  \frac s{2\zeta}=\frac{e^t}{2|\zeta|e^{i(\im t-\pi)}}=-\frac{e^{\re t}}{2|\zeta|}.$$
  Therefore, the subintegral exponent is a flat function along the path $\gamma_{0,t,z}$ at the origin, 
  and hence, the integral in (\ref{ff0}) converges. Convergence of the integral in (\ref{ffinf}) is proved 
  analogously. The fact that the vector functions $f_{10}$ and $f_{2\infty}$ are solutions of systems 
  (\ref{sys0}) and (\ref{sysinf}) respectively is proved by straightforward differentiation. Let us prove 
  that their  analytic extensions along the paths $\alpha_{0,t}$ and 
  $\alpha_{\infty,t}$ are the canonical sectorial solutions in $S_{0,t}^0$ and $S_{0,t}^{\infty}$ 
  respectively that are numerated by the main term matrix eigenvalues $-\frac s2$ and $\frac s2$. 
  Let us prove this statement for $f_{10}$: for the vector function $f_{2\infty}$ the proof is 
  analogous.  It suffices to prove the latter statement for the backward analytic extension 
   along $\gamma_{0,t,z}^{-1}$ of the vector function $f_{10}$  towards a point $w\in\gamma_{0,t,z}$ close to the origin with $\arg w=\im t-\pi$. The backward extension is 
  given by the integral (\ref{ff0}) taken along the 
  straightline segment $[0,w]$. The integral is flat in $w$, as is the subintegral function, see the 
  above discussion, and the first component in $f_{10}$ is also flat for the same reason. 
  Therefore,  $f_{10}(w)=o(1)$, as $w\to0$ with $\arg w=\im t-\pi$, while $f_{20}\equiv(0,1)$.   
  The canonical solution in $S_{0,t}^0$ is uniquely determined by the condition that it should 
 tend to zero, as $w\to0$ along a ray along  which $e^{\frac s{2w}}$  is flat, since the other 
 canonical basic solution $(0,1)$ does not tend to zero. Hence, $f_{10}$ is a canonical solution. 
 
   Now for the proof of the proposition it remains to calculate the monodromy. Clearly the monodromy 
   acts trivially on the functions $f_{20}$ and $f_{1\infty}$. Let us calculate the monodromy images 
   of the other functions $f_{10}$ and $f_{2\infty}$. As $z$ makes one courterclockwise circuit 
   around the origin, the integral in (\ref{ff0}) changes by the additive constant 
   $$2\pi ie^{-\frac s2} Res_0(z^{\ell-1}e^{\frac s{2z}})=\frac{2\pi i}{\ell!}e^{-\frac s2}(\frac s2)^\ell=
   d_\ell^{-1}.$$
   This together with (\ref{ff0}) implies the formula for the monodromy matrix $M_0$ 
   in (\ref{mndr}). Similarly, as $z$ makes a counterclockwise circuit, the integral in (\ref{ffinf}) 
   is changed by the additive constant equal to $2\pi ie^{-\frac s2}$ times the Laurent coefficient 
   at $\zeta^{-1}$ of the function $\zeta^{-(\ell+1)}e^{\frac s{2}\zeta}$. The latter 
   Laurent coefficient is equal to $\frac{(\frac s2)^\ell}{\ell!}$. Thus, the resulting additive constant is 
   equal to $d_\ell^{-1}$. This implies the formula for the monodromy matrix $M_\infty$ in 
   (\ref{mndr}). Proposition \ref{cansol} is proved.
   \end{proof}
  
  Let 
  $$\mcy^0(z)=(f_{10},f_{20})(z), \ \mcy^\infty(z)=(f_{1\infty},f_{2\infty})(z)$$
  denote the fundamental matrices formed by the canonical solution bases (\ref{ff0}) and 
  (\ref{ffinf}) of systems (\ref{sys0}) and (\ref{sysinf}) respectively. Here the vector functions 
  $f_{jp}(z)$ are treated as columns of the corresponding matrix $\mcy^p$. Their 
 monodromy matrices $M_0$ and $M_\infty$ given by (\ref{mndr}) are conjugated: 
  \begin{equation} M_0=\Psi_\zeta^{-1} M_\infty\Psi_\zeta, \ \Psi_\zeta:=
  \left(\begin{matrix} 0 & 1 \\ 1 & 0\end{matrix}\right)\left(\begin{matrix}1 & 0\\ -\zeta & 1\end{matrix}
  \right).\label{psiz}\end{equation}
  Here $\zeta$ is an arbitrary complex number. Therefore, the rescaled fundamental matrix 
  \begin{equation}\wt\mcy^\infty(z)=\mcy^\infty(z)\Psi_\zeta\label{defwt}\end{equation}
  of system (\ref{sysinf}) has the same monodromy matrix $M_0$, as the fundamental 
  matrix $\mcy^0(z)$. 
  
  Now we paste  the fundamental matrices $\mcy^0(z)$ and 
  $\wt\mcy^\infty(z)$ together over the annulus $\mca:=D_2\setminus\overline{D_{\frac12}}$ 
  in order to paste  the  systems 
  (\ref{sys0}) and (\ref{sysinf}) to get  a holomorphic vector bundle with connection induces 
  by the pasted systems.  Namely, the direct products 
  $$\mcf_0:=\cc^2\times D_2, \ \ \mcf_\infty:=\cc^2\times 
  (\oc\setminus\overline{D_{\frac12}})$$ 
  are identified as follows: for every $z\in\mca$,  
  $$(v,z)\in\mcf_0 \text{ is equivalent to } (w,z)\in\mcf_\infty, \ \text{ if and only if}$$ 
  \begin{equation} 
  v=X_{t,u,\zeta}(z)w, \ X_{t,u,\zeta}(z):=\mcy^0(z)\left(\wt\mcy^\infty(z)\right)^{-1}.\label{bundlegl}
  \end{equation}
  Here the  matrix function $X_{t,u,\zeta}(z)$ is defined by  (\ref{bundlegl}) for $z$ 
  close to 1, and for the other $z$ it is defined by its analytic extension along a path going 
  from 1 to $z$ in the annulus $\mca$. 
  
  \begin{proposition} \label{pglbund} 1) The matrix function $X_{t,u,\zeta}(z)$ is holomorphic on the annulus $\mca$,  
  and it depends holomorphically on the parameters $(t,u,\zeta)\in\cc^3$.
  
  2) The corresponding gluing (\ref{bundlegl}) pastes the direct products $\mcf_0\times\cc^3_{t,u,\zeta}$ and $\mcf_\infty\times\cc^3_{t,u,\zeta}$ 
  to a holomorphic vector bundle over $\oc_z\times\cc^3_{t,u,\zeta}$, which will be denoted by 
  $\wt\mce$ and treated as a holomorphic family of holomorphic vector bundles $\mce_{t,u,\zeta}$ on $\oc_z$
  
  3) Gluing (\ref{bundlegl}) sends the connection on $\mcf_\infty$ defined by system (\ref{sysinf}) to 
  the connection on $\mcf_0$ defined by (\ref{sys0}), and hence, induces a meromorphic connection 
  $\nabla_{t,u,\zeta}$ on each bundle $\mce_{t,u,\zeta}$; the family of connections 
   $\nabla_{t,u,\zeta}$ depends holomorphically on the parameters $(t,u,\zeta)$.
  
  4) The monodromy-Stokes data of each connection  $\nabla_{t,u,\zeta}$ defined by the paths 
  $\alpha_{0,t}$ and $\alpha_{\infty,t}$  is parabolic. It is non-degenerate, if and only if $(u,\zeta)\neq(0,0)$, and in this case the corresponding cross-ratio is equal to 
  \begin{equation}\mcr=\frac\zeta{u}.\label{mcrform}\end{equation}
  \end{proposition}
  \begin{proof} Holomorphicity of the matrix function $X_{t,u,\zeta}(z)$ on the annulus $\mca$, 
  i.e., its single-valuedness (independence on path of analytic extension) follows from 
  coincidence of monodromy matrices of the fundamental matrices $\mcy^0$ and $\wt\mcy^\infty$: 
  as $z$ makes a counterclockwise circuit around the origin, both fundamental matrices 
  are multiplied by $M_0$, and in the ratio $X_{t,u,\zeta}(z)=\mcy^0(\wt\mcy^\infty)^{-1}$, this $M_0$ cancels out. This proves Statement 1), which in its turn (together with holomorphicity of the 
  fundamental matrices in the parameters $t$, $u$, $\zeta$) implies Statement 2). Gluing 
  (\ref{bundlegl}) sends each column of the fundamental matrix solution $\wt\mcy^\infty$ of system 
  (\ref{sysinf}) to the corresponding column of the fundamental matrix solution $\mcy^0$ of system 
  (\ref{sys0}), by definition. Hence, it pastes the systems to one meromorphic connection in 
  the bundle $\mce_{t,u,\zeta}$, which depends holomorphically on the parameters, as do the 
  systems and the fundamental matrices. The second columns of both fundamental matrices 
  are canonical solutions $f_{1\infty}$ and $f_{20}$, and they are pasted together. This implies 
  parabolicity of the monodromy-Stokes data of the pasted connection. Its non-degeneracy criterium  
  $(u,\zeta)\neq(0,0)$ follows from construction.
  
  Let us now  calculate the cross-ratio $\mcr$ of the monodromy-Stokes data in the case, when 
  $(u,\zeta)\neq(0,0)$. To do this, let us find the values $f_{jp}(1)$ of the 
  canonical solutions after identification (\ref{bundlegl}) in the trivial bundle chart $\mcf_0$ and 
  their projectivizations $q_{jp}$ and the point $q_{10}'$. Note that 
  \begin{equation}\wt\mcy^\infty=(f_{2\infty}-\zeta f_{1\infty}, \ f_{1\infty}):\label{mcyi}
  \end{equation}
  the columns of the fundamental matrix $\wt\mcy^\infty$ are the vector functions $f_{2\infty}-\zeta f_{1\infty}$ and $f_{1\infty}$, by (\ref{psiz}) and (\ref{defwt}).  And they are identified with 
  $f_{10}$ and $f_{20}$. Therefore, $f_{1\infty}(1)$ is identified with $f_{20}(1)$, and 
  $f_{2\infty}(1)$ is identified with $f_{10}(1)+\zeta f_{20}(1)$.  
  Let $\wt f_{10}(z)$ denote the result of counterclockwise analytic    extension 
  of the vector function $f_{10}(z)$ considered as a germ of vector function at $z=1$. One has 
   $$\wt f_{10}(1)=f_{10}(1)+uf_{20}(1),$$
   by formula (\ref{mndr}) for  the monodromy matrix $M_0$. Thus, in the chart 
   $\mcf_0$, the four points $q_{20}$, $q_{10}$, $q_{10}'$, $q_{2\infty}$ are the projections to 
   $\cp^1=\oc$ of the four vectors $f_{20}(1)$, $f_{10}(1)$, $\wt f_{10}(1)=f_{10}(1)+uf_{20}(1)$, 
   $f_{10}(1)+\zeta f_{20}(1)\simeq f_{2\infty}(1)$. In the basis $f_{10}(1)$, $f_{20}(1)$ 
   (let $(w_1,w_2)$ denote the corresponding coordinates on $\cc^2$)  the 
   latter vectors are $(0,1)$, $(1,0)$, $(1,u)$, $(1,\zeta)$. Their projections  to 
   $\cp^1=\oc_\Phi$, $\Phi=\frac{w_2}{w_1}$, are the points $\infty$, $0$, $u$, $\zeta$. Their 
   cross-ratio (\ref{crat}) is equal to $\frac{\zeta}u$. This proves (\ref{mcrform}) and finishes the proof of 
   Proposition \ref{pglbund}. 
     \end{proof}
 \begin{proposition} \label{proeq} For every given $t\in\cc$  two  holomorphic vector bundles 
 with connections $(\mce_{t,u,\zeta},\nabla_{t,u,\zeta})$, $(\mce_{t,u',\zeta'},\nabla_{t,u',\zeta'})$ corresponding to the same $t$ and some $(u,\zeta),(u',\zeta')\neq(0,0)$ are holomorphically gauge equivalent, if and only if the corresponding cross-ratio are the same, i.e., if and only if 
 $[u:\zeta]=[u':\zeta']$ as points of the projective line $\cp^1$. 
 \end{proposition}
 \begin{proof} The bundles in question have the same formal normal forms at each singular point. 
The paths $\alpha_{tp}$, $p=0,\infty$, defining their monodromy-Stokes data 
are the same for both systems, since they depend only on $t$. Therefore, the 
bundles are gauge equivalent, if and only if their monodromy-Stokes data are the same. 
Thus, gauge equivalence  implies the cross-ratio equality. Let us prove the converse. 
Recall that the monodromy--Stokes data is parabolic and non-degenerate, and the cross-ratio determines it uniquely up to multiplication of the monodromy operator by constant factor (Proposition \ref{abmon}). In both connections the monodromy operators are unipotent. 
Therefore, equality of cross-ratios implies coincidence of the monodromy-Stokes data, and hence, 
gauge equivalence. Proposition \ref{proeq} is proved.
\end{proof}

In what follows the gauge equivalent bundles $\mce_{t,u,\zeta}$ with the given value 
$\mcr:=\frac\zeta{u}$ will be denoted by one and the same symbol
$$\mce_{t,\mcr}:=\mce_{t,u,\zeta}, \ \mcr=\frac\zeta{u}.$$
\begin{proposition} \label{bundlee} The family of vector bundles $\mce_{t,\mcr}$ form a holomorphic 
vector bundle over $\oc_z\times\cc_t\times\oc_\mcr$, which will be denoted by $\mce$, with 
a holomorphic family of meromorphic connections over $\oc_z$ with irregular nonresonant 
singular points at zero and at infinity.
\end{proposition} 
\begin{proof} The vector bundle family $\mce_{t,u,\zeta}$  form a holomorphic 
vector bundle (denoted $\wt\mce$) over $\oc\times\cc_t\times(\cc^2_{u,\zeta}\setminus\{(0,0)\})$ with a holomorphic family of connections over $\oc$. Namely, 
the bundle $\wt\mce$ is defined by gluing (\ref{bundlegl}) of two trivial bundles $\cc^2\times D_2
\times\cc_t\times(\cc^2\setminus\{(0,0)\})$ and $\cc^2\times (\oc\setminus\overline{D_{\frac12}})
\times \cc_t\times(\cc^2\setminus\{(0,0)\})$: the gluing matrix function $X_{t,u\zeta}(z)$ 
is holomorphic in $(z,t,u,\zeta)$. Consider the tautological bundle $\pi:\cc^2_{u,\zeta}\setminus\{(0,0)\}\to\cp^1=\oc_\mcr$, $\mcr=\frac\zeta{u}$, and its two sections $\sigma_0$, $\sigma_\infty$ 
over the two affine charts $\cc=\{\mcr\neq\infty\}$ and $\{\mcr\neq0\}$: 
$$\sigma_0:\mcr\mapsto(1,\mcr), \ \  \sigma_1: \mcr\mapsto(\mcr^{-1},1).$$
Let $\mce^0$ and $\mce^\infty$ denote the pullbacks of the bundle $\wt\mce$ under the 
maps $(z,t,\mcr)\mapsto(z,t,\sigma_p(\mcr))$, $p=0,\infty$. This yields two  holomorphic 
vector bundle families $\mce^p_{t,\mcr}$ over $\oc_z$, with $p=0,\infty$, $\mcr\neq0,\infty$. 
 For every $(t,\mcr)\in\cc^2$ with $\mcr\neq0$ the bundles $\mce^0_{t,\mcr}$ and 
 $\mce^\infty_{t,\mcr}$ are isomorphic as holomorphic bundles with connections. The isomorphism 
 is  uniquely determined by normalization so that in the trivializing chart $\mcf^0$ it sends the 
 solution $f_{20}(z)=(0,1)$ of the connection in one bundle to the same solution of the connection in 
 the other bundle. Uniqueness follows from non-degeneracy of the monodromy-Stokes data. Indeed, the  restriction to the fiber $\oc\times\{ z=1\}$ of the projectivized isomorphism should 
 map the set $\{ q_{20}, q_{10}, q_{10}', q_{2\infty}\}$ corresponding to one connection to 
 the similar set corresponding to the other connection. The collection of points in each set is at least 
 three, by non-degeneracy. This defines the projectivized isomorphism uniquely. Thus, 
 the bundle isomorphism is uniquely defined up to constant factor, 
 and the latter factor is uniquely defined by the above normalization. The isomorphism thus constructed depends 
 holomorphically on the parameters, as does the monodromy-Stokes data. Therefore, 
 the isomorphism family thus constructed pastes the bundles 
 $\mce^0$ and $\mce^\infty$ together to one holomorphic vector bundle $\mce$. Proposition 
 \ref{bundlee} is proved.
 \end{proof}

\begin{lemma} \label{probeq} Two bundles $\mce_{t,\mcr}$ and $\mce_{t',\mcr'}$ with connections are holomorphically gauge equivalent, if and only if relation (\ref{bundleq}) holds.
\end{lemma}
\begin{proof} Clearly, the bundles with connections are gauge equivalent, if and only if they have the same formal normal forms and the same monodromy-Stokes data (or equivalently, the same cross-ratio 
$\mcr$) defined by one and the same pairs of paths and Stokes sectors. 
Coincidence of formal normal forms is equivalent to the equality 
saying that $s=e^t$ is equal to $s'=e^{t'}$. This holds, if and only if 
$t'-t=2\pi in$, $n\in\zz$. The above monodromy-Stokes data of bundles $\mce_{t,\mcr}$ and 
$\mce_{t',\mcr'}$ are defined by two different pairs of paths: $(\alpha_{0,t},\alpha_{\infty,t})$ and 
$(\alpha_{0,t'},\alpha_{\infty,t'})$ Let us show that the monodromy-Stokes data  defined by the first pair of paths 
 is the same for both 
connections, if and only if $\mcr'-\mcr=2n$. To do this, we use the 
 next proposition, which shows how does the cross-ratio change when we replace its defining paths 
 $\alpha_{p,t}$ by $\alpha_{p,t'}$, $p=0,\infty$.  

\begin{proposition} \label{crossm} Let $\mce$ be a holomorphic vector bundle with connection on 
$\oc$ having formal normal 
forms (\ref{2fnf}) at the origin and at infinity that is not ``diagonal'', i.e.,  not a direct sum of one-dimensional holomorphic bundles with connections. Let the connection have a meromorphic solution. 
 Let $s$ be the corresponding parameter 
from the formal normal forms. For every value $t=\ln s$ of the logarithm let $\alpha_{p,t}$, $p=0,\infty$, be the paths defined in (\ref{alft}), and let $\mcr_t$ denote the characteristic cross-ratio 
of the corresponding monodromy-Stokes data. One has 
\begin{equation}\mcr_{t+2\pi i}=\mcr_t+2.\label{mcrt}\end{equation}
\end{proposition}
\begin{proof} When we add $2\pi i$ to $t$, the new path $\alpha_{0,t}$ is obtained from the old one 
by multiplication by counterclockwise circuit. This means that  the  analytic continuation along the 
new path 
$\alpha_{0,t}^{-1}$ of the canonical sectorial solution $f_{10}$ (flat section with respect to the 
connection) is obtained from that for the old 
path by applying the inverse of the monodromy. Similar statement holds for 
$\alpha_{\infty,t}$ and $f_{2\infty}$: the new $f_{2\infty}$ is obtained from the old one 
by applying the monodromy. Recall that the monodromy-Stokes data defined by the paths 
$\alpha_{0,t}$ and $\alpha_{\infty,t}$ are parabolic and non-degenerate, by Proposition \ref{pbng}. We consider that $f_{2\infty}$ is not 
proportional to  $f_{10}$; the opposite case is treated analogously (and can be also deduced by passing to limit in a family of pasted bundles with connections).  
Thus, $q_{2\infty}\neq q_{10}$, and $q_{10}$, $q_{20}$, $q_{2\infty}$ are distinct. 
Choosing appropriate coordinate on $\oc$ we 
consider that the monodromy-Stokes data for the old $t$ are given by 
$$q_{10}=0, \ q_{20}=q_{1\infty}=\infty, \ q_{10}'=u, \ q_{2\infty}=1.$$ 
Then for the new $t$ one has 
$$q_{10}=-u,  \ q_{20}=q_{1\infty}=\infty,  \ q_{10}'=0, \ q_{2\infty}=1+u,$$
by the above path and monodromy argument. Therefore, the old cross-ratio (\ref{crat}) is equal to 
$\frac1u$, and the new one is equal to $\frac1u+2$.
This proves the proposition.
\end{proof}

Proposition \ref{crossm} together with the above argument implies Lemma \ref{probeq}.
\end{proof}

Propositions \ref{pglbund}, \ref{proeq}, \ref{bundlee}  and Lemma \ref{probeq}  together imply Theorem \ref{tbundle}.
  \end{proof}
  
  \subsection{Quotient bundle family. Global parametrization of surfaces $S_{\ell,\pm}^o$}
  Consider the quotient of the space $\cc_t\times\oc_\mcr$ by the action of the cyclic group 
   generated by the translation $F$ by the vector $(2\pi i, 2)$. 
  The quotient space $(\cc\times\oc)\slash F$ is identified with 
  $\cc^*_s\times\oc_\xi$, and the quotient projection is 
 $$\pi_{tr}:(t,\mcr)\mapsto(s=e^{t}, \ \xi=\pi i\mcr-t).$$

  \begin{lemma} \label{tbu} There exists a holomorphic vector bundle $\wh\mce$ over $\oc_z\times\cc^*_s\times\oc_\xi$ with a family of meromorphic connections over $\oc_z$ such that 
  the bundle $\mce=(\mce_{t,\mcr})$ is its pullback  under the propection $z\times(t,\mcr)\mapsto z\times\pi_{tr}(t,\mcr)$.
  \end{lemma}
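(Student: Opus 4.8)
The plan is to descend the bundle $\mce$ from $\oc_z\times\cc_t\times\oc_\mcr$ to the quotient by the $\zz$-action generated by the translation $F$, by equipping $\mce$ with an $F$-equivariant structure — a lift of $F$ to a connection-preserving bundle automorphism — and then forming the quotient bundle. First I would record that $F$ acts freely and properly discontinuously on $\cc_t\times\oc_\mcr$: it moves the coordinate $t$ by $2\pi i\neq0$, so it has no fixed points, and the strip $\{0\leq\im t<2\pi\}$ is a fundamental domain. Hence $\pi_{tr}$ identifies the quotient $(\cc_t\times\oc_\mcr)/F$ biholomorphically with $\cc^*_s\times\oc_\xi$, and it suffices to produce a holomorphic bundle with connection family on this quotient whose pullback under $\idd_{\oc_z}\times\pi_{tr}$ is $\mce$.

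The heart of the argument is the construction of the lift $\wt F$. By Lemma \ref{probeq} (with $n=1$) the bundles with connection $\mce_{t,\mcr}$ and $\mce_{F(t,\mcr)}=\mce_{t+2\pi i,\mcr+2}$ are holomorphically gauge equivalent for every $(t,\mcr)$. By Proposition \ref{pbng}, Statement 1), each carries a meromorphic flat section, unique up to a constant factor and holomorphic on $\cc$; in the trivializing chart $\mcf_0$ this section is represented by the constant canonical solution $f_{20}(z)\equiv(0,1)$ from (\ref{ff0}), which is the \emph{same} vector for all parameter values. Any connection-preserving gauge equivalence maps flat meromorphic sections to flat meromorphic sections, hence sends this distinguished section of $\mce_{t,\mcr}$ to a scalar multiple of that of $\mce_{F(t,\mcr)}$; and by non-degeneracy of the monodromy--Stokes data the gauge equivalence is unique up to scalar (its projectivization is pinned by its action on the at least three distinct points among $q_{jp},q_{10}'$, exactly as in the proof of Theorem \ref{thgauge}). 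I would therefore single out the unique gauge equivalence $\Theta_{t,\mcr}\colon\mce_{t,\mcr}\to\mce_{F(t,\mcr)}$ that fixes this distinguished flat section, i.e. maps $(0,1)$ to $(0,1)$ in the charts $\mcf_0$, and let $\wt F$ be the corresponding fiberwise-linear automorphism of the total space of $\mce$ covering $\idd_{\oc_z}\times F$.

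The main obstacle I expect is verifying that $\wt F$ is holomorphic in $(t,\mcr)$ across the whole sphere $\oc_\mcr$, including $\mcr=\infty$. For this I would work in the two charts from Proposition \ref{bundlee}: over $\{\mcr\neq\infty\}$ the bundle is the holomorphic family $\mce^0$ and over $\{\mcr\neq0\}$ it is $\mce^\infty$, both assembled from the explicit gluing (\ref{bundlegl}) with holomorphic gluing matrix $X_{t,u,\zeta}(z)$ and holomorphically varying canonical solutions (\ref{ff0})--(\ref{ffinf}). A holomorphic local family of gauge equivalences between $\mce_{t,\mcr}$ and $\mce_{F(t,\mcr)}$ exists in each chart, and normalizing it by the nowhere-vanishing, holomorphically varying distinguished sections determines the remaining scalar as a holomorphic nowhere-zero function; near $\mcr=\infty$ (the unique $F$-fixed value on $\oc_\mcr$) one uses the chart $\mce^\infty$, whose construction is holomorphic for $\mcr\neq0$ and whose domain $\{\mcr\neq0\}$ contains a neighborhood of $\infty$ together with its $F$-image, while for finite $\mcr$ one uses $\mce^0$; the two chart descriptions agree on overlaps because $\Theta_{t,\mcr}$ is characterized intrinsically.

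Finally I would take the quotient. Since $\zz$ is freely generated by one element, $\wt F$ generates a holomorphic $\zz$-action on the total space of $\mce$; it preserves the connection family (each $\Theta_{t,\mcr}$ does), and it is free and properly discontinuous because its projection to the base is the free properly discontinuous $F$-action: for any base set $U$ with $F^n(U)\cap U=\emptyset$ for $n\neq0$ one has $\wt F^n(\mce|_U)\cap\mce|_U=\emptyset$. Hence the quotient $\wh\mce:=\mce/\langle\wt F\rangle$ is a holomorphic vector bundle over $\oc_z\times\cc^*_s\times\oc_\xi$, the descended connections form a holomorphic family of meromorphic connections over $\oc_z$, and by the standard property of quotients by free properly discontinuous actions the pullback of $\wh\mce$ under $\idd_{\oc_z}\times\pi_{tr}$ is isomorphic, as a bundle with connection family, to $\mce$. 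This would prove the lemma.
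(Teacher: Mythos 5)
Your proposal is correct and follows essentially the same route as the paper: the paper's proof likewise takes, for each $(t,\mcr)$, the gauge equivalence $\mce_{t,\mcr}\to\mce_{F(t,\mcr)}$ (which exists by Lemma \ref{probeq} and is unique up to scalar by non-degeneracy), normalizes it exactly as you do — as in the proof of Proposition \ref{bundlee}, by requiring it to fix the distinguished flat section $f_{20}\equiv(0,1)$ in the chart $\mcf_0$ — shows that the normalized family depends locally holomorphically on $(t,\mcr)$, and then pastes/descends to the quotient. The only difference is one of presentation: you spell out the equivariant-descent step (free, properly discontinuous $\zz$-action generated by the lift $\wt F$, quotient bundle, pullback property), which the paper compresses into ``this implies the lemma analogously to the same proof.''
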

  \begin{proof} For every $(t,\mcr)$ and $(t',\mcr')=F^n(t,\mcr)$, 
   $n\in\zz$, there is a unique isomorphism of the corresponding bundles 
  $\mce_{t,\mcr}$ and $\mce_{t'_n,\mcr'_n}$ with connections up to scalar factor. This isomorphism can be 
  normalized as in the proof of Proposition \ref{bundlee}, and then it depends locally 
  holomorphically on the parameters $(t,\mcr)$, as in the same proof. This  implies the lemma analogously to the same proof. 
  \end{proof}
 
 Consider the bundle $\wh\mce$ as a family of holomorphic vector bundles $\wh\mce_{s,\xi}$ over 
 $\oc_z$ with connections. Let $\ell\in\nn$ be the corresponding parameter in the formal normal forms  
 of the connections. Set 
 $$\triv_\ell:=\{(s,\xi)\in\cc^*\times\oc \ | \ \text{ the 
 holomorphic  bundle } \ \wh\mce_{s,\xi} \ \text{  is trivial}\},$$ 
 $$\Sigma_\ell:=(\cc^*\times\oc)\setminus\triv_\ell.$$
 Let $\Sigma_\ell'\subset\triv_\ell$ denote the subset of those 
 $(s,\xi)$ for which in the corresponding 
 connection in the trivial bundle on $\oc$ the line non-coincidence condition does not hold: that is, 
  the eigenline of the main term matrix at $0$ with the eigenvalue $-\frac s2$ 
 coincides with the eigenline of the main term matrix at infinity with the eigenvalue $\frac s2$.  
 \begin{theorem} \label{tbtriv} 1) The subset $\triv_\ell\subset\cc^*\times\oc$ is the complement 
 to a complex analytic hypersurface (curve) $\Sigma_\ell$: a complex analytic subset of pure codimension 1. 
 
 2) The subset $\Sigma'_\ell\subset\triv_\ell$ is also a complex analytic hypersurface (curve), in particular, 
 the subsets $\triv_\ell$ and  
$$\triv_\ell^o:=\triv_\ell\setminus\Sigma'_\ell\subset\cc^*_s\times\oc_\xi$$
 are both open connected and dense in $\cc^*_s\times\oc_\xi$. 
 
3) For every $(s,\xi)\in\triv_\ell^o$  the  trivial bundle $\wh\mce_{s,\xi}$ with connection is analytically gauge equivalent to a system 
 of type (\ref{mchoyn}) having a polynomial solution. The parameters $(\chi,a)$ 
 of the corresponding 
 system (\ref{mchoyn}) are uniquely determined up to sign change $(\chi, a)\mapsto(-\chi,-a)$ 
 and $(\chi,a)\neq(0,0)$.

 4) Thus defined two-valued  map $\mct:\triv_\ell^o\to\cc^3_{\chi,a,s}$ associating to each 
 $(s,\xi)$ the parameters of the corresponding system (\ref{mchoyn}) from Statement 3), 
has two holomorphic branches $\mct_{\pm}$  bijectively parametrizing the two determinantal 
 surfaces $S_{\ell,\pm}^o=\{\det(G_{1,\ell}(\chi,a,s)\pm G_{2,\ell}(\chi,a,s))=0\}\setminus\{ s=0\}$ by the domain $\triv_\ell^o\subset\cc^*\times\oc$. 
  \end{theorem}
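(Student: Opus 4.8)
The plan is to deduce all four statements from the Birkhoff--Grothendieck classification of rank-two bundles over $\oc$ together with the characterization of systems of type (\ref{mchoyn}) with polynomial solutions in Theorem \ref{thpar}. A preliminary remark organizes everything: every bundle $\wh\mce_{s,\xi}$ in the family carries a monodromy--Stokes data that is parabolic \emph{and} non-degenerate. Parabolicity holds by construction (Proposition \ref{pglbund}, Statement 4, transported through the quotient projection $\pi_{tr}$), and non-degeneracy holds because the family is built over the tautological base $\cc^2_{u,\zeta}\setminus\{(0,0)\}$, where $(u,\zeta)\neq(0,0)$ always. Hence the degenerate (diagonal) alternative of Proposition \ref{pbng}, Statement 2, never occurs, and the whole discussion reduces to the line non-coincidence condition (*). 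For Statement 1) I would note that $(s,\xi)\in\triv_\ell$ exactly when $\wh\mce_{s,\xi}$ has the minimal splitting type $\mathcal O\oplus\mathcal O$; by upper semicontinuity of the splitting type $\triv_\ell$ is open, the degree of $\wh\mce_{s,\xi}$ is constant on the connected base $\cc^*_s\times\oc_\xi$ and equals $0$ because trivial bundles occur (any system (\ref{mchoyn}) with a polynomial solution, which exists since the loci $\{\mcp_{\ell,\pm}=0\}$ are nonempty by Theorem \ref{tmatr}, is a global system on $\oc$ and appears in the family by Theorem \ref{thpar}). Thus $\triv_\ell$ is nonempty and open, and the theorems on jumps of splitting type in holomorphic families, \cite[proposition 4.1]{malgr}, \cite[appendix 3]{Bol18}, \cite[theorem 2.2]{rohrl}, give that the complement $\Sigma_\ell$ is analytic of pure codimension one, a curve.

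I would prove Statement 3) next, as it underlies the rest. For $(s,\xi)\in\triv_\ell$ a global holomorphic frame exists, in which the connection becomes a linear system of type (\ref{eqlin0}) on $\oc$ with the formal normal forms (\ref{2fnf}) and with parabolic non-degenerate monodromy--Stokes data of cross-ratio $\mcr$ determined by $\xi$. If moreover $(s,\xi)\in\triv_\ell^o$, so that (*) holds, then Theorem \ref{thpar} applies (with $\ell\in\nn$) and the system is gauge equivalent to one of type (\ref{mchoyn}) with a non-constant polynomial solution of degree $\ell$; here $(\chi,a)\neq(0,0)$, since a diagonal system admits no non-constant polynomial solution. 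Uniqueness of $(\chi,a)$ up to $(\chi,a)\mapsto(-\chi,-a)$ follows from Theorem \ref{threq}: two systems of type (\ref{mchoyn}) with the same $s$ and the same cross-ratio coincide up to that sign change, and the cross-ratio is exactly the $F$-invariant datum recorded by $\xi=\pi i\mcr-t$ (Proposition \ref{crossm} shows $\mcr\mapsto\mcr+2$ under $t\mapsto t+2\pi i$, so $\xi$ is well-defined).

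For Statement 2) I would first realize $\Sigma'_\ell$ locally on $\triv_\ell$ as the zero locus of a single holomorphic equation: in a local holomorphic frame the eigenlines $L_{10}$, $L_{2\infty}$ of the main-term matrices at $0$ and $\infty$ depend holomorphically on $(s,\xi)$, and their coincidence is the vanishing of the determinant formed by spanning vectors (well-defined up to a non-vanishing factor, hence with a well-defined divisor). This makes $\Sigma'_\ell$ analytic in $\triv_\ell$, and proper since (*) holds on the dense open set parametrizing the nonempty $S_{\ell,\pm}^o$, so it is a curve. The delicate, and I expect main, obstacle is that $\cc^*\times\oc\setminus\triv_\ell^o=\Sigma_\ell\cup\Sigma'_\ell$ be analytic, which requires controlling $\overline{\Sigma'_\ell}$ along $\Sigma_\ell$. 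Near a generic point of $\Sigma_\ell$ the bundle is $\mathcal O(1)\oplus\mathcal O(-1)$ and has no global holomorphic frame, so I would continue the defining equation of $\Sigma'_\ell$ using a meromorphic (Hecke-modified) frame, in which the eigenline-coincidence function becomes meromorphic; tracking its zero and pole divisors shows $\overline{\Sigma'_\ell}$ is analytic away from the isolated points where zeros and poles collide. These bad points form a discrete (codimension-two) set, so by Shiffman's removable-singularity theorem, \cite{shiffman}, \cite[subsection 4.4]{chirka}, the analytic curve $\Sigma'_\ell$ extends across them; hence $\Sigma_\ell\cup\Sigma'_\ell$ is an analytic curve and $\triv_\ell$, $\triv_\ell^o$ are open, connected and dense.

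Finally, for Statement 4) I would use that $(\chi,a,s)\in S_{\ell,-}$ iff $(-\chi,-a,s)\in S_{\ell,+}$ (from the computation following Theorem \ref{tirred}), together with disjointness of $S_{\ell,+}^o$ and $S_{\ell,-}^o$ (Theorem \ref{tsmooth}, Statement 1). Hence the two values $(\chi,a,s)$, $(-\chi,-a,s)$ of the two-valued map $\mct(s,\xi)$ lie one in $S_{\ell,+}^o$ and one in $S_{\ell,-}^o$, which splits $\mct$ into single-valued branches $\mct_\pm$ with values in $S_{\ell,\pm}^o$; these are holomorphic because the gauge reduction of Statement 3) is carried out by a gauge that is unique once a sign is fixed, hence holomorphic in families. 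Injectivity of $\mct_\pm$ follows because a system of type (\ref{mchoyn}) determines $s$ and its cross-ratio, thus the $F$-invariant $\xi$; surjectivity onto $S_{\ell,\pm}^o$ follows from Theorems \ref{tmatr} and \ref{thpar}, which attach to each point of $S_{\ell,\pm}^o$ a parabolic non-degenerate cross-ratio and hence a point of $\triv_\ell^o$ mapping back to it. Since the inverse map, reading off $s$ and the cross-ratio, is holomorphic as well, each $\mct_\pm:\triv_\ell^o\to S_{\ell,\pm}^o$ is a biholomorphic parametrization, completing the proof.
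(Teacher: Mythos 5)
Your Statements 1)--3) follow essentially the paper's own route: non-emptiness of $\triv_\ell^o$ via systems (\ref{mchoyn}) with polynomial solutions, then the Malgrange--Bolibruch--R\"ohrl theorem for analyticity of $\Sigma_\ell$; the eigenline-coincidence condition, holomorphic in local trivializations depending holomorphically on $(s,\xi)$, with non-empty complement, for $\Sigma'_\ell$; and Theorems \ref{thpar}, \ref{threq} plus the observation that diagonal systems have no non-constant polynomial solutions for Statement 3). Your extra discussion of extending $\overline{\Sigma'_\ell}$ across $\Sigma_\ell$ via meromorphic frames is not required for this theorem (Statement 2) only asserts analyticity of $\Sigma'_\ell$ inside $\triv_\ell$); the paper defers that closure issue to the proof of Theorem \ref{tsmooth}, Statement 3), where it is handled differently, via discreteness of puncture collisions (Proposition \ref{prat}) and Shiffman's theorem. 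Your sketch there is plausible but would need the existence of parameter-meromorphic frames to be justified.

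The genuine gap is in Statement 4). You split the two-valued map $\mct$ into single-valued branches $\mct_\pm$ by invoking the disjointness of $S_{\ell,+}^o$ and $S_{\ell,-}^o$, which you cite as Theorem \ref{tsmooth}, Statement 1). But in the paper that disjointness is established only in Lemma \ref{lreg} (Subsection 2.9), whose proof uses precisely the bijective parametrizations $\mct_\pm$ whose existence is being proved here; so, within the paper's logical structure, your argument is circular. The paper avoids disjointness altogether: it constructs the branches locally (holomorphy coming from \cite[appendix 3, theorem 2]{Bol18}, not from a ``uniqueness implies holomorphy'' remark, which by itself is not an argument), notes that the two local branches differ by the sign involution and hence land in the two surfaces permuted by it, and then uses connectedness of $\triv_\ell^o$ together with analytic continuation -- the identity $\mcp_{\ell,\pm}\circ\mct_\pm\equiv0$ persists under continuation along any path -- to get global single-valued branches with values in a fixed surface. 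Your gap is repairable without circularity, since disjointness of $S_{\ell,+}^o$ and $S_{\ell,-}^o$ admits a direct proof independent of the parametrization: a point of $S_{\ell,+}^o\cap S_{\ell,-}^o$ would, by the proof of Theorem \ref{tmatr} and Proposition \ref{pinv2}, carry two nonzero polynomial solutions, one invariant and one anti-invariant under the symmetry $\mathbb I$ of (\ref{mathbi}), hence linearly independent, contradicting uniqueness of the polynomial solution (Proposition \ref{poldeg}). Either insert such a lemma before using disjointness, or replace the splitting mechanism by the continuation argument; as written, the key step of Statement 4) rests on a result that depends on the theorem you are proving.
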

  
  \begin{proof} The subset $\triv_\ell^o\subset\cc^*\times\oc$ is non-empty. Indeed, the set 
  $S_{\ell,+}\cup S_{\ell,-}$ of  
systems (\ref{mchoyn}) with polynomial solutions is non-empty, since the spectral curve 
$\Gamma_\ell^o$ is non-empty. Their monodromy-Stokes data 
corresponding to symmetric sectors and paths are parabolic,  and the corresponding parameters 
$(s,\xi)$ clearly lie in $\triv_\ell^o$.  This implies that $\Sigma_\ell$ is an analytic hypersurface, by 
 \cite[proposition 4.1]{malgr} (see also its short proof in 
  \cite[appendix 3]{Bol18}); see also a more general  \cite[theorem 2.2, p.449]{rohrl}. 
  The set $\Sigma_\ell'$ is the set of those parameters $(s,\xi)\in\triv_\ell$ that 
  correspond to those connections in trivial bundle where appropriate main term matrix 
  eigenlines coincides. This is a non-trivial codimension one condition, since the complement 
  $\triv_\ell^o=\triv_\ell\setminus\Sigma_\ell'$ has a non-empty interior. Therefore, $\Sigma_\ell'$ is 
  a complex hypersurface in $\triv_\ell$. Statements 1) and  2) are proved.
  
  \begin{proposition} \label{disjax} 
 The surfaces $S_{\ell,\pm}^o$ are disjoint from the $s$-axis $\{\chi=a=0\}$.
   \end{proposition}
   \begin{proof} Way 1. System (\ref{mchoyn}) with $\chi=a=0$ is diagonal with main term matrices 
   $\diag(-\frac s2,0)$ and $\diag(0,\frac s2)$ at zero and at infinity respectively. Therefore, 
   if $s\neq0$, then it has no vector polynomial solution, being the direct sum of one-dimensional equations with solutions $z^\ell e^{\frac s{2z}}$ and $e^{\frac s2z}$. 
   
   Way 2. For $\chi=a=0$ one has $G_{2,\ell}=0$, and $G_{1,\ell}=G_{1,\ell}\pm G_{2,\ell}$ is upper-triangular with non-zero diagonal elements $\frac s2$ and $\frac 12$. Hence, its determinant is non-zero.
 \end{proof}

  Statement 3) follows from Theorems \ref{thpar} and \ref{threq}; inequality $(\chi,a)\neq(0,0)$ follows 
  from Proposition \ref{disjax}. 
  
  Let us prove Statement 4). First, each point $(s_0,\xi_0)\in\triv_\ell^o$ has a neighborhood where the 
  map $\mct:(s,\xi)\mapsto(\chi,a,s)$ has two local holomorphic 
  branches.  This follows from \cite[appendix 3, theorem 2]{Bol18}, which implies holomorphicity of 
  normalized trivialization of the bundle $\mce_{s,\xi}$ in the parameter $(s,\xi)$ and hence, 
  holomorphicity in $(s,\xi)$ of the family of linear systems thus obtained (images of the connections) 
  in the trivialized bundle. The above two holomorphic branches clearly differ by sign 
(Statement 3)), and hence, their images lie in different surfaces $S_{\ell,\pm}$, since 
the latter surfaces are permuted by the sign change $(\chi,a)\mapsto(-\chi,-a)$. 
The parameter subset $\triv_\ell^o$ is open and connected, being the 
complement of the manifold $\cc^*\times\oc$ to the 
  union $\Sigma_\ell\cup\Sigma_\ell'$. Therefore,  analytic extension of each initial holomorphic branch of the map $\mct$ along every path in $\triv_\ell^o$ may lie  only in one 
  of the surfaces $S_{\ell,\pm}$. This together with the above statement implies that each branch yields a global holomorphic map 
  \begin{equation}\mct_{\pm}:\triv_\ell^o\to S_{\ell,\pm}^o=S_{\ell,\pm}\setminus\{ s=0\}.\label{mct}\end{equation}
  Its bijectivity follows from Statement 3) and the fact that each system (\ref{mchoyn}) with 
  $s\neq0$ having a polynomial solution has non-degenerate 
  parabolic monodromy-Stokes data and hence, 
  corresponds to a connection defined by some $(s,\xi)\in\triv_\ell^o$ (Theorem \ref{thpar}). Theorem \ref{tbtriv} is proved.
  \end{proof} 
\subsection{Regularity and rationality.  Proof of Theorem \ref{tsmooth}} 
  \begin{lemma} \label{lreg} The surfaces $S_{\ell,\pm}^o$ are disjoint regular submanifolds in 
  $\cc^3\setminus\{ s=0\}$ diffeomorphically parametrized by  the bijective maps $\mct_{\pm}$ given by (\ref{mct}). In particular, they are connected  and hence, irreducible.
  \end{lemma}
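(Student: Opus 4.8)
The plan is to upgrade the bijective holomorphic maps $\mct_\pm\colon\triv_\ell^o\to S_{\ell,\pm}^o$ furnished by Theorem \ref{tbtriv} into biholomorphisms onto smooth surfaces. The one nontrivial point is that each $\mct_\pm$ is an \emph{immersion}: bijectivity alone is not enough (the normalization of a cuspidal curve is a bijective holomorphic non-immersion), so this must be proved. Once it is known, an injective immersion that is a homeomorphism onto its image is an embedding, so $S_{\ell,\pm}^o$ is a regular two-dimensional submanifold of $\cc^3\setminus\{s=0\}$ and $\mct_\pm$ is biholomorphic onto it. I would prove the immersion property by exhibiting a holomorphic left inverse of $\mct_\pm$ near every point of the image.

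Concretely, fix a point of $S_{\ell,\pm}^o$ and a simply connected neighborhood $U\subset\cc^*_s$ of its $s$-coordinate on which a branch $t=\log s$, and hence the symmetric pair of paths $\alpha_{0,t},\alpha_{\infty,t}$, is well defined. For a system (\ref{mchoyn}) with parameters $(\chi,a,s)$, $s\in U$, the canonical sectorial solutions, the monodromy operator, and thus the four points $q_{20},q_{10},q_{10}',q_{2\infty}\in\oc$ entering the characterizing cross-ratio (\ref{crat}) depend holomorphically on $(\chi,a,s)$, by the holomorphic dependence of Stokes data on parameters recalled in Subsection 2.1 (the formal type is preserved, the eigenvalues $\pm\frac s2$ and $0$ of the main term matrices staying distinct for $s\neq0$). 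Near $S_{\ell,\pm}^o$ the monodromy--Stokes data is parabolic non-degenerate, so at least three of these four points stay distinct and the cross-ratio defines a holomorphic $\oc$-valued function $\mcr(\chi,a,s)$; setting $\xi:=\pi i\mcr-t$ yields a holomorphic map $\Psi\colon(\chi,a,s)\mapsto(s,\xi)$ into $\cc^*_s\times\oc_\xi$ on a neighborhood of the chosen image point.

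By the construction of $\mct_\pm$ in Theorem \ref{tbtriv}, the system $\mct_\pm(s,\xi)$ is gauge equivalent to the bundle $\wh\mce_{s,\xi}$, and gauge equivalence preserves the monodromy--Stokes data, hence the cross-ratio; therefore $\Psi\circ\mct_\pm=\mathrm{id}$ locally. Differentiating gives $d\Psi\circ d\mct_\pm=\mathrm{id}$, so $d\mct_\pm$ is injective at every point and $\mct_\pm$ is an immersion, while $\Psi$ supplies a continuous local inverse, making $\mct_\pm$ a homeomorphism onto its image. This establishes that $S_{\ell,\pm}^o$ is a regular submanifold and that $\mct_\pm$ is biholomorphic (in particular a diffeomorphism). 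Connectedness of $S_{\ell,\pm}^o$ follows from connectedness of $\triv_\ell^o$ (Theorem \ref{tbtriv}, Statement 2), and a connected smooth complex analytic surface is irreducible, since distinct irreducible components of a smooth set are disjoint.

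Finally, disjointness of $S_{\ell,+}^o$ and $S_{\ell,-}^o$: a common point would give, by Theorem \ref{tmatr} and Proposition \ref{pinv2}, a system (\ref{mchoyn}) possessing both an $\mathbb I$-invariant and an $\mathbb I$-anti-invariant nonzero polynomial solution (the two signs in (\ref{wsymm})); these are linearly independent, contradicting uniqueness of the polynomial solution up to a constant factor (Proposition \ref{poldeg}). The main obstacle is the construction and ambient holomorphy of $\Psi$: one must verify that the cross-ratio, together with the local branch $t=\log s$ and the transformation law $\mcr_{t+2\pi i}=\mcr_t+2$ of Proposition \ref{crossm}, produces a genuinely holomorphic inverse on an open subset of $\cc^3$ rather than merely along the surface. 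This rests entirely on the holomorphic dependence of the Stokes data on the parameters; the immersion, embedding, and irreducibility conclusions are then formal.
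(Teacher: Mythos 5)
Your proof of regularity is essentially the paper's own argument: the paper also reduces everything to showing $\mct_\pm$ is an immersion, and does so by exactly your device, namely constructing a holomorphic left inverse $(\chi,a,s)\mapsto(s,\mcr)$ on an ambient neighborhood of each image point, using holomorphic dependence of the canonical solutions (hence of $q_{20}$, $q_{10}$, $q_{10}'$, $q_{2\infty}$) on parameters and non-degeneracy to keep at least three of the four points distinct nearby; connectedness and irreducibility are likewise drawn from connectedness of $\triv_\ell^o$. (One small wording point: off the surface the data is no longer parabolic, only the ``at least three distinct points'' property persists, but that is all you actually use, so this is harmless.) Where you genuinely depart from the paper is the disjointness of $S_{\ell,+}^o$ and $S_{\ell,-}^o$. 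The paper argues via the parametrizations: at a common point the two values $\xi_\pm$ must coincide, since both compute the characteristic cross-ratio of one and the same system with the same paths, so $\mct_+$ and $\mct_-$, which differ by the sign change $(\chi,a)\mapsto(-\chi,-a)$, take the same value, forcing $\chi_0=a_0=0$ and contradicting Proposition \ref{disjax}. Your argument is more elementary and purely algebraic: a common point would produce, via Proposition \ref{pinv2}, a nonzero $\mathbb I$-invariant and a nonzero $\mathbb I$-anti-invariant polynomial solution; being eigenvectors of the linear involution $\mathbb I$ with distinct eigenvalues, they are linearly independent, contradicting the uniqueness in Proposition \ref{poldeg}. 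This bypasses the Stokes-theoretic machinery and Proposition \ref{disjax} entirely, at the price of leaning on the algebraic description of $S_{\ell,\pm}$ from the proof of Theorem \ref{tmatr}; both routes are correct, and yours is arguably the cleaner one for this particular claim.
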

  \begin{proof} The maps $\mct_{\pm}$ are holomorphic and bijective. The surfaces $S_{\ell,\pm}^o$ are analytic subsets in $\cc^3\setminus\{ s=0\}$. To prove that they are  submanifolds, 
  it suffices to show that 
  the maps $\mct_{\pm}$ are immersions, and it suffices to prove this for one sign, say "$+$". (Their connectivity 
  (hence, irreducibility) follows from connectivity of the parametrizing domain $\triv_\ell^o$.) 
  To this end, we fix some $(s_0,\xi_0)\in\triv_\ell^o$ and consider 
  the new local coordinates $(s,\mcr)$ on its neighborhood, where $\mcr=-i\pi^{-1}(\xi+t)$ 
 is the characteristic cross-ratio of the monodromy-Stokes data corresponding to the paths 
  $\alpha_{0,t}$, $\alpha_{\infty,t}$, $t=\ln s$. Set $(\chi_0,a_0,s_0)=\mct_+(s_0,\xi_0)\in S_{\ell,+}^o$. 
   We claim that the value of the cross-ratio $\mcr$ is well-defined on all systems (\ref{mchoyn}) 
   with parameters $(\chi,a,s)$ close enough to $(\chi_0,a_0,s_0)$, and it is a holomorphic 
   function of $(\chi,a,s)$. Indeed, recall that $\mcr$ is the cross-ratio of the points 
   $q_{20}$, $q_{10}$, $q_{10}'$, $q_{2\infty}$, where $q_{10}$, $q_{20}$, $q_{2\infty}$ are projectivizations 
   of the values at $z=1$ of the canonical solutions $f_{10}$, $f_{20}$, $f_{2\infty}$, and $q_{10}'$ 
   is the projectivization of the value at 1 of the image  of the solution $f_{10}$ under the monodromy. 
   All the above four points $q$ depend holomorphically on the parameters of the system, as do 
   the canonical solutions. At least three of them are distinct for $(\chi,a,s)=(\chi_0,a_0,s_0)$ (non-degeneracy) and hence, for all close $(\chi,a,s)$. Therefore, $\mcr$ is 
   well-defined and holomorphic. This yields an inverse map 
   $(\chi,a,s)\mapsto(s,\mcr)$ holomorphic on a neighborhood of the point $(\chi_0,a_0,s_0)$ in  $\cc^3$. It is inverse to $\mct_+$: its pre-composition with $\mct_+$ is the identity. This implies 
   that $\mct_+$ is an immersion. 
   
   Let us show that the surfaces 
   $S_{\ell,\pm}^o$ are disjoint. Suppose the contrary: they are intersected at some 
   point $(\chi_0,a_0,s_0)$, $s_0\neq0$. One has  $(\chi_0,a_0)\neq(0,0)$, by Proposition 
   \ref{disjax}, and 
   $$(\chi_0,a_0,s_0)=\mct_{\pm}(s_0,\xi_{\pm}) \ \text{ for some } \ \xi_{\pm}\in\oc.$$
   But $\xi_+=\xi_-$, since they are equal (up to known multiplicative and additive constants) 
    to the characteristic cross-ratio of parabolic 
   monodromy-Stokes data of one and the same linear system (\ref{mchoyn}) with parameters 
   $(\chi_0,a_0,s_0)$, defined by the same pair of paths. Thus, two distinct 
   maps $\mct_{\pm}$ take the same value 
   $(\chi_0,a_0,s_0)$   at the point $(s_0,\xi_+)$, and  they differ by signs of $(\chi,a)$-components. Hence, $\chi_0=a_0=0$. The  contradiction thus obtained proves
   Lemma \ref{lreg}.
   \end{proof}
   
   \begin{proposition} \label{prat} The surfaces $S_{\ell,\pm}^o$ are fibered by regular rational curves 
   (finitely punctured Riemann spheres) of degree $2\ell+1$ over the $s$-axis punctured 
   at the origin.  There exists a 
   $d\in\nn$, $d\leq2\ell+1$, such that for all but a discrete subset $Coll_{\ell}\subset\cc^*_s$ of values $s$ the $s$-fiber of the  surface $S_{\ell,\pm}^o$ has exactly  $d$ distinct punctures, and for  $s\in Coll_{\ell}$ the corresponding number of punctures is less than $d$.    
   \end{proposition}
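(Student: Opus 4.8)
The plan is to transport the geometry of each $s$-fiber through the $s$-preserving biholomorphic parametrization $\mct_{\pm}\colon\triv_\ell^o\to S_{\ell,\pm}^o$ provided by Theorem \ref{tbtriv} and Lemma \ref{lreg}, and then to read the number of punctures off a holomorphic family of divisors at infinity.

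First I would fix $s\neq0$ and set $F_s:=(\{s\}\times\oc)\cap(\Sigma_\ell\cup\Sigma_\ell')$. By the nonvanishing of the leading form computed below, the affine curve $\gamma_{s,\pm}:=\{\mcp_{\ell,\pm}(\cdot,\cdot,s)=0\}\subset\cc^2_{\chi,a}$ --- which is exactly the $s$-fiber of $S_{\ell,\pm}^o$ --- is nonempty; hence the sphere-fiber $\{s\}\times\oc$ is not contained in the analytic curve $\Sigma_\ell\cup\Sigma_\ell'$, so $F_s$ is finite. The restriction of $\mct_{\pm}$ to $(\{s\}\times\oc)\cap\triv_\ell^o=\oc\setminus F_s$ is a biholomorphism onto $\gamma_{s,\pm}$, so every $s$-fiber is the biholomorphic image of the finitely punctured Riemann sphere $\oc\setminus F_s$, hence a smooth connected rational curve; in particular $S_{\ell,\pm}^o$ is fibered over $\cc^*_s$ with such fibers. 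This settles the qualitative part of the statement.

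Next I would determine the degree. In the $(\ell+1)\times(\ell+1)$ matrix $G_{1,\ell}\pm G_{2,\ell}$ the only entries of degree $2$ in $(\chi,a)$ are the $\ell$ superdiagonal entries $\chi a-j$ of $G_{1,\ell}$ (their leading part being $\chi a$), every other entry having degree at most $1$; hence a monomial of the determinant of total degree $2\ell+1$ must use all $\ell$ superdiagonal entries together with one further entry, which forces the cyclic permutation closing through the entry $(\ell+1,1)=\pm\chi$ of $G_{2,\ell}$. Therefore the leading form of $\mcp_{\ell,\pm}(\cdot,\cdot,s)$ in $(\chi,a)$ equals $c\,\chi^{\ell+1}a^{\ell}$ with $c=\pm1$ independent of $s$, so $\deg_{(\chi,a)}\mcp_{\ell,\pm}=2\ell+1$ for every $s$. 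Because $\gamma_{s,\pm}$ is smooth and connected it is irreducible, so $\mcp_{\ell,\pm}(\cdot,\cdot,s)$ is a power $\tilde P_s^{m}$ of its reduced equation; since the leading form $\chi^{\ell+1}a^{\ell}$ is not a proper power (as $\gcd(\ell+1,\ell)=1$) one gets $m=1$, so $\gamma_{s,\pm}$ is a reduced irreducible plane curve of degree $2\ell+1$ for all $s\neq0$. Vanishing of $\chi^{\ell+1}a^{\ell}$ shows that its projective closure meets the line at infinity only at $[0:1:0]$ and $[1:0:0]$ with multiplicities $\ell+1$ and $\ell$; thus the punctures are precisely the places of the normalization over these two points, and their number is at most $2\ell+1$.

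Finally, for the count I would make the $s$-dependence holomorphic. The map $\mct_{\pm}(s,\cdot)$ extends across the punctures $F_s$ to the normalization morphism $\hat\phi_s\colon\oc\to\overline{\gamma}_{s,\pm}\subset\cp^2$, which is birational onto its image and depends holomorphically on $s$; consequently the pullback $D(s):=\hat\phi_s^{*}(\{t=0\})$ is a holomorphic family of effective divisors of constant degree $2\ell+1$ on the fixed sphere $\oc$, and the number of punctures of $\gamma_{s,\pm}$ equals the number of points in the support of $D(s)$. This support size is lower semicontinuous in a holomorphic family of divisors (points may collide in a limit but cannot split), so over the connected base $\cc^*_s$ it equals its maximal value $d\leq2\ell+1$ outside a proper analytic, hence discrete, subset $Coll_\ell$, and is strictly smaller on $Coll_\ell$. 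The main obstacle is precisely this last step: one must verify that $\hat\phi_s$ and $D(s)$ genuinely form a holomorphic family over all of $\cc^*_s$ (which relies on the nonemptiness of every fiber established above, i.e. on the absence of vertical components of $\Sigma_\ell\cup\Sigma_\ell'$) and that the colliding-puncture locus is not all of $\cc^*_s$; both follow from the nonvanishing of the leading form together with semicontinuity of the support.
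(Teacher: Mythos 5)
Your computation of the top-degree form (the unique monomial $\pm\chi^{\ell+1}a^{\ell}$ obtained from the cyclic permutation through the $(\ell+1,1)$ entry) and the reducedness argument via $\gcd(\ell+1,\ell)=1$ coincide with the paper's, and reading the fiber geometry through the parametrization $\mct_{\pm}$ is also the paper's strategy. However, two steps of your proposal have genuine gaps.

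First, finiteness of $F_s$. You deduce it from ``$\{s\}\times\oc$ is not contained in the analytic curve $\Sigma_\ell\cup\Sigma_\ell'$''. But at this point $\Sigma_\ell\cup\Sigma_\ell'$ is \emph{not} known to be an analytic subset of $\cc^*\times\oc$: Theorem \ref{tbtriv} gives analyticity of $\Sigma_\ell$ in $\cc^*\times\oc$, but of $\Sigma_\ell'$ only inside the open set $\triv_\ell$. Hence on a fixed sphere $\{s\}\times\oc$ the trace of $\Sigma_\ell'$ is a priori an infinite discrete set accumulating to the finitely many points of $\Sigma_\ell$, and $F_s$ could a priori be a countably infinite compact set. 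In the paper the analyticity of $\overline{\Sigma_\ell'}$ (hence of the union) is proved only later, in the proof of Statement 3) of Theorem \ref{tsmooth}, via Shiffman's theorem, and that proof \emph{uses} Proposition \ref{prat}; so your justification is circular. The paper obtains finiteness from the opposite side: the $s$-fiber is a nonempty affine algebraic curve, hence a compact Riemann surface punctured in finitely many points (at most its degree, $\leq 2\ell+1$), and since $\mct_{\pm}$ maps the open set $X_s\subset\oc$ biholomorphically onto it, $X_s$ must be a finitely punctured sphere. Your step can be repaired in the same spirit (a smooth affine curve has finite topology, so its biholomorphic copy $\oc\setminus F_s$ forces $F_s$ to be finite), but not by the reason you give.

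Second, the collision count. Your argument needs $D(s)$ to be a holomorphic family of divisors over all of $\cc^*_s$, which requires the fiberwise extensions $\hat\phi_s$ to depend holomorphically on $s$ \emph{jointly}, including at points $(s,\xi)$ lying over $\Sigma_\ell\cup\Sigma_\ell'$, where $\mct_{\pm}$ is undefined and where joint holomorphy of fiberwise removable singularities is not automatic. You flag this as the main obstacle, but the resolution you offer --- nonvanishing of the leading form together with semicontinuity of the support --- is not a proof: constancy of the degree of $D(s)$ says nothing about holomorphy of the family, and semicontinuity of the support presupposes exactly the continuity of the family through the collision values that is in question. The paper closes this step by a different, algebraic device: Hironaka resolution of singularities of the projective closure of $S_{\ell,\pm}^o$, after which the punctures become the intersection points of the resolved rational fibers with a fixed algebraic curve (the preimage of the section by the infinity plane), i.e.\ the fibers of a finite map of algebraic curves over $\cc^*_s$; the collision locus is then the branch locus of that map, which is discrete, and over it the number of distinct points strictly drops. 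Without this, or an honest removable-singularity argument showing that $\hat\phi$ extends holomorphically in both variables, your last paragraph does not close.
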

   
   \begin{proof} For every $s\neq0$ let $\mathcal X_{s,\pm}$ denote the $s$-fiber: the preimage under the projection
   $\pi_s:S_{\ell,\pm}^o\to\cc_s$. It is a non-empty algebraic curve. Indeed, it is the zero locus of the polynomial 
    $\mcp_{\ell,\pm}(\chi,a,s)$ with fixed $s$ and variable $(\chi,a)$. The latter polynomial 
    in $(\chi,a)$ is non-constant for every $s$: substituting $\chi=0$ we get the polynomial 
    $Q_{\ell,\pm}(\mu,r)=\det(\mcg_\ell\pm r\idd)$ defining the curve $\Gamma_{\ell,\pm}$ 
    (up to constant factor, see Proposition \ref{det=}), $\mu=\frac s2$, $r=\frac a2$. This is a polynomial of degree $\ell>0$ in $r$, and hence, is non-constant. Thus, the set $\mathcal X_{s,\pm}$ is a non-empty zero locus of a polynomial, 
    and hence, an algebraic curve.   Therefore, it is parametrized (bijectively except for possible singularities) 
   by a punctured compact Riemann surface. The punctures 
 correspond to the intersections of the projective closure $\overline{\mathcal X_{s,\pm}}$ with 
   the   infinity hyperplane, which will be denoted by $\mch$. Each puncture comes with multiplicity equal 
   to the corresponding intersection index of the compact curve 
   $\overline{\mathcal X_{s,\pm}}$ with $\mch$.  Therefore, the total  
   number of punctures with multiplicities is  bounded by the degrees of the curves 
   $\overline{\mathcal X_{s,\pm}}$. Their degrees are no greater than the degree of the polynomials 
   $\mcp_{\ell,\pm}$, which is no greater than $2\ell+1$. 
      
    On the other hand,  the map $\mct_{\pm}$ parametrizes the 
curve $\mathcal X_{s,\pm}$ biholomorphically 
   by an open subset $X_s\subset\oc_\xi$, which is the $s$-fiber  of the open subset 
   $\triv_\ell^o\subset\cc^*\times\oc$. Therefore, $X_s$ should be 
  Riemann sphere punctured in no more than $2\ell+1$ points. 
   Hence, each $\mcx_{s,\pm}$ is a rational 
   curve, and it is regular by biholomorphicity of the parametrization $\mct_{\pm}$. 
      Distinct punctures may collide only at a discrete subset $Coll_\ell\subset\cc^*$ of values $s$. Indeed, 
      resolution of singularities of the projective closure $\overline{S_{\ell,\pm}^o}$ 
      of the surface $S^o_{\ell,\pm}$, which exists by Hironaka Theorem \cite{hironaka}, 
      transforms the (singular) fibration by $s$-fibers into a singular fibration of a regular surface by 
      rational curves. Their punctures correspond to their intersections with another algebraic curve: the image 
      of the intersection of the surface $\overline{S^o_{\ell,\pm}}$ with the infinity plane under the resolution of singularities. 
      Collisions of the above-mentioned intersections obviously correspond to a discrete subset of values $s$. 
      This implies the last statement of Proposition \ref{prat}. Each curve  $\mathcal X_{s,\pm}$ is irreducible, 
      being rational. Let us show that it has  degree $2\ell+1$. Indeed, it is the zero locus of the restriction to 
      the given value $s$ of the polynomial $\mcp_{\ell,\pm}(\chi,a,s)=\det(G_{1,\ell}\pm G_{2,\ell})$. 
      The highest degree monomial of the latter polynomial in the variables $(\chi,a)$ with fixed $s$ 
      is unique and equal to $\chi^{\ell+1}a^\ell$ up to sign, which follows from formulas (\ref{mg1}), (\ref{mg2}) for 
      the matrices $G_{1,\ell}$ and $G_{2,\ell}$. Therefore, the curve 
      $\mcx_{s,\pm}$ has degree $2\ell+1$, unless the polynomial $\mcp_{\ell,\pm}(\chi,a,s)$  with fixed $s\neq0$ has a multiple zero along its irreducible zero locus $\mcx_{s,\pm}$. The latter multiple zero case is impossible. Indeed, in the latter case there would exist another polynomial $\Psi_{s,\pm}(\chi,a)$ vanishing  on $\mcx_{s,\pm}$ and an $m\in\nn$, $m\geq2$, such that $\Psi_{s,\pm}^m(\chi,a)=\mcp_{\ell,\pm}(\chi,a,s)$ up to constant factor. Then the highers homogeneous part of the polynomial $\Psi_{s,\pm}$ is 
      a monomial $c\chi^\alpha a^\beta$, $c\neq0$, $\alpha m=\ell+1$, $\beta m=\ell$, $m(\alpha-\beta)=1$, thus $m=1$, -- a contradiction. The proposition is proved.
   \end{proof} 
   
   Statement 1) of Theorem \ref{tsmooth} follows from Lemma \ref{lreg} and Proposition \ref{disjax}. 
   Its Statement 2) follows 
   from Proposition \ref{prat}. It remains to prove Statement 3). The  
 preimage $\mct_{\pm}^{-1}(S_{\ell,\pm}^o)$ is the complement of the 
   product $\cc^*\times\oc$ to the union $\Sigma_\ell\cup\Sigma_\ell'$. 
   Let us show that the latter union is an analytic hypersurface. We already know that 
   $\Sigma_\ell$ is an analytic hypersurface, and $\Sigma_\ell'$ is an analytic hypersurface in 
   $(\cc^*\times\oc)\setminus\Sigma_\ell$. It remains to show that the closure $\overline{\Sigma_\ell'}$ 
   is an analytic hypersurface (if non-empty), i.e., that its a priori potential singularities contained in $\Sigma_\ell$ are removable. Here by non-removable singularity we mean a point where the analytic subset in question does 
   not extend as an analytic subset: a point where the analytic subset is defined but not regular is considered 
   as removable by definition.  Indeed, it follows from definition and analyticity 
   that the only potential non-removable singularities are points of the intersection 
    $\overline{\Sigma_\ell'}\cap\Sigma_\ell$. The latter intersection points have $s$-coordinates lying in the discrete subset $Coll_\ell$ 
    from Proposition \ref{prat}, and each intersection point  
     corresponds to a puncture: lies in the finite complement  $\oc\setminus X_{s,\pm}$ of the corresponding fiber of $\triv_\ell^o=(\cc^*\times\oc)\setminus(\Sigma_\ell\cup\Sigma_{\ell}')$. Hence, each intersection point is 
     isolated. 
     An isolated intersection point is a removable singularity of the hypersurface $\Sigma_\ell'$, by  Shiffman Theorem, see \cite{shiffman} and \cite[subsection 4.4]{chirka}. Therefore, $\overline{\Sigma_\ell'}$ 
     is a hypersurface, and hence, so is $\Sigma_\ell\cup\Sigma_\ell'$. Theorem \ref{tsmooth} 
     is proved.
     
     \subsection{Irreducibility of polynomials and surfaces. Proof of Theorem \ref{tirred}}
     
     Irreducibility of the surface $S_{\ell,\pm}$ is equivalent to connectivity of its regular part. 
     Its open subset $S_{\ell,\pm}^o=S_{\ell,\pm}\setminus\{ s=0\}$ is connected 
     and lies in its regular part (Lemma \ref{lreg}). Thus, to prove irreducibility, it remains to show that 
     $S_{\ell,\pm}$ does not contain the hyperplane $\{ s=0\}$. 
     
     \begin{proposition} \label{nonvan} The polynomial $\mcp_{\ell,\pm}$ does not vanish identically on the hyperplane $\{ s=0\}$. 
     \end{proposition}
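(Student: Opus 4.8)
The plan is to exhibit a single monomial in $\mcp_{\ell,\pm}$ whose coefficient is a nonzero constant independent of $s$; such a monomial survives the substitution $s=0$ and thus immediately shows $\mcp_{\ell,\pm}(\chi,a,0)\not\equiv 0$. The natural candidate is the top-degree monomial $\chi^{\ell+1}a^\ell$ in the variables $(\chi,a)$, already identified in the proof of Proposition \ref{prat} as the unique monomial of maximal $(\chi,a)$-degree $2\ell+1$.

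First I would record the $(\chi,a)$-degrees of the individual entries of $M:=G_{1,\ell}\pm G_{2,\ell}$: the only entries of $(\chi,a)$-degree $2$ are the superdiagonal entries $\chi a-i$ of $G_{1,\ell}$, while every entry of $G_{2,\ell}$ and every diagonal entry of $G_{1,\ell}$ has $(\chi,a)$-degree at most $1$. Since a determinant term is a product of $\ell+1$ entries, one per row, reaching total $(\chi,a)$-degree $2\ell+1$ forces exactly $\ell$ factors of degree $2$ and one factor of degree $1$ (if all $\ell+1$ factors had degree $2$ the total would be $2\ell+2$). The $\ell$ degree-$2$ factors must occupy the full superdiagonal $(i,i+1)$, $i=1,\dots,\ell$, of $G_{1,\ell}$; this leaves row $\ell+1$ and column $1$, so the remaining factor is forced to be the corner entry $M_{\ell+1,1}=\pm\chi$ of $G_{2,\ell}$, and the underlying permutation is the single cycle $\sigma=(1\,2\,\cdots\,\ell+1)$.

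Next I would isolate, within this unique top-degree contribution, the part producing the pure monomial $\chi^{\ell+1}a^\ell$ with no factor of $s$. The corner entry contributes $\pm\chi$ (no $s$), and in each superdiagonal entry $\chi a-i$ the coefficient of $\chi a$ is $1$ (a coincidental $G_{2,\ell}$-entry landing on the superdiagonal is a $\frac a2$ or a $\chi s$ term, neither of which produces $\chi a$). Hence the coefficient of $\chi^{\ell+1}a^\ell$ equals $\pm\sign(\sigma)$, a nonzero constant not involving $s$. Consequently this monomial survives at $s=0$, so $\mcp_{\ell,\pm}(\chi,a,0)$ is a nonzero polynomial, which proves the proposition.

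The one delicate point, and the step I would verify most carefully, is the claim that no other permutation—say one using a degree-one $\chi s$ entry of $G_{2,\ell}$ together with superdiagonal entries—can also feed into the coefficient of $\chi^{\ell+1}a^\ell$ after collecting powers of $s$; but any such term necessarily carries a positive power of $s$ coming from $\chi s$, so it contributes to $\chi^{\ell+1}a^\ell s^k$ with $k\geq1$ and is irrelevant to the $s$-free coefficient. As a sanity check, this is consistent with the explicit formulas (\ref{mcp1}) and (\ref{mcp2}): the coefficient of $\chi^2a$ for $\ell=1$ is $-4$ and the coefficient of $\chi^3a^2$ for $\ell=2$ is $8$, both nonzero and independent of $s$.
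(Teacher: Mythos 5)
Your proof is correct and follows essentially the same strategy as the paper's: exhibit a single monomial of $\mcp_{\ell,\pm}$ whose coefficient is a nonzero constant not involving $s$, so that it survives the substitution $s=0$. The only difference is the choice of monomial — the paper additionally sets $\chi=0$ and reads off the leading term $\pm\frac{a^\ell}{2^{\ell+1}}$ of $\mcp_{\ell,\pm}(0,a,0)$ coming from the antidiagonal entries $\pm\frac a2$ together with the corner entry $\frac12$, whereas you use the top $(\chi,a)$-degree monomial $\chi^{\ell+1}a^\ell$ produced by the cyclic permutation, i.e.\ the same term the paper itself exploits in the proof of Proposition \ref{prat}.
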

     \begin{proof} The polynomial $\mcp_{\ell,\pm}(0,a,0)$ is a polynomial in one 
     variable $a$ with highest degree monomial $\frac{a^\ell}{2^{\ell+1}}$ up to sign, by formulas 
     (\ref{mg1}), (\ref{mg2}) for the matrices $G_{1,\ell}$ and $G_{2,\ell}$.
              \end{proof}

        The surfaces $S_{\ell,\pm}$ do not contain the hyperplane $\{ s=0\}$, by Proposition \ref{nonvan}. This together with the above argument implies their irreducibility. 
        
        Let us prove irreducibility of the polynomial $\mcp_{\ell,\pm}$. Its zero locus is an irreducible 
        surface $S_{\ell,\pm}$. Thus, it remains to show that $S_{\ell,\pm}$ is not its multiple zero locus. 
        This follows from the fact that for every fixed $s\neq0$ the zero locus of the 
        polynomial $\mcp_{\ell,\pm}(\chi,a,s)$ with fixed $s$ 
        is the $s$-fiber of the surface $S_{\ell,\pm}^o$, and it is not 
        its multiple zero locus, as was shown in the proof of Proposition \ref{prat}.  Theorem \ref{tirred} is proved.      
         
     \subsection{Smoothness of the spectral curves and the genus formula. Proof of Theorems 
     \ref{threg} and \ref{congenu}}
     
     As was shown by I.Netay, Theorem \ref{threg} (smoothness of the curve 
     $\Gamma_{\ell}^o=\Gamma_{\ell}\setminus\{ \mu=0\}\subset\cc^2_{\la,\mu}$) implies 
     the genus formula given by Theorem \ref{congenu}. Thus, for the proof of the genus formula, 
     it remains to prove Theorem \ref{threg}. To do this, we consider the line field on 
     $\cc^3_{\chi,a,s}$ given by the system of the second and third 
differential equations in (\ref{isomnews'}). It is generated by the vector field 
\begin{equation}v:=(v_\chi, v_a, v_s), \ v_\chi=\frac12(a(1-4\chi^2)+2\ell\chi), \ 
v_a=2\chi(a^2-s^2)-\ell a, \ v_s=s,\label{vp3}\end{equation}
which will be called the {\it P3 isomonodromic vector field.} 
 
 \begin{theorem} \label{tisom} Each surface $S_{\ell,\pm}$ is tangent to the P3 
  isomonodromic vector field $v$. 
   \end{theorem}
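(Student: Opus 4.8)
The plan is to deduce the tangency from the fact that the integral curves of $v$ are isomonodromic families of systems (\ref{mchoyn}), together with the characterization of polynomial solutions by parabolicity of the monodromy--Stokes data. First I would note that $v$ generates the same line field as (\ref{isomnews'}): one has $v=s\cdot(\chi'_s,a'_s,1)$ with the slopes $\chi'_s,a'_s$ read off from (\ref{isomnews'}), so for $s\neq0$ the reparametrization by $s$ (using $\dot s=v_s=s$) identifies the unparametrized integral curves of $v$ with the graphs of solutions of (\ref{isomnews'}). Since system (\ref{mchoy*}) coincides with (\ref{mchoyn}), the version of Theorem \ref{tisom2} for (\ref{mchoy*}) stated at the end of Subsection 1.5 shows that each such integral curve is an isomonodromic family of systems (\ref{mchoyn}).

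The heart of the argument is that the property of having a polynomial solution is an isomonodromic invariant. By Theorem \ref{thpar}, on $\cc^3\setminus\{s=0\}$ the surface $S_{\ell,\pm}^o$ consists exactly of the parameters $(\chi,a,s)$ for which the monodromy--Stokes data of (\ref{mchoyn}), defined by symmetric sectors and paths, is parabolic and non-degenerate. Locally in $s$ the symmetric Stokes sectors and the paths $\alpha_{0,t}$, $\alpha_{\infty,t}$ of (\ref{alft}) vary continuously with $t=\ln s$, and along an isomonodromic family the Stokes matrices, the formal monodromy, the transition (connection) matrix between the canonical bases at $0$ and at $\infty$, and the residue eigenvalues of the formal normal forms all remain constant (the classical description of isomonodromicity recalled after the definition of isomonodromic family). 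Because $\ell\in\nn$ the formal monodromy is trivial, so by (\ref{monprod}) the monodromy stays unipotent; the coincidence $q_{20}=q_{1\infty}$ is the vanishing of a fixed entry of the constant connection matrix (the pasting of the two zero--eigenvalue canonical solutions into one solution holomorphic on $\cc^*$, as in Proposition \ref{polcan}), hence persists; and non-degeneracy, being a property of the constant monodromy--Stokes data, is likewise preserved. Thus every integral curve of $v$ issued from a point of $S_{\ell,+}^o\cup S_{\ell,-}^o$ stays in this union. Since the two surfaces are disjoint (Theorem \ref{tsmooth}), the integral curve is connected, and the sign in $\mathbb I Y=\pm Y$ for the polynomial solution (Proposition \ref{pinv1}) cannot jump along a continuous deformation (the involution $\mathbb I$ depending only on the constant $\ell$), the curve stays on the single surface it started on. Hence $v$ is tangent to each smooth surface $S_{\ell,\pm}^o$.

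Finally I would extend the tangency from $S_{\ell,\pm}^o$ to all of $S_{\ell,\pm}$. The surface $S_{\ell,\pm}$ is irreducible and is not contained in $\{s=0\}$ (Theorem \ref{tirred} and Proposition \ref{nonvan}), so $S_{\ell,\pm}^o=S_{\ell,\pm}\setminus\{s=0\}$ is dense in $S_{\ell,\pm}$. Tangency of $v$ to the smooth dense subset $S_{\ell,\pm}^o$ means that the polynomial $v(\mcp_{\ell,\pm})$ vanishes on $S_{\ell,\pm}^o$, hence by continuity on all of $S_{\ell,\pm}$; since $\mcp_{\ell,\pm}$ is irreducible (Theorem \ref{tirred}) this forces $\mcp_{\ell,\pm}\mid v(\mcp_{\ell,\pm})$, i.e. $v(\mcp_{\ell,\pm})=\lambda_{\pm}\mcp_{\ell,\pm}$ for a polynomial $\lambda_{\pm}$, which is precisely the tangency of the whole surface $S_{\ell,\pm}$. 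The main obstacle I expect is the careful verification in the second paragraph that the \emph{symmetric} sectors and paths (rather than Jimbo's deformation-adapted ones) can be tracked continuously so that parabolicity --- really the vanishing of the relevant connection-matrix entry --- is genuinely preserved; once the pasting condition is identified with a constant of the isomonodromic deformation, the remaining steps are formal.
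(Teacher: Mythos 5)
Your proposal is correct and follows essentially the same route as the paper: the paper's own proof is exactly the observation that $S_{\ell,\pm}$ is characterized by parabolicity and non-degeneracy of the monodromy--Stokes data (Theorem \ref{thpar}), conditions invariant under the isomonodromic deformations generated by $v$. Your additional details --- the reparametrization identifying $v$ with (\ref{isomnews'}), the disjointness argument preventing jumps between $S_{\ell,+}$ and $S_{\ell,-}$, and the density/irreducibility step extending tangency across $\{s=0\}$ --- are all sound elaborations of what the paper leaves implicit.
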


\begin{proof} The  surface $S_{\ell,\pm}$ is described in terms of monodromy-Stokes data:  
parabolicity and non-degeneracy conditions, see 
Theorem \ref{thpar}, which are invariant under isomonodromic deformations. Therefore, it is tangent to the P3 isomonodromic vector field $v$.\end{proof}

     Smoothness of the curve $\Gamma_\ell^o$ is equivalent to smoothness of one (or equivalently, 
     both) components $\Gamma_{\ell,\pm}^o\subset\cc^2_{a,s}$ of its pullback under the map 
     $(a,s)\mapsto(\la=\frac14(a^2-s^2), \mu=\frac s2)$. The curve $\Gamma_{\ell,\pm}^o$ is 
     the complement of the zero locus $\Gamma_{\ell,\pm}=\{ Q_{\ell,\pm}=0\}$ to the line 
     $\{ s=0\}$. It is the intersection of the smooth surface $S_{\ell,\pm}^o$ with the 
     hyperplane $\{\chi=0\}$. The latter intersection is transversal, since the line field 
     (\ref{isomnews'}) is tangent to $S_{\ell,\pm}^o$ and transversal to the hyperplane 
     $\{\chi=0\}$ outside the line $\{ a=\chi=0\}$. But we already know that 
     $S_{\ell,\pm}^o$ is disjoint from the line $\{\chi=a=0\}$, by Theorem \ref{tsmooth}, Statement 1). Therefore, the line field on $\cc^3_{\chi,a,s}$ given by (\ref{isomnews'}) is transversal to the hyperplane $\{\chi=0\}$ 
     at points of its intersection with $S_{\ell,\pm}^o$, which are exactly the points of the curve 
     $\Gamma_{\ell,\pm}^o$, by Proposition \ref{det=}. Thus, their intersection is transversal. 
     Therefore, the curve $\Gamma_{\ell,\pm}^o$ is smooth, and hence, so is $\Gamma_{\ell}^o$. 
     Theorem \ref{threg}, and hence, Theorem \ref{congenu} are proved.

\section{Isomonodromic foliations of the determinantal surfaces}
Here we study  the isomonodromic foliations of the complex surfaces $S_{\ell,\pm}$ for $\ell=0,1$: foliations by phase curves of the P3 isomonodromic vector field $v$ given by (\ref{vp3}). 

Theorem \ref{tisom} implies the following corollary.

\begin{corollary} For every $\ell\in\nn$ one has 
  \begin{equation}\frac{d\mcp_{\ell,\pm}}{dv}=h_{\ell,\pm}\mcp_{\ell,\pm}, \ h_{\ell, \pm}=h_{\ell,\pm}(\chi,a,s) 
  \ \text{ is a polynomial.}\label{derpol}\end{equation}
\end{corollary}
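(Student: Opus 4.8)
The plan is to read the identity (\ref{derpol}) as a divisibility statement in the polynomial ring $\cc[\chi,a,s]$ and to obtain it from the tangency Theorem \ref{tisom} together with the irreducibility Theorem \ref{tirred}. First I would observe that, since the P3 isomonodromic vector field $v=(v_\chi,v_a,v_s)$ has polynomial components, given explicitly in (\ref{vp3}), the directional derivative
$$\frac{d\mcp_{\ell,\pm}}{dv}=v_\chi\frac{\partial\mcp_{\ell,\pm}}{\partial\chi}+v_a\frac{\partial\mcp_{\ell,\pm}}{\partial a}+v_s\frac{\partial\mcp_{\ell,\pm}}{\partial s}$$
is again a polynomial in $(\chi,a,s)$. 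Thus the content of (\ref{derpol}) is precisely the assertion that the irreducible polynomial $\mcp_{\ell,\pm}$ divides this directional derivative, with the quotient $h_{\ell,\pm}$ being the resulting polynomial.

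Next I would show that $\frac{d\mcp_{\ell,\pm}}{dv}$ vanishes identically on the surface $S_{\ell,\pm}=\{\mcp_{\ell,\pm}=0\}$. By Theorem \ref{tisom} the field $v$ is tangent to $S_{\ell,\pm}$; at every smooth point of the surface, tangency of $v$ is exactly the statement that the derivative of a local defining function along $v$ vanishes there. The tangency furnished by Theorem \ref{tisom} is established on the smooth locus $S_{\ell,\pm}^o=S_{\ell,\pm}\setminus\{s=0\}$, where the monodromy-Stokes characterization of parabolicity and non-degeneracy applies; so on this open set one gets $\frac{d\mcp_{\ell,\pm}}{dv}\equiv0$. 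Since $S_{\ell,\pm}$ is irreducible (Theorem \ref{tirred}) and $S_{\ell,\pm}^o$ is a nonempty Zariski-open subset, the latter is Zariski dense in $S_{\ell,\pm}$; hence the polynomial $\frac{d\mcp_{\ell,\pm}}{dv}$, vanishing on the dense set $S_{\ell,\pm}^o$, must vanish on all of $S_{\ell,\pm}$.

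Finally I would invoke Hilbert's Nullstellensatz. Because $\mcp_{\ell,\pm}$ is irreducible (Theorem \ref{tirred}), the ideal $(\mcp_{\ell,\pm})$ is prime, hence radical, so every polynomial vanishing on its zero locus $S_{\ell,\pm}$ lies in $(\mcp_{\ell,\pm})$. Applying this to $\frac{d\mcp_{\ell,\pm}}{dv}$ produces a factorization $\frac{d\mcp_{\ell,\pm}}{dv}=h_{\ell,\pm}\,\mcp_{\ell,\pm}$ with $h_{\ell,\pm}\in\cc[\chi,a,s]$, which is exactly (\ref{derpol}). The only genuine subtlety—and the step where all the real work is concealed—is the passage from tangency on the smooth open part $S_{\ell,\pm}^o$ to vanishing on the whole algebraic surface: this is what forces one to use irreducibility of $\mcp_{\ell,\pm}$, so that Zariski density of $S_{\ell,\pm}^o$ and the Nullstellensatz can be combined. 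Granting the two deep inputs, Theorems \ref{tisom} and \ref{tirred}, the corollary is then a formal consequence.
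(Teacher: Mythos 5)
Your proposal is correct and takes essentially the same route as the paper: the paper's own proof is a single sentence invoking exactly the two ingredients you use — irreducibility of $\mcp_{\ell,\pm}$ (Theorem \ref{tirred}) and tangency of the polynomial field $v$ to $S_{\ell,\pm}$ (Theorem \ref{tisom}) — and leaves implicit the details you spell out. Your expansion (polynomiality of the directional derivative, vanishing on the smooth locus $S_{\ell,\pm}^o$, Zariski density, and the prime-ideal/Nullstellensatz step giving divisibility) is a faithful unpacking of what the paper compresses into one line.
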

\begin{proof} The surface $S_{\ell,\pm}$ being zero locus of 
an irreducible polynomial $\mcp_{\ell,\pm}$, and the vector field $v$ being  polynomial and tangent to 
$S_{\ell,\pm}$ (Theorem \ref{tisom}), we get (\ref{derpol}). 
\end{proof}

\begin{problem} For every $\ell$ find the polynomial multiplier $h_{\ell,\pm}$ from (\ref{derpol}).
\end{problem}

Below we present formulas for the polynomial multipliers $h_{\ell,\pm}$ for $\ell=1,2$. 

\subsection{Case  $\ell=0$}

 For $\ell=0$ the set of those parameters $(\chi,a,s)$ for which the 
corresponding system (\ref{mchoyn}) has a vector polynomial solution is  the union of two planes 
$\{\chi=\pm\frac12\}$, both being equipped with the coordinates $(a,s)$. The isomonodromic 
foliations of the latter planes are given by the Riccati equation derived from the second equation in 
(\ref{isomnews'}): 
$$a'_s=\pm\frac1s(a^2-s^2).$$

\subsection{Case $\ell=1$}
Let us calculate $h_{1,\pm}$. Afterwards we show that the corresponding isomonodromic foliation is given by a Riccati  equation. We also present a result on monodromy and Stokes operators of the 
linear system whose projectivization is the latter Riccati equation.
\begin{proposition} For $\ell=1$ one has 
\begin{equation}\frac{d\mcp_{1,\pm}}{dv}=h_{1,\pm}\mcp_{1,\pm}, \ \ h_{1,\pm}(\chi,a,s)=(1-2\chi(a\pm s)).\label{h1}\end{equation}
\end{proposition}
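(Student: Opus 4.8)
The plan is to verify (\ref{h1}) by a single direct computation of the logarithmic derivative of $\mcp_{1,\pm}$ along the P3 isomonodromic vector field $v$. The Corollary preceding this proposition already guarantees that $\frac{d\mcp_{1,\pm}}{dv}$ is divisible by $\mcp_{1,\pm}$ with polynomial quotient $h_{1,\pm}$, so only the explicit shape of $h_{1,\pm}$ remains to be found, and the polynomiality itself will reappear for free from the factorization below. Since the constant (and sign) prefactor relating $\mcp_{1,\pm}$ to the polynomial $P_\pm:=(a\pm s)(1-4\chi^2)+4\chi$ of (\ref{mcp1}) cancels in the ratio $\bigl(\frac{d\mcp_{1,\pm}}{dv}\bigr)/\mcp_{1,\pm}$, it suffices to work with $P_\pm$ in place of $\mcp_{1,\pm}$.

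First I would record the components of $v$ from (\ref{vp3}) specialized to $\ell=1$, namely $v_\chi=\tfrac12 a(1-4\chi^2)+\chi$, $v_a=2\chi(a^2-s^2)-a$, $v_s=s$, together with the partial derivatives $\partial_\chi P_\pm=4-8\chi(a\pm s)$, $\partial_a P_\pm=1-4\chi^2$ and $\partial_s P_\pm=\pm(1-4\chi^2)$. Then I would assemble $\frac{dP_\pm}{dv}=v_\chi\,\partial_\chi P_\pm+v_a\,\partial_a P_\pm+v_s\,\partial_s P_\pm$ and group the resulting terms according to whether or not they carry the factor $1-4\chi^2$.

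The decisive simplification is that, after this grouping, the coefficient of $1-4\chi^2$ collapses: its various $\chi$-terms combine into $-2\chi(a\pm s)^2$ (using $a^2\pm2as+s^2=(a\pm s)^2$), and together with the remaining terms carrying $1-4\chi^2$ this coefficient becomes $(a\pm s)\bigl(1-2\chi(a\pm s)\bigr)$, while the terms free of $1-4\chi^2$ reduce to $4\chi\bigl(1-2\chi(a\pm s)\bigr)$. Hence $1-2\chi(a\pm s)$ factors out of the whole expression, leaving precisely $(1-4\chi^2)(a\pm s)+4\chi=P_\pm$. This gives $\frac{dP_\pm}{dv}=\bigl(1-2\chi(a\pm s)\bigr)P_\pm$ and therefore $h_{1,\pm}=1-2\chi(a\pm s)$, as asserted.

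The computation is mechanical and presents no genuine obstacle. The only points requiring care are to keep the sign in $a\pm s$ consistent throughout so that the cases ``$+$'' and ``$-$'' are treated simultaneously, and to spot the collapse $a^2\pm2as+s^2=(a\pm s)^2$ that renders the common factor $1-2\chi(a\pm s)$ visible; without that observation the final factorization is easy to miss.
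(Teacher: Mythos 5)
Your computation is correct and follows essentially the same route as the paper: both reduce to the rescaled polynomial $\wt\mcp_{1,\pm}=\pm4\mcp_{1,\pm}=(a\pm s)(1-4\chi^2)+4\chi$, compute its derivative along $v$ with $\ell=1$ term by term, and factor out $1-2\chi(a\pm s)$ (the paper organizes the intermediate terms via $\wt\mcp_{1,\pm}-4\chi$ rather than by grouping on $1-4\chi^2$, but this is only a cosmetic difference in bookkeeping). Your grouping by the factor $1-4\chi^2$ together with the collapse $-4a\chi(a\pm s)+2\chi(a^2-s^2)=-2\chi(a\pm s)^2$ checks out exactly and yields the same multiplier $h_{1,\pm}=1-2\chi(a\pm s)$.
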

\begin{proof} Let us multiply $\mcp_{1,\pm}$ by $\pm4$. We get  
$$\wt\mcp_{1,\pm}:=\pm4\mcp_{1,\pm}=(a\pm s)(1-4\chi^2)+4\chi,$$
  $$v_a\frac{\partial\wt\mcp_{1,\pm}}{\partial a}+v_s\frac{\partial\wt\mcp_{1,\pm}}{\partial s}=
  (1-4\chi^2)(2\chi(a^2-s^2)- a\pm s)$$
  \begin{equation}=(\wt\mcp_{1,\pm}-4\chi)(2\chi(a\mp s)+1)-2a(1-4\chi^2),\label{mcp11}\end{equation}
  $$v_\chi\frac{\partial\wt\mcp_{1,\pm}}{\partial \chi}=(-8\chi(a\pm s)+4)\frac12(a(1-4\chi^2)+2\chi)=(-4\chi(a\pm s)+2)(a(1-4\chi^2)+2\chi)$$
  $$=-8\chi^2(a\pm s)+4\chi+2a(1-4\chi^2)-4a\chi(1-4\chi^2)(a\pm s)$$
  \begin{equation}=-8\chi^2(a\pm s)+4\chi+2a(1-4\chi^2)-4a\chi(\wt\mcp_{1,\pm}-4\chi).
  \label{mcp12}\end{equation}
  Summing expressions (\ref{mcp12}) and (\ref{mcp11}) yields
  $$\frac{d\wt\mcp_{1,\pm}}{dv}=-8\chi^2a\mp 8\chi^2s+4\chi+\wt\mcp_{1,\pm}(1-2\chi(a\pm s))+16a\chi^2-4\chi$$
  $$-8\chi^2a\pm8\chi^2s=\wt\mcp_{1,\pm}(1-2\chi(a\pm s)).$$
  This proves (\ref{h1}).
  \end{proof}
  
  In what follows we treat only the case "$-$";  the case "$+$" is then obtained by the sign change 
  $(\chi,a)\mapsto(-\chi,-a)$. 
\begin{proposition} 
In the coordinates $(\chi,s)$ on the surface $\gga_{1,-}=\{\mcp_{1,-}=0\}$ 
the vector field (\ref{vp3}) generates the line field defined by the Riccati equation
\begin{equation}\chi'_s=\frac{1-4\chi^2}2-\frac{\chi}s.\label{ricchis}\end{equation}
\end{proposition}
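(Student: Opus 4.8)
The plan is to exploit that, by Theorem~\ref{tisom}, the surface $\gga_{1,-}=\{\mcp_{1,-}=0\}$ is tangent to the P3 isomonodromic vector field $v$ of (\ref{vp3}); hence the line field induced on $\gga_{1,-}$ is exactly the projectivization of the restriction $v|_{\gga_{1,-}}$, and in the chart $(\chi,s)$ the associated Riccati equation reads $\chi'_s=v_\chi/v_s$ once the variable $a$ has been eliminated by the defining equation of the surface. First I would record that $(\chi,s)$ are bona fide coordinates on $\gga_{1,-}$, away from $\{1-4\chi^2=0\}$: by (\ref{mcp1}) the equation $-4\mcp_{1,-}=(a-s)(1-4\chi^2)+4\chi=0$ solves rationally for
$$a=s-\frac{4\chi}{1-4\chi^2},$$
which is precisely the rational parametrization of $S_{1,-}$ by $(\chi,s)$ noted in the $\ell=1$ example.

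The core is then a one-line substitution. Rewriting the surface equation as $a(1-4\chi^2)=s(1-4\chi^2)-4\chi$ and plugging it into the $\chi$-component of $v$ with $\ell=1$, see (\ref{vp3}), gives
$$v_\chi=\frac12\big(a(1-4\chi^2)+2\chi\big)=\frac12\big(s(1-4\chi^2)-4\chi+2\chi\big)=\frac{s(1-4\chi^2)}2-\chi.$$
Since $v_s=s$, I would divide to obtain
$$\chi'_s=\frac{v_\chi}{v_s}=\frac{1-4\chi^2}2-\frac{\chi}s,$$
which is exactly (\ref{ricchis}).

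The only point deserving care — more a bookkeeping issue than a genuine obstacle — is the justification that the induced line field is truly $\chi'_s=v_\chi/v_s$ rather than something requiring the $a$-component as well. This is guaranteed by the tangency of $v$ to $\gga_{1,-}$ (Theorem~\ref{tisom}): differentiating $a=s-\frac{4\chi}{1-4\chi^2}$ along the flow of $v$ must reproduce $v_a$ automatically, so the three components of $v$ are consistent and the planar reduction to $(\chi,s)$ is legitimate wherever $v_s=s\neq0$. One could either invoke this tangency directly, or, if a self-contained check is preferred, verify the identity $v_a=v_\chi\,\partial_\chi a+v_s\,\partial_s a$ by a short computation using the explicit $a(\chi,s)$. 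The degenerate locus $\{1-4\chi^2=0\}$, where the chart breaks down, is handled separately by continuity.
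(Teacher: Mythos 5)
Your proof is correct and takes essentially the same route as the paper: solve $\mcp_{1,-}=0$ for $a=s-\frac{4\chi}{1-4\chi^2}$ and substitute into the $\chi$-component of the isomonodromic field (the paper substitutes into the first equation of (\ref{isomnews'}), which is exactly your $v_\chi/v_s$ since $v$ is $s$ times the vector field of that system). Your extra caution about the locus $\{1-4\chi^2=0\}$ is harmless but unnecessary: the surface never meets it, since $1-4\chi^2=0$ would force $4\chi=0$ in the defining equation $(a-s)(1-4\chi^2)+4\chi=0$, so $(\chi,s)$ are global coordinates on $\gga_{1,-}$.
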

\begin{proof} The equation $\mcp_{1,-}=0$ is equivalent to the formula
\begin{equation} a=s-\frac{4\chi}{1-4\chi^2}.\label{forma}\end{equation}
Substituting (\ref{forma}) 
to the first differential equation in (\ref{isomnews'})  yields
$$\chi'_s=\frac{a-2\chi(-1+2\chi a)}{2s}=\frac{a(1-4\chi^2)+2\chi}{2s}=\frac{s(1-4\chi^2)-2\chi}{2s}=\frac12(1-4\chi^2)-\frac{\chi}s.$$
This proves the statement of the proposition.
\end{proof}

 The Riccati equation (\ref{ricchis}) is the projectivization of the linear system
\begin{equation} \Psi'_s=\left(\frac1s\left(\begin{matrix} 0 & 0\\ 0 & -1\end{matrix}\right)+\left(\begin{matrix} 0 & 2\\ \frac12 & 0\end{matrix}\right)\right)\Psi: 
\label{psisys}\end{equation}
a function $\chi(s)$ is a solution of (\ref{ricchis}), if and only if it can be presented as a ratio $\frac{\Psi_2}{\Psi_1}(s)$, where $\Psi(s)=(\Psi_1(s),\Psi_2(s))$ is a 
vector solution of (\ref{psisys}). 

System (\ref{psisys}) has Fuchsian singularity at the origin with residue matrix 
$\diag(0,-1)$ and irregular singularity at infinity of Poincar\'e rank 1.
\begin{proposition} The formal normal form at infinity of system (\ref{psisys}) is 
\begin{equation}
\wt\Psi'_s=\left(\frac1s\left(\begin{matrix} -\frac12 & 0\\ 0 & -\frac12\end{matrix}\right)+\left(\begin{matrix} 1 & 0\\ 0 & -1\end{matrix}\right)\right)\wt\Psi. 
\label{psisform}\end{equation}

2) The Stokes matrices of system (\ref{psisys}) at infinity are both non-trivial.

3) Its monodromy along circuit around the origin is  unipotent Jordan cell. 
\end{proposition}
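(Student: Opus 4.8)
The plan is to reduce the entire analysis of system (\ref{psisys}) to the observation that it is governed by a modified Bessel equation of order zero. Writing $\Psi=(\Psi_1,\Psi_2)$, the two scalar equations $\Psi_1'=2\Psi_2$ and $\Psi_2'=\frac12\Psi_1-\frac1s\Psi_2$ let me eliminate $\Psi_2=\frac12\Psi_1'$ and obtain $s^2\Psi_1''+s\Psi_1'-s^2\Psi_1=0$, the modified Bessel equation of order $0$. A fundamental system is then $\Psi_1=I_0(s)$ and $\Psi_1=K_0(s)$, with $\Psi_2=\frac12\Psi_1'$ determined single-valuedly from $\Psi_1$; this single reduction drives both the monodromy statement and the Stokes statement.

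For Statement 1) I would compute the formal normal form at infinity directly from its definition (\ref{nforminf}). The leading constant matrix of (\ref{psisys}) is $N=\bigl(\begin{smallmatrix}0&2\\ \frac12&0\end{smallmatrix}\bigr)$, whose characteristic polynomial is $\lambda^2-1$, so $\la_{1\infty}=1$, $\la_{2\infty}=-1$, giving the leading term $\diag(1,-1)$ of (\ref{psisform}). Diagonalizing $N=H\diag(1,-1)H^{-1}$ with $H=\bigl(\begin{smallmatrix}2&-2\\1&1\end{smallmatrix}\bigr)$ and conjugating the residue $\diag(0,-1)$, I get $H^{-1}\diag(0,-1)H=-\frac12\bigl(\begin{smallmatrix}1&1\\1&1\end{smallmatrix}\bigr)$, whose diagonal part is $\diag(-\tfrac12,-\tfrac12)$. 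This is the residue $\wt R_\infty$ of the formal normal form and yields exactly (\ref{psisform}); the shared trace $\tr\diag(0,-1)=-1$ is a consistency check.

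For Statement 3) I would use that $I_0$ is entire and even, hence single-valued under $s\mapsto se^{2\pi i}$, while $K_0(s)=-(\ln\frac s2+\gamma)I_0(s)+(\text{single-valued})$ acquires the additive term $-2\pi i\,I_0(s)$ under the same circuit. Thus, in the basis $(I_0,K_0)$ of the scalar solution space, the monodromy around the origin is the nontrivial unipotent Jordan cell $\bigl(\begin{smallmatrix}1&-2\pi i\\0&1\end{smallmatrix}\bigr)$; since $\Psi_2=\frac12\Psi_1'$ is obtained by a single-valued operation, the monodromy $M_0$ of the full system (\ref{psisys}) around $0$ is the same nontrivial unipotent Jordan cell. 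This is exactly the resonant phenomenon: the residue eigenvalues $0,-1$ differ by an integer, so the naive formal monodromy $\exp(2\pi i\diag(0,-1))=\idd$ is corrected by a logarithm.

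For Statement 2) I would combine the previous two with the product formula (\ref{monprod}) at infinity. The formal normal form (\ref{psisform}) has residue eigenvalues $b_{1\infty}=b_{2\infty}=-\frac12$, so the formal monodromy at infinity is $M_{norm,\infty}=\diag(e^{-\pi i},e^{-\pi i})=-\idd$. Since $0$ and $\infty$ are the only singular points on $\oc_s$, the monodromy $M_\infty$ around infinity is conjugate to $M_0^{-1}$, hence again a nontrivial unipotent Jordan cell, so $\tr M_\infty=2$. Writing the two opposite-triangular unipotent Stokes matrices as $C_{0\infty}=\bigl(\begin{smallmatrix}1&0\\c&1\end{smallmatrix}\bigr)$ and $C_{1\infty}=\bigl(\begin{smallmatrix}1&d\\0&1\end{smallmatrix}\bigr)$, formula (\ref{monprod}) gives $M_\infty=-C_{1\infty}^{-1}C_{0\infty}^{-1}$ with $\tr M_\infty=-cd-2$; the constraint $\tr M_\infty=2$ then forces $cd=-4\neq0$, so $c\neq0$ and $d\neq0$, i.e., both Stokes matrices are nontrivial. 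The main obstacle I expect is precisely this synthesis: correctly reconciling the two computations of the monodromy — the explicit Bessel monodromy around $0$ against the Stokes-factored monodromy around $\infty$ — with consistent orientation and basis conventions, so that the unipotency established in Statement 3) can be transported through (\ref{monprod}) to constrain the Stokes multipliers.
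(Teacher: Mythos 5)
Your proof is correct, and for Statements 2)--3) it takes a genuinely different route from the paper. Statement 1) is proved exactly as in the paper (diagonalize the free term matrix, conjugate the residue $\diag(0,-1)$, take the diagonal part, getting $\diag(-\tfrac12,-\tfrac12)$). The difference is in the logical order and the source of non-triviality of the monodromy. The paper argues ``softly'': unipotence of $M_0$ comes from the Fuchsian exponents $0,-1$ at the origin (both monodromy eigenvalues equal $1$, so $\tr M_0=2$); then the trace of the factorization (\ref{monprod}) at infinity, $-(2+c_0c_1)=2$, forces $c_0c_1=-4$, proving Statement 2); and only then does Statement 3) (non-identity, hence nontrivial Jordan cell) follow from the explicit matrix (\ref{monst}) with nonzero off-diagonal entries. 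You instead reduce (\ref{psisys}) to the modified Bessel equation $s^2\Psi_1''+s\Psi_1'-s^2\Psi_1=0$ and read off the monodromy explicitly from the logarithmic term in $K_0$, obtaining the nontrivial unipotent cell $\left(\begin{smallmatrix}1 & -2\pi i\\ 0 & 1\end{smallmatrix}\right)$ directly; this proves Statement 3) independently of the Stokes data, and then the same trace computation as in the paper yields Statement 2). Your route buys an explicit monodromy matrix and decouples 3) from 2), at the price of invoking classical facts about $I_0$, $K_0$; the paper's route stays entirely within the general Stokes-theoretic framework it has already set up. One small caveat: in the paper's convention, formula (\ref{monprod}) with $p=\infty$ expresses the monodromy along a counterclockwise circuit \emph{around the origin} in the canonical sectorial basis at infinity, whereas you apply it to the monodromy around infinity, i.e.\ to $M_0^{-1}$; since both $M_0$ and $M_0^{-1}$ are unipotent with trace $2$, and the formal monodromy is the scalar $-\idd$ under either orientation, this discrepancy is harmless here, but it is worth being aware that the identity you write is, strictly speaking, the paper's identity applied to the inverse circuit.
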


\begin{proof}  The free term matrix in (\ref{psisys})  has eigenvalues $1$, $-1$;  the corresponding eigenvectors are 
$(2,1)$, $(2,-1)$. Therefore, it  is diagonalized by conjugacy as follows:
$$\left(\begin{matrix}2 & 2\\ 1 & -1\end{matrix}\right)^{-1}\left(\begin{matrix} 0 & 2\\ \frac12 & 0\end{matrix}\right)\left(\begin{matrix}2 & 2\\ 1 & -1\end{matrix}\right)=\left(\begin{matrix} 1 & 0\\ 0 & -1\end{matrix}\right).$$
The same conjugacy brings the residue matrix in (\ref{psisys}) to 
$$\left(\begin{matrix}2 & 2\\ 1 & -1\end{matrix}\right)^{-1}\left(\begin{matrix} 0 & 0\\ 0 & -1\end{matrix}\right)\left(\begin{matrix}2 & 2\\ 1 & -1\end{matrix}\right)
=\left(\begin{matrix} -\frac12 & \frac12\\ \frac12 & -\frac12\end{matrix}\right).$$
This implies that the residue matrix of the formal normal form at infinity is $\diag(-\frac12, -\frac12)$. Statement 1) is proved.

The eigenvalues of the monodromy operator of system (\ref{psisys}) along a positive circuit around the origin are equal to 
the exponents of $2\pi i$ times the residue matrix eigenvalues $0$, $-1$, since the singular point at the origin is Fuchsian with 
residue matrix $\diag(0,-1)$. Thus, both monodromy eigenvalues are equal to one, the monodromy is unipotent, and its trace is equal to 2. On the other hand, the 
monodromy matrix in an appropriate canonical sectorial solution basis at infinity is the product of three matrices: the   monodromy matrix of the formal normal form 
$\exp(-2\pi i\diag(\frac12,\frac12))=-Id$ 
and the inverses of the Stokes matrices; the latter inverses  are unipotent respectively lower and upper triangular of the type
$$C_1^{-1}=\left(\begin{matrix} 1 & 0\\ -c_1 & 1\end{matrix}\right), \ C_0^{-1}=\left(\begin{matrix} 1 & -c_0\\ 0 & 1\end{matrix}\right); \ 
c_{0,1} \text{ are called the {\it Stokes multipliers,}}$$
see (\ref{monprod}). 
The total product of the three matrices in question is equal to 
\begin{equation}M=\left(\begin{matrix} -1 & c_0\\ c_1 & -(1+c_0c_1)\end{matrix}\right).\label{monst}\end{equation}
Its trace is equal to $-(2+c_0c_1)$. But on the other hand, it is equal to 2, see the above discussion. Thus, 
$c_0c_1=-4$. This proves Statement 2) of the proposition. Statement 3), which is equivalent to nontriviality of the monodromy (unipotence), 
follows from non-triviality of the Stokes multipliers and (\ref{monst}), as in \cite{4}. The proposition is proved.
\end{proof} 

\subsection{Case $\ell=2$}

\begin{proposition} For $\ell=2$ one has 
\begin{equation}\frac{d\mcp_{2,\pm}}{dv}=h_{2,\pm}\mcp_{2,\pm}, \ \ h_{2,\pm}(\chi,a,s)=(2-a(2\chi\mp1)).\label{h2}\end{equation}
\end{proposition}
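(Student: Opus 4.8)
By the relation (\ref{derpol}), which follows from Theorem \ref{tisom} together with irreducibility of $\mcp_{2,\pm}$, it is already known that $\frac{d\mcp_{2,\pm}}{dv}=h_{2,\pm}\mcp_{2,\pm}$ for some polynomial $h_{2,\pm}$; hence the only task is to identify $h_{2,\pm}$ explicitly. My first step is to reduce to a single sign by symmetry. The explicit formulas of Example \ref{irr2} give $\mcp_{2,+}\circ\mci=\mcp_{2,-}$, where $\mci:(\chi,a,s)\mapsto(-\chi,-a,s)$ is the involution of Theorem \ref{tirred}. Moreover $v$ is invariant under $\mci$: under $(\chi,a,s)\mapsto(-\chi,-a,s)$ the components $v_\chi,v_a$ in (\ref{vp3}) change sign while $v_s$ is unchanged, which is exactly the transformation law of $\partial_\chi,\partial_a,\partial_s$, i.e. $\mci_* v=v$. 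Since $\frac{d(f\circ\mci)}{dv}=\bigl(\frac{df}{dv}\bigr)\circ\mci$ for an $\mci$-invariant field, one gets $h_{2,-}=h_{2,+}\circ\mci$, and as $\bigl(2-a(2\chi-1)\bigr)\circ\mci=2-a(2\chi+1)$, proving (\ref{h2}) for one sign yields the other automatically. So I would treat, say, the sign ``$-$'' and deduce ``$+$'' by this symmetry, just as in the $\ell=1$ case above.

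For the chosen sign I would mirror the $\ell=1$ computation. Put $\wt\mcp_{2,\pm}:=\pm8\mcp_{2,\pm}$, the explicit quintic of Example \ref{irr2}, and specialize the field (\ref{vp3}) to $\ell=2$, so $v_\chi=\tfrac12 a(1-4\chi^2)+2\chi$, $v_a=2\chi(a^2-s^2)-2a$, $v_s=s$. Then I would compute the three partials $\partial_\chi\wt\mcp_{2,\pm}$, $\partial_a\wt\mcp_{2,\pm}$, $\partial_s\wt\mcp_{2,\pm}$, assemble $\frac{d\wt\mcp_{2,\pm}}{dv}=v_\chi\partial_\chi\wt\mcp_{2,\pm}+v_a\partial_a\wt\mcp_{2,\pm}+v_s\partial_s\wt\mcp_{2,\pm}$, and verify directly the polynomial identity $\frac{d\wt\mcp_{2,\pm}}{dv}=\bigl(2-a(2\chi\mp1)\bigr)\wt\mcp_{2,\pm}$. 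To make the multiple of $\wt\mcp_{2,\pm}$ manifest and control cancellations, I would trade the terms carrying the full prefactor $(2\chi\pm1)^2(2\chi\mp1)(a^2-s^2)$ for $\wt\mcp_{2,\pm}+2a(2\chi\pm1)(6\chi\mp1)-16\chi$, read off from the defining formula; this plays the role that the substitution $(a\pm s)(1-4\chi^2)=\wt\mcp_{1,\pm}-4\chi$ played for $\ell=1$. The residual $(a^2-s^2)$-terms not of this form must then cancel, and since (\ref{derpol}) guarantees a priori that the quotient is a genuine polynomial, confirming this cancellation is exactly confirming (\ref{h2}).

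To shorten the bookkeeping and guard against slips, I would use a degree count as a skeleton. Because $v_\chi,v_a$ have total degree $3$ in $(\chi,a,s)$ and $v_s$ degree $1$, the operator $d/dv$ raises total degree by at most $2$; since $\deg\wt\mcp_{2,\pm}=5$, the quotient satisfies $\deg h_{2,\pm}\le 2$. Matching the top homogeneous parts pins its degree-$2$ piece: the degree-$5$ part of $\wt\mcp_{2,\pm}$ is $8\chi^3(a^2-s^2)$, and the leading parts $-2a\chi^2$ of $v_\chi$ and $2\chi(a^2-s^2)$ of $v_a$ contribute $(-48+32)a\chi^4(a^2-s^2)=-16a\chi^4(a^2-s^2)$ in degree $7$, which forces the degree-$2$ part of $h_{2,\pm}$ to be $-2a\chi$, consistent with $2-a(2\chi\mp1)$. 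The remaining coefficients of degree $\le 1$ can then be fixed by evaluating the claimed identity on a few slices, e.g. $\chi=0$ and $a=0$, where both sides collapse to one-variable polynomials. I expect no conceptual obstacle here: the entire content of (\ref{h2}) is the explicit value of a quotient that is \emph{a priori} polynomial, so the only difficulty is the bookkeeping of a degree-$7$ identity, and the symmetry reduction, the prefactor substitution, and the degree bound together keep that bookkeeping routine.
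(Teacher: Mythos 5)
Your proposal is correct, and it reaches (\ref{h2}) by a route organized differently from the paper's. The paper proves the identity by a self-contained brute-force computation: it expands $\frac{d\wt\mcp_{2,\pm}}{dv}$ completely, groups it as $A(a^2-s^2)+B$ (formulas (\ref{forAB})--(\ref{formB})), and factors $A$ and $B$ separately, treating both signs at once via the $\pm$ notation; in particular it never uses (\ref{derpol}), and your symmetry reduction (which is valid: $\mci_*v=v$ and $\mcp_{2,+}\circ\mci=\mcp_{2,-}$) buys little over that bookkeeping. What you do genuinely differently is to use Corollary (\ref{derpol}) --- hence Theorems \ref{tisom} and \ref{tirred}, both available at this point of the paper --- as an a priori guarantee that the cofactor $h_{2,\pm}$ is a polynomial, which lets you replace the full expansion by three small computations: the degree bound $\deg h_{2,\pm}\le 2$, the top-degree matching pinning the quadratic homogeneous part to $-2a\chi$ (your count $(-48+32)\,a\chi^4(a^2-s^2)$ is correct), and two slice evaluations determining the affine part, using that a polynomial of degree $\le1$ on $\cc^3$ is determined by its restrictions to $\{\chi=0\}$ and $\{a=0\}$. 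That scheme is sound, and the slices you left unexecuted do come out right: for the sign $+$, with $v|_{\chi=0}=(\frac a2,-2a,s)$ and $v|_{a=0}=(2\chi,-2\chi s^2,s)$ one gets
\begin{align*}
\frac{d\wt\mcp_{2,+}}{dv}\Big|_{\chi=0}&=-a(a^2-s^2)+4a+2s^2=(2+a)\,\wt\mcp_{2,+}\big|_{\chi=0},\\
\frac{d\wt\mcp_{2,+}}{dv}\Big|_{a=0}&=32\chi-2s^2(2\chi+1)^2(2\chi-1)=2\,\wt\mcp_{2,+}\big|_{a=0},
\end{align*}
which forces $h_{2,+}=2+a-2a\chi=2-a(2\chi-1)$, and then $h_{2,-}=h_{2,+}\circ\mci=2-a(2\chi+1)$ by your symmetry argument. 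Two trifles: on those slices both sides are polynomials in two variables, not one; and the division by $\wt\mcp_{2,\pm}$ restricted to each slice is legitimate because those restrictions are nonzero. The trade-off between the two proofs: the paper's is elementary and self-contained (it independently re-proves, for $\ell=2$, the divisibility asserted by (\ref{derpol})), while yours is lighter on computation but rests on the isomonodromic tangency (Theorem \ref{tisom}) and irreducibility (Theorem \ref{tirred}) through (\ref{derpol}).
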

\begin{proof} Set $\wt\mcp_{2,\pm}:=\pm 8\mcp_{2,\pm}$. One has 
$$\wt\mcp_{2,\pm}=(2\chi\pm1)^2(2\chi\mp1)(a^2-s^2)-2a(2\chi\pm1)(6\chi\mp1)+16\chi,$$
see (\ref{mcp2}),  
$$v_s\frac{\partial\wt\mcp_{2,\pm}}{\partial s}=-2s^2(2\chi\pm1)^2(2\chi\mp1)$$
\begin{equation}=2(a^2-s^2)(2\chi\pm1)^2(2\chi\mp1)-2a^2(2\chi\pm1)^2(2\chi\mp1),
\label{mcp21}\end{equation}
$$v_a\frac{\partial\wt\mcp_{2,\pm}}{\partial a}=(2\chi(a^2-s^2)-2a)(2\chi\pm1)(2a(4\chi^2-1)-12\chi
\pm2),$$
$$v_{\chi}\frac{\partial\wt\mcp_{2,\pm}}{\partial\chi}=\frac12(a(1-4\chi^2)+4\chi)((a^2-s^2)(2\chi\pm1)(12\chi\mp2)-8a(6\chi\pm1)+16).$$
Therefore 
\begin{equation}\frac{d\wt\mcp_{2,\pm}}{dv}=v_s\frac{\partial\wt\mcp_{2,\pm}}{\partial s}+ 
v_a\frac{\partial\wt\mcp_{2,\pm}}{\partial a}+v_\chi\frac{\partial\wt\mcp_{2,\pm}}{\partial\chi}=
A(a^2-s^2)+B,\label{forAB}\end{equation}
$$A=(2\chi\pm1)(2(4\chi^2-1)+4\chi a(4\chi^2-1) -24\chi^2\pm4\chi+ (a(1-4\chi^2)+4\chi)(6\chi\mp1))
$$
\begin{equation}=(2\chi\pm1)(4\chi^2-1)(2-a(2\chi\mp1)).\label{formA}\end{equation}
$$B=-2a^2(2\chi\pm1)^2(2\chi\mp1)-4a(2\chi\pm1)(a(4\chi^2-1)-6\chi\pm1)+
4(a(1-4\chi^2)+4\chi)(-a(6\chi\pm1)+2)$$
\begin{equation}=2a^2(4\chi^2-1)(6\chi\mp1)+4a(-20\chi^2+1)+32\chi.\label{formB}\end{equation}
In order to prove (\ref{h2}), taking into account formula (\ref{formA}), it remains to show that 
\begin{equation}B=(2-a(2\chi\mp1))(-2a(2\chi\pm1)(6\chi\mp1)+16\chi).\label{bforn}\end{equation}
Opening brackets in (\ref{bforn}), one gets (\ref{formB}). This proves (\ref{bforn}), and hence,  
 (\ref{h2}).
\end{proof}

\section{Acknowledgements}

I am grateful to Yulia Bibilo, Victor Buchstaber, Igor Netay, Victor Novokshenov, Constantin Shramov, 
Vladimir Sokolov   and  Ilya Vyugin for helpful discussions.

\end{document}